\documentclass[reqno]{amsart}
\usepackage[margin=1.5in]{geometry}
\usepackage{palatino}
\usepackage{mathpazo}
\usepackage{amsmath, amssymb, amsthm, mathrsfs}
\usepackage{tikz-cd}
\usepackage{hyperref}
\usepackage{enumerate}

\newtheorem{theorem}{Theorem}[section]
\newtheorem{introthm}{Theorem}

\newtheorem{lemma}[theorem]{Lemma}
\newtheorem{proposition}[theorem]{Proposition}

\newtheorem{conjecture}[theorem]{Conjecture}
\newtheorem*{theorem*}{Theorem}

\theoremstyle{definition}
\newtheorem{definition}[theorem]{Definition}
\newtheorem{setup}[theorem]{Setup}
\newtheorem{example}[theorem]{Example}

\theoremstyle{remark}
\newtheorem{remark}[theorem]{Remark}

\newcommand{\LL}{\mathbb{L}}
\newcommand{\TT}{\mathbb{T}}
\newcommand{\Gm}{\mathbb{G}_m}
\newcommand{\cO}{\mathcal{O}}
\newcommand{\cL}{\mathcal{L}}

\newcommand{\K}{\mathbb{K}}
\newcommand{\F}{\mathbb{F}}
\newcommand{\DR}{\mathbf{DR}}
\newcommand{\Spec}{\mathrm{Spec\,}}

\newcommand{\bfem}[1]{\textbf{\textit{#1}}}

\newcommand{\dCrit}{\mathrm{dCrit}}
\newcommand{\vdim}{\mathrm{vdim}}
\newcommand{\Het}{\mathbb{H}}
\newcommand{\LFc}{\mathfrak{L}\mathcal{F}_c}
\newcommand{\Hom}{\mathrm{Hom}}
\newcommand{\rk}{\mathrm{rk}\,}

\makeatletter
\g@addto@macro{\endabstract}{\@setabstract}
\newcommand{\authorfootnotes}{\renewcommand\thefootnote{\@fnsymbol\c@footnote}}%
\makeatother

\begin{document}
\title[Monodromic Perverse Sheaves on Contact Stacks]{Monodromic
Perverse Sheaves on Shifted Contact Stacks}
\author{Efe \.{I}zbudak}
\date{\today}

\begin{center}
  \LARGE
  Monodromic Perverse Sheaves on Shifted Contact Stacks \par \bigskip

  \normalsize
  \authorfootnotes
  Efe \.{I}zbudak\footnote{efe.izbudak@studium.uni-hamburg.de}\par \bigskip

  Department of Mathematics, Universit\"{a}t Hamburg\par \bigskip
\end{center}

\subjclass[2020]{Primary 14A30; Secondary 14N35, 14F08, 18N10, 53D10}
\keywords{Derived algebraic geometry, shifted contact structures, Legendrian
  correspondences, Darboux theorem, perverse sheaves, vanishing cycles,
Donaldson--Thomas invariants}

\begin{abstract}
  Applying the
  BBDJS minimal model to the derived symplectification of a $-1$-shifted contact
  derived Artin stack and descending algebraically along the structural free
  $\mathbb{G}_m$-action, we construct an $\ell$-adic perverse sheaf
  on any oriented
  such stack, and use Verdier's specialization equivalence for
  monodromic sheaves
  to equip it with a tame twisted monodromy operator $\theta$. We show that the
  local fundamental class of a Legendrian $L$ satisfies $\theta \circ \mu_L =
  (-1)^{\vdim L}\mu_L$, so that on Legendrians of odd virtual dimension, the class vanishes due to $\theta$-invariance. Moreover, both parities occur
  already on the $A_1$ chart while an odd Legendrian can carry a nonzero local
  class. We further formulate a contact analogue of Joyce's conjecture for a
  graded orientation, in which the orientation datum is twisted by the parity of
  the virtual dimension. Under the assumption of a monodromic
  refinement of the
  symplectic conjecture, we construct the categorified Legendrian 2-categories
  $\mathfrak{L}\mathcal{F}_c(X)$ and $LLeg_0$ via $\ell$-adic
  pull-push functors.
  Finally, we show that the contact Behrend function is identically
  $1$, so that the
  associated Donaldson--Thomas invariant is the compactly supported
  \'etale Euler
  characteristic of the classical truncation, and that the higher traces of
  $\theta$ recover the singularity type that the first trace discards. As an
  application, we show that the symplectic invariant of a derived intersection of conic
  Lagrangians in a cotangent bundle vanishes identically, while the contact
  invariant computes the Euler characteristic of the projectivized
  intersection, with an explicit formula for conormal bundles.
\end{abstract}
\newpage
\tableofcontents
\newpage
\section{Introduction and Summary}
\label{sec:intro}

An oriented $-1$-shifted symplectic derived Artin stack carries a perverse
sheaf of vanishing cycles \cite[Theorem 1.3]{BenBassat2015}, whose pointwise
Euler characteristic is the Behrend function \cite{Behrend} and whose
hypercohomology computes the Donaldson--Thomas invariant when the stack is
proper \cite{JoyceSong}. Amorim and Ben-Bassat extend the assignment to Lagrangian
correspondences in \cite[Theorem 6.11]{AmorimBassat}, where the $2$-category of
such correspondences is linearized by pull-push between these sheaves.

As opposed to the symplectic case, $-1$-shifted contact derived Artin stack carries no vanishing cycles, and the
construction has to be performed on its symplectification $p \colon
\widetilde{Z} \to Z$, a $\Gm$-torsor whose total space is $-1$-shifted
symplectic and whose structural action scales the symplectic form with weight
$1$ \cite[Theorem 3.9]{kib2}. The perverse sheaf of $\widetilde{Z}$ is then
monodromic along the orbits, and the pushforward $Rp_*$ retains of it only the
generalized $1$-eigenspace of the orbit monodromy, which is zero whenever that
eigenvalue does not occur. We descend this sheaf along Verdier's
specialization equivalence \cite{Verdier} instead, and work on $Z$ with the descended
perverse sheaf together with a tame automorphism.

The descent machinery differs from the symplectic construction in that the 
$\Gm$-action does not preserve the orientation datum of a chart. The orientation
local system has monodromy $(-1)^{n+1}$ along the orbits in a chart with base
of dimension $n$, so the geometric monodromy $T$ of the vanishing cycles and the
twisted operator $\theta$ that glues across the atlas differ by a global sign.
The same sign appears in the fundamental classes of Legendrians as the parity
of the virtual dimension, and also in the invariants of Sections \ref{sec:dt} and
\ref{sec:applications}, which are defined as traces of $\theta$.

The shifted contact structures are those of Berktav \cite{kib1, kib2}, and the
categories linearized below are the non-linear derived Legendrian categories
$\mathcal{F}_c(X)$ and $Leg_n$ constructed in \cite{IzbudakBerktav2} via the ``contact--symplectic dictionary'' established in \cite{IzbudakBerktav2026}. Section
\ref{sec:review} recollects the material used in the proofs. The local input is
the contact Darboux atlas of \cite[Theorem 3.7]{kib2}, collected in
\S\ref{sec:charts} in the form used throughout. A chart has a smooth base $U$ of
dimension $n$ and a potential $s$, and its symplectification is the derived
critical locus of $\widetilde{f}(x,t) = t \cdot s(x)$ over $U \times \Gm$
(Theorem \ref{thm:atlas}). The equivariant refinement of the atlas, describing the local
model at a point with linearly reductive stabilizer, is the subject of the
companion paper \cite[Theorem 3.8(2)]{Izbudak_Darboux} and is not used here.

The sheaf-theoretic input is the perverse sheaf of vanishing cycles of Brav,
Bussi, Dupont, Joyce and Szendr\H{o}i \cite{BBDJS}. Their results are stated
over $\mathbb{C}$, and loc.\ cit.\ leaves the $\ell$-adic case open, the
obstruction being that the Thom--Sebastiani theorem was not available there in
the required generality. Both ingredients are now in place. The perverse sheaf on an oriented d-critical
stack is constructed in \cite[Theorem 1.3]{BenBassat2015} for any theory of
perverse sheaves, in particular for Laszlo--Olsson's $\ell$-adic perverse
sheaves over $\K$, and the Thom--Sebastiani theorem for Donaldson--Thomas
perverse sheaves is proved in \cite[\S 4.3]{KPS}, with \cite[Remark 7.20]{KPS}
recording that the arguments there apply to $\ell$-adic perverse sheaves over
$(-1)$-shifted symplectic stacks over $\K$. On stacks the statements we need are
proved directly for $\ell$-adic coefficients by Descombes, monodromic
Thom--Sebastiani in \cite[Theorem 4.2]{Descombes} and the glued
Donaldson--Thomas sheaves together with smooth pullback functoriality in
\cite[Corollary 5.9]{Descombes}, whose standing hypotheses are those fixed in
the conventions below. We cite \cite{BBDJS} for the constructions and their
local models, and \cite{BenBassat2015, KPS, Descombes} for their validity in
the present setting.

The descended sheaf, the two operators it carries, and the comparison between
them are the content of the first main theorem.

\begin{introthm}[Monodromic Perverse Sheaf] \label{thm:A}
  Let $Z$ be an oriented $-1$-shifted contact derived Artin stack over $\K$ with
  symplectification $p \colon \widetilde{Z} \to Z$. Then
  \begin{enumerate}[(i)]
  \setlength\itemsep{0.1in}
  \item There is an $\ell$-adic perverse sheaf $\mathcal{P}_Z$ on $Z$ carrying
  two commuting tame automorphisms, the geometric monodromy $T$ and the twisted
  monodromy operator $\theta$, such that the pair $(\mathcal{P}_Z, T)$
  corresponds to the BBDJS perverse sheaf $\mathcal{P}_{\widetilde{Z}}$ of
  $\widetilde{Z}$ under Verdier's specialization equivalence along $p$ (cf.
  Theorem \ref{thm:monodromic_sheaf}).
  \item In a contact Darboux chart with base of dimension $n$ the operator
  $\theta$ is $(-1)^n$ times the geometric monodromy of the homogeneous
  potential along the $\Gm$-orbits, and the chartwise operators commute with the
  comparison isomorphisms of the atlas, hence glue (cf. Definition
  \ref{def:twisted}, Proposition \ref{prop:theta_glues}).
  \item The two operators differ by the orientation sign character $\varpi$,
  which is identically $-1$ for contact orientation data, so that $T = -\theta$
  globally (cf. Proposition \ref{prop:sign_character}).
  \item Consequently $Rp_* \mathcal{P}_{\widetilde{Z}}$ retains only the part on
  which $\theta$ acts with generalized eigenvalue $-1$, while the pair
  $(\mathcal{P}_Z, \theta)$ retains all of it (cf. Lemma
  \ref{lem:monodromic_descent}(1)).
  \end{enumerate}
\end{introthm}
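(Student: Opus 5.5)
The proof runs in four steps: construct the sheaf and $T$, define $\theta$ on charts and glue it, compute the sign comparing the two, and read off what $Rp_*$ sees.

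\textbf{Step 1: the sheaf and $T$.} By \cite[Theorem 3.9]{kib2}, $\widetilde{Z}$ is $-1$-shifted symplectic, and the orientation of $Z$ pulls back to an orientation of $\widetilde{Z}$. Then \cite[Theorem 1.3]{BenBassat2015}, together with \cite[Corollary 5.9]{Descombes}, gives the $\ell$-adic perverse sheaf $\mathcal{P}_{\widetilde{Z}}$. We show it is monodromic along the free $\Gm$-action, working chartwise. By Theorem \ref{thm:atlas}, $\widetilde{Z}$ is locally $\dCrit(\widetilde f)$ with $\widetilde f=t\cdot s(x)$ on $U\times\Gm$. Since $\widetilde f$ is $\Gm$-equivariant of weight $1$, its vanishing cycles are constant along orbits up to the monodromy of the scaling. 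Smooth pullback functoriality of the glued sheaf along the action and projection maps then gives the global statement. Verdier's specialization equivalence \cite{Verdier} identifies monodromic sheaves on the torsor with pairs (sheaf on $Z$, tame automorphism). This produces $(\mathcal{P}_Z,T)$ with $T$ the orbit monodromy. It also shows $\mathcal{P}_Z$ is perverse, since the equivalence is $t$-exact up to the shift by the relative dimension $1$.

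\textbf{Step 2: defining and gluing $\theta$.} Set $\theta:=(-1)^n T$ on a chart with base of dimension $n$, as in Definition \ref{def:twisted}; it commutes with $T$ trivially. The content of (ii) is Proposition \ref{prop:theta_glues}. Comparison isomorphisms between charts of bases of dimensions $n$ and $n'$ factor through stabilizations by nondegenerate quadratic forms. Thom--Sebastiani \cite[Theorem 4.2]{Descombes} identifies the vanishing cycles of $t\cdot(s+q)$ with those of $t\cdot s$ tensored with those of $t\cdot q$. The factor $\phi_{tq}$ is a rank-one local system on the orbit, and a quadratic form in $m$ variables scaled by $t$ has monodromy $(-1)^m$ around the orbit (the square-root-of-discriminant double cover). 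Hence $T$ changes by $(-1)^{n'-n}$ under comparison while $(-1)^nT$ is invariant, and these invariant chartwise operators glue.

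\textbf{Step 3: the sign character.} Define $\varpi:=\theta\circ T^{-1}$. It is a locally constant $\pm1$, hence a character. The orientation local system on a chart is $K_U^{\otimes 2}|_{\mathrm{crit}}$ together with its square root. It is twisted by $\Gm$ with total weight giving monodromy $(-1)^{n+1}$, as stated in the introduction: the $U$-directions contribute $n$ and the $t$-direction contributes $1$. Since the BBDJS sheaf is $\phi_{\widetilde f}$ tensored with the orientation $\mathbb{Z}/2$-bundle, the geometric monodromy of $\mathcal{P}$ is $(-1)^{n+1}$ times that of the homogeneous potential. With $\theta=(-1)^n\cdot(\text{potential monodromy})$ this gives $T=-\theta$, i.e.\ $\varpi\equiv-1$, which is Proposition \ref{prop:sign_character}. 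I expect this step to be the main obstacle: it requires pinning down precisely how $\Gm$ acts on the orientation datum of a contact orientation, and checking that this weight computation is compatible with the gluing in Step 2.

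\textbf{Step 4: the pushforward.} For a monodromic sheaf on a $\Gm$-torsor with monodromy $T$, one computes $Rp_*$ by the Leray spectral sequence along the fibre $\Gm$. Its cohomology is the invariants and coinvariants of $T$, which only see the generalized eigenvalue $1$ part of $T$; the other eigenspaces are killed because $T-1$ is invertible on them. By Step 3, eigenvalue $1$ for $T$ is eigenvalue $-1$ for $\theta$. This is Lemma \ref{lem:monodromic_descent}(1), while the equivalence of Step 1 keeps the whole pair $(\mathcal{P}_Z,\theta)$.
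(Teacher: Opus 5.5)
Your outline follows the paper's decomposition. Step 1 is the proof of Theorem \ref{thm:monodromic_sheaf}. Step 2 is Proposition \ref{prop:theta_glues}, with the $(-1)^r$ coming from the square roots of $\det(t\,q_{ij}) = t^r\det(q_{ij})$. Step 4 is Lemma \ref{lem:monodromic_descent}(1). In Step 2, however, the $T$ in $\theta := (-1)^n T$ must be read as the monodromy $T_{U,\widetilde{f}}$ of the untwisted model $\mathcal{PV}_{U,\widetilde{f}}$, with $\theta$ acting as the identity on $Q_{U,\widetilde{f}}$. The global orbit monodromy of your Step 1 is intrinsic, so $(-1)^n$ times it would jump under stabilization and could not glue. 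The gap is Step 3, which carries all the content of (iii). There you take $\varepsilon_U = (-1)^{n+1}$ from the statement being proved. Your only support is the unexplained count ``the $U$-directions contribute $n$ and the $t$-direction contributes $1$,'' and you explicitly leave open how $\Gm$ acts on the orientation datum.

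The missing idea is an explicit weight comparison on the Koszul--Tate model of $\dCrit(t\cdot s)$. $Q_{U,\widetilde{f}}$ is the $\mu_2$-torsor comparing the chosen square root $p^*R_Z$ with the canonical square root $K_{\widetilde{U}}|$ of the chart. The root $p^*R_Z$ has a $\Gm$-weight-zero generator, because $\LL_{\widetilde{Z}/Z}$ is trivialized by $dt/t$. One compares Darboux generators with weight-zero ones:
\[ dx_1\wedge\cdots\wedge dx_n\wedge dt = t\,\Bigl(dx_1\wedge\cdots\wedge dx_n\wedge \frac{dt}{t}\Bigr), \qquad d\widetilde{y}_1\wedge\cdots\wedge d\widetilde{y}_n\wedge d\zeta \equiv t^{n}\, dy_1\wedge\cdots\wedge dy_n\wedge dz , \]
since $\widetilde{y}_i = t y_i$ has weight $1$ and $\zeta = z$ has weight $0$. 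So on $K_{\widetilde{X}} = \det(\deg 0)\otimes\det(\deg -1)^{-1}$ the two generators differ by $t^{1-n}$, and on $K_{\widetilde{U}}^{\otimes 2}$ by $t^{2}$. The canonical isomorphism $K_{\widetilde{X}} \simeq K_{\widetilde{U}}^{\otimes 2}|$ matches Darboux generators, so in weight-zero frames it is multiplication by $t^{n+1}$. Hence $Q_{U,\widetilde{f}}$ is the torsor of square roots of $t^{n+1}$ times a weight-zero unit, with orbit monodromy $(-1)^{n+1}$.

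Note what your count actually refers to. The ``$n$'' comes from the conjugate variables $\widetilde{y}_i$, which have weight $1$ because $\omega$ does; the coordinates $x_i$ of $U$ have weight $0$ and contribute nothing. The ``$1$'' comes from $dt$ entering $K_{\widetilde{X}}$ once but $K_{\widetilde{U}}^{\otimes 2}$ twice.

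Your worry about compatibility with gluing then disappears. $\varpi$ is the ratio of $T$ and $\theta$, both already global automorphisms, so it suffices to compute it chart by chart. As a consistency check, a rank-$k$ stabilization multiplies $\varepsilon_U$ by $(-1)^k$, which matches the factor $(-1)^k$ acquired by $T_{U,\widetilde{f}}$. So $T = \varepsilon_U\, T_{U,\widetilde{f}}$ is unchanged, as in the last paragraph of the proof of Proposition \ref{prop:sign_character}.
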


The second main theorem computes the action of $\theta$ on the fundamental
class of a Legendrian and analyzes the obstruction that the resulting eigenvalue
imposes.

\begin{introthm}[Parity of the local fundamental class] \label{thm:B}
  Let $\varphi \colon L \to X$ be a proper oriented Legendrian in a $-1$-shifted
  contact derived scheme in Darboux form over a chart with base of dimension
  $n$. Then
  \begin{enumerate}[(i)]
  \setlength\itemsep{0.1in}
  \item There is a local class $\mu_L \colon \F_L[\vdim L] \to \varphi^!
  \mathcal{P}_X$, independent of the choices in its construction, with
  \[ \theta \circ \mu_L = (-1)^{n + \rk E}\, \mu_L , \]
  where $E$ is the bundle of a local presentation of $\varphi$ in the sense of
  Theorem \ref{thm:JS_local} (cf. Proposition \ref{prop:local_verification}).
  \item The exponent reduces modulo $2$ to the virtual dimension, so that
  $\theta \circ \mu_L = (-1)^{\vdim L} \mu_L$, and on a Legendrian of odd
  virtual dimension $\theta$-invariance forces $\mu_L = 0$ (cf. Proposition
  \ref{prop:parity_formula}).
  \item Both parities occur over a single contact chart, already for $s = x^2$
  on $\mathbb{A}^1$ (cf. Example \ref{ex:A1_parity}).
  \item There is a Legendrian of odd virtual dimension whose local class is
  nonzero and generates a one-dimensional $\Hom$-group, so the obstruction is
  not vacuous (cf. Example \ref{ex:odd_nonzero}, Proposition
  \ref{prop:odd_nonvanishing}).
  \item Grading the orientation datum by the virtual dimension absorbs the sign,
  and for the graded datum the local class is invariant (cf. Definition
  \ref{def:graded_orientation}, Remark \ref{rem:local_graded}).
  \end{enumerate}
\end{introthm}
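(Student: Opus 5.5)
The plan is to work entirely inside one contact Darboux chart. By Theorem \ref{thm:atlas}, $\widetilde{X} = \dCrit(\widetilde{f})$ with $\widetilde{f}(x,t) = t\,s(x)$ on $U \times \Gm$. The local model of a proper oriented Legendrian is then pulled back to its symplectification, which is a conic Lagrangian $\widetilde{L} = L \times_X \widetilde{X}$. For (i), I would invoke the Joyce--Safronov type local presentation of Theorem \ref{thm:JS_local} for $\widetilde{L} \to \widetilde{X}$. This gives a smooth $V \to U\times\Gm$, a vector bundle $E$, and a section whose zero locus presents $\widetilde{L}$, with $\widetilde{f}$ restricted to $V$ written as a quadratic form in the fibre directions. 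The standard Amorim--Ben-Bassat construction then produces a class $\widetilde{\mu} \colon \F_{\widetilde{L}}[\vdim \widetilde L] \to \widetilde{\varphi}^!\mathcal{P}_{\widetilde{X}}$. It is built from the fundamental class of the zero locus composed with the Thom--Sebastiani isomorphism of \cite[Theorem 4.2]{Descombes}.

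To obtain $\mu_L$, I would descend $\widetilde{\mu}$ along Verdier specialization (Theorem \ref{thm:monodromic_sheaf}). This requires $\widetilde{\mu}$ to be $\Gm$-equivariant, and the presentation can be chosen to be $\Gm$-stable because the Legendrian is conic in the symplectification. Independence of choices follows the symplectic argument. Two presentations are dominated by a third, and the comparison isomorphisms of Proposition \ref{prop:theta_glues} intertwine the classes up to the orientation signs, which the chosen orientation fixes.

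The eigenvalue computation is the heart of (i). By Definition \ref{def:twisted}, $\theta = (-1)^n T_{\mathrm{orb}}$, where $T_{\mathrm{orb}}$ is the geometric monodromy along the orbits. On the presented class, $T_{\mathrm{orb}}$ acts in two ways. It acts trivially on the constant sheaf $\F_L$, since the class of the zero locus is monodromy invariant. It acts on the Thom--Sebastiani factor of the quadratic form on $E$ by the monodromy of $\phi_{q}$ for a nondegenerate quadratic form of rank $\rk E$, which is $(-1)^{\rk E}$ (Picard--Lefschetz in one variable, tensored $\rk E$ times). Multiplying the two contributions gives $(-1)^{n+\rk E}$.

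For (ii), I would compute the virtual dimension from the presentation. Here $\vdim \widetilde L = \vdim L + 1 = \dim V - \rk E$ in the relevant normalization, and $\dim V$ differs from $n+1$ by an even amount because the fibre is paired by the quadratic form. Hence $n + \rk E \equiv \vdim L \pmod 2$. Vanishing in odd degree then uses that $\mu_L$ is $\theta$-invariant by construction, since it descends from an equivariant class. Combining $\theta\mu_L = \mu_L$ with $\theta\mu_L = -\mu_L$ gives $2\mu_L = 0$, and $\ell$ is odd.

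The main obstacle is this invariance-versus-eigenvalue tension: it makes (ii) and (iv) look contradictory. I would resolve it by checking whether "invariance" refers to the graded datum of (v), so that (iv) produces a nonzero class for the ungraded twist. Item (v) then follows by tensoring the orientation with the sign local system $(-1)^{\vdim}$, which absorbs exactly the factor above.

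Items (iii) and (iv) are explicit. For $s = x^2$ on $\mathbb{A}^1$ I would exhibit two Legendrians, the reduced point and the derived zero locus with its two possible presentations, one with $\rk E = 0$ and one with $\rk E = 1$. I would then compute $\Hom(\F_L[\vdim L], \varphi^!\mathcal{P}_X)$ directly from the $A_1$ vanishing cycle, which is one-dimensional.
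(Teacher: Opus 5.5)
The main gap is the descent step, and it is the source of the ``tension'' you flag but do not resolve. You descend $\widetilde\mu$ as a $\Gm$-equivariant morphism $\F_{\widetilde L}[\vdim \widetilde L]\to\widetilde\varphi^!\mathcal{P}_{\widetilde X}$, and in (ii) you conclude that $\mu_L$ is $\theta$-invariant by construction. But the source $p_L^*\F_L[\vdim L][1]$ has trivial slice monodromy, while $\widetilde\varphi^!\mathcal{P}_{\widetilde X}$ has slice pair $(\varphi^!\mathcal{P}_X,\varphi^!T)$. By Lemma~\ref{lem:monodromic_descent}(2)--(3) such a descent therefore yields $T\circ\mu_L=\mu_L$, i.e.\ $\theta\circ\mu_L=-\mu_L$ for every Legendrian, whatever $n$ and $\rk E$ are. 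On the $A_1$ chart, where $T=-1$ on $\mathcal{P}_X$, this forces $\mu_L=0$ (Remark~\ref{rem:twist_necessary}).

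Your eigenvalue count gives the right answer: $(-1)^n$ from the dimensional twist, times $(-1)^{\rk E}$ from the Kummer factor of $\det(t\cdot q_{ij})=t^{\rk E}\det(q_{ij})$. It is, however, incompatible with your descent as stated. The missing idea is that the Amorim--Ben-Bassat chain upstairs is monodromic only into a Kummer-twisted target. Besides $\Lambda^{\otimes\rk E}$ from Thom--Sebastiani for $t\cdot q$, comparing the chosen orientation with the canonical one of $\dCrit(t\cdot s)$ brings in $Q_{U,\widetilde f}$, whose orbit monodromy is $(-1)^{n+1}$ (Proposition~\ref{prop:sign_character}). So $\mu_{\widetilde L}$ lands in $\widetilde\varphi^!\mathcal{P}_{\widetilde X}\otimes\Lambda^{\otimes(\rk E+n+1)}$, whose slice automorphism is $S=(-1)^{\rk E+n+1}\varphi^!T=(-1)^{n+\rk E}\varphi^!\theta$. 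Invariance of $\mathrm{sl}(\mu_{\widetilde L})$ under $S$ is exactly the relation in (i), and it is also what gives independence of the torsor trivialization.

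Your domination argument for independence of the presentation is not available. The comparisons of Proposition~\ref{prop:theta_glues} relate charts of $\widetilde X$, not presentations of $L$, and the paper's argument covers only the trivialization. With the twist in place nothing is contradictory. The class $\mu_L$ is invariant under $\theta_L=(-1)^{\vdim L}\theta$, which is (v). The last clause of (ii) says only that the ungraded requirement $\theta\circ\mu_L=\mu_L$ is incompatible with $\theta\circ\mu_L=-\mu_L$ unless $\mu_L=0$; it is not a property of the constructed class.

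Your examples do not prove (iii) or (iv). Over $s=x^2$ on $\mathbb{A}^1$ no presentation has $\rk E=0$: it would give $s\circ\Psi=0$ on the reduced $V$, so the smooth, hence open, map $\Psi$ would land in the origin. Moreover, by (ii) the parity is independent of the presentation, so two presentations of one Legendrian can never exhibit both parities. You need two Legendrians: the reduced point ($E=\cO$, $q=-u^2$, $\sigma=x$, $\vdim L=0$) and the derived point ($E=\cO^{\oplus 2}$, $q=u_1u_2$, $\sigma=(x,-x)$, $Z(\sigma)\simeq\Spec\Lambda(\eta)$ with $\deg\eta=-1$, $\vdim L=-1$). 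Nor can this chart give (iv). Here $X^{\mathrm{cl}}$ is a point, $\varphi^!\mathcal{P}_X$ sits in degree $0$, and $\Hom(\F[-1],\F)=0$, so the odd class is zero; the one-dimensional $\Hom$ you compute belongs to the even Legendrian.

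For (iv) one needs a chart with positive-dimensional critical locus, e.g.\ $s=x_1^2$ on $\mathbb{A}^2$, where $\mathcal{P}_X\simeq\F_{\mathbb{A}^1}[1]$ and $\theta=-1$ (Lemma~\ref{lem:mb_chart}). Take $E=\cO$, $q=-u^2$, $\sigma=x_1$, so $\vdim L=1$ and $\Hom=\F$. You then still need a genuine nonvanishing argument, as in Proposition~\ref{prop:odd_nonvanishing}. The kernel term $H^{-1}_{\mathrm{et}}(\mathbb{A}^1,\F)$ of the descent sequence vanishes, so $\mathrm{sl}$ is injective. The upstairs chain is an isomorphism because the graph of $\widetilde\sigma$ meets each sheet of the Milnor fibre of $-tu^2$ exactly once.

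Finally, your dimension formula in (ii) is wrong. By \eqref{eq:vdimJS}, $\vdim L=2\dim V-\dim U-\rk E$, so $n+\rk E\equiv\vdim L$ simply because $2\dim V$ is even. Nothing forces $\dim V-\dim U$ to be even (the form $q$ lives on $E$, not on the fibres of $\Psi$), and no such claim is needed.
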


We note that Conjecture \ref{conj:joyce}, the contact analogue of Joyce's conjecture for
Lagrangians, is stated in that graded form, and part (v) above is its local
verification. The linearization of the Legendrian categories is independent of
this conjecture and instead depends on the symplectic conjecture of Amorim and
Ben-Bassat and on a monodromic refinement (the assumptions (J1)--(J3) of
Setup \ref{setup:joyce}) in which the same grading of the orientation datum
enters. That refinement is not a consequence of the symplectic conjecture. The
attractor case of (J1)--(J2) is a theorem of Kinjo, Park and Safronov
\cite[Theorem 7.23]{KPS}, proved for a $\Gm$-action preserving both the
d-critical structure and the orientation, whereas the action here scales the
potential with weight one and moves the orientation by $(-1)^{n+1}$. What
supports (J3) is instead the unconditional local verification of parts (i) and
(v) of Theorem \ref{thm:B}, together with the grading that Example
\ref{ex:odd_nonzero} shows to be forced.

\begin{introthm}[Perverse linearization] \label{thm:C}
  Assume the hypotheses (J1)--(J3) of Setup \ref{setup:joyce}, and let $X$ be a
  $1$-shifted contact derived stack. Then
  \begin{enumerate}[(i)]
  \setlength\itemsep{0.1in}
  \item There is a bicategory $\LFc(X)$ of Legendrian correspondences in $X$,
  enriched over graded $\F$-vector spaces equipped with an automorphism, whose
  $2$-morphism spaces are the monodromic \'etale hypercohomologies of the
  sheaves of Theorem \ref{thm:A} (cf. Theorem \ref{thm:linear_category}).
  \item There is a linear weak $2$-category $LLeg_0$ whose objects are
  $0$-shifted contact derived stacks and whose $1$-morphisms are proper spans
  carrying graded orientations (cf. Theorem \ref{thm:global_2cat}).
  \item The non-linear $2$-category $Leg_0$ of Legendrian correspondences of
  \cite{IzbudakBerktav2} maps to $LLeg_0$ by a $2$-functor
  \[ F \colon Leg_0^{\mathrm{pr}} \longrightarrow LLeg_0 \]
  which is the identity on objects and $1$-morphisms and sends a $2$-morphism to
  the image of its fundamental class, where $Leg_0^{\mathrm{pr}}$ is the
  sub-$2$-category on all objects whose $1$- and $2$-morphisms are proper and
  carry graded orientations (cf. Theorem \ref{thm:linearization_functor}).
  \item Every class in the image of $F$ is $\theta$-invariant, and $F$ is
  monoidal for the contact product (cf. Theorem
  \ref{thm:linearization_functor}).
  \end{enumerate}
\end{introthm}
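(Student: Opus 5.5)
The plan is to assemble Theorem \ref{thm:C} from the pieces built in the body of the paper, in three stages. Throughout I take (J1)--(J3) of Setup \ref{setup:joyce} as given: (J1)--(J2) supply fundamental classes for graded-oriented Lagrangians in the symplectification, compatible with composition and with monodromy, and (J3) asserts the graded local invariance of Theorem \ref{thm:B}(v) globally.

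\textbf{Stage 1: the bicategory $\LFc(X)$.}
\begin{enumerate}[(a)]
\item I pass to the $0$-shifted setting by the contact--symplectic dictionary of \cite{IzbudakBerktav2026}. For Legendrians $L_1, L_2 \to X$, the derived intersection $L_1 \times_X L_2$ is a $-1$-shifted contact stack with an orientation induced from the orientations of the $L_i$.
\item Theorem \ref{thm:A} then gives it the perverse sheaf $\mathcal{P}_{L_{12}}$ with operator $\theta$. I set the $2$-morphism space to be $\Het^*(L_{12}, \mathcal{P}_{L_{12}})$, with $\theta$ as its automorphism.
\item Composition is the pull-push along the triple intersection $L_1\times_X L_2 \times_X L_3$. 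The required maps come from (J1)--(J2), which identify $\mathcal{P}$ of the composite with the pushforward of pullbacks.
\item The operator $\theta$ commutes with these maps because $T = -\theta$ (Theorem \ref{thm:A}(iii)) and smooth pullback and proper pushforward commute with geometric monodromy \cite[Corollary 5.9]{Descombes}.
\item Associators and unitors are inherited from the non-linear category $\mathcal{F}_c(X)$ of \cite{IzbudakBerktav2}. The coherence axioms reduce to uniqueness of the gluing isomorphisms of BBDJS sheaves, which holds because those gluings are canonical up to the orientation data.
\end{enumerate}

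\textbf{Stage 2: the linear $2$-category $LLeg_0$.} The same construction is run with objects the $0$-shifted contact stacks and $1$-morphisms the proper graded-oriented Legendrian spans in $X^{-}\times Y$, where the product is taken in the contact sense. Properness is needed so that pushforward agrees with compactly supported pushforward; this is what makes the pull-push composition well defined. Horizontal composition of spans uses the fibre product, which the dictionary shows is again a proper Legendrian, while vertical composition is that of Stage 1.

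\textbf{Stage 3: the functor $F$.}
\begin{enumerate}[(a)]
\item On $2$-morphisms, a Legendrian $M \to L_{12}$ is sent to the class $\mu_M \in \Het^{-\vdim M}(L_{12}, \mathcal{P})$, obtained by adjunction from the global fundamental class given by (J3).
\item Functoriality for vertical and horizontal composition is exactly the composition compatibility in (J2).
\item $\theta$-invariance: locally $\theta\circ\mu = (-1)^{\vdim}\mu$ by Theorem \ref{thm:B}(ii), and the graded orientation absorbs this sign by Theorem \ref{thm:B}(v). Since the image of $\mu_M$ is determined by its restrictions to Darboux charts, and $\theta$ glues (Proposition \ref{prop:theta_glues}), the class is $\theta$-invariant.
\item Monoidality under the contact product follows from monodromic Thom--Sebastiani \cite[Theorem 4.2]{Descombes}. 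It identifies $\mathcal{P}_{X\times Y}$ with the convolution product, on which $\theta$ acts diagonally, and it respects fundamental classes via the Künneth isomorphism.
\end{enumerate}

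\textbf{Main obstacle.} I expect the hardest step to be the sign bookkeeping in monoidality. Thom--Sebastiani for the symplectifications involves an extra $\Gm$-factor, because the product of the two symplectifications must be reduced by the antidiagonal $\Gm$, and the orientation twist $(-1)^{n+1}$ is not multiplicative on its own. My first attempt would be to check on products of Darboux charts that the grading by $\vdim$ makes the twist multiplicative: $\vdim$ is additive, so a sign of the form $(-1)^{\vdim}$ is multiplicative. I would then globalize using Proposition \ref{prop:theta_glues}.
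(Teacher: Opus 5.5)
\textbf{Level error in Stage 1.} Your Stage 1 is one categorical level off. Since $X$ is $1$-shifted, Theorem \ref{thm:review_intersection} makes $L_{12}=L_1\times^h_X L_2$ a $0$-shifted contact stack, not a $-1$-shifted one. Theorem \ref{thm:A} therefore attaches no perverse sheaf to it, and $\Het^\bullet(L_{12},\mathcal{P}_{L_{12}})$ is undefined. Hom-spaces indexed by pairs of objects and composed through $L_1\times_X L_2\times_X L_3$ are what one would build for a $0$-shifted $X$. You also never say what the $1$-morphisms of $\LFc(X)$ are.

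In the paper the picture is as follows:
\begin{itemize}
\item The $1$-morphisms $L_0\to L_1$ are Legendrians $N\to L_{01}$.
\item The $2$-morphisms between $N_0$ and $N_1$ are $\mathbb{H}^\bullet_{\mathrm{mon}}(\widetilde N_{01},\mathcal{P}_{\widetilde N_{01}}[-\vdim\widetilde N_0])$. They are computed upstairs, on the $-1$-shifted symplectic $\widetilde N_{01}=\widetilde N_0\times^h_{\widetilde L_{01}}\widetilde N_1$, not on the contact quotient. By Lemma \ref{lem:monodromic_descent}(1) the former is the hypercohomology of $\mathrm{Cone}(T-\mathrm{id})$ rather than of $\mathcal{P}_{N_{01}}$.
\item The induced orientation on $\widetilde N_{01}$ is $\Gm$-equivariant and correctly graded by Lemma \ref{lem:intersection_orientation}. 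The grading is additive because $\vdim\widetilde L_{01}$ is even.
\item Vertical composition must be run upstairs. $N_{012}$ carries no contact structure, and (J1)--(J3) only produce classes for Lagrangians in $-1$-shifted symplectic stacks. So one uses the proper Lagrangian $\widetilde N_{012}\to\widetilde N_{01}^-\times\widetilde N_{12}^-\times\widetilde N_{02}$ and pull-push with $R\tilde\pi_{\mathrm{out},!}\simeq R\tilde\pi_{\mathrm{out},*}$, checking that the shifts $[-2d]+[d]$ return $[-\vdim\widetilde N_0]$.
\end{itemize}
Your Stage 2 sits at the right level, since $X\circledast Y$ is $0$-shifted. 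However, ``vertical composition is that of Stage 1'' inherits the contact-level pull-push, for which the hypotheses give no classes. Coherence also does not come from uniqueness of BBDJS gluings. In the paper it reduces to (J2)--(J3) and to proper base change on derived Cartesian squares, which also gives the interchange law; your plan omits the interchange law.

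\textbf{Deriving the $\theta$-compatibilities instead of assuming them.} You try to derive these rather than taking them from (J3), and the derivations fail.
\begin{itemize}
\item In Stage 1(d), $T=-\theta$ does not convert $T$-equivariance into $\theta$-equivariance. A fundamental class has source $\F_L[\vdim L]$ with trivial monodromy, so $T$-equivariance means $T\circ\mu=\mu$, i.e.\ $\theta\circ\mu=-\mu$ (compare Remark \ref{rem:twist_necessary}). On an external product $T_1\boxtimes T_2=\theta_1\boxtimes\theta_2$, so the relation $T=-\theta$ is not preserved by K\"unneth or Thom--Sebastiani.
\item In Stage 3(c), $\theta$-invariance cannot be checked chartwise. $\F_L[\vdim L]$ is not perverse, so morphisms out of it do not form a sheaf (Remark \ref{rem:gluing}). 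Nothing identifies the class of (J1) locally with the presentation-dependent class of Proposition \ref{prop:local_verification}. That proposition is, moreover, only proved for derived schemes in Darboux form.
\end{itemize}
The paper takes all of these directly from (J3): $\theta$-invariance of $\mu_{\widetilde P}$ and of the composition classes, and $\theta$-equivariance of the K\"unneth and Thom--Sebastiani isomorphisms. It then only uses that the six operations commute with the glued $\theta$ of Proposition \ref{prop:theta_glues}.

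The same applies to your main obstacle. By \eqref{eq:contact_product} the symplectification of $X_1\circledast X_2$ is the full product $\widetilde X_1^-\times\widetilde X_2$ with the diagonal action. Thom--Sebastiani applies to it with no antidiagonal reduction, and the needed equivariance is part of (J3). If you want to check it on charts, additivity of $\vdim$ is not the mechanism:
\begin{itemize}
\item A product chart has base dimension $n_1+n_2+1$. So $\varepsilon_U=(-1)^{n+1}$ and $T$ are in fact multiplicative, while $\theta_{\mathrm{prod}}=-\theta_1\otimes\theta_2$.
\item The graded operator of Remark \ref{rem:graded_equiv} absorbs this sign precisely because the virtual dimension of a contact product is $\vdim X_1+\vdim X_2+1$ rather than the sum.
\end{itemize}
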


The results that follow below are again unconditional. The traces of $\theta$ on the
hypercohomology of $\mathcal{P}_Z$ are invariants of a proper oriented contact
stack, and the fourth main theorem evaluates the local function whose integral
gives the first of these traces, and computes the higher ones on an
$A_k$-singularity. The contrast with the symplectic theory is sharp here. There
the Behrend function takes the value $k$ at an $A_k$-point and the invariant is
a virtual count, while the contact function is constant and the singularity type
is recorded by the higher traces instead. Remark \ref{rem:not_virtual} exhibits
a family in which the two behave differently, the symplectic invariant being
deformation invariant and the contact one not.

\begin{introthm}[The contact Behrend function] \label{thm:D}
  Let $Z$ be an oriented $-1$-shifted contact derived Artin stack over $\K$.
  Then
  \begin{enumerate}[(i)]
  \setlength\itemsep{0.1in}
  \item The $\ell$-adic contact Behrend function of $Z$ is identically $1$ (cf.
  Proposition \ref{prop:behrend_one}).
  \item If $Z$ is proper, then
  \[ \mathrm{DT}(Z) = \chi_{et, c}(Z^{\mathrm{cl}},
  \overline{\mathbb{Q}}_\ell) , \]
  the compactly supported \'etale Euler characteristic of the classical
  truncation (cf. Theorem \ref{thm:integration}, Proposition
  \ref{prop:behrend_one}).
  \item The higher traces $\mathrm{DT}_m$ of $\theta$ are again constructible
  integrals (cf. Theorem \ref{thm:integration} and
  \S\ref{subsec:highertraces}).
  \item For an $A_k$-singularity with $k \geq 2$ they leave $\{\pm 1\}$ first at
  $m = k+1$, while for $k = 1$ they are constantly $1$. In either case the
  sequence $(\mathrm{DT}_m)_{m \geq 1}$ recovers $k$ (cf. Example
  \ref{ex:Ak_higher}).
  \item For $Z$ proper the higher traces assemble into a rational zeta function,
  which for every stratification adapted to $\mathcal{P}_Z$ is the product of
  the local zeta functions of the strata, weighted by their Euler
  characteristics (cf. Proposition \ref{prop:zeta}).
  \end{enumerate}
\end{introthm}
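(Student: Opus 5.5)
The plan is to reduce every part of Theorem \ref{thm:D} to a chart computation on the contact Darboux atlas of Theorem \ref{thm:atlas}, followed by gluing. For (i), fix a chart with smooth base $U$ of dimension $n$ and potential $s$. Its symplectification is $\dCrit(\widetilde f)$ with $\widetilde f(x,t) = t\, s(x)$ on $U\times\Gm$. Since $\widetilde f$ is homogeneous of weight one in $t$, the nearby and vanishing cycles of $\widetilde f$ at a point $(x,t)$ with $s(x)=0$ can be computed along the fibre $\{x\}\times\Gm$. The key observation is that the critical locus of $t s$ is $\{s = 0,\ t\,ds = 0\} = \{s=0, ds=0\}\times \Gm$. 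Near such a point, $\widetilde f = t s$ has a Milnor fibre $\{ts = \epsilon\}$ that projects isomorphically onto $\{s\neq 0\}$ locally, via $t = \epsilon/s$.

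Under Verdier specialization, the stalk of $\mathcal{P}_Z$ at $x$ is therefore the restriction of $\phi_{ts}$ to the slice $t=1$. I expect that slice to be a shifted copy of the cone of $\F_x \to R\Gamma(\{s\neq 0\}\cap B_x)$ twisted by the orientation local system. Equivalently, it is the local cohomology of $\{s=0\}$ placed in perverse degree. Taking the Euler characteristic with the sign $(-1)^{\dim}$ built into the Behrend function, the contributions of the link cancel: the complement of a hypersurface germ in a ball has Euler characteristic $0$ in the $\ell$-adic (tame) setting, by the $\Gm$-homogeneity in $t$. What remains is $\nu = 1$. I would check this first on $A_1$ ($s=x^2$, $n=1$), where the answer should be visibly $1$, and then argue for general $s$.

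This is the main obstacle. A cleaner route I would try first is to show that $\chi$ of the stalk of $\mathcal{P}_Z$ equals $\chi$ of the stalk of $\phi_{\widetilde f}$ at $(x,1)$. One then applies the Thom--Sebastiani-type identity for $t\cdot s$: after the substitution $u = t-1$, the relevant quantity is $\chi(\phi_{ts})$. By the standard computation for a product with a coordinate that is a unit, this equals $(-1)^{n}\bigl(1-\chi(F_s)\bigr)$ corrected by $\chi(F_s)$, and the two Milnor fibre terms cancel. The orientation twist by $\varpi=-1$ from Proposition \ref{prop:sign_character} fixes the overall sign to $+1$. Independence of the chart follows from the gluing in Proposition \ref{prop:theta_glues}. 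Since $\nu$ is pointwise defined, it is automatically well defined on $Z$. Smooth pullback compatibility (\cite[Corollary 5.9]{Descombes}) passes from schemes to Artin stacks.

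For (ii), Theorem \ref{thm:integration} expresses $\mathrm{DT}(Z)$ as the constructible integral $\int_{Z^{\mathrm{cl}}} \nu\, d\chi_{et,c}$, and substituting $\nu\equiv 1$ gives the claim. For (iii) the same theorem gives $\mathrm{DT}_m$ as the integral of the pointwise trace of $\theta^m$ on stalks, which is constructible because $\mathcal{P}_Z$ is. For (iv), take the chart $s = x^{k+1}$ on $\mathbb{A}^1$. The stalk at $0$ is the reduced cohomology of the Milnor fibre, which consists of $k+1$ points, so it is $k$-dimensional. The monodromy permutes these points cyclically, with eigenvalues the nontrivial $(k+1)$-st roots of unity, and $\theta$ is that monodromy times $(-1)^n$. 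Then $\mathrm{tr}\,\theta^m = (-1)^{m}\bigl(\sum_{\zeta^{k+1}=1,\zeta\ne1}\zeta^m\bigr)$, which equals $\pm(-1)$ unless $(k+1)\mid m$, in which case it equals $\pm k$. Hence for $k\ge2$ the sequence first leaves $\{\pm1\}$ at $m=k+1$, and the sequence recovers $k$. For $k=1$ we have $\theta=\pm T$ on a one-dimensional stalk with $T=-1$, and $\theta = -T$, so the trace is constantly $1$. For (v), I would form $\exp\bigl(\sum_m \mathrm{DT}_m t^m/m\bigr)$ and stratify so that $\mathcal{P}_Z$ is lisse on each stratum. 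Multiplicativity of $\chi_c$ then gives $\mathrm{DT}_m=\sum_S \chi_c(S)\,\mathrm{tr}(\theta^m\mid \mathcal{P}_{Z,s_S})$, so the zeta function is $\prod_S \det(1-t\theta_S)^{-\chi_c(S)}$ up to sign conventions. This product is rational, and it is independent of the stratification by a refinement argument.
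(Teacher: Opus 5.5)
Your treatment of (iii)--(v) agrees with the paper: Theorem \ref{thm:integration} applied to $\theta^m$, the cyclic-permutation count on the $k$-dimensional $A_k$ stalk, and the stratified product of local zeta functions. The argument for (i), on which (ii) rests, has a genuine gap.

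\emph{You compute the wrong quantity.} By Definition \ref{def:behrend}, $\nu^{\mathrm{contact}}_Z(x)$ is the alternating trace of $\theta_x$ on the stalks, which is a Lefschetz number. Both of your routes instead compute an Euler characteristic of a stalk. An Euler characteristic does not see $\theta$ at all, so appealing to $\varpi=-1$ to fix a sign has nothing to act on.

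\emph{You also use the wrong stalk.} The fibre $\{ts=\epsilon\}$ is globally isomorphic to $\{s\neq 0\}$. However, the Milnor fibre of $\widetilde f$ at $(x,1)$ confines $t$ near $1$, hence $s$ near $\epsilon$, so up to homotopy it is $F_x$ itself. The set $\{s\neq 0\}\cap B_x$ is instead the union over the whole orbit, namely the mapping torus of the Milnor monodromy. Hence $\mathcal{H}^i(\mathcal{P}_Z)_x\simeq \widetilde H^{i+n-1}(F_x)$. Your cone of $\F_x\to R\Gamma(B_x\setminus\{s=0\})$ is built from $\mathrm{Cone}(T-\mathrm{id})$ on $H^\bullet(F_x)$. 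Up to a constant summand it is the $Rp_*$-type object of Lemma \ref{lem:monodromic_descent}(1), which is exactly what the slice construction is designed to avoid. Its Euler characteristic is $\pm 1$ at every point simply because a mapping torus has $\chi=0$, so your $\nu=1$ is an artefact.

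Your own part (iv) exposes this. At an $A_k$ point the stalk is $k$-dimensional in a single degree, so its Euler characteristic is $k$. The value $1$ there comes from $\mathrm{Tr}(\theta)=(-1)\cdot(-1)$, not from a rank count. The ``cleaner route'' fails for the same reason: $\chi$ of the stalk of $\phi_{ts}$ at $(x,1)$ is $\pm(\chi(F_x)-1)$, which is the Milnor number in the isolated case, and no cancellation of Milnor fibre terms occurs.

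\emph{The missing input is A'Campo's theorem.} With the correct identification, $\theta$ acts on $\widetilde H^{j}(F_x)$ as $(-1)^n T_{U,\widetilde f}$, where $T_{U,\widetilde f}$ is the inverse of the Milnor monodromy of $s$. Reindex by $j=i+n-1$. The sign $(-1)^i$ from the perverse and slice shifts combines with the dimensional twist $(-1)^n$ in $\theta$ to give
\[ \nu^{\mathrm{contact}}_Z(x) = -\Lambda\bigl(T_{U,\widetilde f}\mid \widetilde H^\bullet(F_x)\bigr) = 1-\Lambda\bigl(T_{U,\widetilde f}\mid H^\bullet(F_x)\bigr) . \]
This is also where the sign genuinely enters: with the global operator $T=-\theta$ one would get $-1$. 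Every geometric point of $Z$ lies in $\{s=0\}\cap\mathrm{Crit}(s)$ by Lemma \ref{lem:KT_model}. A'Campo's theorem says the Lefschetz number of the monodromy vanishes at a singular point of the zero fibre \cite{ACampo}; it transfers to $\K$ by the Lefschetz principle and is unchanged by inverting a quasi-unipotent operator. Applying it gives $\nu^{\mathrm{contact}}_Z\equiv 1$. Your proposal needs to supply this vanishing. Once it does, (ii) follows from Theorem \ref{thm:integration} exactly as you say.
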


The last main theorem applies these invariants to the derived intersections of
conic Lagrangians in a cotangent bundle of a smooth variety, a class of global
objects on which the invariant of the symplectic theory vanishes identically.

\begin{introthm}[Conic Lagrangian intersections]\label{thm:E}
  Let $M$ be a smooth variety, let $\Lambda_1, \Lambda_2$ be conic Lagrangians
  in $T^*M \setminus 0$, and let $Z$ be the derived intersection of the
  Legendrians $\mathbb{P}\Lambda_i \subseteq \mathbb{P}T^*M$. Then
  \begin{enumerate}[(i)]
  \setlength\itemsep{0.1in}
  \item $Z$ is a $-1$-shifted contact derived stack whose symplectification is
  the derived intersection $\widetilde{Z}$ of $\Lambda_1$ and $\Lambda_2$ (cf.
  Theorem \ref{thm:conic}(1)).
  \item If $\widetilde{Z}$ is of finite type, every $\Gm$-invariant
  constructible function on it integrates to zero. In particular the
  Behrend-weighted Euler characteristic of $\widetilde{Z}$, which is the
  invariant of the symplectic theory, vanishes identically (cf. Lemma
  \ref{lem:torsor_vanishing}, Theorem \ref{thm:conic}(2)).
  \item If $Z$ is proper and admits contact orientation data, its invariant is
  the Euler characteristic of the classical projectivized intersection (cf.
  Theorem \ref{thm:conic}(3)).
  \item For $M$ proper and $\Lambda_i$ the conormal bundles of two smooth
  subvarieties meeting cleanly along $W$ with excess $e$, the invariant is
  $e \cdot \chi_{et, c}(W)$, and it vanishes for a transverse intersection (cf.
  Proposition \ref{prop:conormal}).
  \item The self-intersection of the conormal of a subvariety of codimension at
  least $2$ admits contact orientation data if and only if $\dim M$ is even (cf.
  Proposition \ref{prop:conormal_orientation}).
  \end{enumerate}
\end{introthm}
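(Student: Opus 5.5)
The plan is to deduce Theorem \ref{thm:E} from three ingredients: the contact--symplectic dictionary, a torsor vanishing argument, and Theorem \ref{thm:D}.

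For (i), I would identify $T^*M\setminus 0 \to \mathbb{P}T^*M$ as the symplectification of the $0$-shifted (unshifted) contact structure. Its $\Gm$-action scales the Liouville form, hence the symplectic form, with weight $1$. Conic Lagrangians $\Lambda_i$ are $\Gm$-stable and therefore descend to Legendrians $\mathbb{P}\Lambda_i$. The dictionary of \cite{IzbudakBerktav2026} states that derived intersections of Legendrians are $-1$-shifted contact. Symplectification commutes with fibre products because pullback of the $\Gm$-torsor is exact. It follows that $\widetilde{Z} = \Lambda_1 \times_{T^*M\setminus 0} \Lambda_2$ is the pullback torsor over $Z$, and it carries the induced $-1$-shifted symplectic structure of weight $1$.

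For (ii), I would prove Lemma \ref{lem:torsor_vanishing} by stratifying $\widetilde{Z}$ into $\Gm$-stable constructible pieces on which the $\Gm$-invariant function $c$ is constant. Each such piece is a $\Gm$-torsor over its image in $Z$. By multiplicativity of $\chi_{et,c}$ in étale-locally trivial fibrations, and because $\chi_{et,c}(\Gm)=0$, each piece contributes zero. Since the torsor is free, there are no fixed points to contribute. The Behrend function of $\widetilde{Z}$ is $\Gm$-invariant, being intrinsic to the d-critical structure, which is preserved up to scaling. Hence the Behrend-weighted Euler characteristic vanishes.

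For (iii), I would apply Theorem \ref{thm:D}(ii) directly. Properness of $Z$ and the orientation give $\mathrm{DT}(Z)=\chi_{et,c}(Z^{\mathrm{cl}})$, and $Z^{\mathrm{cl}}$ is the classical intersection $\mathbb{P}\Lambda_1\cap\mathbb{P}\Lambda_2$.

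For (iv), I would take $\Lambda_i = T^*_{Y_i}M\setminus 0$. The classical intersection over $W$ is the bundle of nonzero conormal covectors vanishing on $TY_1+TY_2$. Clean intersection means this is a vector bundle over $W$ of rank $\dim M-\dim(TY_1+TY_2)$, and I expect this rank to equal the excess $e$ in the paper's convention. Projectivizing gives a $\mathbb{P}^{e-1}$-bundle over $W$, and multiplicativity yields $e\cdot\chi_{et,c}(W)$, where properness of $M$ is needed to invoke (iii). Transversality gives $e=0$, so the intersection is empty and the invariant vanishes.

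For (v), I would compute the orientation line. For the self-intersection, $Z = \mathbb{P}\Lambda\times^h_{\mathbb{P}T^*M}\mathbb{P}\Lambda$, the canonical bundle is $K_{\mathbb{P}\Lambda}^{\otimes 2}$ twisted by the contact line bundle. Contact orientation data requires a square root on which the $\Gm$-weight sign $(-1)^{n+1}$ of Proposition \ref{prop:sign_character} is compatible. I would compute the weight of the contact line bundle $\mathcal{O}(1)$ appearing in $K_{\mathbb{P}T^*M}\cong \mathcal{O}(-\dim M)\otimes(\ldots)$. The parity of $\dim M$ then decides whether the required square root exists. Codimension at least $2$ ensures the $\mathbb{P}^{c-1}$ fibres have nontrivial $\mathcal{O}(1)$, so that no cancellation occurs.

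The main obstacle is (v): tracking the exact twist in the contact orientation datum and checking that the square-root obstruction is precisely the parity of $\dim M$. I would first work this out on the $A_1$ Darboux chart to calibrate the signs.
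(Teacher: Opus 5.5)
Your treatment of (i)--(iv) follows the paper's route. Part (i) is Lemma \ref{lem:conic_legendrian} plus commuting the torsor pullback past the fiber product. Part (ii) is the torsor argument of Lemma \ref{lem:torsor_vanishing}. Part (iii) is Theorem \ref{thm:D}(ii), with $Z^{\mathrm{cl}}$ the classical fiber product. Part (iv) is the excess-bundle computation of Proposition \ref{prop:conormal}, and the rank you write down is exactly $e$ by cleanness. One small correction in (ii): the Behrend function is an invariant of the stack $\widetilde{Z}^{\mathrm{cl}}$ alone, characterized by its behaviour under smooth pullback. So every automorphism fixes it, in particular every element of $\Gm(\K)$. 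You do not need the d-critical structure, and ``preserved up to scaling'' would not suffice if the function did depend on it.

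Your sketch of (v) has the right skeleton but misidentifies what must be checked. Contact orientation data carries no compatibility condition with the sign $(-1)^{n+1}$ of Proposition \ref{prop:sign_character}. That sign is the orbit monodromy of the comparison torsor $Q_{U,\widetilde{f}}$ between an already chosen equivariant square root and the canonical one on a Darboux chart, and $n$ is chart-dependent. By Proposition \ref{prop:orientation_obstruction}, $p^*$ identifies line bundles on $Z$ with $\Gm$-equivariant ones on $\widetilde{Z}$, so contact orientation data exists if and only if $K_Z$ is a square in $\mathrm{Pic}(Z)$. For the same reason, calibrating on the $A_1$ chart cannot help: there the contact line bundle is trivial and the chart is canonically oriented (Remark \ref{rem:canonical_orientation}), whereas the obstruction lives on the global $\mathbb{P}^{c-1}$-fibers.

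The argument you need runs as follows. Put $L=\mathbb{P}N^*Y$ and $X=\mathbb{P}T^*M$. The cofiber sequence $f^*\LL_X|_Z\to\LL_L|_Z\oplus\LL_L|_Z\to\LL_Z$ gives $K_Z\simeq K_L^{\otimes 2}|_Z\otimes f^*K_X^{-1}|_Z$. Moreover $K_X\simeq\cO(-d)$; your ``$(\ldots)$'' is trivial, since $\det T^*M=K_M$ cancels in the Euler sequence. Hence $K_Z\simeq K_L^{\otimes 2}|_Z\otimes\cL^{\otimes d}$.
\begin{itemize}
\item If $d$ is even, $K_L|_Z\otimes\cL^{\otimes d/2}$ is a square root.
\item If $d$ is odd, restrict a putative square root to a fiber $F\simeq\mathbb{P}^{c-1}$ of $Z^{\mathrm{cl}}=\mathbb{P}N^*Y\to Y$. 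There $\cL|_F=\cO(1)$, so $K_Z|_F$ has odd degree, and it is not a square in $\mathrm{Pic}(\mathbb{P}^{c-1})=\mathbb{Z}$ when $c\geq 2$.
\end{itemize}
This is Proposition \ref{prop:conormal_orientation}. The paper additionally computes $K_L=\pi^*(K_Y\otimes\det N_Y)\otimes\cO(-c)$ via the relative Euler sequence. That yields the explicit root and the case $c=1$, but it is not needed for the biconditional in (v).
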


Overall, Theorems \ref{thm:A}, \ref{thm:B}, \ref{thm:D} and \ref{thm:E} are
unconditional, and are proved in Sections \ref{sec:perverse}, \ref{sec:joyce},
\ref{sec:dt} and \ref{sec:applications} respectively, apart from the last
assertion of Theorem \ref{thm:D}, which is proved in \S\ref{subsec:zeta}. The
assumptions (J1)--(J3) enter only in Theorem \ref{thm:C} and Section
\ref{sec:linearization}.

\medskip
\paragraph{\bfem{Conventions and notations}.}
Let $\K$ denote an algebraically closed field of characteristic zero.
All commutative differential graded algebras (cdgas) are concentrated
in nonpositive degrees and defined over $\K$ \cite{PTVV}. Classical
$\K$-schemes are locally of finite type, and all derived $\K$-schemes
and stacks $X$ are locally finitely presented. Derived Artin stacks are in
addition assumed quasi-separated and to have affine stabilizers, as required by
the contact Darboux atlas of Theorem \ref{thm:atlas} and by the stacky
Thom--Sebastiani theorem of \cite[\S 4.3]{KPS}. For constructible and
perverse sheaves on an Artin stack $X$, we work in the
derived category $D^b_c(X_{et}, \F)$ of algebraically constructible complexes on
the lisse-\'etale site, where $\F$ is a finite extension of $\mathbb{Q}_\ell$ or
$\overline{\mathbb{Q}}_\ell$ for a prime $\ell$. We abbreviate the subscript to
$X_{et}$ throughout, and for schemes and Deligne--Mumford stacks this agrees
with the small \'etale site. We use the $\ell$-adic sheaf theories of
Laszlo--Olsson
\cite{LO1, LO2, LO3} and Liu--Zheng \cite{LZ}. See Section \ref{subsec:elladic}.

We fix a topological generator $\gamma = (\zeta_N)_N$ of
$\pi_1^{\mathrm{et}}(\Gm) \simeq \hat{\mathbb{Z}}(1)$, a compatible system of
primitive roots of unity in $\K$. All monodromy operators are the actions of
$\gamma$. We fix once and for all an isomorphism $\F(1) \simeq \F$ and suppress Tate twists except where a
monodromy weight is at issue. Cohomological shifts follow the conventions of
\cite[Section 2]{BBDJS}. Virtual dimensions of symplectifications are normalized
by
\[ \vdim \widetilde{Y} = \vdim Y + 1, \]
the shift accounting for the $\Gm$-fiber of the torsor $\widetilde{Y} \to Y$. In
particular $\vdim \widetilde{L} = \vdim L + 1$ for a Legendrian $L$ and
$\vdim \widetilde{X} = \vdim X + 1$ for the ambient contact stack. We write
$X^{\mathrm{cl}}$ for the classical truncation of a derived stack $X$, and
$f^{\mathrm{cl}}$ for the induced morphism of classical truncations.

\section{Recollection of Prior Work}
\label{sec:review}

We review the material used in the proofs, namely shifted symplectic and contact
structures, $\ell$-adic sheaf theory on Artin stacks, vanishing cycles and the
BBDJS perverse sheaf, monodromic complexes, the local models of Lagrangians and
the contact Darboux charts. The companion paper covers the same shifted
contact geometry in \cite[\S\S 2.1--2.3]{Izbudak_Darboux}. We keep only the
statements that the later sections invoke by number, in the form in which they
are used here, and we collect the local models in \S\ref{sec:charts} beside the
charts they present rather than separately. The remaining subsections are the
$\ell$-adic material, which the companion paper does not use.

\subsection{Shifted Symplectic Structures and Lagrangian Intersections}
\label{subsec:symplectic}
All constructions take place in derived algebraic geometry over $\K$ in the
sense of To\"en--Vezzosi \cite{ToenHAG}; see \cite{Toen2014} for a survey. We
write $\mathcal{A}^{p(, \mathrm{cl})}(X, n)$ for the space of \bfem{(closed)
$p$-forms of degree $n$} on a derived Artin stack $X$, $L_{qcoh}(X)$ for its
stable $\infty$-category of quasi-coherent complexes, $\LL_X \in L_{qcoh}(X)$
for the cotangent complex and $\TT_X \simeq
\mathbb{R}\underline{\mathcal{H}om}(\LL_X, \cO_X)$ for its dual. An
\bfem{$n$-shifted symplectic structure} is a closed $2$-form
$\omega \in \mathcal{A}^{2, \mathrm{cl}}(X, n)$ whose underlying $2$-form is
non-degenerate, in the sense that $\Theta_\omega \colon \TT_X \to \LL_X[n]$ is
an equivalence \cite{PTVV}, and a \bfem{Lagrangian structure} on $L \to X$ is
an isotropic structure whose induced map on tangent complexes is an equivalence
onto the annihilator \cite{AmorimBassat}. A derived intersection of Lagrangians
in an $n$-shifted symplectic stack is $(n-1)$-shifted symplectic \cite{PTVV},
and for $k < 0$ such a stack admits smooth atlases in Darboux form
\cite[Theorem 2.10]{BenBassat2015}.

Amorim
and Ben-Bassat use the perverse sheaf of vanishing cycles on these intersections
to construct, for a $1$-shifted symplectic $S$, a weak $2$-category
$\mathfrak{L}Lag(S)$ whose $2$-morphism spaces are the \'etale hypercohomologies
\[ \mathcal{H}om(N_0, N_1) := \Het^\bullet\bigl(N_0 \times^h_{L_{01}} N_1,\,
\mathcal{P}_{N_{01}}[-\vdim N_0]\bigr) \]
\cite[Theorem 1.3]{AmorimBassat}. The contact constructions of Sections
\ref{sec:joyce} and \ref{sec:linearization} are modeled on this one.

\subsection{The Symplectification Torsor}
\label{subsec:contact}
An \bfem{$n$-shifted contact structure} on a derived Artin stack $X$ is a line
bundle $\cL$ together with a non-degenerate Maurer--Cartan element defining a
contact form $\alpha \in \mathcal{A}^1(X, \cL, n)$, and we write
$(X, \cL, \alpha)$ for the datum \cite[Definition 3.6]{kib1}. Nothing below is
computed on $X$ itself; everything is computed on the $\Gm$-torsor over it, for
which we follow \cite{kib1, kib2, IzbudakBerktav2026, IzbudakBerktav2}.

\begin{theorem}[Symplectification \cite{kib1, kib2}]\label{thm:symplectification}
  An $n$-shifted contact derived stack $(X, \cL, \alpha)$ is the base of a
  principal $\Gm$-bundle
  $p \colon \widetilde{X} \longrightarrow X$,
  its \bfem{derived symplectification}, whose total space carries an
  $n$-shifted symplectic form $\omega_{\widetilde{X}}$ scaled with weight $1$ by
  a free structural $\Gm$-action \cite[Theorem 3.9]{kib2}, \cite[Theorem
  4.7]{kib1}. The passage back down is the weight-one quotient of
  \cite[Theorem 3.7]{IzbudakBerktav2026}, which recovers $\alpha$ as the
  weight-zero reduction of the contraction of $\omega_{\widetilde{X}}$ with the
  fundamental vector field of the action.
\end{theorem}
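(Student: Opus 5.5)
The statement to prove is Theorem \ref{thm:symplectification}. It is a recollection of results of Berktav \cite{kib1, kib2} and of \cite{IzbudakBerktav2026}, so the plan is to assemble those results and check that they fit together, not to prove anything from scratch.

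\emph{Step 1: the torsor.} Given $(X, \cL, \alpha)$, take $\widetilde{X}$ to be the total space of the frame bundle of $\cL$, that is $\mathrm{Tot}(\cL^\vee)$ with the zero section removed. Then $p\colon \widetilde{X}\to X$ is a principal $\Gm$-bundle, and the scaling action is free. It is convenient to view this as the relative $\mathrm{Spec}$ of $\bigoplus_{k\in\mathbb{Z}}\cL^{\otimes k}$ over $X$. Then $\cO_{\widetilde{X}}$ is $\mathbb{Z}$-graded, and $p^*\cL$ is trivialized by a tautological section $t$ of weight $1$.

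\emph{Step 2: the symplectic form.} Pull $\alpha$ back to a $1$-form $\lambda = t\cdot p^*\alpha$ of degree $n$ on $\widetilde{X}$. The closure structure comes from the Maurer--Cartan datum of the contact structure. The key identity is
\[ \omega_{\widetilde{X}} = d_{dR}\lambda = dt\wedge p^*\alpha + t\, p^*d\alpha . \]
With this formula, nondegeneracy of $\omega_{\widetilde{X}}$ reduces, through the cofiber sequence $p^*\LL_X \to \LL_{\widetilde{X}} \to \cO_{\widetilde{X}}\cdot dt$, to the contact nondegeneracy condition. That condition says that $d\alpha$ is nondegenerate on $\ker\alpha$ and that $\alpha$ is surjective onto $\cL[n]$. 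The reduction is a five-lemma argument on the filtered tangent complex, and it is exactly what \cite[Theorem 4.7]{kib1} and \cite[Theorem 3.9]{kib2} carry out. Weight $1$ is then immediate: $t$ has weight $1$ and $p^*\alpha$ is invariant, so $\lambda$ and $\omega_{\widetilde{X}}$ scale with weight $1$.

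\emph{Step 3: the inverse direction.} Let $E$ be the fundamental vector field of the action. Contract to get $\iota_E\omega_{\widetilde{X}}$. Since $\iota_E dt = t$ and $\iota_E p^*(\cdot) = 0$, this gives $\iota_E\omega_{\widetilde{X}} = t\, p^*\alpha$. This $1$-form has weight $1$, so it descends as a section of $\LL_X\otimes\cL[n]$; this is the weight-zero reduction, after twisting by $t^{-1}$. The descended section is $\alpha$. This matches the weight-one quotient of \cite[Theorem 3.7]{IzbudakBerktav2026}.

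\emph{Main obstacle.} The delicate part is the stacky, homotopy-coherent closedness in Step 2. For a derived Artin stack, a closed form is a point of a mapping space, not a cocycle. The plan handles this by working smooth-locally on an atlas and using descent of $\mathcal{A}^{2,\mathrm{cl}}$ along $\Gm$-equivariant smooth covers. I would treat this as settled in \cite{kib2} and only check that the equivariance conventions there match the weight-$1$ normalization used here.
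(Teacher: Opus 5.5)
Your proposal is correct and takes the same route as the paper, which gives no independent argument for this recollection and simply cites \cite[Theorem 3.9]{kib2}, \cite[Theorem 4.7]{kib1} and \cite[Theorem 3.7]{IzbudakBerktav2026}; your sketch is the standard construction behind those references, namely $\lambda = t\,p^*\alpha$ with $t$ the weight-one tautological section on $\mathrm{Tot}(\cL^\vee)\setminus 0$, then $\omega_{\widetilde{X}} = d_{\DR}\lambda$, then $\iota_E\omega_{\widetilde{X}} = \lambda$. One minor point: $\omega_{\widetilde{X}}$ is exact, so its closed structure is canonical and needs neither homotopy-coherent descent nor any Maurer--Cartan input; the real content you are deferring to the citations is nondegeneracy, not closedness.
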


\begin{definition}[Legendrian Morphism
  \cite{IzbudakBerktav2026}]\label{def:legendrian}
  A morphism $f \colon L \to X$ into an $n$-shifted contact stack is
  \bfem{Legendrian} when it carries an isotropic structure
  $f^*\alpha \simeq 0$ for which
  \[ \cO_L \longrightarrow \LL_{L/X} \otimes^{\mathbb{L}}_{\cO_L} f^*\cL[n-1]
    \longrightarrow \TT_L \]
  is a stable fiber sequence \cite[Definition 2.39]{IzbudakBerktav2026}. What
  the condition buys is the statement every computation below relies on: the
  symplectification $\widetilde{L} \to \widetilde{X}$ of a Legendrian is a
  $\Gm$-equivariant Lagrangian \cite[proof of Theorem
  4.1]{IzbudakBerktav2026}.
\end{definition}

Note further that intersecting two Legendrians stays within contact geometry as per the following result.

\begin{theorem}[Legendrian intersections {\cite[Theorem
  4.1]{IzbudakBerktav2026}, \cite[Theorem 5.1]{IzbudakBerktav2}}]
  \label{thm:review_intersection}
  For Legendrian morphisms $L_1, L_2 \to X$ into an $n$-shifted contact stack,
  the derived fiber product $L_{01} = L_1 \times_X^h L_2$ is $(n-1)$-shifted
  contact, which is what keeps the constructions of Sections
  \ref{sec:linearization} and \ref{sec:applications} on the contact side. The
  statement is relative as well: for Legendrians $N_1 \to L_{01}$ and
  $N_2 \to L_{12}$ over a third Legendrian $L_1$, the fiber product
  $N_1 \times^h_{L_1} N_2$ is Legendrian over $L_{02}$.
\end{theorem}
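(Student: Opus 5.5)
The plan is to reduce both assertions to the corresponding symplectic statements on symplectifications, and then come back down using the weight-one quotient of Theorem \ref{thm:symplectification}.

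\textbf{Step 1: the intersection as a torsor.} Let $\widetilde{L}_i := L_i \times^h_X \widetilde{X}$. By Definition \ref{def:legendrian} these are $\Gm$-equivariant Lagrangians in $\widetilde{X}$. Consider the derived fiber product $\widetilde{L}_{01} := \widetilde{L}_1 \times^h_{\widetilde{X}} \widetilde{L}_2$. Base change along $p$ identifies it with $L_{01} \times^h_X \widetilde{X}$, so it is a principal $\Gm$-bundle over $L_{01}$. The $\Gm$-action on it is free and is induced diagonally from the actions on the three factors.

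\textbf{Step 2: the symplectic structure and its weight.} By \cite{PTVV}, $\widetilde{L}_{01}$ carries an $(n-1)$-shifted symplectic form. This form is the loop built from the two Lagrangian homotopies $\omega_{\widetilde X}|_{\widetilde L_i}\simeq 0$. Each of these homotopies is $\Gm$-equivariant of weight $1$. The construction of the PTVV form is functorial in the input data, so the resulting form is scaled with weight $1$ by the free action.

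\textbf{Step 3: descent to a contact structure.} Apply the weight-one quotient \cite[Theorem 3.7]{IzbudakBerktav2026} to $\widetilde{L}_{01}$. This gives an $(n-1)$-shifted contact structure on $\widetilde{L}_{01}/\Gm \simeq L_{01}$. Its line bundle is the one associated with the torsor of Step 1, namely the restriction of $\cL$ to $L_{01}$. This proves the first assertion.

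\textbf{Step 4: the relative statement.} The same pattern applies one level up. The symplectifications $\widetilde{N}_1 \to \widetilde{L}_{01}$ and $\widetilde{N}_2 \to \widetilde{L}_{12}$ are $\Gm$-equivariant Lagrangians. Composition of Lagrangian correspondences, as in \cite{AmorimBassat}, makes $\widetilde{N}_1 \times^h_{\widetilde{L}_1} \widetilde{N}_2$ Lagrangian in $\widetilde{L}_{02}$. This composite is again the base change of $N_1 \times^h_{L_1} N_2$ to its torsor, and its isotropic structure is equivariant of weight $1$. It remains to show that a weight-one equivariant Lagrangian in a symplectification descends to a Legendrian. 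For this I would verify the fiber sequence of Definition \ref{def:legendrian} directly. Take the Lagrangian equivalence $\TT_{\widetilde N}\simeq \LL_{\widetilde N/\widetilde L_{02}}[n-2]$ on the symplectification and split off the weight-zero part coming from the fundamental vector field. The factor $\cO_N$ is the tangent direction of the $\Gm$-fiber, and the twist by $\cL$ records the weight-one scaling.

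\textbf{Expected obstacle.} The hard part is the homotopy-coherent equivariance. In Step 2 and Step 4 one must check that the PTVV intersection form, and the Lagrangian composition, are equivariant of weight $1$ as structures, not merely at the level of underlying forms. One must also check that the converse descent in Step 4 recovers the Legendrian condition of \cite[Definition 2.39]{IzbudakBerktav2026}, rather than only an isotropic one. I would handle both by carrying out the constructions in $\Gm$-equivariant closed forms of weight one, that is, in the category of graded mixed complexes. In that setting the weight-one quotient of \cite[Theorem 3.7]{IzbudakBerktav2026} is an equivalence of spaces of structures.
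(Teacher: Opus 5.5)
Your proposal is correct and follows essentially the same route as the cited proof. As the paper records in the proof of Lemma \ref{lem:conic_legendrian}, that proof first produces the $\Gm$-equivariant Lagrangian structures on the symplectifications (the fact you import from Definition \ref{def:legendrian}), then works upstairs with the Lagrangian intersection and composition results, and descends by the weight-one quotient of Theorem \ref{thm:symplectification}. The one point worth making explicit in Step 4 is the isotropic structure $f^*\alpha \simeq 0$ itself: contract the weight-one null-homotopy of $\widetilde{f}^*\omega$ with the fundamental vector field and descend, as in Remark \ref{rem:conic_legendrian}, after which your fiber-sequence check supplies the non-degeneracy.
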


These intersections carry the non-linear $2$-categories of contact geometry.
For an $n$-shifted contact stack $X$, the weak $2$-category
$\mathcal{F}_c(X)$ has Legendrian objects $L \to X$, Legendrians
$N \to L_{01}$ in the intersection as $1$-morphisms, and Legendrian spans
descending from the symplectification as $2$-morphisms \cite[Theorem
1.1]{IzbudakBerktav2}, its composition operations descending to the contact
quotients and satisfying the coherence conditions there by \cite[Theorem
6.1]{IzbudakBerktav2}.

The Cartesian product $X_1 \times X_2$ of contact stacks carries no contact
structure, as the contact line bundles and the conformal scaling factors do
not agree across the factors. The product of the symplectifications, however,
does descend to one. Give
\[ \widetilde{X}_1^- \times \widetilde{X}_2 \]
the diagonal $\Gm$-action. Each factor form has weight $1$, and so does
$\omega_1^- \boxplus \omega_2$, and by \cite[Theorem 3.7]{IzbudakBerktav2026} the
stack quotient
\begin{equation}\label{eq:contact_product}
  X_1 \circledast X_2 := \bigl[ (\widetilde{X}_1^- \times \widetilde{X}_2) /
  \Gm \bigr]
\end{equation}
carries an $n$-shifted contact structure whose symplectification is
$\widetilde{X}_1^- \times \widetilde{X}_2$. We call it the \bfem{contact
product}. This is the derived analogue of the classical contact product, and its
virtual dimension is $\vdim \widetilde{X}_1 + \vdim \widetilde{X}_2 - 1$. 

A Legendrian correspondence between $n$-shifted contact stacks $X_1$ and $X_2$ is
then a Legendrian morphism $L \to X_1 \circledast X_2$, equivalently a span
$X_1 \leftarrow L \rightarrow X_2$ whose associated principal $\Gm$-bundles
form a $\Gm$-equivariant Lagrangian span
$\widetilde{X}_1 \leftarrow \widetilde{L} \rightarrow \widetilde{X}_2$ inside
$\widetilde{X}_1^- \times \widetilde{X}_2$. This construction generates the
global $(\infty, 2)$-category of Legendrian correspondences, denoted $Leg_n$
\cite[Corollary 8.1]{IzbudakBerktav2}.

\subsection{\texorpdfstring{$\ell$}{l}-adic Sheaf Theory on Artin
Stacks}\label{subsec:elladic}

The category $D^b_c(X_{et}, \F)$ fixed in the conventions is the one of
Laszlo--Olsson, constructed together with the six operations $f^*$, $Rf_*$,
$Rf_!$, $f^!$, $\otimes$, $R\mathcal{H}om$ for finite coefficients in
\cite{LO1} and for adic coefficients in \cite{LO2}, and enhanced
$\infty$-categorically by Liu--Zheng \cite{LZ}. Its perverse t-structure is constructed in \cite[Theorem
5.1]{LO3}, and in \cite[Theorem 4.2]{LO3} for $X$ of finite type. The heart
$\mathrm{Perv}(X_{et}, \F)$ is the category of perverse sheaves on $X$, and
perverse sheaves form a stack for the smooth topology
\cite[Proposition 7.1]{LO3}.

The six operations, with their adjunctions, base change and Verdier duality, are
available in this setting by \cite{LO1, LO2, LZ}, and we use them without
further comment. Proper base change is \cite[Corollary 6.2.2]{LZ} and
\cite[Theorem 12.1]{LO2}, the K\"unneth formula is \cite[Theorem 6.2.1]{LZ},
$Rf_! \simeq Rf_*$ for proper $f$ is \cite[Proposition 6.2.11]{LZ}, and the
gluing of perverse sheaves is \cite[Proposition 7.1]{LO3}. These four are
invoked by name in what follows.
\vspace{1em}

\subsection{Vanishing Cycles, d-Critical Loci, and the BBDJS Perverse Sheaf}
\label{subsec:vanishing}

Let $U$ be a smooth $\K$-scheme, $f \in \cO(U)$ a regular function, and
$U_0 = f^{-1}(0)$. The \'etale vanishing cycles functor
$\phi_f \colon D^b_c(U_{et}, \F) \to D^b_c((U_0)_{et}, \F)$ and its monodromy
automorphism are constructed in \cite[Exp. XIII]{SGA7II}. We write
$\phi^p_f := \phi_f[-1]$ for the perverse-normalized functor, and write
\[ \mathcal{PV}_{U, f} := \phi^p_f(\F[\dim U]), \]
a perverse sheaf supported on $\mathrm{Crit}(f) \cap U_0$, equipped with the
monodromy automorphism given by the action of the generator $\gamma$.

\begin{theorem}[Thom--Sebastiani \cite{BBDJS}]\label{thm:TS}
  For regular functions $f \in \cO(U)$ and $g \in \cO(V)$ on smooth $\K$-schemes
  there is a canonical isomorphism
  \[ \mathcal{PV}_{U \times V,\, f \boxplus g} \simeq
  \mathcal{PV}_{U, f} \boxtimes \mathcal{PV}_{V, g} \]
  over $\mathrm{Crit}(f) \times \mathrm{Crit}(g)$ \cite[Theorem
  2.13]{BBDJS}. Its availability over $\K$ with $\ell$-adic coefficients, and
  on stacks, is the sheaf-theoretic input recorded in Section
  \ref{sec:intro}, the precise references being
  \cite[Proposition 4.3, Corollary 4.4 and Remark 7.20]{KPS} and
  \cite[Theorem 4.2, Corollary 5.9]{Descombes}. The functor $\phi^p_f$ commutes with proper pushforward
  \cite[Theorem 2.11(ii)]{BBDJS}. The interaction of these isomorphisms with the
  monodromy operators is the subject of Section \ref{sec:perverse}.
\end{theorem}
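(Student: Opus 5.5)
The statement to prove is the Thom--Sebastiani isomorphism (Theorem \ref{thm:TS}),
\[ \mathcal{PV}_{U \times V,\, f \boxplus g} \simeq \mathcal{PV}_{U, f} \boxtimes \mathcal{PV}_{V, g}, \]
over $\K$ with $\ell$-adic coefficients. I would reduce it to the classical \'etale Thom--Sebastiani theorem for vanishing cycles, due to Deligne and Illusie in characteristic zero. I would first restrict everything to an \'etale neighbourhood of $\mathrm{Crit}(f) \times \mathrm{Crit}(g)$. Both sides are supported on the critical loci, and $f, g$ are locally constant on the reduced critical loci. So, after shrinking $U$ and $V$, I may assume $f$ vanishes on $\mathrm{Crit}(f)$ and $g$ vanishes on $\mathrm{Crit}(g)$. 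The general case then follows by translating each connected component of the critical locus by its critical value $c$, since $\phi_{f-c}$ is the relevant functor there.

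The core step is the local comparison. Over $\K$ of characteristic zero, Illusie's Thom--Sebastiani theorem (via the local convolution $*$ on $\mathbb{A}^1$ near $0$) gives a canonical isomorphism
\[ \phi_{f \boxplus g}(\F_{U\times V}) \simeq \phi_f(\F_U) \boxtimes \phi_g(\F_V)[1], \]
restricted to $U_0 \times V_0$, and compatible with monodromy in the form $T_{f\boxplus g} = T_f \boxtimes T_g$. For tame coefficients the local convolution of tame sheaves corresponds to the tensor product of the monodromy representations, which is exactly where characteristic zero enters. Next I would insert the normalizations. On the left, $\phi^p_{f\boxplus g}(\F[\dim U + \dim V]) = \phi_{f\boxplus g}(\F)[\dim U+\dim V-1]$. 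On the right, $\phi^p_f(\F[\dim U])\boxtimes\phi^p_g(\F[\dim V]) = (\phi_f\F \boxtimes \phi_g \F)[\dim U+\dim V-2]$. The extra $[1]$ in the convolution statement accounts for the discrepancy, so the shifts match.

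It remains to check that the isomorphism is canonical, i.e. independent of the auxiliary choices and compatible with étale localization on $U$ and $V$. Compatibility with étale localization follows from smooth base change for $\phi$. With these properties, the local isomorphisms glue over $\mathrm{Crit}(f)\times\mathrm{Crit}(g)$ by the stack property of perverse sheaves \cite[Proposition 7.1]{LO3}. For the stacky and d-critical version, I would quote \cite[Theorem 4.2]{Descombes} and \cite[Remark 7.20]{KPS} rather than reprove it.

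I expect the main obstacle to be the canonicity and sign conventions of the isomorphism: its dependence on the chosen generator $\gamma$ and on the ordering of the factors, which yields a sign of $(-1)^{\dim U\dim V}$ under swapping. These are precisely what the BBDJS gluing needs. I would handle this first by fixing Illusie's convolution normalization with respect to $\gamma$. I would then verify on the model $f=x^2$, $g=y^2$ that the resulting map agrees with the one of \cite[Theorem 2.13]{BBDJS}, comparing the two one-dimensional stalks at the origin. Since both isomorphisms are natural in $(U,f)$ and $(V,g)$, agreement on this model, together with the stabilization by quadratic forms used in BBDJS, should suffice.
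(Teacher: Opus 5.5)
The paper gives no argument of its own here. The statement is a recollection justified by citation: \cite[Theorem 2.13]{BBDJS} over $\mathbb{C}$, \cite[Proposition 4.3, Corollary 4.4, Remark 7.20]{KPS} and \cite[Theorem 4.2, Corollary 5.9]{Descombes} for $\ell$-adic coefficients and stacks, and \cite[Theorem 2.11(ii)]{BBDJS} for proper pushforward. Deriving the scheme-level $\ell$-adic isomorphism yourself from the Deligne--Illusie \'etale Thom--Sebastiani theorem, and quoting only the stacky version, is a legitimate alternative. Your reduction to critical value $0$ is also right, though it tacitly uses the BBDJS convention that $\mathcal{PV}_{U,f}$ is the sum of the $\phi^p_{f-c}$ over critical values $c$; with the paper's displayed definition ($\phi^p_f$ at $0$ only) the isomorphism fails at the origin for $f = x^2+1$, $g = y^2-1$.

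However, your core step has the shift backwards. The Milnor fibre of $f\boxplus g$ is the join of those of $f$ and $g$, so $\phi_{f\boxplus g}\F \simeq (\phi_f\F\boxtimes\phi_g\F)[-1]$. For $f=x^2$, $g=y^2$ both factors sit in degree $0$, while $\phi_{x^2+y^2}\F$ sits in degree $1$. With your $[1]$ the left side becomes $(\phi_f\F\boxtimes\phi_g\F)[\dim U+\dim V]$ against $[\dim U+\dim V-2]$ on the right, so the shifts do not match as you claim. With $[-1]$ they do.

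Your canonicity step also does not work as proposed. Over $\K$ there is no $\ell$-adic isomorphism in \cite{BBDJS} to compare with: their Theorem 2.13 is a topological statement over $\mathbb{C}$, and the $\ell$-adic case is what they leave open. Even granting a comparison, agreement on $x^2, y^2$ plus naturality cannot pin down a natural isomorphism. Composing yours with $T_f^2\boxtimes\mathrm{id}$ gives another isomorphism that:
\begin{itemize}
\item is natural for \'etale maps;
\item is compatible with stabilization by quadratic forms (which changes $T_f$ by the sign $(-1)^r$, killed by the square);
\item commutes with monodromy;
\item agrees with yours on every quadratic model, where $T_f=\pm 1$.
\end{itemize}
Yet it differs from yours already for $f=x^3$, where $T_f$ has order $3$.

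The literal statement asks only for a canonical isomorphism, which the convolution construction supplies. What the later arguments actually use (Proposition \ref{prop:theta_glues}, Step 2 of Proposition \ref{prop:local_verification}) is its compatibility with monodromy, Verdier duality and the stabilization isomorphisms $\Theta_\Phi$. These must be verified directly for the convolution isomorphism, and that is precisely what the paper imports from \cite{KPS} and \cite{Descombes}.

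Finally, you do not address the assertion that $\phi^p_f$ commutes with proper pushforward. It follows from proper base change for nearby cycles \cite[Exp.~XIII, 2.1.7]{SGA7II} together with the triangle defining $\phi_f$.
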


\begin{example}[Non-degenerate quadratic forms \cite{BBDJS}]\label{ex:quadratic}
  For the quadratic form $q(z) = z_1^2 + \cdots + z_r^2$ on
  $\mathbb{A}^r$ one has
  $\mathrm{Crit}(q) = \{0\}$ and $\mathcal{PV}_{\mathbb{A}^r, q}
  \cong \F_{\{0\}}$,
  an isomorphism canonical only up to sign, since it depends on a
  generator of the
  rank-one vanishing cohomology. The monodromy operator acts by $(-1)^r$
  \cite[Example 2.14]{BBDJS}, \cite[Exp. XV]{SGA7II}.

  The relative form of this statement is the one we use. Let $\Phi \colon U
  \hookrightarrow V$ be a closed embedding of smooth $\K$-schemes with
  $f = g \circ \Phi$ and $\Phi|_{\mathrm{Crit}(f)}$ an isomorphism onto
  $\mathrm{Crit}(g)$,
  \[
    \begin{tikzcd}[row sep=large, column sep=large]
      \mathrm{Crit}(f) \arrow[r, hook] \arrow[d, "\sim"',
      "\Phi|_{\mathrm{Crit}(f)}"] &
      U \arrow[d, hook, "\Phi"] \arrow[dr, "f"] & \\
      \mathrm{Crit}(g) \arrow[r, hook] & V \arrow[r, "g"'] & \mathbb{A}^1 ,
    \end{tikzcd}
  \]
  and let $q_{UV}$ be the induced non-degenerate quadratic form
  on the normal bundle $N_{UV}$, of rank $r = \dim V - \dim U$. Then
  $\det(q_{UV})$ determines a principal $\mu_2$-bundle $P_\Phi$ on
  $\mathrm{Crit}(f)$, and there is a natural isomorphism of perverse sheaves
  \[ \Theta_\Phi \colon \mathcal{PV}_{U, f} \longrightarrow
    \Phi|^*_{\mathrm{Crit}(f)}\bigl( \mathcal{PV}_{V, g} \bigr)
  \otimes_{\mu_2} P_\Phi \]
  compatible with Verdier duality and with the monodromy operators
  \cite[Definition 5.2, Lemma 5.3 and Theorem 5.4]{BBDJS}. These
  \bfem{stabilization isomorphisms} generate, together with \'etale comparisons
  of charts, the descent data of the global construction below
  \cite[Theorem 6.9]{BBDJS}, \cite[Section 4]{BenBassat2015}.
\end{example}

Joyce introduces \bfem{algebraic d-critical loci} $(X, s)$, classical spaces
recording local presentations as critical loci of regular functions on smooth
schemes, together with a canonical line bundle $K_{X, s}$ on
$X^{\mathrm{red}}$ restricting on a critical chart $(U, f)$ to
$K_U^{\otimes 2}|_{\mathrm{Crit}(f)^{\mathrm{red}}}$ \cite{Joyce_dCrit}. An
\bfem{orientation} is a square root of $K_{X, s}$
\cite[Section 2]{Joyce_dCrit}. The truncation of a $-1$-shifted symplectic
derived Artin stack carries an induced d-critical structure \cite{BenBassat2015}.

\begin{theorem}[The BBDJS perverse sheaf \cite{BBDJS, BenBassat2015}]\label{thm:BBDJS_sheaf}
  Let $(X, \omega)$ be an oriented $-1$-shifted symplectic derived Artin stack
  over $\K$. There is a perverse sheaf $\mathcal{P}_X \in
  \mathrm{Perv}(X_{et}, \F)$, \'etale-locally isomorphic on a Darboux chart
  $\dCrit(f) \to X$ to $\mathcal{PV}_{U, f} \otimes_{\mu_2} Q_{U, f}$, where
  $Q_{U, f}$ is the $\mu_2$-local system comparing the orientation with the
  square root $K_U$ of $K_{X,s}|_{\dCrit(f)}$. The local models glue along the
  comparison isomorphisms of \cite[Theorem 6.9]{BBDJS}. Over
  $\mathbb{C}$ this is
  \cite[Theorem 6.9 and Corollary 6.11]{BBDJS}. For the $\ell$-adic perverse
  sheaves of \cite{LO1, LO2, LO3} over $\K$ it is \cite[Theorem 1.3]{BenBassat2015},
  which requires only that the characteristic be different from $2$ and
  moreover records the compatibility $t^*(\mathcal{P}_X)[n] \simeq
  \mathcal{P}_T$
  for $t \colon T \to X$ smooth of relative dimension $n$.
\end{theorem}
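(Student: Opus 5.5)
Since this statement is a recollection, the plan is to reconstruct the argument of \cite{BBDJS, BenBassat2015}. We proceed in two stages: first for $-1$-shifted symplectic derived schemes, equivalently oriented algebraic d-critical loci, and then for Artin stacks by smooth descent.

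\emph{Scheme case.} First pass to the classical truncation $(X^{\mathrm{cl}}, s)$ with its induced d-critical structure and orientation $K_{X,s}^{1/2}$, and cover it by critical charts $(R, U, f, i)$ coming from the Darboux form \cite[Theorem 2.10]{BenBassat2015}. On each chart we set $\mathcal{P}_{R} := \mathcal{PV}_{U,f} \otimes_{\mu_2} Q_{U,f}$. Here $Q_{U,f}$ is the $\mu_2$-torsor of local isomorphisms between the chosen square root $K_{X,s}^{1/2}|_R$ and $K_U|_{\mathrm{Crit}(f)^{\mathrm{red}}}$, and this sheaf is perverse because $\phi^p_f$ preserves perversity.

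To compare two charts we use Joyce's result that, after \'etale shrinking, any two charts embed into a common third chart. For an embedding $\Phi$ we use the stabilization isomorphism $\Theta_\Phi$ of Example \ref{ex:quadratic}. The twist $P_\Phi$ attached to $\det(q_{UV})$ is exactly the discrepancy between $Q_{U,f}$ and $\Phi^*Q_{V,g}$, because $K_V|_U \simeq K_U \otimes \det N_{UV}^{\vee}$ and $q_{UV}$ identifies $(\det N_{UV})^{2}$ with a trivial line up to the $\mu_2$-choice. Hence $\Theta_\Phi \otimes \mathrm{id}$ gives a canonical isomorphism $\mathcal{P}_R \simeq \mathcal{P}_{R'}$ on overlaps.

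\emph{Cocycle condition and gluing.} We then verify that these comparison isomorphisms satisfy the cocycle condition. This follows because $\Theta_\Phi$ depends only on $\Phi|_{\mathrm{Crit}(f)}$ and on the quadratic form up to homotopy, and is multiplicative under composites $U \hookrightarrow V \hookrightarrow W$ \cite[Theorem 5.4]{BBDJS}. The perverse sheaves then glue, since perverse sheaves form a stack \cite[Proposition 7.1]{LO3}; this yields $\mathcal{P}_X$ with the asserted local form. In the $\ell$-adic setting the only nonformal input is the Thom--Sebastiani isomorphism of Theorem \ref{thm:TS}. It is used to identify $\mathcal{PV}$ of $f \boxplus q$ with $\mathcal{PV}_{U,f}$ tensored with the rank-one sheaf of Example \ref{ex:quadratic}, and is available by \cite{KPS, Descombes}. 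The hypothesis $\mathrm{char} \neq 2$ enters through $\mu_2$-torsors and the computation of $\mathcal{PV}$ for $z_1^2 + \cdots + z_r^2$.

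\emph{Stack case.} Choose a smooth atlas $t \colon T \to X$ of relative dimension $n$. The truncation $T^{\mathrm{cl}}$ inherits a d-critical structure, with canonical bundle twisted by $(\det \LL_{T/X})^{\otimes 2}$, and hence an induced orientation; the scheme case then produces $\mathcal{P}_T$. To descend $\mathcal{P}_T[-n]$ to $X$, we establish the compatibility $t^*\mathcal{P}_X[n] \simeq \mathcal{P}_T$ for every smooth $t$, compatibly with composition. Locally this reduces to smooth pullback of vanishing cycles, $t^*\phi^p_f \simeq \phi^p_{f\circ t}$ up to the shift $[n]$. Applying this on $T \times_X T$ and on triple products gives descent data, which glue by \cite[Proposition 7.1]{LO3} again.

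The main obstacle is this step: checking that the smooth-pullback isomorphisms intertwine the stabilization isomorphisms and the orientation twists $Q$ coherently, including the sign conventions on the relative canonical bundles. This is the content of \cite[Theorem 1.3]{BenBassat2015}, so we would follow its argument at that point.
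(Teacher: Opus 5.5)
Your outline is correct and matches the argument of the sources the paper cites; the paper recalls this theorem without proof. In both, the scheme case glues the chartwise models $\mathcal{PV}_{U,f}\otimes_{\mu_2}Q_{U,f}$ along the stabilization isomorphisms of \cite[Theorem 5.4]{BBDJS}, and the stack case descends along a smooth atlas via smooth-pullback compatibility as in \cite[Theorem 1.3]{BenBassat2015}. The one point to tighten is the descent step: it should invoke the smooth-pullback isomorphisms between oriented d-critical schemes, for instance along the projections $T\times_X T \rightrightarrows T$, rather than the compatibility $t^*\mathcal{P}_X[n]\simeq\mathcal{P}_T$, since $\mathcal{P}_X$ is not yet defined at that stage.
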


In classical Donaldson--Thomas theory over $\mathbb{C}$, the \bfem{Behrend
function} $\nu_X \colon X(\mathbb{C}) \to \mathbb{Z}$ is a canonical
constructible function \cite[Section 1.2]{Behrend}. For
$X = \mathrm{Crit}(f) \subseteq U$ it is given by
$\nu_X(x) = (-1)^{\dim U}\bigl(1 - \chi(MF_f(x))\bigr)$, where $MF_f(x)$
denotes the Milnor fiber. Behrend proves that the Donaldson--Thomas invariant
of a proper moduli scheme with symmetric obstruction theory equals the Euler
characteristic weighted by $\nu_X$ \cite{Behrend}. Section \ref{sec:dt}
constructs the contact analogue.

\subsection{Milnor Fibers and the \texorpdfstring{$A_k$}{Ak}-Singularities}
\label{subsec:milnor}

The invariants of Sections \ref{sec:dt} and \ref{sec:applications} are
evaluated on hypersurface singularities, and the $A_k$-singularities recur
throughout. We collect the classical facts used.

Let $f \colon U \to \mathbb{A}^1$ be a function on a smooth variety of
dimension $n$ with $f(x) = 0$, and let $F_x$ denote its Milnor fiber at $x$.
Over $\mathbb{C}$ this is 
\[
f^{-1}(\epsilon) \cap B_\delta(x)\quad \text{for}\quad 0 < |\epsilon| \ll \delta \ll 1,
\]
and its reduced cohomology is the stalk at
$x$ of the vanishing cycles $\phi_f \overline{\mathbb{Q}}_\ell$, in the sense
that 
\[\widetilde{H}^j(F_x) = \mathcal{H}^j(\phi_f
\overline{\mathbb{Q}}_\ell)_x.\]
Over $\K$ we take the right side as the definition, the comparison
theorem and the Lefschetz principle transporting every statement below. The
Milnor fiber carries the monodromy automorphism $T_x$, the action of the
generator $\gamma$ fixed in the conventions of the introduction, which is
quasi-unipotent
by the monodromy theorem \cite[Exp.~I]{SGA7}.

When $x$ is an isolated critical point of $f$, the Milnor fiber is homotopy
equivalent to a bouquet of $\mu$ spheres of dimension $n - 1$
\cite[\S\S 6--7]{Milnor}, where
\[ \mu = \mu_f(x) := \dim_\K \cO_{U, x} \big/ \bigl( \partial_1 f, \dots,
  \partial_n f \bigr) \]
is the \bfem{Milnor number}. Its reduced cohomology is therefore concentrated
in degree $n - 1$, of rank $\mu$, and $\chi(F_x) = 1 + (-1)^{n-1}\mu$. The
Behrend function of $\mathrm{Crit}(f)$ at $x$ is then
\[ \nu_{\mathrm{Crit}(f)}(x) = (-1)^{n}\bigl( 1 - \chi(F_x) \bigr) =
  (-1)^n (-1)^n \mu = \mu , \]
the Milnor number. Two further facts concern the monodromy. The Lefschetz
number of $T_x$ on $H^\bullet(F_x)$ vanishes whenever $x$ is a singular point
of the fiber $f^{-1}(0)$, that is whenever $f(x) = 0$ and $df(x) = 0$
\cite{ACampo}, and the \bfem{monodromy zeta function}
\[ \zeta_{f, x}(t) := \prod_{j} \det\bigl( 1 - t\,T_x \mid
  \widetilde{H}^j(F_x) \bigr)^{(-1)^{j+1}} \]
is computed by A'Campo from an embedded resolution of $f^{-1}(0)$
\cite{ACampo}. Both are used in Section \ref{sec:dt} through the twisted
operator $\theta$, which acts on the stalks of $\mathcal{P}_Z$ as $(-1)^n$
times the inverse of $T_x$ by Proposition \ref{prop:behrend_one}.

\begin{example}[The $A_k$-singularity in one variable]\label{ex:Ak_classical}
  Let $f(x) = x^{k+1}$ on $U = \mathbb{A}^1$, $k \geq 1$. The Milnor number is
  $\mu = \dim \K[x]/(x^k) = k$, and the Milnor fiber $F_0 = \{ x^{k+1} =
  \epsilon \}$ consists of the $k + 1$ roots of $\epsilon$, so that
  $\widetilde{H}^0(F_0)$ has rank $k$, in agreement with the bouquet of $k$
  zero-spheres. The monodromy $T_0$ permutes the roots cyclically, $x \mapsto
  \zeta_{k+1} x$ for a primitive $(k+1)$-th root of unity $\zeta_{k+1}$. On
  $H^0(F_0)$ it is a permutation without fixed points, so its trace is $0$,
  and on the reduced cohomology its trace is $-1$. Its characteristic
  polynomial on $\widetilde{H}^0(F_0)$ is
  \[ \frac{x^{k+1} - 1}{x - 1} = \prod_{\zeta^{k+1} = 1,\ \zeta \neq 1} (x -
    \zeta) , \]
  and more generally, for $m \geq 1$, the trace of $T_0^m$ on
  $\widetilde{H}^0(F_0)$ is $k$ if $k + 1$ divides $m$, since $T_0^m$ is then
  the identity, and $-1$ otherwise, since $T_0^m$ is then a permutation without
  fixed points. The monodromy zeta function is
  \[ \zeta_{f, 0}(t) = \det\bigl( 1 - t\,T_0 \mid \widetilde{H}^0(F_0)
    \bigr)^{-1} = \frac{1 - t}{1 - t^{k+1}} , \]
  by the identity $\prod_{\zeta^{k+1} = 1}(1 - t\zeta) = 1 - t^{k+1}$. The
  Behrend function of $\mathrm{Crit}(f) = \{0\}$ takes the value $k$ at the
  origin. For $k = 1$ this is the non-degenerate quadratic singularity, whose
  monodromy on $\widetilde{H}^{n-1}(F_0)$ in $n$ variables is $(-1)^n$ by the
  Picard--Lefschetz formula \cite[Exp.~XV]{SGA7II}, the case $n = 1$ being the
  transposition of two points.
\end{example}

Adding a non-degenerate quadratic form in further variables does not change
the singularity type. The $A_k$-singularity in $n$ variables is $x_1^{k+1} +
x_2^2 + \cdots + x_n^2$, its Milnor number is again $k$, and its Milnor fiber
and monodromy are obtained from those of $x_1^{k+1}$ by the Thom--Sebastiani
theorem, Theorem \ref{thm:TS}. A function whose critical locus is
positive-dimensional has \bfem{transversal type $A_k$} along a stratum $S$
of its critical locus if its restriction to a transversal slice at each point
of $S$ is an $A_k$-singularity in the transversal variables. The invariants of
Section \ref{sec:applications} are computed stratum by stratum from the
transversal type.

\subsection{Monodromic Complexes and Tame Monodromy}\label{subsec:monodromic}

Over $\K$ the \'etale fundamental group $\pi_1^{\mathrm{et}}(\Gm, 1) \simeq
\hat{\mathbb{Z}}(1)$ is topologically generated by the element $\gamma$ fixed
in the conventions, and every local system on $\Gm$ is tame.

For $m \geq 1$ and
a character $\chi \colon \mu_m(\K) \to \F^\times$, the \bfem{Kummer local
system} $\mathscr{K}_\chi$ is the rank-one summand of $[m]_* \F$ on which
$\mu_m$ acts through $\chi$, where $[m] \colon \Gm \to \Gm$ is the $m$-th power
map. Kummer local systems are the rank-one local systems with finite monodromy,
and $H^\bullet_{\mathrm{et}}(\Gm, \mathscr{K}_\chi) = 0$ for $\chi$ nontrivial.
We write $\mathscr{K} := \mathscr{K}_{\chi_2}$ for the quadratic Kummer system.
Its monodromy is $-\mathrm{id}$ and $\mathscr{K}^{\otimes 2}$ is canonically
trivial.

\begin{definition}[\cite{Verdier}]\label{def:monodromic}
  Let $Y$ be an Artin stack locally of finite type over $\K$ and let
  $q \colon \widetilde{Y} \to Y$ be a principal $\Gm$-bundle. A complex
  $N \in D^b_c(\widetilde{Y}_{\mathrm{et}}, \F)$ is \bfem{monodromic} if its
  cohomology sheaves are locally constant with tame monodromy along
  the fibers of
  $q$. For the
  trivial bundle
  $\widetilde{Y} = Y \times \Gm$ this is the condition that the
  cohomology sheaves
  be locally constant along the $\Gm$-factor, tameness being
  automatic over $\K$.
  
  We write $\mathrm{Perv}_{\mathrm{mon}}(\widetilde{Y}_{\mathrm{et}}, \F)$
  for the full subcategory of monodromic perverse sheaves. 
\end{definition}

The following theorem collects well-known facts in related literature about monodromic complexes.
\vspace{2em}

\begin{theorem}\label{thm:monodromy_facts}
  \begin{enumerate}
  \vspace{0em}
  \setlength{\itemsep}{0.1in}
    \item[]
    \item The monodromy of \'etale vanishing cycles is quasi-unipotent
      \cite[Exp. I]{SGA7}.
    \item The category $\mathrm{Perv}_{\mathrm{mon}}((Y \times
      \Gm)_{et}, \F)$ is
      equivalent to the category of pairs $(P, T)$ of a perverse
      sheaf on $Y$ with an
      automorphism, and under the equivalence the pushforward along
      the projection
      corresponds to $\mathrm{Cone}(T - \mathrm{id} \colon P \to P(-1))$, in
      the normalization in which $P$ is the shift by $[-1]$ of the restriction
      of the monodromic sheaf to a fiber
      \cite{Verdier} (for Artin
      stacks via \cite{LO1, LO3, LZ}).
    \item (Picard--Lefschetz.) For an ordinary quadratic singularity in $n$
      variables, the vanishing cohomology is of rank one, concentrated in degree
      $n - 1$, and $\gamma$ acts on it by $(-1)^n$ \cite[Exp. XV]{SGA7II}.
  \end{enumerate}
\end{theorem}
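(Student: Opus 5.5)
The statement collects three facts from the literature, so the plan is to reduce each item to the cited source and check that it holds in the present setting: Artin stacks over $\K$ with $\ell$-adic coefficients. No new argument should be needed beyond this transfer.

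For (1), the monodromy theorem of \cite[Exp. I]{SGA7} gives quasi-unipotence of the inertia action on nearby and vanishing cycles for a function on a scheme of finite type over a henselian trait. I would apply it to the strict henselization of $\mathbb{A}^1$ at $0$ and restrict to the tame quotient. Since $\mathrm{char}\,\K = 0$, all inertia is tame, and the tame inertia is topologically generated by $\gamma$. So some power of $\gamma$ acts unipotently on $\phi_f$. To pass to stacks, I would note that vanishing cycles commute with smooth pullback, and that quasi-unipotence is a property that can be checked on a smooth atlas, because the exponent can be chosen uniformly on a quasi-compact atlas by constructibility.

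For (2), on a scheme $Y$ I would follow Verdier's argument. A monodromic complex on $Y \times \Gm$ has cohomology sheaves that are locally constant along the fibres. Restricting to $Y \times \{1\}$ and recording the action of $\gamma$ gives a pair $(P, T)$, with the shift $[-1]$ making $P$ perverse. Conversely, a pair $(P, T)$ determines a local system in the $\Gm$-direction through the tame fundamental group $\hat{\mathbb{Z}}(1)$. The key points are:
\begin{itemize}
\item Tameness means that only the action of $\gamma$ matters.
\item Full faithfulness follows from $H^0(\Gm, -)$ of the internal Hom being the $\gamma$-invariants.
\item The pushforward formula follows from $H^\bullet(\Gm, \mathcal{L})$ being computed by the complex $\mathcal{L}_1 \xrightarrow{\gamma - 1} \mathcal{L}_1(-1)$, applied in families via smooth base change along $Y \times \Gm \to Y$.
\end{itemize}
For Artin stacks I would descend the equivalence. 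Perverse sheaves form a stack for the smooth topology \cite[Proposition 7.1]{LO3}, and the monodromic condition is smooth-local on $Y$. Both sides of the equivalence are therefore stacks, and the equivalence on an atlas is compatible with pullbacks, so it glues. The pushforward formula glues in the same way, using smooth base change \cite{LO2, LZ}.

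For (3), this is the Picard--Lefschetz formula \cite[Exp. XV]{SGA7II}. After an étale change of coordinates, Morse's lemma (available since $\mathrm{char}\,\K \neq 2$) puts the singularity in the form $\sum x_i^2$. The vanishing cycle is then a single class in degree $n-1$, and $\gamma$ acts on it by $(-1)^n$, consistent with Example \ref{ex:quadratic}. This can be cross-checked by Thom--Sebastiani (Theorem \ref{thm:TS}) from the case $n = 1$, where $\gamma$ is the transposition of the two roots and acts by $-1$ on reduced $H^0$.

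The main obstacle is item (2) on stacks. The care needed there is in the normalization of the shift and Tate twist, and in making sure the equivalence is genuinely compatible with the gluing data rather than holding only atlas by atlas. I would handle this by working with the $\infty$-categorical enhancement of \cite{LZ}, so that the descent of the equivalence is a limit of equivalences.
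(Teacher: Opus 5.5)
\textbf{Overall.} The paper gives no argument for this statement beyond the citations, so your strategy of reducing each item to its source is the same as the paper's, and your sketch of (3) is fine. One step in (2), however, fails as written.

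\textbf{The gap in (2).} The failing step is the converse construction, where a pair $(P,T)$ with $T$ an arbitrary automorphism ``determines a local system in the $\Gm$-direction through the tame fundamental group''. Tameness guarantees that a lisse sheaf on $\Gm$ is determined by the action of $\gamma$. It does not guarantee that every automorphism occurs.

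With $\ell$-adic coefficients a lisse sheaf is a \emph{continuous} representation of the profinite group $\hat{\mathbb{Z}}(1)$. Its image is compact, so $T$ must stabilize a lattice, which means all eigenvalues of $T$ lie in $\overline{\mathbb{Z}}_\ell^\times$. Conversely, unit eigenvalues give a $T$-stable lattice $\Lambda$, and then $n \mapsto T^n$ extends continuously to $\hat{\mathbb{Z}}(1) \to GL(\Lambda)$. Already for $Y$ a point, $P = \F$ and $T = \ell \cdot \mathrm{id}$, no monodromic sheaf has slice pair $(P,T)$.

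So your argument, like the literal wording of (2), proves the equivalence only onto the full subcategory of pairs whose automorphism has $\ell$-adic unit eigenvalues. The unrestricted form is the topological statement over $\mathbb{C}$, where $\pi_1(\mathbb{C}^\times) = \mathbb{Z}$ imposes no condition. For the paper the restriction costs nothing, since every $T$ that occurs there is quasi-unipotent by (1). It must nevertheless be part of what you prove. This, rather than the stacky gluing you single out, is the real subtlety; the gluing is routine once you note that the slice functor $u^*[-1]$ commutes with shifted smooth pullback.

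\textbf{Smaller points.} First, identifying the stalk of $R\pi_* N$ at $y$ with $R\Gamma(\Gm, N|_{\{y\} \times \Gm})$ is base change along $\{y\} \hookrightarrow Y$ for the non-proper projection $\pi$. Neither smooth nor proper base change gives this. It is exactly where tameness has to be used, for instance via Deligne's base change theorem for $Rj_*$ of sheaves tamely ramified along $Y \times \{0,\infty\} \subset Y \times \mathbb{P}^1$, combined with proper base change. Your full-faithfulness step computes $\Hom$ through $R\pi_* R\mathcal{H}om$, so it rests on the same point. It also needs $\mathrm{Ext}^{-1}(P_1, P_2) = 0$ for perverse $P_i$, which kills the coinvariant term coming from $H^1(\Gm, -)$.

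Second, in (1), Grothendieck's theorem in its representation-theoretic form requires that no finite extension of the residue field contain all $\ell$-power roots of unity. That fails for $\K$: the inertia $\hat{\mathbb{Z}}(1)$ of the strict henselization of $\mathbb{A}^1_\K$ at $0$ has continuous characters that are not quasi-unipotent, such as $\gamma \mapsto 1+\ell$. What you need is the geometric variant, which uses that $\phi_f \F$ is of geometric origin, either by spreading out over a field finitely generated over $\mathbb{Q}$ or by embedded resolution in characteristic $0$. The passage to stacks in (1) is unnecessary, since the paper only takes vanishing cycles on smooth schemes.
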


Descent for monodromic complexes along $\Gm$-torsors is developed
in Section \ref{sec:perverse} (Definition \ref{def:slicepair}, Lemma
\ref{lem:monodromic_descent}).

\subsection{Local Models of Lagrangians}\label{subsec:lagmodels}

Joyce and Safronov prove a Lagrangian neighborhood theorem for Lagrangians in
$k$-shifted symplectic derived schemes in Darboux form \cite{JS}. We use the
case $k = -1$, in the following form and with the following sign convention.

\begin{theorem}[{\cite[Theorem 3.7, Example 3.6]{JS}}]\label{thm:JS_local}
  Let $U$ be a smooth affine $\K$-scheme, $s \in \cO(U)$, and let
  $\varphi \colon L \to \dCrit(s)$ be a Lagrangian. \'Etale-locally on $L$ there
  exist a smooth morphism $\Psi \colon V \to U$ of smooth $\K$-schemes, a vector
  bundle $E \to V$ with a non-degenerate quadratic form $q$, and a section
  $\sigma \in \Gamma(V, E)$ with
  \[ q(\sigma) = -\, s \circ \Psi, \]
  such that $L$ is equivalent to the derived zero locus $Z(\sigma) \subseteq V$
  compatibly with $\varphi$ and the Lagrangian structure. The cotangent complex
  of $L$ is
  \[ \LL_L \simeq \bigl[\, T_{V/U} \longrightarrow E^\vee \longrightarrow
  T^\vee_V \,\bigr]\big|_L \]
  in degrees $-2, -1, 0$, so that
  \begin{equation}\label{eq:vdimJS}
    \vdim L = 2 \dim V - \dim U - \rk E .
  \end{equation}
  An orientation of $L$ induces a trivialization of $\det(E)|_L$
  \cite[Proposition 5.20]{AmorimBassat}.
\end{theorem}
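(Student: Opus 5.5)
Since this statement is the case $k=-1$ of the Joyce--Safronov Lagrangian neighborhood theorem, the plan is to specialize \cite[Theorem 3.7]{JS} and then read off the two numerical consequences. First, apply \cite[Theorem 3.7]{JS} to the $-1$-shifted symplectic derived scheme $\dCrit(s)$ in Darboux form with the Lagrangian $\varphi \colon L \to \dCrit(s)$. This produces, \'etale-locally on $L$, the data $(V, \Psi, E, q, \sigma)$ of the statement. Using \cite[Example 3.6]{JS}, I would identify the isotropy condition with the identity $q(\sigma) + s\circ\Psi = 0$. This is where the sign convention $q(\sigma) = -s\circ\Psi$ is fixed. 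Replacing $q$ by $-q$ exchanges the two conventions and does not affect non-degeneracy, so nothing depends on this choice beyond bookkeeping.

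Second, I would derive the cotangent complex from the presentation. The derived structure on $L$ is relative over $U$: besides the Koszul direction $E^\vee \to T^\vee_V$ coming from $d\sigma$, the Lagrangian condition over $\dCrit(s)$ adds a degree $-2$ term. This term is the relative tangent bundle $T_{V/U}$, mapping to $E^\vee$ via $q(\sigma,-)$ composed with the fiber derivative. That gives $\LL_L \simeq [T_{V/U} \to E^\vee \to T^\vee_V]|_L$ in degrees $-2,-1,0$. I would check this identification against the fiber sequence $\TT_L \to \varphi^*\TT_{\dCrit(s)} \to \LL_L[1]$. Its middle term is $[T_U \to T^\vee_U]$, which matches the known tangent complex of a derived critical locus.

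Taking the alternating sum of ranks then yields
\[ \vdim L = \dim V - \rk E + (\dim V - \dim U) = 2\dim V - \dim U - \rk E, \]
which is \eqref{eq:vdimJS}.

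Third, for the orientation statement, I would compute the determinant line:
\[ \det \LL_L \simeq \bigl(K_V \otimes \det(E)^{-1} \otimes \det(T_{V/U})\bigr)|_L \simeq \bigl(\Psi^*K_U \otimes \det(E)^{-1}\bigr)|_L , \]
using $K_V \simeq \Psi^*K_U \otimes K_{V/U}$. An orientation of $L$ relative to the canonical square root $K_U$ on the chart then becomes a trivialization of $\det(E)|_L$. The argument is as in \cite[Proposition 5.20]{AmorimBassat}, which I would cite directly.

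The main obstacle is the second step. One must check that the three-term complex with the $T_{V/U}$ term is really the cotangent complex of the Joyce--Safronov model, and not just of $Z(\sigma)$, which would only give the two terms in degrees $-1, 0$. One must also confirm that the equivalence $L \simeq Z(\sigma)$ in the statement is meant as an equivalence of Lagrangians over $\dCrit(s)$. I would settle both points by comparing with the explicit cdga model in \cite[Example 3.6]{JS}.
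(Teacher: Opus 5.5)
Your plan follows the paper's own treatment. The statement is imported from \cite[Theorem 3.7, Example 3.6]{JS}, and the paper adds only three things: the normalization of the quadratic relation, the rank count giving \eqref{eq:vdimJS}, and a citation of \cite[Proposition 5.20]{AmorimBassat} for orientations. Three details in your version need fixing.
\begin{enumerate}
\item Joyce and Safronov write the relation as $Q(s,s)+4\pi^*\Phi=0$. So besides changing the sign, you must rescale $q$ by $\tfrac14$.
\item In your consistency check, the Lagrangian condition over a $-1$-shifted target is $\TT_{L/X}\simeq\LL_L[-2]$. The sequence is therefore $\TT_L\to\varphi^*\TT_{\dCrit(s)}\to\LL_L[-1]$, not $\LL_L[1]$; you can check this on the zero section $U\to\dCrit(0)$.
\item The degree $-1$ term $E^\vee$ contributes $(\det E^\vee)^{-1}=\det E$. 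Hence $\det\LL_L\simeq(\Psi^*K_U\otimes\det E)|_L$, which is the formula the paper uses in Step~2 of Proposition~\ref{prop:local_verification}. Your conclusion that an orientation trivializes $\det(E)|_L$ is unaffected.
\end{enumerate}

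The obstacle you isolate is real, and it resolves against the literal wording of the statement. The three-term complex is the cotangent complex of the following model. Start from the Koszul algebra of $\sigma$: degree $-1$ generators form a local frame of $E^\vee$, with $d\xi=\xi(\sigma)$. Then adjoin degree $-2$ generators $z_v$, one for each $v$ in a local frame of $T_{V/U}$. Work in a frame of $E$ in which $q$ is constant, write $q(-,-)$ for the associated bilinear form, and set $dz_v=q(\nabla_v\sigma,-)$. This is a cycle precisely because, for vertical $v$,
\[
q(\nabla_v\sigma,\sigma)=\tfrac12\, v(q(\sigma))=-\tfrac12\, v(s\circ\Psi)=0 .
\]
It produces exactly your map $T_{V/U}\to E^\vee$, $v\mapsto q(\nabla_v\sigma,-)$.

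So $L\simeq Z(\sigma)$ holds literally only when $\Psi$ is \'etale, as in all of the paper's worked examples. For general $\Psi$, the identity \eqref{eq:vdimJS}, on which Proposition~\ref{prop:parity_formula} rests, requires the enlarged model, since $Z(\sigma)$ itself has virtual dimension $\dim V-\rk E$.
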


Joyce and Safronov state the compatibility as $Q(s,s) + 4\,\pi^*(\Phi) = 0$
\cite[Example 3.6]{JS}. Rescaling $q$ by $\tfrac14$ gives the form above.

Amorim and Ben-Bassat write the relation without the sign
\cite[Proposition 5.20]{AmorimBassat}. The two conventions differ by
replacing $\phi_f$ with $\phi_{-f}$, and the canonical identification
$\phi_{-f} \simeq \phi_f$ commutes with the monodromy operator $T$, since
multiplication by $-1$ on the base acts trivially on the tame fundamental
group, so the difference is immaterial for the statements below. The identity \eqref{eq:vdimJS} is obtained by taking the
alternating sum
of the ranks in $\LL_L$. The degree $-2$ term contributes the
relative dimension
$\dim V - \dim U$ of $\Psi$, and $T^\vee_{V/U} \oplus T^\vee_U$ has rank
$\dim V$.

We call $(V, \Psi, E, q, \sigma)$ a \bfem{local presentation} of $L$. Its shape
is summarized by
\[
  \begin{tikzcd}[row sep=large, column sep=large]
    & E \arrow[d, "\pi"] & \\
    L \simeq Z(\sigma) \arrow[r, hook] & V \arrow[u, bend left=45, "\sigma"]
    \arrow[r, "\Psi"'] & U ,
  \end{tikzcd}
  \qquad q(\sigma) = - \, s \circ \Psi .
\]
Section \ref{sec:joyce} applies Theorem \ref{thm:JS_local} to the
$\Gm$-equivariant Lagrangian $\widetilde{L} \to \widetilde{X} \simeq
\dCrit(t \cdot s)$ obtained by symplectification, with all data pulled back
along the torsor.

\subsection{Contact Darboux Charts}
\label{sec:charts}

This subsection assembles the local objects the rest of the paper computes
with. Nothing here is new, and nothing here is reproved. A chart is a derived
discriminant locus inside a 1-jet space, extended in degree $-2$; the atlas
producing them is \cite[Theorem 3.7]{kib2} in the form of \cite[Theorem
3.8(1)]{Izbudak_Darboux}, and its symplectification is the trivial-group case
of \cite[Proposition 3.3]{Izbudak_Darboux}, whose equivariant refinement we do
not need. We record the statements in the form used below and cite the
companion paper for the proofs.

Let $V$ be a smooth affine $\K$-scheme with \'etale coordinates $x_1, \dots,
x_n$. The \bfem{1-jet space} $J^1(V) := T^*V \times \mathbb{A}^1$ carries the
coordinates $(x_i, y_i, z)$ and the canonical $0$-shifted contact form
\[ \alpha_{\mathrm{can}} = -d_{\DR}z + \sum_{i=1}^n y_i\, d_{\DR}x_i \]
with contact line bundle $\cO_{J^1(V)}$ \cite[Section 4.1]{kib2}. The
\bfem{1-jet prolongation} of a regular function $s \in \cO(V)$ is the section
\[ j^1 s \colon V \longrightarrow J^1(V), \qquad
x \longmapsto (x,\, d s(x),\, s(x)), \]
and $z_V \colon V \to J^1(V)$, $x \mapsto (x, 0, 0)$, denotes the zero section.
Both are Legendrian in the sense of Definition \ref{def:legendrian}, since
$(j^1 s)^* \alpha_{\mathrm{can}} = -d_{\DR}s + d_{\DR}s = 0$ and
$z_V^* \alpha_{\mathrm{can}} = 0$.

\begin{definition}[Derived discriminant locus, {\cite[Definition
  2.6]{Izbudak_Darboux}}]\label{def:discriminant}
  The \bfem{derived discriminant locus} of $s \in \cO(V)$ is the derived
  intersection of the 1-jet prolongation with the zero section,
  \[ \Delta\mathrm{loc}(s) := V \times^h_{j^1 s,\; J^1(V),\; z_V} V . \]
  By Theorem \ref{thm:review_intersection} it carries a canonical $-1$-shifted
  contact structure.
\end{definition}

\begin{lemma}[Koszul--Tate model, {\cite[Lemma 2.7]{Izbudak_Darboux}}]\label{lem:KT_model}
  Let $A$ be the quasi-free cdga generated over $\cO(V)$ by degree
  $-1$ generators
  $y_1, \dots, y_n$ and $z$, with differential
  \[ d(z) = s, \qquad d(y_i) = \frac{\partial s}{\partial x_i}, \]
  and let $\alpha := d_{\DR}z + \sum_{i=1}^n y_i\, d_{\DR}x_i \in
  \mathcal{A}^1(\Spec(A), \cO, -1)$. Then there is an equivalence
  \[ \Spec(A) \;\simeq\; \Delta\mathrm{loc}(s) \]
  of $-1$-shifted contact derived schemes under which $\alpha$
  corresponds to the
  contact form induced by $\alpha_{\mathrm{can}}$. The classical truncation of
  $\Delta\mathrm{loc}(s)$ is the closed subscheme
  $\{ s = 0 \} \cap \mathrm{Crit}(s) \subseteq V$.
\end{lemma}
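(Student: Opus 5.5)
The statement is Lemma~\ref{lem:KT_model}. The plan is to compute the derived fiber product $\Delta\mathrm{loc}(s) = V \times^h_{J^1(V)} V$ directly with a Koszul resolution. Then we check that the contact form transported from Theorem~\ref{thm:review_intersection} is the displayed $\alpha$, and finally read off the classical truncation.

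\emph{Step 1 (the underlying cdga).} The zero section $z_V \colon V \to J^1(V) = V \times \mathbb{A}^n_y \times \mathbb{A}^1_z$ is the regular closed embedding cut out by the sequence $y_1, \dots, y_n, z$. Its structure sheaf is therefore resolved over $\cO(J^1(V))$ by the Koszul complex on these $n+1$ functions, which is quasi-free on degree $-1$ generators $\eta_1, \dots, \eta_n, \zeta$ with $d\eta_i = y_i$ and $d\zeta = z$. Since $\cO(J^1(V))$ is flat over itself, the derived tensor product computing $\Delta\mathrm{loc}(s)$ is obtained by base changing this Koszul algebra along $(j^1 s)^* \colon \cO(J^1(V)) \to \cO(V)$. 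This map sends $x_i \mapsto x_i$, $y_i \mapsto \partial_i s$ and $z \mapsto s$. The result is exactly the algebra $A$, after renaming $\eta_i, \zeta$ to $y_i, z$. Taking $H^0$ gives $\cO(V)/(s, \partial_1 s, \dots, \partial_n s)$. This is the ring of $\{s=0\} \cap \mathrm{Crit}(s)$, which proves the last claim.

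\emph{Step 2 (the form).} The $-1$-shifted contact structure on a Legendrian intersection comes from the isotropic homotopies $h_1 \colon (j^1 s)^*\alpha_{\mathrm{can}} \simeq 0$ and $h_2 \colon z_V^*\alpha_{\mathrm{can}} \simeq 0$. It is their difference, viewed as a degree $-1$ one-form on the fiber product. For $z_V$ the pullback vanishes strictly, so $h_2 = 0$. For $j^1 s$ the pullback $-d_{\DR}s + \sum_i \partial_i s\, d_{\DR}x_i$ also vanishes strictly, so $h_1$ is trivial as well. The induced form is then the canonical degree $-1$ one-form built from the Koszul generators. In the model $A$ this is the form $\alpha'$ with $d\alpha' = (j^1s)^*\alpha_{\mathrm{can}} - z_V^*\alpha_{\mathrm{can}}$ at chain level, computed by replacing $y_i \rightsquigarrow$ its Koszul generator and $z \rightsquigarrow$ its generator in $\alpha_{\mathrm{can}}$. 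This yields $\pm d_{\DR}z + \sum_i y_i\, d_{\DR}x_i$.

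\emph{Step 3 (signs and nondegeneracy).} We check that $\alpha$ satisfies $d\alpha = d_{\DR}s - \sum_i \partial_i s\, d_{\DR}x_i = 0$ modulo the internal differential. This fixes the relative sign between $d_{\DR}z$ and the $y_i$ terms, and it is matched to the sign in $\alpha_{\mathrm{can}}$ by the orientation of the fiber product (the swap of factors). Nondegeneracy need not be checked by hand, since it is inherited from Theorem~\ref{thm:review_intersection}. The contact line bundle is trivial because $\cL = \cO_{J^1(V)}$.

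The main obstacle is Step 2: making precise that the abstract contact structure of the intersection theorem agrees, as a point of the space of contact forms and not merely up to homotopy class, with the explicit $\alpha$. My approach is to work throughout in the strict Koszul model, where both isotropic structures are strictly zero. The homotopy-coherent data then reduce to the chain-level difference above, and one appeals to the explicit symplectification computation of \cite{IzbudakBerktav2026}, in its trivial-group form, to identify the two.
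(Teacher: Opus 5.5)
The paper does not prove this lemma. It imports it from \cite[Lemma 2.7]{Izbudak_Darboux}, and the only internal trace of an argument is the remark that $\Delta\mathrm{loc}(s)$ is the derived zero locus of the $1$-jet, together with the symplectification computation of Theorem \ref{thm:atlas}. Your route is the natural one and is correct in substance. Step 1 is fine, but the justification should be that the Koszul algebra is semi-free, hence flat, over $\cO(J^1(V))$. That is why the underived base change along $(j^1 s)^*$ computes the derived tensor product. It gives $A$ and $H^0(A) = \cO(V)/(s, \partial_1 s, \dots, \partial_n s)$ as claimed.

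Steps 2 and 3 need two corrections.

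\emph{The source of the form.} Your description of the form as the difference of $h_1$ and $h_2$ is wrong as stated. Both are zero, so taken literally it produces the zero form. The degree $-1$ form comes from the homotopy $j^1 s \circ \mathrm{pr}_1 \simeq z_V \circ \mathrm{pr}_2$ filling the fiber square. In the model this homotopy is given by the Koszul generators, with $d\eta_i = \partial_i s - 0$ and $d\zeta = s - 0$. Transporting $\alpha_{\mathrm{can}}$ along this homotopy (the homotopy-operator formula) is what justifies your substitution rule. The rule is valid here precisely because $\alpha_{\mathrm{can}}$ is affine in $(y, z)$ and the homotopy on the $x_i$ is zero.

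\emph{The sign.} The change from $-d_{\DR}z$ in $\alpha_{\mathrm{can}}$ to $+d_{\DR}z$ in $\alpha$ is the Koszul sign $d(d_{\DR}z) = -d_{\DR}(dz) = -d_{\DR}s$. This is what makes $d\alpha = -d_{\DR}s + \sum_i \partial_i s\, d_{\DR}x_i = 0$. Swapping the factors of the fiber product changes only the overall sign of the form, never the relative sign between the two terms.

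The obstacle you flag can be closed concretely, and upstairs, which is where Theorem \ref{thm:review_intersection} builds its structure (cf. the proof of Lemma \ref{lem:conic_legendrian}).
\begin{enumerate}
\item With $p_i = t y_i$, the symplectification $J^1(V) \times \Gm$ with Liouville form $t\alpha_{\mathrm{can}}$ is $T^*(V \times \Gm)$, with $z$ conjugate to $t$.
\item The lift of $j^1 s$ is the graph of $d(t \cdot s)$, and the lift of $z_V$ is the zero section. So the symplectified intersection is $\dCrit(t \cdot s)$, with generator weights $\mathrm{wt}(t y_i) = 1$ and $\mathrm{wt}(z) = 0$.
\item Theorem \ref{thm:atlas} identifies $(\Spec(A), \alpha)$ equivariantly with the same space, namely $\Spec(A[t,t^{-1}])$ with $\omega = d_{\DR}(t\alpha)$.
\item For the Euler field $E = t\partial_t$ one has $\iota_E(t\alpha) = 0$, so Cartan's formula gives $\iota_E\omega = t\alpha$. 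Its weight-zero reduction is $\alpha$.
\end{enumerate}
This replaces your unspecified appeal to \cite{IzbudakBerktav2026} with a short explicit check.
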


The derived discriminant locus differs from the derived critical locus by the
additional equation $s = 0$, resolved by the generator $z$. Equivalently,
$\Delta\mathrm{loc}(s)$ is the derived zero locus of the $1$-jet of $s$, and $\dCrit(s)$ is the derived zero locus of $d s$. The generator $z$ carries the
contact direction, and Theorem \ref{lem:chart_sympl} identifies the
symplectification of $\Delta\mathrm{loc}(s)$ with $\dCrit(t \cdot s)$. Over a
quotient stack both sides acquire further Koszul--Tate generators in degree
$-2$. See \cite[Definition 3.1]{Izbudak_Darboux}.

\begin{definition}[Contact Darboux chart, {\cite[Definition
  3.1(i)]{Izbudak_Darboux}}]\label{def:chart}
  A \bfem{contact Darboux chart} is $\Spec(A)$ for $A$ the
  Koszul--Tate algebra of
  Lemma \ref{lem:KT_model} attached to a regular function $s$ on a smooth affine
  $\K$-scheme $U$ with \'etale coordinates $x_1, \dots, x_n$,
  extended by finitely
  many generators in degree $-2$, together with the form
  $\alpha = d_{\DR}z + \sum_i y_i\, d_{\DR}x_i$. A chart with no generators in
  degree $-2$ is $\Delta\mathrm{loc}(s)$.
\end{definition}

\begin{theorem}[Contact Darboux atlas, \cite{kib2, Izbudak_Darboux}]\label{thm:atlas}
  Every quasi-separated, locally finitely presented $-1$-shifted contact derived
  Artin stack $Z$ over $\K$ with affine stabilizers admits a smooth atlas by
  contact Darboux charts, each carrying as many generators in degree $-2$ as its
  relative dimension. If $Z$ admits an \'etale atlas, for instance if $Z$ is a
  derived scheme or a derived Deligne--Mumford stack, the charts are the contact
  Darboux schemes $\Delta\mathrm{loc}(s)$ of Lemma \ref{lem:KT_model}.

  Moreover, let $\Spec(A)$ be a chart with base $U$ of dimension $n$ and
  potential $s$, write $\widetilde{A} := A[t, t^{-1}]$ with $\Gm$ scaling $t$
  with weight $1$, and set
  \[ \widetilde{y}_i := t y_i, \qquad \zeta := z . \]
  Then $\widetilde{A}$ is the standard-form Koszul--Tate model of
  $\dCrit(\widetilde{f})$ over $\widetilde{U} := U \times \Gm$, where
  $\widetilde{f}(x,t) := t \cdot s(x)$ is homogeneous of $\Gm$-weight $1$, with
  degree $0$ generators $x_i, t$ and degree $-1$ generators $\widetilde{y}_i,
  \zeta$. The Liouville form $\lambda := t\alpha$ has $\Gm$-weight $1$ and
  satisfies $d_{\DR}\lambda = \omega$ for the standard symplectic form $\omega$
  of $\dCrit(\widetilde{f})$. The generators $x_i, y_i, z$ carry $\Gm$-weight
  $0$ and $t$ carries weight $1$, so $A = \widetilde{A}^{\Gm}$, and
  $\Spec(\widetilde{A})$ is the symplectification of
  $\Spec(A)$.\label{lem:chart_sympl}
\end{theorem}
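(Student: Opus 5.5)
The statement has two parts of different nature. The \emph{Moreover} part is an explicit computation with Koszul--Tate models, and I would prove it first. The existence of the atlas is the substantive part, and I would reduce it to the $-1$-shifted symplectic Darboux theorem applied to the symplectification, along the lines of \cite[Theorem 3.7]{kib2} and \cite[Theorem 3.8(1)]{Izbudak_Darboux}.

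For the chart computation, I would write down the standard Koszul--Tate model of $\dCrit(\widetilde{f})$ over $\widetilde{U} = U \times \Gm$. It is generated over $\cO(U)[t,t^{-1}]$ by degree $-1$ generators $\eta_i$ dual to $x_i$ and $\zeta$ dual to $t$, with
\[ d\eta_i = \partial_{x_i}\widetilde{f} = t\,\partial_i s, \qquad d\zeta = \partial_t \widetilde{f} = s . \]
In $\widetilde{A} = A[t,t^{-1}]$ one has $d(t y_i) = t\,\partial_i s$ and $d(z) = s$. Since $t$ is invertible, the substitution $\eta_i \mapsto \widetilde{y}_i = t y_i$, $\zeta \mapsto z$ is an isomorphism of quasi-free cdgas over $\cO(U)[t^{\pm 1}]$. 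Any degree $-2$ generators of the chart are adjoined with $\Gm$-weight $0$ on both sides, and their differentials are unchanged.

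Next I would compute $d_{\DR}\lambda$ for $\lambda = t\,d_{\DR}z + \sum_i t y_i\, d_{\DR}x_i$. This gives
\[ d_{\DR}\lambda = d_{\DR}t \wedge d_{\DR}\zeta + \sum_i d_{\DR}\widetilde{y}_i \wedge d_{\DR}x_i , \]
which is the standard Darboux form on $\dCrit(\widetilde{f})$ after the usual sign conventions of \cite{BenBassat2015}. The weight statements are then immediate. The elements $x_i, y_i, z$ have weight $0$ and $t$ has weight $1$, so $\widetilde{f}$, $\lambda$ and $\omega$ all have weight $1$. Taking the weight-zero part of $A[t^{\pm1}]$ returns $A$, so $A = \widetilde{A}^{\Gm}$.

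To identify $\Spec(\widetilde{A})$ with the symplectification, I would use the weight-one quotient of Theorem \ref{thm:symplectification}. Contracting $\omega$ with the fundamental vector field $t\partial_t$ returns $\lambda = t\alpha$ up to an exact term, and its weight-zero reduction is $\alpha$. The uniqueness of the symplectification then identifies $\Spec(\widetilde{A}) \to \Spec(A)$ with $p$.

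For the existence of the atlas, I would pass to the symplectification $\widetilde{Z}$. This is $-1$-shifted symplectic, with a free $\Gm$-action of weight $1$. The goal is a Darboux chart for $\widetilde{Z}$ whose base is of the form $U \times \Gm$, with $\Gm$ acting on the second factor and the potential homogeneous of weight one, hence of the form $t \cdot s(x)$. My first attempt would be to trivialize the torsor smooth-locally on $Z$, then choose a Darboux chart for $\widetilde{Z}$ adapted to the slice $t = 1$, and spread it out by the action. Here one uses that the Darboux theorem \cite[Theorem 2.10]{BenBassat2015} produces minimal charts at a point, so that each point contributes as many degree $-2$ generators as the relative dimension of the atlas. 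In the \'etale case there are no degree $-2$ generators, so the charts are $\Delta\mathrm{loc}(s)$ by Lemma \ref{lem:KT_model}.

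The main obstacle is making the Darboux chart for $\widetilde{Z}$ compatible with the $\Gm$-action, so that the potential is homogeneous and the Liouville form has weight one. The freeness of the action is what should make this possible without the equivariant machinery of \cite{Izbudak_Darboux}. Freeness lets one reduce to a product $\Gm$-action, then average, or equivalently read off the weight-one component of the potential. Here I would lean directly on \cite[Theorem 3.7]{kib2} rather than reprove it.
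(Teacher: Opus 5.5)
Your proposal is correct and takes the paper's approach. The paper also defers existence of the atlas to \cite[Theorem 3.7]{kib2}, in the form of \cite[Theorem 3.8(1)]{Izbudak_Darboux}, and treats the \emph{Moreover} part as bookkeeping of generators and weights, citing \cite[Proposition 3.3 and Remark 3.4]{Izbudak_Darboux}; your substitution $\widetilde{y}_i = t y_i$, $\zeta = z$ and your computation of $d_{\DR}(t\alpha)$ carry that bookkeeping out correctly. Your heuristic of taking the weight-one part of a non-equivariant Darboux potential would not give a chart on its own, since the chart map itself must first be $\Gm$-equivariant, but you rest on the cited theorem rather than that sketch, so nothing is missing.
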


The first assertion is \cite[Theorem 3.7]{kib2} in the form of \cite[Theorem
3.8(1)]{Izbudak_Darboux}, and the second is \cite[Proposition
3.3]{Izbudak_Darboux} for the trivial group, the bookkeeping of generators,
degrees and weights being tabulated in \cite[Remark 3.4]{Izbudak_Darboux}.

\section{Contact Orientations and \texorpdfstring{$\ell$}{l}-adic
Monodromic Perverse Sheaves}
\label{sec:perverse}

Throughout this section and \S\ref{sec:joyce}, $Z$ denotes an oriented
$-1$-shifted contact derived Artin stack over $\K$, with symplectification
$p \colon \widetilde{Z} \to Z$, and charts are the contact Darboux charts of
Theorem \ref{thm:atlas}. Statements requiring $Z$ proper say so.

\subsection{Contact Orientation Data}

The Donaldson--Thomas sheaf of a $-1$-shifted symplectic stack depends on a
choice of square root of its canonical bundle. On a contact stack the square
root has to be taken upstairs, on the symplectification, and has to be compatible with the $\Gm$-action, which is what carries the contact structure. This forces the following definition.

\begin{definition}\label{def:orientation}
  We define \bfem{contact orientation data} on a $-1$-shifted
  contact derived stack $Z$ to be $\mathbb{G}_m$-equivariant
  symplectic orientation data on its symplectification
  $\widetilde{Z}$, which corresponds to a choice of a square root
  $R_Z$ of the canonical bundle $K_Z$ \cite[Section
  2.5]{Joyce_dCrit}. Following the analogous definition for
  Lagrangians \cite[Definition 5.3]{AmorimBassat}, an orientation
  for a Legendrian morphism $f \colon L \to Z$ is defined on the
  symplectification, where $\widetilde{f} \colon \widetilde{L} \to
  \widetilde{Z}$ is Lagrangian. An orientation is a $\Gm$-equivariant isomorphism of line
  bundles $K_{\widetilde{L}} \simeq \widetilde{f}^*(p^* R_Z)$ whose tensor
  square is the canonical isomorphism $(K_{\widetilde{L}})^{\otimes 2} \simeq
  \widetilde{f}^*(K_{\widetilde{Z}})$. For Legendrians of odd virtual dimension
  this notion requires a monodromic refinement, given in Definition
  \ref{def:graded_orientation}.
\end{definition}

The definition is made upstairs. The corresponding statement downstairs is twisted. Taking determinants in the Legendrian fiber sequence of
\cite[Definition 2.39]{IzbudakBerktav2026} we obtain
\[ (K_L)^{\otimes 2} \simeq f^*\bigl( K_Z \otimes
  \cL^{\otimes -(\vdim L + 1)} \bigr) . \]
Note that the twist disappears on $\widetilde{Z}$, where $p^*\cL$ is trivialized by
the tautological section, which has weight $1$. This is the same weight
discrepancy computed in Proposition \ref{prop:sign_character}.

The exponent $\vdim L + 1$ is the source of the parity obstruction of Section
\ref{sec:joyce}. It reappears as the residual twist of Step 2 of the proof of
Proposition \ref{prop:local_verification}, and forces the grading of Definition
\ref{def:graded_orientation}.

Contact orientation data need not exist. The equivariance requirement is not an
extra condition on the square root, and the obstruction is the same one that
governs orientability downstairs.

\begin{proposition}[\'Etale Obstruction to
  Orientability]\label{prop:orientation_obstruction}
  The projection $p \colon \widetilde{Z} \to Z$ is a principal
  $\mathbb{G}_m$-bundle associated to the contact line bundle $\cL$
  \cite[Definition 4.3]{kib1}. The topological obstruction to
  orientability resides in the \'etale cohomology group
  $H^2_{\mathrm{et}}(Z, \mu_2)$, the \'etale analogue of the
  vanishing of a second Stiefel--Whitney class.
\end{proposition}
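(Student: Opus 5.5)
The plan is to prove the proposition in two steps. First, identify $p$ as the frame bundle of $\cL$. Second, reduce orientability, meaning the existence of a $\Gm$-equivariant square root of $K_{\widetilde{Z}}$, to the vanishing of a class in $H^2_{\mathrm{et}}(Z,\mu_2)$ through the Kummer sequence.

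\textbf{Step 1: $p$ is the frame bundle of $\cL$.} By Theorem \ref{thm:symplectification} and \cite[Definition 4.3]{kib1}, $\widetilde{Z}$ is the complement of the zero section in the total space of $\cL$ (or of $\cL^\vee$, depending on the weight convention). Its structural action is fibrewise scaling, and the tautological section trivializes $p^*\cL$ with weight $1$. This is the statement that $p$ is associated to $\cL$, and it is essentially a matter of unwinding definitions.

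\textbf{Step 2: descend the problem to $Z$.} Pullback along a free $\Gm$-torsor gives an equivalence
\[ \mathrm{Pic}^{\Gm}(\widetilde{Z}) \simeq \mathrm{Pic}(Z), \]
and likewise for $\Gm$-equivariant isomorphisms. Hence a $\Gm$-equivariant square root of $K_{\widetilde{Z}}$ is the same thing as a square root $R_Z$ of the line bundle $K'$ on $Z$ that descends $K_{\widetilde{Z}}$. Using the determinant of the fibre sequence $\cO\to\TT_{\widetilde{Z}}\to p^*\TT_Z$, one identifies $K'$ with $K_Z$ up to a power of $\cL$. This matches Definition \ref{def:orientation}, and the remark following it already records the twist.

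\textbf{Step 3: the Kummer obstruction.} The Kummer sequence $1\to\mu_2\to\Gm\xrightarrow{2}\Gm\to1$ is exact on the étale (or lisse-étale) site, since $\mathrm{char}\,\K\neq 2$. Its long exact sequence contains
\[ H^1(Z,\Gm)\xrightarrow{2}H^1(Z,\Gm)\xrightarrow{\delta}H^2_{\mathrm{et}}(Z,\mu_2). \]
A line bundle $K'$ therefore has a square root exactly when $\delta[K']=0$. This class is the analogue of $w_2$, or more precisely of the mod-$2$ reduction of $c_1$.

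Two further points need to be addressed along the way. First, the relevant line bundles live on $Z^{\mathrm{red}}$, by Joyce's convention. Since $Z^{\mathrm{red}}\hookrightarrow Z$ is a nil-immersion, étale cohomology with $\mu_2$ coefficients is unchanged by topological invariance of the étale site. Second, the Kummer argument needs the cohomology of stacks to be computed on the lisse-étale site, which the conventions of \S\ref{subsec:elladic} permit.

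\textbf{Main obstacle.} The delicate step is showing that equivariance imposes no extra condition. A priori the $\Gm$-equivariant structure on a square root could carry a half-integral weight. Here the weight $1$ of the form, and hence the weight of $K_{\widetilde{Z}}$, could fail to be even. I would handle this by computing the weight of $K_{\widetilde{Z}}$ on a chart from Theorem \ref{lem:chart_sympl}.

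If the weight is odd, a weight-compatible square root would be forced to involve the quadratic Kummer system $\mathscr{K}$. In that case I would instead define the obstruction as $\delta$ of $K_Z\otimes\cL^{\otimes k}$ for the appropriate $k$. It still lies in $H^2_{\mathrm{et}}(Z,\mu_2)$, which is all the proposition asserts.
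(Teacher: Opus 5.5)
Your proposal is correct and follows the paper's route: the equivalence $p^*$ between line bundles on $Z$ and $\Gm$-equivariant line bundles on $\widetilde{Z}$, followed by the Kummer boundary $H^1(Z,\Gm)\to H^2_{\mathrm{et}}(Z,\mu_2)$.

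Your Step 2 hedge (``up to a power of $\cL$'') and your ``main obstacle'' both disappear once you note, as the paper does, that $\LL_{\widetilde{Z}/Z}\simeq\cO_{\widetilde{Z}}$ is trivialized by the invariant form $d_{\DR}t/t$. Equivalently, the Euler vector field spanning $\cO\to\TT_{\widetilde{Z}}$ has weight $0$. Hence $\det\LL_{\widetilde{Z}}\simeq p^*\det\LL_Z$ equivariantly, the descended bundle is exactly $K_Z$ with no power of $\cL$, and an honest $\Gm$-equivariant square root simply descends, since weights are not intrinsic on a free torsor. The weight of $\omega$ enters only when one compares with the Darboux square root $K_{\widetilde{U}}$, which is Proposition \ref{prop:sign_character} and not this statement.
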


\begin{proof}
  The relative cotangent sequence for $p \colon \widetilde{Z} \to Z$
  gives the exact triangle \[ p^*\LL_Z \to \LL_{\widetilde{Z}} \to
  \LL_{\widetilde{Z}/Z} \to p^*\LL_Z[1].\] As the fiber is
  $\mathbb{G}_m$, the relative cotangent complex
  $\LL_{\widetilde{Z}/Z}$ is equivalent to the rank $1$ trivial
  bundle $\cO_{\widetilde{Z}}$. The exact triangle implies that the
  determinant line bundles satisfy \[\det(\LL_{\widetilde{Z}}) \simeq
  p^*(\det(\LL_Z)) \otimes \det(\LL_{\widetilde{Z}/Z}) \simeq
  p^*(\det(\LL_Z)) \otimes \cO_{\widetilde{Z}} \simeq
  p^*(\det(\LL_Z))\]. A $\mathbb{G}_m$-equivariant square root of
  $\det(\LL_{\widetilde{Z}})$ is thus given by a square root of the
  line bundle $\det(\LL_Z)$ over the base $Z$. As $p$ is a principal
  $\mathbb{G}_m$-bundle, the pullback $p^*$ is an equivalence between line
  bundles on $Z$ and $\mathbb{G}_m$-equivariant line bundles on
  $\widetilde{Z}$, so the topological obstruction to orientability
  resides in the \'etale cohomology group $H^2_{\mathrm{et}}(Z,
  \mu_2)$ via the boundary map of the Kummer exact sequence $1 \to
  \mu_2 \to \mathbb{G}_m \xrightarrow{2} \mathbb{G}_m \to 1$.
\end{proof}

In the situation of the contact Darboux atlas the obstruction vanishes on each
chart, and there is a preferred choice.

\begin{remark}\label{rem:canonical_orientation}
  A derived discriminant locus $\Delta\mathrm{loc}(s)$ attached to a regular
  function $s$ on a smooth $\K$-scheme $V$ of dimension $N$ is canonically
  oriented. By Lemma \ref{lem:KT_model} its cotangent complex has
  $\Omega^1_V$ in
  degree $0$ and, in degree $-1$, the rank $N+1$ module spanned by
  $d y_1, \dots, d y_N, dz$, namely $T_V \oplus \cO_V$. Hence
  \[ \det\bigl(\LL_{\Delta\mathrm{loc}(s)}\bigr)
    = \det(\Omega^1_V) \otimes \det(T_V \oplus \cO_V)^{-1}
  = \omega_V \otimes \omega_V \]
  restricted to $\Delta\mathrm{loc}(s)$, and $R :=
  \omega_V|_{\Delta\mathrm{loc}(s)}$
  is a square root. The two determinants cancel because the contact
  line is trivial
  here, $s$ being a function rather than a section. For a nontrivial
  contact line
  the obstruction of Proposition \ref{prop:orientation_obstruction} is not
  formally resolved in this way.
\end{remark}

\subsection[Slice Pairs and Descent Along a Gm-Torsor]{Slice Pairs
and Descent Along a \texorpdfstring{$\Gm$}{Gm}-Torsor}

An orientation of $\widetilde{Z}$ produces a perverse sheaf upstairs, and the
question is what of it survives on $Z$. The projection $p$ is a $\Gm$-torsor, so
a complex on $\widetilde{Z}$ that is locally constant along the orbits is
determined by its restriction to a slice together with the monodromy around the
orbit. We fix that language here and record the descent statements the rest of
the paper uses.

\begin{definition}[Slice pairs and the monodromic sign
  twist]\label{def:slicepair}
  Let $p \colon \widetilde{Y} \to Y$ be a trivialized principal $\Gm$-bundle,
  $\widetilde{Y} \simeq Y \times \Gm$, with unit section $u \colon Y
  \to \widetilde{Y}$,
  and let $N \in D^b_c(\widetilde{Y}_{\mathrm{et}}, \F)$ be
  monodromic in the sense
  of Definition \ref{def:monodromic}. The \bfem{slice pair} of $N$ is
  \[ (B_N, T_N), \qquad B_N := u^* N[-1], \]
  where $T_N \in \mathrm{Aut}(B_N)$ is the automorphism induced by the fiberwise
  monodromy. A different section of $p$ differs from $u$ by a translation, which
  changes this identification by a power of $T_N$. Any construction
  invariant under
  $T_N$ is therefore independent of the trivialization.

  Let $\mathscr{K}$ denote the Kummer local system on $\Gm$ attached
  to the quadratic
  character, and set $\Lambda := \mathrm{pr}_2^* \mathscr{K}$ on
  $\widetilde{Y}$, the
  \bfem{monodromic sign twist}. For monodromic $N$, the complex $N
  \otimes \Lambda$ is
  monodromic with slice pair $(B_N, -T_N)$, and $\Lambda^{\otimes 2}$
  is canonically
  trivial.
\end{definition}

\begin{remark}\label{rem:pullback_slice}
  For $A \in D^b_c(Y_{\mathrm{et}}, \F)$, the pullback $p^*A[1]$ is monodromic
  with slice pair $(A, \mathrm{id})$. Its restriction to a fiber is the constant
  complex with value $A_y[1]$, so the cohomology sheaves are constant along the
  fibers and the monodromy automorphism is the identity, while the slice is
  $u^*p^*A[1][-1] = A$. In particular, a monodromic complex whose monodromy
  automorphism is nontrivial is not of the form $p^*A[1]$.
\end{remark}

What a monodromic complex records beyond its slice is the monodromy
automorphism, and the pushforward along $p$ sees only the part of it where that
automorphism is trivial. The following lemma makes this precise and provides
the descent and base change statements used throughout.

\begin{lemma}[Monodromic sheaves on a $\Gm$-torsor]\label{lem:monodromic_descent}
  Let $p \colon \widetilde{Y} \to Y$ be as in Definition
  \ref{def:slicepair} and let
  $N$ be monodromic with slice pair $(B, T)$.
  \begin{enumerate}
    \item There is a canonical exact triangle
      \[ B \xrightarrow{\;T - \mathrm{id}\;} B(-1) \longrightarrow Rp_* N
      \xrightarrow{\;+1\;}. \]
    \item For every $A \in D^b_c(Y_{\mathrm{et}}, \F)$ there is a
      short exact sequence,
      functorial in $A$ and $N$,
      \[ 0 \to \Hom_{Y}(A, B[-1](-1))_{T} \to
        \Hom_{\widetilde{Y}}(p^*A[1], N) \xrightarrow{\;\mathrm{sl}\;}
      \Hom_{Y}(A, B)^{T} \to 0, \]
      where $(-)^T$ and $(-)_T$ denote invariants and coinvariants of
      post-composition
      with $T$, and the \bfem{slice map} $\mathrm{sl}$ is induced by
      the morphism
      $Rp_*N \to B[1]$ of the triangle in (1). The image of
      $\mathrm{sl}$ is independent
      of the trivialization of the torsor.
  
    \item For a morphism $g \colon W \to Y$ of finite type with induced Cartesian
      morphism of torsors $\widetilde{g} \colon \widetilde{W} \to \widetilde{Y}$,
      the complex $\widetilde{g}^! N$ is monodromic with slice pair
      $(g^! B, g^! T)$, and $\mathrm{sl}$ intertwines $\widetilde{g}^!$ with
      $g^!$.
  \end{enumerate}
\end{lemma}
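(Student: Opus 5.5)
Everything reduces to the trivialized case $\widetilde{Y} \simeq Y \times \Gm$, and I would work throughout with the Künneth description of a monodromic complex. By Theorem \ref{thm:monodromy_facts}(2), $N$ is determined by its slice pair. Concretely, $N \simeq \mathrm{pr}_1^* B \otimes \mathrm{pr}_2^*\mathscr{L}_T[1]$, where $\mathscr{L}_T$ denotes the local system on $\Gm$ whose monodromy $\gamma$ acts by $T$. This is understood locally on $Y$, or through the pro-unipotent/tame formalism when $T$ is not of finite order. For a general monodromic $N$ one says instead that $N$ lies in the essential image of Verdier's equivalence, which is all that is needed.

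For (1), compute $Rp_* N = R\mathrm{pr}_{1*} N$ by the projection formula and Künneth. Cohomology of $\Gm$ with coefficients in a tame local system is computed by the two-term complex $V \xrightarrow{\gamma - 1} V(-1)$. This uses $\pi_1^{\mathrm{tame}}(\Gm) = \hat{\mathbb{Z}}'(1)$, whose group cohomology is given by the invariants and the coinvariants twisted by $(-1)$. Inserting $V = B$ with $\gamma = T$ and accounting for the shift $[1]$ gives the triangle $B \xrightarrow{T-\mathrm{id}} B(-1) \to Rp_*N[\ldots]$. The shift conventions are exactly those fixed in Theorem \ref{thm:monodromy_facts}(2), so I would simply cite that normalization rather than redo the computation. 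Canonicity comes from functoriality of the Verdier equivalence.

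For (2), use the adjunction $\Hom_{\widetilde{Y}}(p^*A[1], N) = \Hom_Y(A, Rp_*N[-1])$ and apply $\Hom_Y(A,-)$ to the shifted triangle from (1). The resulting long exact sequence reads
\[ \Hom(A,B[-1]) \xrightarrow{T-1} \Hom(A,B[-1](-1)) \to \Hom(A, Rp_*N[-1]) \to \Hom(A,B) \xrightarrow{T-1} \Hom(A,B(-1)). \]
The cokernel of the first arrow gives the coinvariants. Using the fixed identification $\F(1) \simeq \F$, the kernel of the last arrow gives the invariants, and the connecting map $Rp_*N[-1] \to B$ is the slice map. For the independence claim, note that changing the section alters the identification of $B$ by a power of $T$, as in Definition \ref{def:slicepair}. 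On $T$-invariants this acts as the identity, so the image of $\mathrm{sl}$ is unchanged.

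For (3), the fiber square gives $\widetilde{W} \simeq W \times \Gm$. Then $\widetilde{g}^! (\mathrm{pr}_1^*B \otimes \mathrm{pr}_2^*\mathscr{L}) \simeq \mathrm{pr}_1^* g^! B \otimes \mathrm{pr}_2^*\mathscr{L}$, because $\widetilde{g} = g \times \mathrm{id}$ and $!$-pullback commutes with exterior products against a smooth lisse factor (Künneth, \cite{LZ}). Local constancy along the fibers is therefore preserved, and the slice pair becomes $(g^!B, g^!T)$. Base change $g^! Rp_* \simeq R\widetilde{p}_* \widetilde{g}^!$ identifies the triangles of (1), so $\mathrm{sl}$ is intertwined with $g^!$.

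I expect the main obstacle to be the canonical form of (1) on Artin stacks. One must check that the Verdier equivalence, and hence the triangle, is natural and compatible with $g^!$ and base change in the Laszlo--Olsson/Liu--Zheng setting. I would handle this by reducing to a smooth atlas of $Y$ and using that all constructions are compatible with smooth pullback, together with the gluing of perverse sheaves \cite[Proposition 7.1]{LO3}.
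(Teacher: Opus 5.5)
Your proposal is essentially the paper's proof. Part (1) is the same tame-group-cohomology computation $[B \xrightarrow{T-\mathrm{id}} B(-1)]$ on the $\Gm$-fibres. Part (2) is the same adjunction plus long exact sequence, with the same change-of-section argument. In (3), monodromicity of $\widetilde{g}^!N$ likewise comes from $(g\times\mathrm{id})^!$ commuting with exterior products.

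The one variation is in how (3) identifies the slice. You read it off a global product form $N\simeq \mathrm{pr}_1^*B\otimes\mathrm{pr}_2^*\mathscr{L}_T[1]$ and then use $g^!Rp_*\simeq R\widetilde{p}_*\widetilde{g}^!$. The paper instead computes the slice directly from $u^!N\simeq u^*N[-2](-1)$ and $\widetilde{g}\circ u_W=u_Y\circ g$. That route uses only local constancy along the fibres, so it never needs $T$ to be realized by a local system on $\Gm$.
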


\begin{proof}~
\begin{enumerate}
    \item For a tame monodromic complex $M$ on $\Gm$ over $\K$ there
      is a canonical
      identification $R\Gamma((\mathbb{G}_{m})_{\mathrm{et}}, M)
      \simeq [\, M_1 \xrightarrow{T -\mathrm{id}} M_1(-1) \,]$ in
      degrees $0$ and $1$, computing the cohomology of the
      tame fundamental group $\pi_1^{\mathrm{et}}(\Gm) \simeq
      \hat{\mathbb{Z}}(1)$
      \cite{Verdier}. Applying this fiberwise to $N$, whose
      restriction to the fiber over
      $y$ is $B_y[1]$ with monodromy $T$, gives
      $(Rp_*N)_y \simeq \mathrm{Cone}(T - \mathrm{id} \colon B_y \to
      B_y(-1))$, naturally
      in $y$. The triangle follows. For $N = p^*A[1]$ the monodromy is the
      identity by Remark \ref{rem:pullback_slice}, so $T - \mathrm{id}$ vanishes
      and the triangle splits as
      \[ Rp_* p^* A[1] \simeq A[1] \oplus A(-1) , \]
      the two summands being the degree $0$ and degree $1$ cohomology of the
      $\Gm$-fiber.

    \item Apply $\Hom_Y(A[1], -)$ to the triangle in (1) and use the adjunction
      $\Hom_{\widetilde{Y}}(p^*A[1], N) \simeq \Hom_Y(A[1], Rp_*N)$.
      The resulting long
      exact sequence yields the stated short exact sequence. A change
      of trivialization
      modifies the slice identification of $(B, T)$ by a power of $T$. This acts
      trivially on $\Hom_Y(A, B)^T$ and on the $T$-coinvariants, so
      the sequence and the
      image of $\mathrm{sl}$ are independent of the choice.
  
    \item Write $\widetilde{g} := g \times_Y \widetilde{Y}$, so that
      \[
        \begin{tikzcd}[row sep=large, column sep=large]
          \widetilde{W} \arrow[r, "\widetilde{g}"] \arrow[d, "p_W"'] &
          \widetilde{Y} \arrow[d, "p"] \\
          W \arrow[r, "g"'] \arrow[u, bend left=45, "u_W"] & Y \arrow[u,
          bend right=45, "u_Y"'] .
        \end{tikzcd}
      \]
      For monodromicity, work locally on $Y$. After pullback along a finite Kummer
  covering $[k] \colon \Gm \to \Gm$ trivializing the finite-order part of the
  monodromy, a tame monodromic complex is an iterated extension of
  external products
  $M \boxtimes \mathscr{K}_\chi$ with $\mathscr{K}_\chi$ a Kummer
  local system, and
  \[(g \times \mathrm{id})^!(M \boxtimes \mathscr{K}_\chi) \simeq g^! M \boxtimes
  \mathscr{K}_\chi,\] so $\widetilde{g}^! N$ is monodromic. For the
  slice pair,
  note that $u_Y$ is a section of a smooth morphism of relative
  dimension $1$ and
  $N$ has locally constant cohomology sheaves in the fiber direction, so
  $u_Y^! N \simeq u_Y^* N[-2](-1)$, and similarly for $u_W$. Then
  \[ u_W^* \widetilde{g}^! N[-1] \simeq u_W^! \widetilde{g}^! N[1](1)
  = g^! u_Y^! N[1](1) \simeq g^!(u_Y^* N[-1]) = g^! B, \]
  using $\widetilde{g} \circ u_W = u_Y \circ g$ and the composition
  of exceptional
  pullbacks. The identification carries the monodromy automorphism to
  $g^! T$, and
  the compatibility of $\mathrm{sl}$ follows from the functoriality
  of the triangle in part (1) under $g^!$.
  \end{enumerate}
\end{proof}

\subsection{The Twisted Monodromy Operator}

The monodromy of the vanishing cycles of a chart does not glue across the atlas. A change of chart alters the dimension of the base. Twisting by the
parity of that dimension repairs this, and the twisted operator is the one that
descends.

\begin{definition}[Twisted monodromy operator of a chart]\label{def:twisted}
  Let $(U, s)$
  be a contact Darboux chart of Theorem \ref{thm:atlas}, with smooth base $U$ of
  dimension $n$, potential $s \in \cO(U)$ and symplectified chart
  $\widetilde{U} = U \times \Gm$ carrying the homogeneous potential
  $\widetilde{f} = t \cdot s$. Write
  $\mathcal{PV}_{U, \widetilde{f}} := \phi^p_{\widetilde{f}}(\F[\dim
  \widetilde{U}])$ for the
  perverse vanishing cycles local model, and $T_{U, \widetilde{f}}$ for its
  geometric monodromy along the $\Gm$-orbits. The \bfem{twisted monodromy
  operator} of the chart is
  \[ \theta_{U, \widetilde{f}} := (-1)^{n}\, T_{U, \widetilde{f}}
  = (-1)^{\dim \widetilde{U} - 1}\, T_{U, \widetilde{f}}. \]
  We let $\theta_{U, \widetilde{f}}$ act as the identity on the orientation
  $\mu_2$-local systems $Q_{U, \widetilde{f}}$ of Theorem \ref{thm:BBDJS_sheaf}, and on the twisted local model
  $\mathcal{PV}_{U, \widetilde{f}} \otimes_{\mu_2} Q_{U, \widetilde{f}}$ as
  $\theta_{U, \widetilde{f}} \otimes \mathrm{id}$.
\end{definition}

Brav, Bussi, Dupont, Joyce and Szendr\H{o}i write $M_{U, f}$ for the geometric
monodromy operator on $\mathcal{PV}_{U, f}$ and define the twisted monodromy
operator $\tau_{U, f} = (-1)^{\dim U} M_{U, f}$ \cite[\S 2.4]{BBDJS}. The
corresponding operator on a d-critical stack is the second isomorphism of
Theorem \ref{thm:BBDJS_sheaf}. In this paper $T$ denotes that operator, transported
to $\mathcal{P}_{\widetilde{Z}}$.

The symplectified chart has dimension
$\dim \widetilde{U} = n + 1$, so $\tau_{\widetilde{U}, \widetilde{f}} =
(-1)^{n+1} M$ while $\theta_{U, \widetilde{f}} = (-1)^{n} M$, and the two
differ by a sign. That sign is computed intrinsically in Proposition
\ref{prop:sign_character}, where it appears as the character
$\varepsilon_U = (-1)^{n+1}$ of the $\Gm$-action on the orientation, so the
relation $T = -\theta$ arises twice by independent routes.

\begin{proposition}[Gluing of the twisted operator]\label{prop:theta_glues}
  The chartwise operators $\theta_{U, \widetilde{f}}$ commute with
  the comparison
  isomorphisms entering the construction of $\mathcal{P}_{\widetilde{Z}}$ in
  Theorem \ref{thm:BBDJS_sheaf}, and therefore glue to a canonical tame
  automorphism $\theta$ of $\mathcal{P}_{\widetilde{Z}}$, compatible with the
  monodromic structure and independent of all choices of charts. Under the slice
  correspondence of Lemma \ref{lem:monodromic_descent} it induces a
  canonical tame
  automorphism $\theta$ of $\mathcal{P}_Z$ commuting with the
  descended monodromy
  automorphism.
\end{proposition}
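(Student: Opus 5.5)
The strategy is to reduce gluing of $\theta$ to gluing of the geometric monodromy, which BBDJS already establish, and then control the parity sign. By Theorem \ref{thm:BBDJS_sheaf}, $\mathcal{P}_{\widetilde{Z}}$ is glued from the local models $\mathcal{PV}_{\widetilde{U},\widetilde{f}} \otimes_{\mu_2} Q_{\widetilde{U},\widetilde{f}}$. The gluing uses two kinds of comparison isomorphism: \'etale comparisons between charts of equal dimension, and the stabilization isomorphisms $\Theta_\Phi$ of Example \ref{ex:quadratic} for closed embeddings $\Phi \colon \widetilde{U} \hookrightarrow \widetilde{V}$. Since $\theta_{U,\widetilde{f}}$ is $(-1)^n T_{U,\widetilde{f}}$ tensored with the identity on $Q$, it suffices to check two things for each kind of comparison:
\begin{itemize}
\item[(a)] it intertwines the geometric monodromies, up to whatever action it induces on the $\mu_2$-torsors;
\item[(b)] the scalar signs $(-1)^n$ on the two sides, together with that action, match.
\end{itemize}

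For \'etale comparisons, (a) follows because vanishing cycles commute with \'etale pullback compatibly with $\gamma$. Part (b) is automatic, since $\dim \widetilde{U}$ is preserved.

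For stabilizations, I would first reduce to Thom--Sebastiani. Locally $\widetilde{g} = \widetilde{f} \boxplus q$ on the normal directions, with $q$ a nondegenerate quadratic form of rank $r = \dim \widetilde{V} - \dim \widetilde{U}$. Theorem \ref{thm:TS}, in its monodromic form from \cite[Theorem 4.2]{Descombes}, identifies $T_{\widetilde{g}}$ with $T_{\widetilde{f}} \boxtimes T_q$. By Example \ref{ex:quadratic} and Theorem \ref{thm:monodromy_facts}(3), $T_q = (-1)^r$ on $\mathcal{PV}_q \cong \F_{\{0\}}$. Hence $\Theta_\Phi$ carries $T_{\widetilde{f}}$ to $(-1)^r T_{\widetilde{g}}$, and therefore carries $(-1)^n T_{\widetilde{f}}$ to $(-1)^{n+r} T_{\widetilde{g}}$, which is exactly $\theta$ for the chart of base dimension $n+r$. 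This is the same bookkeeping that makes $\tau_{U,f} = (-1)^{\dim U} M_{U,f}$ glue in \cite[\S 2.4]{BBDJS}.

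The subtle point is the twist by $P_\Phi$ and by $Q$. Since $\theta$ acts as the identity on these $\mu_2$-torsors by definition, I must check that the isomorphism $P_\Phi \otimes Q_{\widetilde{V}} \simeq Q_{\widetilde{U}}$ commutes with $\theta \otimes \mathrm{id}$. It does, because it is an isomorphism of $\mu_2$-local systems that involves no monodromy operator.

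I expect the main obstacle to be monodromy along the $\Gm$-orbit, as opposed to the Milnor monodromy $\gamma$. Without care one could confuse the two. I would argue that, for the weight-one homogeneous potential $t \cdot s$, the action of $\lambda \in \Gm$ identifies $\phi_{\widetilde{f}}$ at $(x,t)$ with $\phi_{\lambda\widetilde{f}}$ at $(x,\lambda t)$. Transport around the orbit therefore coincides with the action of $\gamma$. This is what makes $T_{U,\widetilde{f}}$ simultaneously the fiberwise monodromy of Lemma \ref{lem:monodromic_descent} and the BBDJS monodromy, so that comparisons respect it. The same identification shows that $\mathcal{P}_{\widetilde{Z}}$ is monodromic and $\theta$ is compatible with the monodromic structure. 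Tameness follows from quasi-unipotence, Theorem \ref{thm:monodromy_facts}(1).

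Independence of choices follows because any two charts are compared through a common stabilization, and the glued automorphism is unique by the stack property \cite[Proposition 7.1]{LO3}.

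Finally, to descend, I apply Lemma \ref{lem:monodromic_descent} chartwise. The slice pair $(B,T)$ defines $\mathcal{P}_Z$ locally as $u^*\mathcal{P}_{\widetilde{Z}}[-1]$. Since $\theta$ commutes with the fiberwise monodromy (being a scalar multiple of it on each chart), it induces an automorphism of $B$ commuting with $T$. A change of trivializing section changes the slice identification by a power of $T$, with which $\theta$ commutes, so the descended $\theta$ is canonical. These local automorphisms glue on $Z$ by the stack property, because the slice construction is compatible with smooth pullback (Lemma \ref{lem:monodromic_descent}(3) together with Theorem \ref{thm:BBDJS_sheaf}).
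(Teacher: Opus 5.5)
Your proposal is correct and follows essentially the paper's route. It uses the same two generating moves (\'etale comparisons and equivariant stabilizations), the same bookkeeping $(-1)^{n+r}(-1)^r=(-1)^n$, the same observation that $\theta$ acts trivially on the $\mu_2$-factors, and the same gluing via \cite[Proposition 7.1]{LO3}, tameness by quasi-unipotence, and descent through the slice pair.

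The only variation is where the factor $(-1)^r$ comes from. You obtain it from Thom--Sebastiani and Picard--Lefschetz for the $\gamma$-action and transfer it to the orbit monodromy by homogeneity. The paper reads it off directly as the orbit monodromy of the square-root torsor of $\det(t\,q_{ij}) = t^r\det(q_{ij})$. Whether orbit transport equals $\gamma$ or $\gamma^{-1}$ (cf.\ the proof of Proposition \ref{prop:behrend_one}) does not affect the commutation you need.
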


\begin{proof}
  The comparison isomorphisms between the twisted local models are generated by
  two moves, described in Theorem \ref{thm:BBDJS_sheaf} and \cite[Section 4]{BenBassat2015}. These are \'etale morphisms of
  charts of equal base dimension, and equivariant stabilizations
  $(U', s') = (U \times \mathbb{A}^r, s \boxplus q)$ with $q$ a
  non-degenerate quadratic form of rank $r$, with homogeneous potentials
  $\widetilde{f} = t \cdot s$ and $\widetilde{f}' = \widetilde{f} + t \cdot q$
  \cite[Section 4]{BenBassat2015}.

  For an \'etale morphism of charts, vanishing cycles commute with \'etale
  pullback and the orbit monodromy is functorial, so the comparison intertwines
  the operators $T$. The base dimensions agree, so it intertwines the operators
  $\theta$.

  For a stabilization, the potential $t \cdot q$ is a quadratic form in the new
  variables with invertible coefficient matrix $t \cdot (q_{ij})$
  over $\widetilde{U}$, and
  the stabilization isomorphism $\Theta_\Phi$ of \cite[Theorem 5.4]{BBDJS},
  recalled in Example \ref{ex:quadratic}, identifies
  $\mathcal{PV}_{U', \widetilde{f}'}$ with
  $\mathcal{PV}_{U, \widetilde{f}}$ twisted by the $\mu_2$-local
  system of square
  roots of $\det(t \cdot q_{ij}) = t^r \det(q_{ij})$. Along a $\Gm$-orbit this
  local system has monodromy $(-1)^r$. Equivalently, the transversal
  Milnor fiber
  of $t \cdot q$ is the affine quadric $\{q = \epsilon/t\}$, on which
  the $t$-loop
  acts through the antipodal map of degree $(-1)^r$, as in the computation of
  Section \ref{sec:dt}. It follows that the fiberwise monodromies satisfy
  $T_{U', \widetilde{f}'} = (-1)^r \, T_{U, \widetilde{f}}$ under the
  stabilization isomorphism, and
  \[ \theta_{U', \widetilde{f}'} = (-1)^{n + r}\, T_{U', \widetilde{f}'}
    = (-1)^{n + r} (-1)^r\, T_{U, \widetilde{f}}
  = (-1)^{n}\, T_{U, \widetilde{f}} = \theta_{U, \widetilde{f}}. \]
  Note that the orientation factors $Q$ are $\mu_2$-local systems, the comparisons are
  $\mu_2$-equivariant, and $\theta$ acts trivially on them, so they
  do not interact.

  Perverse sheaves on the lisse-\'etale site form a stack
  \cite[Proposition 7.1]{LO3} and their automorphisms form a sheaf,
  so an automorphism commuting with the descent data
  of $\mathcal{P}_{\widetilde{Z}}$ glues. Tameness holds chartwise by
  quasi-unipotence \cite{SGA7}. Finally, $\theta$ commutes with the fiberwise
  monodromy of $\mathcal{P}_{\widetilde{Z}}$ (chartwise it is a scalar multiple of it), and induces an automorphism of the slice pair $(\mathcal{P}_Z, T)$
  commuting with $T$.
\end{proof}

The twisted operator exists globally. It remains to compare it with
the geometric monodromy, which also acts on $\mathcal{P}_{\widetilde{Z}}$ and
does not agree with it.

\begin{proposition}[Comparison with the geometric monodromy]
  \label{prop:sign_character}
  Let $T$ be the geometric orbit monodromy automorphism of
  $\mathcal{P}_{\widetilde{Z}}$. There is a unique locally constant function
  $\varpi$ from the reduced support of $\mathcal{P}_{\widetilde{Z}}$ to $\mu_2$,
  the \bfem{orientation sign character}, with
  \[ T = \varpi \cdot \theta. \]
  In a contact Darboux chart with base dimension $n$ one has
  $\varpi = (-1)^n \varepsilon_U$, where $\varepsilon_U \in \{\pm 1\}$ is the
  monodromy of the orientation local system $Q_{U, \widetilde{f}}$ along the
  $\Gm$-orbits. In particular $\varpi$ depends only on the contact orientation
  data, and the same relation holds for the descended automorphisms of
  $\mathcal{P}_Z$.

  For contact orientation data in the sense of Definition \ref{def:orientation}
  one has $\varepsilon_U = (-1)^{n+1}$ in every contact Darboux chart, and
  therefore
  \[ \varpi \equiv -1, \qquad \text{that is} \qquad T = - \theta \]
  globally on the support of $\mathcal{P}_{\widetilde{Z}}$.
\end{proposition}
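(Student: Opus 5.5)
The plan is to treat the comparison chartwise first and then glue. On a contact Darboux chart with base $U$ of dimension $n$, Theorem \ref{thm:BBDJS_sheaf} identifies $\mathcal{P}_{\widetilde{Z}}$ étale-locally with $\mathcal{PV}_{U,\widetilde{f}} \otimes_{\mu_2} Q_{U,\widetilde{f}}$. The geometric orbit monodromy $T$ of this twisted model is the tensor product of the vanishing-cycle monodromy $T_{U,\widetilde{f}}$ with the monodromy $\varepsilon_U \in \{\pm 1\}$ of the rank-one $\mu_2$-local system $Q_{U,\widetilde{f}}$ along the $\Gm$-orbits. By Definition \ref{def:twisted}, $\theta$ acts as $(-1)^n T_{U,\widetilde{f}} \otimes \mathrm{id}$. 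Hence
\[ T = \varepsilon_U\, T_{U,\widetilde{f}} = (-1)^n \varepsilon_U\, \theta \]
on the chart, which defines $\varpi$ locally as the constant $(-1)^n\varepsilon_U$.

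Uniqueness is immediate. Away from the zero set of $\mathcal{P}_{\widetilde{Z}}$, $\theta$ is an automorphism, so a $\mu_2$-valued scalar with $T = \varpi\theta$ is determined on each connected component of the reduced support. Both $T$ and $\theta$ commute with the comparison isomorphisms: $T$ by functoriality of orbit monodromy, and $\theta$ by Proposition \ref{prop:theta_glues}. The chartwise functions $\varpi$ therefore agree on overlaps and glue to a locally constant function on the support, by the sheaf property of automorphisms from \cite[Proposition 7.1]{LO3}. Since $(-1)^n\varepsilon_U$ is computed from $n$ and the orientation local system alone, $\varpi$ depends only on the orientation data. The descended statement on $\mathcal{P}_Z$ follows because both operators pass through the slice correspondence of Lemma \ref{lem:monodromic_descent}, which is functorial in automorphisms.

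The main step is computing $\varepsilon_U = (-1)^{n+1}$ for contact orientation data. The local system $Q_{U,\widetilde{f}}$ compares the given equivariant square root with the chart square root $K_{\widetilde{U}}|_{\dCrit(\widetilde{f})}$. The given root is $p^*R_Z$, which is $\Gm$-invariant with trivial orbit monodromy by Proposition \ref{prop:orientation_obstruction}. The chart root is $K_{\widetilde{U}} = K_U \boxtimes K_{\Gm}$. The plan is to compute how the $\Gm$-action by $t$ acts on the comparison between these two roots. The d-critical canonical bundle involves the Koszul--Tate generators: the degree $-1$ generators $\widetilde{y}_i = t y_i$ (there are $n$ of them), the generator $\zeta$, and the degree $0$ generators $x_i, t$. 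Each rescaled generator $\widetilde{y}_i$ contributes a weight, and the generator $dt/t$ contributes another. The square-root comparison then picks up the character $t^{n+1}$ modulo squares, that is, the $\mu_2$-torsor of square roots of $t^{n+1}$.

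Along the orbit loop this torsor has monodromy $(-1)^{n+1}$. This is the same mechanism as the $\det(t\cdot q_{ij}) = t^r\det(q_{ij})$ computation in the proof of Proposition \ref{prop:theta_glues}. It is consistent with stabilization, since $r$ extra variables change both $n$ and the weight count by $r$.

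I expect this weight bookkeeping to be the main obstacle. One must check carefully which generators carry weight in the determinant and that the count is $n+1$ rather than $n$, using the table of \cite[Remark 3.4]{Izbudak_Darboux}. Granting it, $\varpi = (-1)^n(-1)^{n+1} = -1$ on every chart, hence $T = -\theta$ globally.
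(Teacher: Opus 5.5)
Your proposal is correct and follows essentially the same route as the paper: the chartwise identification $T = T_{U,\widetilde{f}} \otimes \varepsilon_U = (-1)^n \varepsilon_U\,\theta$, gluing of this locally constant ratio using the globality of $T$ and $\theta$, and a weight count on the Koszul--Tate generators producing the $\mu_2$-torsor of square roots of $t^{n+1}$. The bookkeeping you deferred is what the paper carries out, by comparing Darboux and weight-$0$ generators degree by degree: a factor $t$ in degree $0$ and $t^{n}$ in degree $-1$, hence $t^{1-n}$ for $K_{\widetilde{X}}$, against $t^{2}$ for $K_{\widetilde{U}}^{\otimes 2}$, which gives $t^{2}\cdot t^{-(1-n)} = t^{n+1}$ and confirms your count.
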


\begin{proof}
  In a chart, $\mathcal{P}_{\widetilde{Z}}$ is identified with
  $\mathcal{PV}_{U, \widetilde{f}} \otimes_{\mu_2} Q_{U,
  \widetilde{f}}$, and the
  fiberwise monodromy of a tensor product is the tensor product of the fiberwise
  monodromies. Hence $T = T_{U, \widetilde{f}} \otimes \varepsilon_U =
  (-1)^n \varepsilon_U \cdot \theta$ on the chart. The ratio of the two global
  automorphisms $T$ and $\theta$ is therefore, chartwise, multiplication by the
  locally constant sign $(-1)^n \varepsilon_U$. Globality of $T$ and
  $\theta$ makes
  this sign independent of the chart, and uniqueness holds on the support.

  It remains to compute $\varepsilon_U$ for contact orientation data.
  Work in the
  coordinates of Theorem \ref{lem:chart_sympl}, where
  $\widetilde{U} = U \times \Gm$ carries the homogeneous potential
  $\widetilde{f} = t \cdot s$ and $\widetilde{X} \simeq
  \dCrit(\widetilde{f})$ has
  Koszul--Tate generators $x_1, \dots, x_n, t$ in degree $0$ and
  $\widetilde{y}_1, \dots, \widetilde{y}_n, \zeta$ in degree $-1$, with weights
  \[ \mathrm{wt}(x_i) = 0, \quad \mathrm{wt}(t) = 1, \quad
    \mathrm{wt}(\widetilde{y}_i) = \mathrm{wt}(t y_i) = 1, \quad
  \mathrm{wt}(\zeta) = \mathrm{wt}(z) = 0 . \]
  By Definition \ref{def:orientation}, contact orientation data is
  $\Gm$-equivariant of weight $0$, so we must compare the Darboux
  generators with
  weight-$0$ ones. In degree $0$ the Darboux generator of the
  determinant line is
  $d x_1 \wedge \cdots \wedge d x_n \wedge d t$, while the weight-$0$
  generator is
  $d x_1 \wedge \cdots \wedge d x_n \wedge d t / t$. The two differ
  by the factor
  $t$. In degree $-1$ the Darboux generator is $d \widetilde{y}_1 \wedge \cdots
  \wedge d \widetilde{y}_n \wedge d \zeta$ and the weight-$0$ generator is
  $d y_1 \wedge \cdots \wedge d y_n \wedge d z$. As
  $d \widetilde{y}_i \equiv t \, d y_i$ modulo terms of degree $0$, which do not
  affect the determinant of the graded pieces, the two differ by $t^{n}$. Taking
  the alternating combination,
  \[ \det \LL_{\widetilde{X}} = \det(\text{degree } 0) \otimes
  \det(\text{degree } -1)^{-1} \]
  shows that the Darboux and weight-$0$ generators of
  $K_{\widetilde{X}}$ differ by
  $t \cdot t^{-n} = t^{1-n}$. On the other side,
  $K_{\widetilde{U}}^{\otimes 2}$ has
  Darboux generator $(d x \wedge d t)^{\otimes 2} = t^2 (d x \wedge d
  t/t)^{\otimes 2}$,
  so its Darboux and weight-$0$ generators differ by $t^{2}$. The canonical
  isomorphism $K_{\widetilde{X}} \simeq K_{\widetilde{U}}^{\otimes 2}$ of
  Theorem \ref{thm:BBDJS_sheaf} carries one Darboux generator to the other, and in the weight-$0$ trivializations it is multiplication by
  \[ t^{2} \cdot t^{-(1-n)} = t^{\, n+1} . \]
  Therefore $Q_{U, \widetilde{f}}$ is the $\mu_2$-local system of
  square roots of
  $t^{\,n+1}$ up to a weight-$0$ unit, whose monodromy along the $\Gm$-orbits is
  $(-1)^{n+1}$. Thus $\varepsilon_U = (-1)^{n+1}$ and
  $\varpi = (-1)^n (-1)^{n+1} = -1$.

  This value is consistent with the stabilizations of Proposition
  \ref{prop:theta_glues}. A stabilization of rank $k$ replaces $n$ by $n+k$ and $\varepsilon_U$ by $(-1)^{n+k+1} = (-1)^k \varepsilon_U$, matching the factor
  $(-1)^k$ by which the chartwise monodromy $T_{U, \widetilde{f}}$
  changes, so that
  the product $T = T_{U, \widetilde{f}} \otimes \varepsilon_U$ is unchanged.
\end{proof}

The two operators carry the same information on a chart and differ by
a global sign. Only one of them survives the pushforward, which is what the
following theorem records.

\begin{theorem} \label{thm:monodromic_sheaf}
  The monodromic
  \'etale perverse sheaf $\mathcal{P}_{\widetilde{Z}}$ descends to an
  $\ell$-adic
  perverse sheaf $\mathcal{P}_Z$ on $Z_{et}$ equipped
  with two commuting tame automorphisms, the geometric monodromy $T$ and the
  twisted monodromy operator $\theta$ of Proposition
  \ref{prop:theta_glues}, related
  by the orientation sign character, $T = \varpi \cdot \theta$ (Proposition
  \ref{prop:sign_character}). In every contact Darboux chart with base
  dimension $n$, the operator $\theta$ acts as $(-1)^n$ times the
  vanishing-cycle
  monodromy of the homogeneous potential, and $T = -\theta$ by Proposition
  \ref{prop:sign_character}. The operator $\theta$ is the one that acts on
  fundamental classes of Legendrians, after the grading correction of
  \S\ref{subsec:gradedjoyce}.
\end{theorem}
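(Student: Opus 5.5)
The plan is to assemble Theorem \ref{thm:monodromic_sheaf} from the three ingredients already in place: the BBDJS sheaf $\mathcal{P}_{\widetilde{Z}}$ of Theorem \ref{thm:BBDJS_sheaf}, the slice-pair descent of Lemma \ref{lem:monodromic_descent}, and the two operators of Propositions \ref{prop:theta_glues} and \ref{prop:sign_character}. I will proceed in four steps.

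The first step is to show that $\mathcal{P}_{\widetilde{Z}}$ is monodromic in the sense of Definition \ref{def:monodromic}. The contact orientation data of Definition \ref{def:orientation} makes $\widetilde{Z}$ an oriented $-1$-shifted symplectic stack, so Theorem \ref{thm:BBDJS_sheaf} applies. On a chart of Theorem \ref{thm:atlas} the sheaf is $\mathcal{PV}_{U,\widetilde{f}} \otimes_{\mu_2} Q_{U,\widetilde{f}}$ with $\widetilde{f} = t\cdot s$. The $\Gm$-action $(x,t)\mapsto(x,\lambda t)$ scales $\widetilde{f}$, and vanishing cycles of $\lambda\widetilde{f}$ agree with those of $\widetilde{f}$. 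Hence $\mathcal{PV}_{U,\widetilde{f}}$ has cohomology sheaves locally constant along the orbits, so it is locally of the form $\phi^p_s$-type data pulled back and twisted by a Kummer system. The twisting factor $Q_{U,\widetilde{f}}$ is the square root of $t^{n+1}$, which is Kummer by the proof of Proposition \ref{prop:sign_character}. Tameness is automatic over $\K$ and also follows from quasi-unipotence, Theorem \ref{thm:monodromy_facts}(1). Monodromicity is local, so it holds globally.

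The second step is descent. On each chart the torsor is trivialized, so the slice $B = u^*\mathcal{P}_{\widetilde{Z}}[-1]$ of Definition \ref{def:slicepair} is defined. The restriction to a section of a smooth morphism of relative dimension one, shifted by $[-1]$, is again perverse, because the complex is locally constant along the fibres. By Theorem \ref{thm:monodromy_facts}(2), the chartwise slice pairs $(B,T)$ are therefore pairs of a perverse sheaf with an automorphism. On overlaps, two trivializations differ by a translation. By Definition \ref{def:slicepair} this changes the slice identification only by a power of $T$, so I will take the identifications to be the monodromy-transport isomorphisms, which satisfy the cocycle condition. Gluing of perverse sheaves on the lisse-\'etale site \cite[Proposition 7.1]{LO3} then produces $\mathcal{P}_Z$ together with $T$. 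Compatibility with smooth pullback follows from Lemma \ref{lem:monodromic_descent}(3) applied to smooth $g$.

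The third step attaches the operators and their relations. Proposition \ref{prop:theta_glues} gives $\theta$ on $\mathcal{P}_Z$ commuting with $T$, and both operators are tame. Proposition \ref{prop:sign_character} gives $T = \varpi\cdot\theta$ with $\varpi \equiv -1$. The chart formula is exactly Definition \ref{def:twisted} read through the slice. The final sentence of the theorem is only a forward reference and needs no proof here.

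I expect the main obstacle to be the cocycle in the second step. The slice depends on a trivialization, and transition functions $g \colon U_{ij} \to \Gm$ act through $T^{\deg}$, which makes sense only \'etale-locally via Kummer covers. The approach I would try first is to make the descent canonical: replace the slice by the functor $N \mapsto Rp_*(N\otimes p^*(-))$-type nearby-cycle specialization of Verdier \cite{Verdier}, which is intrinsic. I would then check chartwise that it recovers $(u^*N[-1],T)$.
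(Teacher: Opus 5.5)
Your Steps 1 and 3 agree with the paper's argument. One minor point: with non-semisimple monodromy, the local form in Step 1 is only an iterated extension of Kummer twists, as in the proof of Lemma \ref{lem:monodromic_descent}(3). The gap is in Step 2. It is the obstacle you flag at the end, but Step 2 as written asserts it away, and it is real.

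On an overlap $U_{ij}$ the two unit sections satisfy $u_j = g_{ij}\cdot u_i$, where $g_{ij}$ is a transition function of $\cL$ and in general a nonconstant unit. The two slices are then not related by a power of $T$. Take a Morse--Bott chart $i$ as in Lemma \ref{lem:mb_chart} with $n+k$ even. In its coordinates $\mathcal{P}_{\widetilde{Z}} \simeq (\F_{\mathbb{A}^{n-k}} \boxtimes \mathscr{K})[n-k+1]$ with $T=-1$. Hence $u_i^*\mathcal{P}_{\widetilde{Z}}[-1] \simeq \F[n-k]$, but $u_j^*\mathcal{P}_{\widetilde{Z}}[-1] \simeq \F[n-k]\otimes g_{ij}^*\mathscr{K}$. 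Here $g_{ij}^*\mathscr{K}$ is the local system of square roots of $g_{ij}$, nontrivial unless $g_{ij}$ is a square on $U_{ij}$.

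A ``monodromy transport'' from $u_i$ to $u_j$ is therefore an \'etale-local choice of square root of $g_{ij}$. Such choices satisfy the cocycle condition only up to a $\mu_2$-valued $2$-cocycle. That cocycle represents the Kummer class of $\cL$ in $H^2_{\mathrm{et}}(S,\mu_2)$, with $S$ the support. When that class vanishes, the choices are still ambiguous up to $H^1_{\mathrm{et}}(S,\mu_2)$, and changing them changes the glued sheaf. Eigenvalues of $T$ of order $m$, coming from the Milnor monodromy of $s$, need $m$-th roots of the $g_{ij}$ in the same way. The unipotent part of $T$ meets the analogous obstruction through $\log T_u$. The remark in Definition \ref{def:slicepair} that another section changes the slice by a power of $T_N$ holds for constant translations only.

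Your proposed repair cannot close this. For $A$ on $Z$, the complex $N\otimes p^*A$ is monodromic with monodromy $T\otimes\mathrm{id}$. So by Lemma \ref{lem:monodromic_descent}(1) its pushforward sees only the generalized $1$-eigenspace of $T$, which is zero in the example above. More fundamentally, no intrinsic functor can return the chartwise pairs $(u^*N[-1],T)$, because these are not canonically isomorphic on overlaps. Theorem \ref{thm:monodromy_facts}(2) concerns the trivial torsor only. For instance, over $\mathbb{A}^2\setminus 0 \to \mathbb{P}^1$, a monodromic sheaf with chartwise slice pairs $(\F[1],-\mathrm{id})$ would be a shifted rank-one local system with orbit monodromy $-1$ on a simply connected space, so none exists.

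What the descent needs is additional data or a weaker conclusion.
\begin{itemize}
\item Extra data: a compatible system of roots of the transition functions of $\cL$, of the orders occurring in $T$. For the sign part this is a square root of $\cL$, which in Proposition \ref{prop:conormal_orientation}(3) can be read off from the orientation. Nothing in the hypotheses provides higher roots.
\item Weaker conclusion: a twisted sheaf on the corresponding root gerbe of $\cL$.
\item Special case: a trivialized $\cL$ as in Remark \ref{rem:exact_case}, where your argument works as written.
\end{itemize}

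The paper's proof has the same outline as yours. It glues the pairs $(\mathcal{P}_U,T)$ by \cite[Proposition 7.1]{LO3}, using the BBDJS comparisons of Proposition \ref{prop:theta_glues}. Those identify restrictions of $\mathcal{P}_{\widetilde{Z}}$ over overlaps upstairs, not its restrictions along two different unit sections. So the step you isolate is not closed there either.
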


\begin{proof}
  \vspace{2pt}
  \textbf{\textit{Step 1: The sheaf upstairs is monodromic.}}
  The $\Gm$-action on $\widetilde{Z}$ equips the $\ell$-adic BBDJS perverse
  sheaf $\mathcal{P}_{\widetilde{Z}}$ of Theorem \ref{thm:BBDJS_sheaf} with the
  structure of a monodromic perverse sheaf along $p \colon \widetilde{Z} \to Z$
  in the sense of Definition \ref{def:monodromic}.

In a local Darboux chart over $\K$, the \'etale vanishing cycles
  functor $\phi_{\widetilde{f}}$ \cite[Section 2.3]{BBDJS} is applied
  to the homogeneous potential $\widetilde{f}(x,t) = t \cdot s(x)$
  evaluated along the special fiber $\widetilde{f} = 0$. The
  potential $\widetilde{f}$ is homogeneous of weight $1$ with respect
  to the $\mathbb{G}_m$-action. The cohomology sheaves of the
  vanishing cycles are locally constant along the
  $\mathbb{G}_m$-orbits, and the canonical monodromy automorphism $T$
  of the \'etale vanishing cycles corresponds to the geometric
  monodromy along the $\mathbb{G}_m$-orbits \cite[Section 2.4]{BBDJS}.

  By the $\ell$-adic monodromy theorem \cite[Exp.~I]{SGA7}, the
  monodromy $T$ is quasi-unipotent, and tame automatically since
  $\mathrm{char}\, \K = 0$. This operator corresponds to the
  topological generator of the \'etale fundamental group
  $\pi_1^{\mathrm{et}}(\mathbb{G}_m) \simeq \hat{\mathbb{Z}}(1)$. The
  complex $\mathcal{P}_{\widetilde{Z}}$ restricts to a locally
  constant system along the orbits, twisted by the tame monodromy
  $T$. Consequently, $\mathcal{P}_{\widetilde{Z}}$ is monodromic.

  \vspace{5pt}
  \textbf{\textit{Step 2: The pushforward loses the twist.}}
  The derived pushforward $Rp_* \mathcal{P}_{\widetilde{Z}}$ computes
  the cohomology of the $\mathbb{G}_m$-fibers. For a monodromic
  perverse sheaf, this pushforward is given locally over a
  trivializing chart $U \to Z$ by the complex $[\mathcal{P}_U
  \xrightarrow{T - \mathrm{id}} \mathcal{P}_U(-1)]$ in degrees $0$ and $1$,
  that is by the mapping cone $\mathrm{Cone}(T -
  \mathrm{id} \colon \mathcal{P}_U \to \mathcal{P}_U(-1))$, as in Lemma
  \ref{lem:monodromic_descent}(1). When $1$ is not an eigenvalue of the monodromy
  operator $T$ acting on the stalks, this mapping cone is acyclic,
  and the pushforward vanishes. The derived pushforward along the
  $\mathbb{G}_m$-orbits retains only the generalized $1$-eigenspace
  of the monodromy.

  Verdier's specialization equivalence for monodromic perverse sheaves
  \cite{Verdier}, recorded in Theorem \ref{thm:monodromy_facts}(2), extends to
  Artin stacks through the lisse-\'etale theory of Laszlo--Olsson \cite[Theorem 2.2.3 and Example
2.2.6]{LO1} and the $\infty$-categorical \'etale descent of Liu--Zheng
  \cite[Sections 4, 5, 7, and 8]{LZ}.

  Because $\widetilde{Z} \to Z$ is a $\mathbb{G}_m$-torsor, Verdier's
  equivalence applies locally over a trivializing chart $U \to Z$
  (where $\widetilde{U} \simeq U \times \mathbb{G}_m$), yielding the
  commutative diagram:
  \begin{center}
    \begin{tikzcd}[row sep=large, column sep=large]
      \mathrm{Perv}_{\mathrm{mon}}(\widetilde{U}_{\mathrm{et}},
      \mathbb{F}) \arrow[r, "\sim"] \arrow[d, "Rp_*"'] &
      \mathrm{Perv}(U_{\mathrm{et}}, T) \arrow[d, "{\mathrm{Cone}(T -
      \mathrm{id})}"] \\
      D^b_c(U_{\mathrm{et}}, \mathbb{F}) \arrow[r, equal] &
      D^b_c(U_{\mathrm{et}}, \mathbb{F})
    \end{tikzcd}
  \end{center}

  The abelian category of monodromic $\ell$-adic perverse sheaves on
  the trivial $\mathbb{G}_m$-torsor $\widetilde{U} \to U$ is
  equivalent to $\mathrm{Perv}(U_{\mathrm{et}}, T)$, the category of
  \'etale perverse sheaves on the chart $U$ equipped with an automorphism.

  Given the monodromic perverse sheaf $\mathcal{P}_{\widetilde{Z}}$,
  Verdier's equivalence locally assigns an object
  \[ (\mathcal{P}_U, T) \in \mathrm{Perv}(U_{\mathrm{et}}, T), \]
  where $T$ is the monodromy automorphism. In the notation of
  Definition \ref{def:slicepair}, $(\mathcal{P}_U, T)$ is the slice
  pair of $\mathcal{P}_{\widetilde{Z}}$ over the chart. The BBDJS
  local gluing data for $\mathcal{P}_{\widetilde{Z}}$ in Theorem
  \ref{thm:BBDJS_sheaf} depends on the choice of orientation. The choice of
  contact orientation data on $Z$ provides $\Gm$-equivariant
  orientation data on $\widetilde{Z}$, so the gluing isomorphisms are the
  comparisons of Proposition \ref{prop:theta_glues}, which commute
  with $\theta$. The
  geometric monodromy $T$ is intrinsic to the global $\Gm$-action and
  requires no
  gluing. The local perverse sheaves $(\mathcal{P}_U, T)$
  descend and glue over $Z$, as perverse sheaves on the lisse-\'etale site form a stack for the smooth topology
  \cite[Proposition 7.1]{LO3}. The result is a global $\ell$-adic
  perverse sheaf $\mathcal{P}_Z$ with a monodromy automorphism $T$.
\end{proof}

\begin{remark}[The exact case]\label{rem:exact_case}
  The observation of this paragraph is due to P. Safronov (personal
  communication). Let $X$ be a $(-1)$-shifted symplectic derived stack with its canonical exact
  structure. Restricting that structure to the classical truncation $X^{\mathrm{cl}}$
  gives the d-critical structure underlying the symplectic form, which is how
  that d-critical structure is constructed. If $f$ denotes the potential of the
  exact structure, then $f^{-1}(0)$ carries a canonical one-form of degree $-1$,
  so the induced $(-1)$-shifted contact structure on $f^{-1}(0)$ is given by a
  global one-form. Its contact line bundle is trivial, the
  symplectification is $f^{-1}(0) \times \Gm$, and the sheaf produced by Theorem
  \ref{thm:monodromic_sheaf} on that symplectification is the
  Donaldson--Thomas sheaf of $X$ together with its internal monodromy operator,
  which is locally the monodromy of the sheaf of vanishing cycles.

  Safronov also notes that not every $(-1)$-shifted contact stack arises in
  this way. Beyond the exact
  case the construction requires the descent of $\theta$ across charts for a
  nontrivial contact line bundle, Proposition \ref{prop:theta_glues}, together
  with the twist of Definition \ref{def:orientation}, which is trivial when
  $\cL^{\otimes (\vdim L + 1)}$ is. The sign $T = -\theta$ is already visible in
  the exact case, being computed on a chart where the torsor is trivial.
\end{remark}

Six signs occur in Sections \ref{sec:perverse}--\ref{sec:joyce}, on
a chart with
base $U$ of dimension $n$ and homogeneous potential $\widetilde{f}
= t \cdot s$
on $\widetilde{U} = U \times \Gm$. We collect them, with the place
each is fixed.
\[
  \begin{array}{lll}
    T_{U, \widetilde{f}} & \text{monodromy of } \mathcal{PV}_{U,
    \widetilde{f}}
    \text{ along the } \Gm\text{-orbits}
    & \text{Definition \ref{def:twisted}} \\
    \theta_{U, \widetilde{f}} = (-1)^n T_{U, \widetilde{f}}
    & \text{its dimensional twist on the chart}
    & \text{Definition \ref{def:twisted}} \\
    \varepsilon_U = (-1)^{n+1}
    & \text{monodromy of the orientation local system } Q_{U, \widetilde{f}}
    & \text{Proposition \ref{prop:sign_character}} \\
    \varpi = (-1)^n \varepsilon_U \equiv -1
    & \text{the global ratio } T = \varpi \cdot \theta
    & \text{Proposition \ref{prop:sign_character}} \\
    \delta = n + \rk E \equiv \vdim L
    & \text{parity defect of a Legendrian presentation}
    & \text{Proposition \ref{prop:parity_formula}} \\
    \Lambda
    & \text{the quadratic Kummer twist, } (B_N, T_N) \mapsto (B_N, -T_N)
    & \text{Definition \ref{def:slicepair}}
  \end{array}
\]
Thus $T = -\theta$ globally, and $\theta \circ \mu_L = (-1)^{\delta_L} \mu_L$
with $\delta_L \equiv \vdim L$. The grading correction of
\S\ref{subsec:gradedjoyce} absorbs the second sign by twisting the orientation
datum by $\Lambda^{\otimes \vdim L}$.

\section{The Parity Defect and the Contact Joyce Conjecture}
\label{sec:joyce}

To define $\ell$-adic integration and composition functors for Legendrian
correspondences we require a contact analogue of Joyce's conjecture
\cite[Conjecture 5.18]{AmorimBassat}. We first construct the local
fundamental class of a Legendrian and compute the eigenvalue of the twisted
monodromy operator on it (\S\ref{subsec:localmodel}). This eigenvalue is
$(-1)^{\vdim L}$, and the exponent is the parity defect
(\S\ref{subsec:parity}). Both parities occur already on the $A_1$ chart, and an
odd Legendrian can carry a nonzero local class, so the resulting obstruction is
not vacuous (\S\ref{subsec:nonvacuous}). The conjecture is therefore formulated
for Legendrians carrying a graded orientation, which absorbs the sign
(\S\ref{subsec:gradedjoyce}).

\subsection{The Local Model of a Legendrian}
\label{subsec:localmodel}

On a contact Darboux chart a Legendrian
acquires a presentation in the sense of Joyce and Safronov, and the
computation of the eigenvalue takes place there.

\begin{proposition}[Local Verification for Contact
  Schemes]\label{prop:local_verification}
  Let $X \simeq \Delta\mathrm{loc}(s)$ be a $-1$-shifted contact
  derived scheme in
  Darboux form over a chart with base of dimension $n$, and let
  $\varphi \colon L
  \to X$ be a proper oriented Legendrian with local presentation $(V,
    \Psi, E, q,
  \sigma)$ as in Theorem \ref{thm:JS_local}. There exists a morphism
  $\mu_L \colon \F_L[\vdim L] \to
  \varphi^! \mathcal{P}_X$ in $D^b_c(L_{et}, \F)$,
  independent of the choices in its construction, satisfying
  \[ T \circ \mu_L = (-1)^{\rk E + n + 1}\, \mu_L \enskip \text{and}
  \quad \theta \circ \mu_L = (-1)^{n + \rk E}\, \mu_L. \enskip (\text{\S\ref{subsec:parity} for parity of exponent})\]
\end{proposition}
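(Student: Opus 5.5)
The plan is to build $\mu_L$ upstairs on the symplectification and then descend it with the slice map of Lemma \ref{lem:monodromic_descent}(2). By Definition \ref{def:legendrian}, the symplectification $\widetilde{\varphi}\colon \widetilde{L} \to \widetilde{X} \simeq \dCrit(\widetilde{f})$, with $\widetilde{f} = t\cdot s$ on $\widetilde{U} = U\times\Gm$ (Theorem \ref{thm:atlas}), is a $\Gm$-equivariant Lagrangian. Pulling the given local presentation back along the torsor gives the presentation $(\widetilde{V} = V\times\Gm,\ \widetilde{\Psi},\ \widetilde{E},\ \widetilde{q},\ \widetilde{\sigma})$ of $\widetilde{L}$. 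Here $\widetilde{E} = E\boxtimes\cO$ carries the form $t\cdot q$ and the section is $\widetilde{\sigma} = \sigma$, so that $\widetilde{q}(\widetilde{\sigma}) = -t\, s\circ\Psi = -\widetilde{f}\circ\widetilde{\Psi}$.

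On $\widetilde{V}$ I would follow the Amorim--Ben-Bassat local construction \cite[\S 5]{AmorimBassat}. First, smooth pullback (Theorem \ref{thm:BBDJS_sheaf}) identifies $\widetilde{\Psi}^*\mathcal{PV}_{\widetilde{U},\widetilde{f}}$, up to shift, with $\mathcal{PV}_{\widetilde{V},\widetilde{f}\circ\widetilde\Psi}$. Next, consider the function $\widetilde{f}\circ\widetilde\Psi + \widetilde{q}(e)$ on the total space of $\widetilde{E}$, and apply Thom--Sebastiani (Theorem \ref{thm:TS}) together with the stabilization isomorphism of Example \ref{ex:quadratic}. The zero locus of $\widetilde\sigma$ then produces a canonical class $\F_{\widetilde{L}}[\vdim \widetilde{L}] \to \widetilde\varphi^!\mathcal{P}_{\widetilde X}$, using the trivialization of $\det \widetilde{E}|_{\widetilde L}$ supplied by the orientation. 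Two facts about this class are needed. Its independence of the choices (the presentation, the square root) follows from the comparison isomorphisms, which are exactly the moves analysed in Proposition \ref{prop:theta_glues}. Its restriction to a fibre is locally constant along orbits, so by Lemma \ref{lem:monodromic_descent}(3) it lies in the monodromic framework. The descended class $\mu_L$ is then its image under $\mathrm{sl}$. Since $p^*\F_L[\vdim L][1] = \F_{\widetilde L}[\vdim\widetilde L]$ by the normalization $\vdim\widetilde L = \vdim L+1$, this image lands in $\Hom(\F_L[\vdim L],\varphi^!\mathcal{P}_X)^T$, and that image is independent of the trivialization.

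The eigenvalue computation tracks the orbit monodromy through each identification, in the manner of the proof of Proposition \ref{prop:sign_character}. The constant sheaf $\F_{\widetilde L}$ has trivial monodromy. The real input is the transversal quadratic $t\cdot q$ of rank $r = \rk E$: its square-root local system of $\det(t\,q) = t^r\det q$ has orbit monodromy $(-1)^r$, exactly as in the stabilization step of Proposition \ref{prop:theta_glues}. Next comes the orientation datum. The trivialization of $\det\widetilde E$ and the identification $K_{\widetilde L}\simeq \widetilde\varphi^*p^*R_Z$ are $\Gm$-equivariant of weight $0$, so they must be compared with the Darboux generators of $Q_{U,\widetilde f}$. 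By Proposition \ref{prop:sign_character} this comparison contributes $\varepsilon_U = (-1)^{n+1}$; this is the residual twist of the canonical line $\cL^{\otimes(\vdim L+1)}$ noted after Definition \ref{def:orientation}. Multiplying the two contributions gives $T\circ\mu_L = (-1)^{\rk E + n+1}\mu_L$. Then $\theta = -T$ (Proposition \ref{prop:sign_character}) gives $\theta\circ\mu_L = (-1)^{n+\rk E}\mu_L$.

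I expect the main obstacle to be the bookkeeping in the last step. It must be checked that no further sign enters from the smooth pullback along $\widetilde\Psi$ or from the $\Gm$-weights of the generators of $\LL_{\widetilde L}$ in degree $-2$. I would settle this by computing, exactly as in Proposition \ref{prop:sign_character}, the $t$-power relating Darboux and weight-$0$ generators of $\det\LL_{\widetilde L}$. The three-term model of Theorem \ref{thm:JS_local} makes this explicit, and I would expect the $T_{V/U}$ contributions to cancel between degrees $0$ and $-2$, leaving only $t^{r}$ and the $t^{n+1}$ from $K_{\widetilde U}$. As a sanity check I would run the case $s = x^2$ on $\mathbb{A}^1$.
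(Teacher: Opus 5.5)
Your outline follows the paper's: the equivariant presentation $(\widetilde V,\widetilde\Psi,\widetilde E,t\cdot q,\widetilde\sigma)$, the Amorim--Ben-Bassat chain upstairs, the slice map, the two signs $(-1)^{\rk E}$ and $\varepsilon_U=(-1)^{n+1}$, then $\theta=-T$. As written, however, your descent step contradicts your eigenvalue step.

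You build $\mu_{\widetilde L}$ as a morphism $\F_{\widetilde L}[\vdim\widetilde L]\to\widetilde\varphi^!\mathcal{P}_{\widetilde X}$ with \emph{untwisted} target. You correctly observe that Lemma~\ref{lem:monodromic_descent}(2) then places $\mathrm{sl}(\mu_{\widetilde L})$ in $\Hom(\F_L[\vdim L],\varphi^!\mathcal{P}_X)^T$, that is $T\circ\mu_L=\mu_L$. Together with your later relation $T\circ\mu_L=(-1)^{\rk E+n+1}\mu_L$, this forces $\mu_L=0$ whenever $n+\rk E$ is even, i.e.\ on every Legendrian of even virtual dimension. The relations then hold there only vacuously.

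The signs you track in your third paragraph cannot be imposed after the fact. The slice map, applied to a morphism into a complex with slice pair $(B,T)$, only produces $T$-invariant classes. Your own sanity check shows this. Take $s=x^2$ on $\mathbb{A}^1$ and the reduced point, so $n=\rk E=1$. Then $T$ acts by $-1$ on $\mathcal{P}_X$, so $\Hom^T=0$. Upstairs, $\widetilde\varphi^!\mathcal{P}_{\widetilde X}$ is a shift of $\mathscr{K}$ along the orbit and $H^\bullet_{\mathrm{et}}(\Gm,\mathscr{K})=0$, so every untwisted class vanishes (Remark~\ref{rem:twist_necessary}).

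The missing idea is that the weight-$0$ equivariant orientation does not trivialize the discriminant torsor of $t\cdot q$ by itself. It trivializes the product of that torsor with the comparison torsor $Q_{U,\widetilde f}$, and what survives is a $\mu_2$-local system with orbit monodromy $(-1)^{\rk E}(-1)^{n+1}$. In the paper this residue is carried as a monodromic twist of the target,
\[ \mu_{\widetilde L}\colon \F_{\widetilde L}[\vdim\widetilde L]\longrightarrow \widetilde\varphi^!\mathcal{P}_{\widetilde X}\otimes\Lambda^{\otimes(\rk E+n+1)}, \]
whose slice pair is $(\varphi^!\mathcal{P}_X,S)$ with $S=(-1)^{\rk E+n+1}\varphi^!T$ (Lemma~\ref{lem:monodromic_descent}(3) and Definition~\ref{def:slicepair}). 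The slice map then lands in the $S$-invariants. The eigenvalue relation is thus an output of the descent lemma, not a separate computation.

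A secondary point: your claim that independence of the presentation and of the square root follows from Proposition~\ref{prop:theta_glues} is unsupported. That proposition treats \'etale maps and stabilizations of the ambient chart $(U,s)$, not changes of $(V,\Psi,E,q,\sigma)$. The paper proves only independence of the trivialization of the torsor: a change acts by a power of $S$, which fixes $\mu_L$. It lists presentation-independence as open in \S\ref{sec:outlook}.
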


\begin{proof}
  The proof proceeds in four steps, carried out on the symplectification and
  descended in the last.

  \vspace{5pt}
  \textbf{\textit{Step 1: An equivariant local presentation upstairs.}}
  We claim that, \'etale-locally on $L$, the presentation $(V, \Psi, E, q,
  \sigma)$ may be chosen so that $q(\sigma) = - s \circ \Psi$ and
  $\vdim L = 2\dim V - \dim U - \rk E$, and so that its pullback along the
  torsor
  \[ \widetilde{V} = V \times \Gm, \quad
    \widetilde{\Psi} = \Psi \times \mathrm{id}, \quad
    \widetilde{E} = E \times \Gm, \quad
    \widetilde{q} = t \cdot q, \quad
  \widetilde{\sigma} = \sigma \times \mathrm{id} \]
  is a $\Gm$-equivariant local presentation of the Lagrangian
  $\widetilde{\varphi} \colon \widetilde{L} \to \widetilde{X} \simeq
  \dCrit(t \cdot s)$, with $\Gm$ acting through the torsor factor and
  $\widetilde{q}(\widetilde{\sigma}) = -\, t \cdot (s \circ \Psi)$.

Recall that by Definition \ref{def:legendrian} the symplectification
  $\widetilde{\varphi} \colon \widetilde{L} \to \widetilde{X}$ is a
  $\Gm$-equivariant Lagrangian \cite[proof of Theorem
  4.1]{IzbudakBerktav2026}. Apply Theorem
  \ref{thm:JS_local} to $\widetilde{\varphi}$ with target $\dCrit(t
  \cdot s)$. The
  choices entering its proof are a smooth morphism, a vector bundle with a
  non-degenerate quadratic form, and a section. Each may be made
  $\Gm$-equivariantly, as a rational $\Gm$-representation is the direct sum of its weight spaces, so that a
  $\Gm$-stable subspace has a $\Gm$-stable complement and an invariant element
  lifts invariantly along a surjection. Because $\Gm$ acts freely
  through the torsor factor, an
  equivariant presentation is the pullback along $\widetilde{X} \to X$ of a
  presentation $(V, \Psi, E, q, \sigma)$ of $L$ over the base,
  \[
    \begin{tikzcd}[row sep=large, column sep=large]
      \widetilde{E} \arrow[d] &
      \widetilde{V} \arrow[l, "\widetilde{\sigma}"'] \arrow[r,
      "\widetilde{\Psi}"]
      \arrow[d] \arrow[dr, phantom, "\lrcorner", very near start] &
      \widetilde{U} \arrow[d, "\Gm"] \\
      E & V \arrow[l, "\sigma"] \arrow[r, "\Psi"'] & U ,
    \end{tikzcd}
  \]
  with the quadratic form scaled by the weight of the potential. The
  potential of $\dCrit(t \cdot s)$
  is $t \cdot s$, so $\widetilde{q} = t \cdot q$ and
  \[ \widetilde{q}(\widetilde{\sigma}) = t \cdot q(\sigma)
  = -\, t \cdot (s \circ \Psi) = -\, (t\cdot s) \circ \widetilde{\Psi}, \]
  which is the defining relation of that theorem upstairs. Taking
  $\Gm$-invariants recovers $q(\sigma) = - s \circ \Psi$ downstairs, and
  $\vdim L = 2\dim V - \dim U - \rk E$.

  \vspace{5pt}
  \textbf{\textit{Step 2: Thom--Sebastiani for the homogeneous quadratic
  form.}}
  Let $\widetilde{z}$ denote the zero section of $\widetilde{E}$, let $\Lambda$
  be the monodromic sign twist of Definition \ref{def:slicepair}, and let $P_q$
  be the $\mu_2$-torsor of square roots of $\det(q_{ij})$ on $\widetilde{V}$.
  We claim that
  \begin{equation}\label{eq:TSquad}
    \phi_{t \cdot q} \F_{\widetilde{E}} \simeq
    \widetilde{z}_* \F_{\widetilde{V}}[1 - \rk E]
    \otimes_{\mu_2} P_q \otimes \Lambda^{\otimes \rk E} ,
  \end{equation}
  and that the residual monodromic factor carried by the comparison of
  \eqref{eq:TSquad} with the globally oriented sheaf
  $\mathcal{P}_{\widetilde{X}}$ is
  \begin{equation}\label{eq:residual_twist}
    \Lambda^{\otimes (\rk E + n + 1)} ,
  \end{equation}
  whose orbit monodromy is $(-1)^{\rk E + n + 1}$.

The function $t \cdot q$ is a quadratic form in the fiber variables of
  $\widetilde{E}$ with invertible coefficient matrix $t \cdot (q_{ij})$ over
  $\widetilde{V}$. By Thom--Sebastiani and the computation for non-degenerate
  quadratic forms \cite[Theorem 2.13, Example 2.14]{BBDJS}, its vanishing cycles
  are supported on the zero section and are the constant sheaf there, shifted by
  $[1 - \rk E]$ and twisted by the $\mu_2$-local system of square roots of
  \[ \det(t \cdot q_{ij}) = t^{\rk E} \det(q_{ij}). \]
  The factor $t^{\rk E}$ contributes the $\rk E$-th power of the
  quadratic Kummer
  system on the $\Gm$-factor, that is $\Lambda^{\otimes \rk E}$. Its orbit
  monodromy is $(-1)^{\rk E}$. Equivalently, the transversal Milnor fiber of
  $t \cdot q$ is the affine quadric $\{ q = \epsilon/t \}$. The loop is taken
  at fixed $\epsilon$ with $t$ traversing a circle, so that $\epsilon/t$ winds
  once around $0$, and it acts on the quadric through the antipodal map of
  degree $(-1)^{\rk E}$, as in the computation of Section \ref{sec:dt}. The remaining factor $\det(q_{ij})$ contributes the
  torsor $P_q$, which is pulled back from $\widetilde{V}$ and carries
  trivial orbit monodromy. Tate twists are suppressed by the conventions of
  Section \ref{sec:review}. This proves \eqref{eq:TSquad}.

  For \eqref{eq:residual_twist} we must compare two different
  orientations of the
  chart, and it is here that the factor $Q_{U, \widetilde{f}}$
  enters. Recall the
  two data. On the one hand, a derived critical locus carries a \bfem{canonical}
  orientation. For $\widetilde{X} = \dCrit(\widetilde{f})$ with
  $\iota \colon \widetilde{X} \to \widetilde{U}$ one has
  $K_{\widetilde{X}} \simeq \iota^* K_{\widetilde{U}}^{\otimes 2}$, so that
  $R^{\mathrm{can}} := \iota^* K_{\widetilde{U}}$ is a square root
  \cite[Section 5.1]{AmorimBassat}. On the other hand, the
  orientation datum of
  a Legendrian is taken relative to the \bfem{chosen global} square root
  $R_{\widetilde{X}}$. By Definition \ref{def:orientation}, following
  \cite[Definition 5.3]{AmorimBassat}, it is an isomorphism
  \[ \beta_{\widetilde{L}} \colon K_{\widetilde{L}} \longrightarrow
  \widetilde{\varphi}^*\bigl( R_{\widetilde{X}} \bigr) \]
  subject to $\gamma_{\widetilde{X}} \circ \beta_{\widetilde{L}}^{\otimes 2} =
  \alpha_{\widetilde{L}}$, where $\gamma_{\widetilde{X}} \colon
  R_{\widetilde{X}}^{\otimes 2} \to K_{\widetilde{X}}$ and
  $\alpha_{\widetilde{L}}$
  is the canonical isomorphism attached to the Lagrangian structure.

  The computation of \cite[Proposition 5.20]{AmorimBassat} that
  identifies the
  orientation with a trivialization of $\det(E)$ is carried out against
  $R^{\mathrm{can}}$. It uses the chart identifications
  $\det(\LL_L) \simeq \det(T^\vee_U)|_L \otimes \det(E)|_L$ and
  $\det(\LL_X) \simeq \det(T^\vee_U)^{\otimes 2}|_X$, and reads
  $\alpha_{\widetilde{L}}$ off from them. Substituting $R_{\widetilde{X}}$ for
  $R^{\mathrm{can}}$ therefore changes the conclusion by precisely
  the $\mu_2$-torsor
  of isomorphisms $R_{\widetilde{X}} \to R^{\mathrm{can}}$ squaring
  to the canonical
  one, which is the comparison torsor $Q_{U, \widetilde{f}}$ of
  Theorem \ref{thm:BBDJS_sheaf}. Hence what $\beta_{\widetilde{L}}$
  trivializes is not
  the discriminant torsor of $q$ alone but its tensor product with
  $Q_{U, \widetilde{f}}$, and the residual monodromic factor is the product of
  $\Lambda^{\otimes \rk E}$ with the orbit monodromy $(-1)^{n+1}$ of
  $Q_{U, \widetilde{f}}$ computed in Proposition
  \ref{prop:sign_character}, that is
  $\Lambda^{\otimes (\rk E + n + 1)}$.

  \vspace{5pt}
  \textbf{\textit{Step 3: The equivariant fundamental class upstairs.}} Assume now that $L$ is oriented and $\varphi$ proper. We claim there is a
  morphism
  \[ \mu_{\widetilde{L}} \colon \F_{\widetilde{L}}[\vdim \widetilde{L}]
    \longrightarrow \widetilde{\varphi}^! \mathcal{P}_{\widetilde{X}}
  \otimes \Lambda^{\otimes (\rk E + n + 1)} \]
  in $D^b_c(\widetilde{L}_{et}, \F)$, every step of whose construction is
  $\Gm$-equivariant.

The section $\sigma$ induces $\widetilde{\sigma} \colon \widetilde{V} \to
  \widetilde{E}$, which is proper as $\sigma$ is a closed immersion.
  Adjunction for \'etale sheaves gives $\F_{\widetilde{E}} \to
  \widetilde{\sigma}_* \F_{\widetilde{V}}$. We apply $\phi_{t \cdot q}$ to this
  morphism. Because $\widetilde{\sigma}$ is proper, vanishing cycles commute with
  the pushforward \cite[Theorem 2.11(ii)]{BBDJS} by proper base change. Combined with Step 2, this yields
  \[ \widetilde{z}_* \F_{\widetilde{V}}[1 - \rk E] \otimes_{\mu_2} P_q \otimes
    \Lambda^{\otimes \rk E} \longrightarrow \widetilde{\sigma}_*
  \phi_{-\, t \cdot s \circ \widetilde{\Psi}} \F_{\widetilde{V}} . \]
  We compose with the canonical identification $\phi_{-g} \simeq
  \phi_{g}$ induced
  by $\epsilon \mapsto -\epsilon$ on the base of the vanishing cycles.
  Multiplication by a unit lifts to every Kummer cover $\epsilon = \delta^m$
  and normalizes its deck group $\mu_m$, acting on it by conjugation. Because $\mu_m$ is abelian that action is trivial, so the induced automorphism of
  $\pi_1^{\mathrm{t}} \simeq \hat{\mathbb{Z}}(1)$ is the identity and the
  identification commutes with $T$. Inversion $\epsilon \mapsto \epsilon^{-1}$
  would interchange $T$ and $T^{-1}$, and is not an automorphism of the
  henselian trait. The
  inverse Thom--Sebastiani isomorphism introduces a cohomological shift of
  $[\rk E - 1]$, which, combined with the relative dimension of the smooth
  morphism $\widetilde{\Psi}$, recovers the virtual dimension shift
  $\vdim \widetilde{L}$. 
  
  Applying adjunction along the Lagrangian morphism
  $\widetilde{\varphi}$ as in \cite[Proposition 5.20]{AmorimBassat} yields
  $\mu_{\widetilde{L}}$, with target twisted by the residual factor
  \eqref{eq:residual_twist} of Step 2.

  Each step is $\Gm$-equivariant. The adjunction unit $\F_{\widetilde{E}} \to
  \widetilde{\sigma}_* \F_{\widetilde{V}}$, the commutation of
  $\phi_{t \cdot q}$
  with the proper pushforward $\widetilde{\sigma}_*$, and the smooth pullback
  along $\widetilde{\Psi}$ are all pulled back along the torsor by Step 1. The only step interacting nontrivially with the
  fiberwise monodromy is the identification \eqref{eq:TSquad}, whose monodromic
  content, after comparison with the globally oriented sheaf
  $\mathcal{P}_{\widetilde{X}}$, is recorded by the factor
  $\Lambda^{\otimes (\rk E + n + 1)}$ of \eqref{eq:residual_twist}.

  \vspace{5pt}
  \textbf{\textit{Step 4: Descent along the torsor.}} By Step 1, the symplectification
  \[\widetilde{L} \to \widetilde{X} \simeq \dCrit(t \cdot s)\] carries the
  $\Gm$-equivariant local presentation $(\widetilde{V}, \widetilde{\Psi},
  \widetilde{E}, t \cdot q, \widetilde{\sigma})$, and by Step 3 there is a morphism
  \[ \mu_{\widetilde{L}} \colon \F_{\widetilde{L}}[\vdim \widetilde{L}]
    \longrightarrow \widetilde{\varphi}^! \mathcal{P}_{\widetilde{X}}
  \otimes \Lambda^{\otimes (\rk E + n + 1)} \]
  whose construction is $\Gm$-equivariant throughout. It remains to descend
  $\mu_{\widetilde{L}}$ along the torsor.

  Write $A := \F_L[\vdim L]$. As $\F_{\widetilde{L}} \simeq p_L^* \F_L$ and
  $\vdim \widetilde{L} = \vdim L + 1$, the source of $\mu_{\widetilde{L}}$ is
  $p_L^* A[1]$, which is monodromic with slice pair $(A, \mathrm{id})$ by Remark
  \ref{rem:pullback_slice}. By the construction of Theorem
  \ref{thm:monodromic_sheaf}, the slice pair of $\mathcal{P}_{\widetilde{X}}$ is
  $(\mathcal{P}_X, T)$. 
  
  Symplectification of $\varphi$ is base change along $p$,
  \[
    \begin{tikzcd}[row sep=large, column sep=large]
      \widetilde{L} \arrow[r, "\widetilde{\varphi}"] \arrow[d, "p_L"']
      \arrow[dr, phantom, "\lrcorner", very near start] &
      \widetilde{X} \arrow[d, "p"] \\
      L \arrow[r, "\varphi"'] & X ,
    \end{tikzcd}
  \]
  so $\widetilde{\varphi}$ is a Cartesian morphism of principal $\Gm$-bundles in
  the sense of Lemma \ref{lem:monodromic_descent}(3), which therefore shows that
  $\widetilde{\varphi}^! \mathcal{P}_{\widetilde{X}}$ is monodromic
  with slice pair
  $(\varphi^! \mathcal{P}_X, \varphi^! T)$, and tensoring with
  $\Lambda^{\otimes (\rk E + n + 1)}$ multiplies the automorphism by
  $(-1)^{\rk E + n + 1}$ by Definition \ref{def:slicepair}. The target of
  $\mu_{\widetilde{L}}$ therefore has slice pair
  \[ \bigl(\varphi^! \mathcal{P}_X,\; S \bigr), \qquad
  S := (-1)^{\rk E + n + 1}\, \varphi^! T. \]

  Applying the slice map of Lemma \ref{lem:monodromic_descent}(2), we define
  \[ \mu_L := \mathrm{sl}(\mu_{\widetilde{L}}) \in
  \Hom_{L}\bigl(\F_L[\vdim L],\, \varphi^! \mathcal{P}_X\bigr). \]
  By exactness of the descent sequence, $\mu_L$ lies in the $S$-invariants, that
  is $S \circ \mu_L = \mu_L$. Substituting the value of $S$ and multiplying both
  sides by the sign $(-1)^{\rk E + n + 1}$, which is an involution, we obtain
  \[ T \circ \mu_L = (-1)^{\rk E + n + 1}\, \mu_L , \]
  the first relation of the statement. For the second, Proposition
  \ref{prop:sign_character} gives $\theta = -T$, so
  \[ \theta \circ \mu_L = -\, T \circ \mu_L = (-1)^{\rk E + n + 2}\, \mu_L
  = (-1)^{n + \rk E}\, \mu_L . \]
  Since a change of
  trivialization of the torsor modifies the slice identifications by
  a power of $S$,
  which fixes $\mu_L$ by the displayed relation, the class $\mu_L$ is
  independent of
  the choices made.
\end{proof}

\begin{remark}[The twist is necessary]\label{rem:twist_necessary}
  The factor $\Lambda^{\otimes \rk E}$ cannot be removed. Let
  $s(x) = x^2$ on $\mathbb{A}^1$ and let $L$ be the Legendrian point, modeled by
  $V = \mathbb{A}^1$, $\Psi = \mathrm{id}$, $E$ of rank $1$. The restriction of
  $\mathcal{P}_{\widetilde{X}} = \phi_{tx^2}$ to its support, the
  $t$-axis orbit, is
  a shift of the quadratic Kummer local system $\mathscr{K}$, whose
  orbit monodromy
  is $-\mathrm{id}$. As $H^\bullet_{\mathrm{et}}(\Gm, \mathscr{K})
  = 0$, every
  morphism from a constant complex to
  $\widetilde{\varphi}^! \mathcal{P}_{\widetilde{X}}$ vanishes, so an untwisted
  fundamental class is necessarily zero. After tensoring with
  $\Lambda \simeq \mathscr{K}$ the relevant cohomology group is nonzero, so the
  twisted construction is not obstructed for this reason. The same computation
  shows that the untwisted operator $T$ acts by $-1$ on the stalk of
  $\mathcal{P}_X$ in this chart, so no nonzero morphism can be
  $T$-invariant. The
  relevant operator is the twisted one.
\end{remark}

\subsection{The Parity Defect}
\label{subsec:parity}

The exponent in the local verification is $n + \rk E$, which depends on the
presentation through both of its terms. Its parity does not, and it is the
parity that controls whether a class can be invariant.

\begin{definition}[Parity defect]\label{def:parity_defect}
  Let $\varphi \colon L \to X$ be a Legendrian with a local presentation
  $(V, \Psi, E, q, \sigma)$ over a contact Darboux chart with base of
  dimension $n$, as in Theorem \ref{thm:JS_local}. The
  \bfem{parity defect} of the presentation is
  $\delta := n + \rk E \in \mathbb{Z}/2$.
\end{definition}

The defect is defined through a choice of presentation, and the next
proposition identifies it with an invariant of the Legendrian.

\begin{proposition}[The parity defect is the parity of the virtual dimension]
  \label{prop:parity_formula}
  For every local presentation of a Legendrian $\varphi \colon L \to X$,
  \[ \delta \;\equiv\; \vdim L \pmod 2 . \]
  In particular $\delta$ is independent of the chart and of the
  presentation, and
  depends only on the connected component of $L$. We write $\delta_L$.
\end{proposition}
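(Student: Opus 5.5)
The plan is to use the dimension formula \eqref{eq:vdimJS} of Theorem \ref{thm:JS_local} and reduce it modulo $2$. For a local presentation $(V, \Psi, E, q, \sigma)$ of $\varphi$ over a chart with base $U$ of dimension $n$, Step 1 of the proof of Proposition \ref{prop:local_verification} gives
\[ \vdim L = 2\dim V - \dim U - \rk E = 2 \dim V - n - \rk E . \]
The term $2\dim V$ is even. Since $-n - \rk E \equiv n + \rk E \pmod 2$, we get $\vdim L \equiv n + \rk E = \delta$.

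We must first check that the presentation used here is the one over the base. The paper's normalization has $\vdim \widetilde{L} = \vdim L + 1$, with the upstairs presentation $(\widetilde{V}, \widetilde{\Psi}, \widetilde{E}, t\cdot q, \widetilde{\sigma})$ over $\widetilde{U}$ of dimension $n+1$. Applying \eqref{eq:vdimJS} upstairs gives
\[ 2(\dim V + 1) - (n+1) - \rk E = \vdim L + 1 , \]
which agrees. So either level yields the same parity, and no stray $\pm 1$ from the $\Gm$-fiber enters. This consistency check is where I expect the only real care to be needed: the formula is applied to a Legendrian in a contact chart rather than to a Lagrangian in a symplectic one. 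I would cite Step 1 of Proposition \ref{prop:local_verification} for the downstairs form of the identity.

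Independence then follows formally. The right-hand side $\vdim L$ is defined intrinsically as the Euler characteristic of $\LL_L$, so it does not depend on the chart or the presentation. It is locally constant on $L$, because the rank of a perfect complex is locally constant. Hence $\delta$ is constant on connected components of $L$, which justifies the notation $\delta_L$. If $L$ has several components with different virtual dimensions, the statement should be read componentwise, with the presentation taken étale-locally near a point of the given component.
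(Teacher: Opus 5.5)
Your proposal is correct and is essentially the paper's proof: both reduce the identity $\vdim L = 2\dim V - n - \rk E$ of Theorem \ref{thm:JS_local} modulo $2$ and then use that $\vdim L$ is intrinsic to $L$. Your upstairs consistency check is a harmless extra, and your appeal to local constancy of the rank of $\LL_L$ spells out the ``connected component'' clause that the paper leaves implicit.
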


\begin{proof}
  By the virtual dimension formula \eqref{eq:vdimJS} of Theorem
  \ref{thm:JS_local},
  \[ \vdim L = 2 \dim V - \dim U - \rk E , \]
  so, solving for the rank of the bundle,
  \[ \rk E = 2 \dim V - \dim U - \vdim L . \]
  Substituting this into Definition \ref{def:parity_defect} and using
  $\dim U = n$,
  \[ \delta = n + \rk E = n + \bigl( 2 \dim V - n - \vdim L \bigr)
  = 2 \dim V - \vdim L . \]
  Reducing modulo $2$ removes the term $2 \dim V$, leaving
  $\delta \equiv \vdim L$. The quantities $\dim V$ and $\rk E$ depend on the
  presentation and $n$ on the chart, while $\vdim L$ depends on neither, so the
  reduction is independent of all of them.
\end{proof}

Proposition \ref{prop:local_verification} gives $\theta \circ \mu_L =
(-1)^{n + \rk E} \mu_L$, and $n + \rk E$ is the parity defect of Definition
\ref{def:parity_defect}, so the local class satisfies
\[ \theta \circ \mu_L = (-1)^{\delta_L}\, \mu_L = (-1)^{\vdim L}\, \mu_L . \]

Note that the computation is consistent with the stabilizations of Proposition
\ref{prop:theta_glues}. A chart stabilization $(U', s') = (U \times
  \mathbb{A}^k,
s \boxplus q_U)$ with $q_U$ non-degenerate of rank $k$ is accompanied by
\[ V' = V \times \mathbb{A}^k, \quad \Psi' = \Psi \times \mathrm{id}, \quad
  E' = E \oplus \cO^{k}, \quad q' = q \oplus (-q_U), \quad
\sigma' = (\sigma, w) , \]
which satisfies $q'(\sigma') = q(\sigma) - q_U(w) = -(s \boxplus q_U) \circ
\Psi'$. Here $n$, $\dim V$ and $\rk E$ each increase by $k$, so that
\[ \vdim L' = 2(\dim V + k) - (n + k) - (\rk E + k) = 2 \dim V - n - \rk E
= \vdim L \]
is unchanged, and so is $\delta$. Example \ref{ex:A1_parity} shows that both
values of $\delta_L$ are realized on a single contact chart, so the defect
cannot be removed by a choice of chart.

\subsection{The Defect is Not Vacuous}
\label{subsec:nonvacuous}

The defect would carry no information if the classes it obstructs vanished for
other reasons. We compute the perverse sheaf of a Morse--Bott chart, exhibit
both parities on the $A_1$ chart, and produce an odd Legendrian whose local
class is nonzero.

\begin{lemma}[The perverse sheaf of a Morse--Bott chart]\label{lem:mb_chart}
  Let $U = \mathbb{A}^n$ with coordinates $x_1, \dots, x_n$ and let
  $s = q(x_1, \dots, x_k)$ be a non-degenerate quadratic form in the first $k$
  coordinates, $1 \leq k \leq n$. Write $X = \Delta\mathrm{loc}(s)$ for the
  associated contact Darboux chart. Then
  \[ X^{\mathrm{cl}} \simeq \mathbb{A}^{n-k}, \qquad
  \mathcal{P}_X \simeq \F_{\mathbb{A}^{n-k}}[\, n-k \,], \]
  and the operators of Theorem \ref{thm:monodromic_sheaf} act on it
  by the scalars
  \[ T_{U, \widetilde{f}} = (-1)^k, \qquad \varepsilon_U = (-1)^{n+1}, \qquad
  T = (-1)^{n+k+1}, \qquad \theta = (-1)^{n+k} . \]
\end{lemma}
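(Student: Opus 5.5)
I will compute everything on the symplectified chart $\widetilde{U} = \mathbb{A}^n \times \Gm$ with homogeneous potential $\widetilde{f} = t \cdot q(x_1,\dots,x_k)$, and then descend via the slice pair of Definition \ref{def:slicepair}.

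\textbf{Classical truncation.} By Lemma \ref{lem:KT_model}, $X^{\mathrm{cl}} = \{s=0\}\cap\mathrm{Crit}(s)$. Since $q$ is non-degenerate in $x_1,\dots,x_k$, the equations $\partial_i q = 0$ for $i \le k$ cut out $\{x_1=\cdots=x_k=0\}$ scheme-theoretically. On this locus $q$ vanishes, and there is no condition on $x_{k+1},\dots,x_n$. Hence $X^{\mathrm{cl}} \simeq \mathbb{A}^{n-k}$.

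\textbf{The sheaf upstairs.} Write $\widetilde{U} = \mathbb{A}^k \times (\mathbb{A}^{n-k}\times\Gm)$ and $\widetilde{f} = (t\cdot q) \boxplus 0$. I apply \eqref{eq:TSquad}, that is, Thom--Sebastiani for the family of quadratic forms $t\cdot q$ over $\mathbb{A}^{n-k}\times\Gm$, exactly as in Step 2 of Proposition \ref{prop:local_verification} with $E = \mathcal{O}^k$. This gives $\mathcal{PV}_{U,\widetilde{f}} \simeq \F_{\mathbb{A}^{n-k}\times\Gm}[n-k+1] \otimes_{\mu_2} P_q \otimes \Lambda^{\otimes k}$. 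Two facts make this clean:
\begin{itemize}
\item $P_q$ is trivial, since $\det(q_{ij})$ is a constant;
\item the orbit monodromy is $(-1)^k$, either from the Kummer factor $t^k$ or from the Picard--Lefschetz input of Theorem \ref{thm:monodromy_facts}(3).
\end{itemize}
This gives $T_{U,\widetilde{f}} = (-1)^k$.

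\textbf{Orientation and operators.} Proposition \ref{prop:sign_character} gives $\varepsilon_U = (-1)^{n+1}$ for any contact orientation data, so $Q_{U,\widetilde{f}}$ contributes the monodromic factor $\Lambda^{\otimes(n+1)}$. Therefore
\[ T = T_{U,\widetilde{f}}\,\varepsilon_U = (-1)^{n+k+1}, \qquad \theta = (-1)^n T_{U,\widetilde{f}} = (-1)^{n+k}, \]
the latter directly from Definition \ref{def:twisted}, consistent with $T = -\theta$.

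\textbf{Descent.} The slice is $B = u^*\mathcal{P}_{\widetilde{X}}[-1]$, and restricting $\F[n-k+1]$ to the unit section and shifting by $[-1]$ gives $\F_{\mathbb{A}^{n-k}}[n-k]$, which is perverse as required. Since $\Lambda$ restricts trivially to the unit section, the sign twists affect only the automorphism and not the underlying slice. Thus $\mathcal{P}_X \simeq \F_{\mathbb{A}^{n-k}}[n-k]$, and the operators act by the stated scalars.

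\textbf{Main obstacle.} The delicate point is global rather than stalkwise. I need the orientation local system $Q_{U,\widetilde{f}}$ to be exactly $\Lambda^{\otimes(n+1)}$ times a local system pulled back from $X^{\mathrm{cl}}$, and that pulled-back factor must be trivial. Without this, $\mathcal{P}_X$ would only be a rank-one local system twist of $\F[n-k]$. I would settle it by noting two facts: $\mathbb{A}^{n-k}$ is simply connected in characteristic zero, so every $\mu_2$-torsor on it is trivial; and the weight-zero computation in the proof of Proposition \ref{prop:sign_character} exhibits $Q_{U,\widetilde{f}}$ as square roots of $t^{n+1}$ up to a weight-zero unit. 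Together these force $Q_{U,\widetilde{f}} \simeq \Lambda^{\otimes(n+1)}$, so the descended sheaf is constant.
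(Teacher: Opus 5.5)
Your proposal is correct and follows essentially the paper's route: the transversal non-degenerate quadratic form $t\cdot q$ gives the shifted constant sheaf $\F[n-k+1]$ on $\mathrm{Crit}(\widetilde{f})$ with orbit monodromy $(-1)^k$, Proposition \ref{prop:sign_character} supplies $\varepsilon_U=(-1)^{n+1}$, and the $[-1]$ shift of the slice yields $\F_{\mathbb{A}^{n-k}}[n-k]$. There are two minor differences. You read $X^{\mathrm{cl}}$ off Lemma \ref{lem:KT_model} instead of computing $\mathrm{Crit}(\widetilde{f})/\Gm$. You also check explicitly that the orientation torsor is $\Lambda^{\otimes(n+1)}$ up to a torsor that is trivial on $\mathbb{A}^{n-k}$, a point the paper passes over silently.
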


\begin{proof}
  The symplectified potential is $\widetilde{f} = t \cdot q(x_1, \dots, x_k)$ on
  $\widetilde{U} = \mathbb{A}^n \times \Gm$. Differentiating,
  $\partial_{x_i} \widetilde{f} = t \,\partial_{x_i} q$ for $i \leq k$,
  $\partial_{x_i} \widetilde{f} = 0$ for $i > k$, and
  $\partial_t \widetilde{f} = q$. As $t$ is invertible and $q$ is
  non-degenerate, the first group of equations forces $x_1 = \cdots = x_k = 0$,
  and the last is then automatic. Hence
  \[ \mathrm{Crit}(\widetilde{f}) = \{ x_1 = \cdots = x_k = 0 \}
  \simeq \mathbb{A}^{n-k} \times \Gm . \]
  Taking $\Gm$-quotients gives $X^{\mathrm{cl}} \simeq \mathbb{A}^{n-k}$.

  Transversally to this locus $\widetilde{f}$ is a non-degenerate
  quadratic form of
  rank $k$ with coefficient matrix $t \cdot q_{ij}$, so by Example
  \ref{ex:quadratic} the complex $\mathcal{PV}_{U, \widetilde{f}}$ is
  the perverse constant sheaf on $\mathrm{Crit}(\widetilde{f})$, twisted by the
  $\mu_2$-system of square roots of $\det(t \, q_{ij}) = t^k \det(q_{ij})$, and
  $T_{U, \widetilde{f}} = (-1)^k$. Proposition \ref{prop:sign_character} gives
  $\varepsilon_U = (-1)^{n+1}$ for contact orientation data, so that
  \[ T = T_{U, \widetilde{f}} \, \varepsilon_U = (-1)^{k} (-1)^{n+1}
    = (-1)^{n+k+1} , \qquad
  \theta = (-1)^n T_{U, \widetilde{f}} = (-1)^{n+k} , \]
  the second by Definition \ref{def:twisted}. These satisfy $\theta = -T$ as
  required. Finally $\mathrm{Crit}(\widetilde{f})$ is smooth of dimension
  $n-k+1$, so the perverse constant sheaf on it is
  $\F[\,n-k+1\,]$, and taking the slice of Definition \ref{def:slicepair}, which
  shifts by $[-1]$, gives $\mathcal{P}_X \simeq \F_{\mathbb{A}^{n-k}}[\,n-k\,]$.
\end{proof}

\begin{example}[Both parities occur over the $A_1$ chart]\label{ex:A1_parity}
  Let $U = \mathbb{A}^1 = \Spec \K[x]$ and $s = x^2$, so that $n =
  \dim U = 1$. By
  Lemma \ref{lem:KT_model} the contact chart is $X = \Delta\mathrm{loc}(s) =
  \Spec(A)$, where $A$ is generated over $\K[x]$ by $y, z$ in degree $-1$ with
  $d(y) = 2x$ and $d(z) = x^2$. By Theorem
  \ref{lem:chart_sympl} its symplectification is $\widetilde{X} \simeq
  \dCrit(t \cdot x^2)$ over $\widetilde{U} = \mathbb{A}^1 \times
  \Gm$. We exhibit
  two Legendrians in $X$ with different parity defects.

  \emph{(a) The reduced point.} Take
  \[ V = \mathbb{A}^1, \quad \Psi = \mathrm{id}, \quad E = \cO_V, \quad
  q(u) = -u^2, \quad \sigma = x . \]
  The form $q$ is non-degenerate, and substituting $\sigma$ into $q$ gives
  $q(\sigma) = -x^2 = -\, s \circ \Psi$, so $(V, \Psi, E, q, \sigma)$ is a local
  presentation in the sense of Theorem \ref{thm:JS_local}. 
  
  The
  section $\sigma$ is
  a regular element of $\K[x]$, so its Koszul complex is a resolution and the
  derived zero locus is the reduced point $Z(\sigma) = \Spec \K[x]/(x)$. By
  \eqref{eq:vdimJS},
  \[ \vdim L = 2 \cdot 1 - 1 - 1 = 0 . \]
  Since $\det E = \cO_V$ is trivial, $L$ is orientable by Theorem
  \ref{thm:JS_local}, and $\varphi$ is proper. The parity defect is
  \[ \delta = n + \rk E = 1 + 1 \equiv 0 \pmod 2 . \]

  \emph{(b) The odd-dimensional point.} Take
  \[ V = \mathbb{A}^1, \quad \Psi = \mathrm{id}, \quad E = \cO_V^{\oplus 2},
  \quad q(u_1, u_2) = u_1 u_2, \quad \sigma = (x, -x) . \]
  The hyperbolic form $q$ is non-degenerate, and substituting $\sigma$ gives
  \[ q(\sigma) = x \cdot (-x) = -x^2 = -\, s \circ \Psi , \]
  so this too is a local presentation. Its derived zero locus is the Koszul
  algebra $\K[x]\langle \xi_1, \xi_2 \rangle$ with $d(\xi_1) = x$ and
  $d(\xi_2) = -x$, where $\xi_1, \xi_2$ have degree $-1$. Substituting
  $\eta := \xi_1 + \xi_2$ gives $d(\eta) = x + (-x) = 0$, while
  $\K[x]\langle \xi_1 \rangle$ with $d(\xi_1) = x$ is a Koszul resolution of
  $\K$. Rearranging the tensor factors accordingly,
  \[ Z(\sigma) \;\simeq\; \Spec\bigl( \Lambda(\eta) \bigr), \qquad
  \deg \eta = -1, \quad d \eta = 0 . \]
  We obtain the cotangent complex $\LL_L \simeq [\, E^\vee \to T^\vee_U \,]|_L$ in
  degrees $-1, 0$, of rank $1 - 2 = -1$, in agreement with \eqref{eq:vdimJS}:
  \[ \vdim L = 2 \cdot 1 - 1 - 2 = -1 . \]
  Since $\det E = \det(\cO_V^{\oplus 2}) = \cO_V$ is trivial, $L$ is again
  orientable, and $\varphi$ is proper, the classical truncation being
  the origin.
  The parity defect is
  \[ \delta = n + \rk E = 1 + 2 \equiv 1 \pmod 2 . \]

  One checks that both presentations satisfy the hypotheses of Theorem \ref{thm:JS_local} over the
  same contact chart, and their defects agree with Proposition
  \ref{prop:parity_formula}, since $\vdim L = 0$ in case (a) and
  $\vdim L = -1$ in
  case (b). By Propositions \ref{prop:local_verification} and
  \ref{prop:parity_formula} the local class of (b) satisfies
  $\theta \circ \mu_L = - \mu_L$, while that of (a) is $\theta$-invariant.
  Thus both parities are realized over one chart, and no choice of
  chart can remove
  the defect.

\end{example}

Over the chart of Example \ref{ex:A1_parity} the defect is not yet
witnessed by a
nonzero class. Applying Lemma \ref{lem:mb_chart} with $n = k = 1$ gives
$X^{\mathrm{cl}}$ the reduced point, $\mathcal{P}_X \simeq \F$ concentrated in
degree $0$, and
\[ T = (-1)^{1+1+1} = -1, \qquad \theta = (-1)^{1+1} = +1 . \]
As $L^{\mathrm{cl}}$ is also the reduced point in both presentations,
\[ \Hom_L\bigl( \F_L[\vdim L],\, \varphi^! \mathcal{P}_X \bigr)
  = \Hom_{\mathrm{pt}}\bigl( \F[\vdim L],\, \F \bigr)
  =
  \begin{cases} \F & \vdim L = 0, \\ 0 & \vdim L = -1,
\end{cases} \]
since $\Hom(\F[-1], \F) = \mathrm{Ext}^1_{\F}(\F, \F) = 0$. In case
(b) the class
$\mu_L$ therefore vanishes and the eigenvalue relation holds
vacuously. Indeed no
Legendrian over this chart can witness it, since $X^{\mathrm{cl}}$ is a
point, $L$ is
affine and $\varphi$ is proper, so $L^{\mathrm{cl}}$ is finite and
$\varphi^! \mathcal{P}_X$ is concentrated in degree $0$, and the
$\Hom$-group vanishes unless $\vdim L = 0$. Example \ref{ex:odd_nonzero} works
over a chart with positive-dimensional critical locus, where the obstruction
becomes visible.

\begin{example}[An odd Legendrian with a nonvanishing
  class]\label{ex:odd_nonzero}
  Let $U = \mathbb{A}^2 = \Spec \K[x_1, x_2]$ and $s = x_1^2$, so
  that $n = 2$. By
  Lemma \ref{lem:KT_model} the chart is $X = \Delta\mathrm{loc}(s) =
  \Spec(A)$ with
  $A$ generated over $\K[x_1, x_2]$ by $y_1, y_2, z$ in degree $-1$ and
  \[ d(y_1) = \frac{\partial s}{\partial x_1} = 2 x_1, \qquad
  d(y_2) = \frac{\partial s}{\partial x_2} = 0, \qquad d(z) = s = x_1^2 , \]
  so that $H^0(A) = \K[x_1,x_2]/(2x_1, x_1^2) = \K[x_2]$ and
  $X^{\mathrm{cl}} = \mathbb{A}^1_{x_2}$.

  \emph{The perverse sheaf.} This is the Morse--Bott chart of Lemma
  \ref{lem:mb_chart} with $n = 2$ and $k = 1$, so
  \[ \mathcal{P}_X \simeq \F_{\mathbb{A}^1}[1], \qquad
  T = (-1)^{2+1+1} = +1, \qquad \theta = (-1)^{2+1} = -1 . \]

  \emph{The Legendrian.} Take
  \[ V = \mathbb{A}^2, \quad \Psi = \mathrm{id}, \quad E = \cO_V, \quad
  q(u) = -u^2, \quad \sigma = x_1 . \]
  Substituting, $q(\sigma) = -x_1^2 = -\, s \circ \Psi$, so this is a local
  presentation in the sense of Theorem \ref{thm:JS_local}. The
  section $\sigma$ is
  regular, so $Z(\sigma) = \Spec \K[x_1,x_2]/(x_1) =
  \mathbb{A}^1_{x_2}$ is reduced,
  $\varphi^{\mathrm{cl}}$ is the identity of $\mathbb{A}^1$ and so proper, and
  $\det E = \cO_V$ is trivial so $L$ is orientable. By \eqref{eq:vdimJS},
  \[ \vdim L = 2 \cdot 2 - 2 - 1 = 1 , \]
  which is odd, and the parity defect is $\delta = n + \rk E = 2 + 1
  \equiv 1$, in
  agreement with Proposition \ref{prop:parity_formula}.

  \emph{The $\Hom$-group.} As $\varphi^{\mathrm{cl}}$ is an isomorphism,
  $\varphi^! \mathcal{P}_X = \mathcal{P}_X = \F_{\mathbb{A}^1}[1]$, and we obtain
  \[ \Hom_L\bigl( \F_L[\vdim L],\, \varphi^! \mathcal{P}_X \bigr)
    = \Hom_{\mathbb{A}^1}\bigl( \F[1], \F[1] \bigr)
  = H^0_{\mathrm{et}}(\mathbb{A}^1, \F) = \F , \]
  a one-dimensional space, on which $\theta$ acts by $-1$ through
  post-composition.
  Consequently
  \[ \Hom^{\theta = +1} = 0, \qquad \Hom^{\theta_L = +1} = \F , \]
  where $\theta_L := (-1)^{\vdim L} \theta = -\theta$. This operator
  is introduced
  in Remark \ref{rem:graded_equiv} below, where the grading that produces it is
  defined. The class is moreover nonzero by Proposition
  \ref{prop:odd_nonvanishing}.
\end{example}

\begin{proposition}[Nonvanishing of the local
  class]\label{prop:odd_nonvanishing}
  In the situation of Example \ref{ex:odd_nonzero}, the class $\mu_L$ of
  Proposition \ref{prop:local_verification} is nonzero, and generates the
  one-dimensional space $\Hom_L(\F_L[\vdim L], \varphi^! \mathcal{P}_X)$.
\end{proposition}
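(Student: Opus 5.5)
Since the $\Hom$-group in Example \ref{ex:odd_nonzero} is already known to be one-dimensional, it suffices to show $\mu_L \neq 0$, and the plan is to check this after restriction to a point. Fix a point $x \in L^{\mathrm{cl}} = \mathbb{A}^1_{x_2}$, say the origin, with inclusion $i_x$. By Lemma \ref{lem:monodromic_descent}(3), applied to $g = i_x$ and composed with $\varphi$, the slice map intertwines $\widetilde{i}_x^!$ with $i_x^!$. Hence $i_x^! \mu_L = \mathrm{sl}(\widetilde{i}_x^!\mu_{\widetilde{L}})$. The source and target are constant sheaves on $\mathbb{A}^1$, so $i_x^!$ is an isomorphism on this $\Hom$-group (both sides shift by $[-2]$). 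The question therefore reduces to a single fiber $\{x\}\times\Gm$ upstairs, where everything is given by the Thom--Sebastiani computation in one transversal variable $x_1$.

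On that fiber I would unwind Step 3 of Proposition \ref{prop:local_verification} explicitly. The presentation has $\Psi = \mathrm{id}$ and $\widetilde{\sigma} = x_1$, the graph of a regular section, so $\widetilde{\sigma}$ is a closed immersion of $\widetilde{V}$ into the total space of $\widetilde{E}$ that is transverse to the zero section. The adjunction unit $\F_{\widetilde{E}} \to \widetilde{\sigma}_*\F_{\widetilde{V}}$ becomes, after applying $\phi_{t\cdot q}$, the restriction from the vanishing cycles of $-tu^2$ on $\widetilde{E}$ to the vanishing cycles of $-t x_1^2$ on $\widetilde{V}$. Because $\widetilde{\sigma}$ is transverse to the critical locus in the $u$-direction, this map identifies the Milnor fiber $\{-tu^2=\epsilon\}$ (two points) with $\{-tx_1^2 = \epsilon\}$, via $u = x_1$. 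It is therefore an isomorphism on reduced $H^0$ of the Milnor fibers, which has rank $1$. After the twist by $\Lambda^{\otimes(\rk E+n+1)} = \Lambda^{\otimes 4}$, which is trivial, and after the orientation comparison $Q_{U,\widetilde f}$, the map $\widetilde{i}_x^!\mu_{\widetilde{L}}$ is an isomorphism between rank-one monodromic local systems on $\Gm$, both of trivial monodromy.

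Next I would apply Lemma \ref{lem:monodromic_descent}(2) on the fiber. The slice map is surjective onto $\Hom(A,B)^S$, and its kernel consists of the coinvariant classes of degree shifted by one. An isomorphism of rank-one local systems on $\Gm$ with trivial monodromy is the pullback of an isomorphism on the slice. Since $p^*$ is faithful in this degree, its image under $\mathrm{sl}$ is that slice isomorphism, which is nonzero; equivalently, it does not lie in the summand $A(-1)$ of Lemma \ref{lem:monodromic_descent}(1), because it is nonzero on $H^0$ of the fiber. So $i_x^!\mu_L \neq 0$, hence $\mu_L \neq 0$, and it generates the one-dimensional space. This is consistent with $\theta = -1$ and $\theta\circ\mu_L = (-1)^{\vdim L}\mu_L = -\mu_L$.

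The main obstacle is the claim that the Step 3 map is an isomorphism on stalks rather than zero. This depends on tracking the inverse Thom--Sebastiani shift together with the identification $\phi_{-g}\simeq\phi_g$. I would settle it by a direct Milnor-fiber computation: both sides are $\widetilde H^0$ of two-point sets matched by $u = x_1$. Any sign ambiguity, including the sign of the orientation comparison from Example \ref{ex:quadratic}, only rescales the class and does not affect nonvanishing.
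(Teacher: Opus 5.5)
Your proposal is correct and follows essentially the paper's route: pass to the upstairs class $\mu_{\widetilde{L}}$ through the slice map of Lemma \ref{lem:monodromic_descent}(2), then show the upstairs composite is an isomorphism by the stalk computation in which the graph $u = x_1$ of $\widetilde{\sigma}$ meets each of the two sheets $u = \pm\sqrt{-\epsilon/t}$ of the Milnor fiber of $-tu^2$ exactly once. The one difference is your preliminary restriction to a single fiber $\{x\}\times\Gm$ via Lemma \ref{lem:monodromic_descent}(3), which is harmless but unnecessary: the paper observes directly that the kernel term $\Hom_L(A, B[-1](-1))_S = H^{-1}_{\mathrm{et}}(\mathbb{A}^1,\F)$ of the descent sequence on all of $L$ vanishes, so $\mathrm{sl}$ is already injective there.
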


\begin{proof}
  We first reduce to the symplectification. Taking
  $A = \F_L[1]$ and $B = \varphi^! \mathcal{P}_X =
  \F_{\mathbb{A}^1}[1]$, the slice
  automorphism of Proposition \ref{prop:local_verification} is
  $S = (-1)^{\rk E + n + 1} T = (-1)^4 (+1) = +1$, and the descent sequence of
  Lemma \ref{lem:monodromic_descent}(2) has kernel term
  \[ \Hom_L\bigl( A, B[-1](-1) \bigr)_S =
  H^{-1}_{\mathrm{et}}(\mathbb{A}^1, \F) = 0 , \]
  so the slice map $\mathrm{sl}$ is an isomorphism onto $\Hom_L(A,
  B)^S = \F$. Hence
  $\mu_L \neq 0$ as soon as $\mu_{\widetilde{L}} \neq 0$, which we now verify.

  The construction in Step 3 of the proof of that proposition follows the chain of \cite[Proposition
  5.20]{AmorimBassat}, whose three steps
  we check in turn. We work upstairs, where by Step 1 of that proof the
  data are
  \[ \widetilde{U} = \mathbb{A}^2 \times \Gm, \quad
    \widetilde{f} = t x_1^2, \quad \widetilde{V} = \widetilde{U}, \quad
    \widetilde{\Psi} = \mathrm{id}, \quad \widetilde{E} =
    \cO_{\widetilde{V}}, \quad
  \widetilde{q}(u) = - t u^2, \quad \widetilde{\sigma} = x_1, \]
  so that $\dim \widetilde{U} = \dim \widetilde{V} = 3$ and
  $\widetilde{q}(\widetilde{\sigma}) = -t x_1^2 = - \widetilde{f}$.

  First the loci. Since $t$ is invertible on $\Gm$, the ideal
  $(2 t x_1, x_1^2)$ equals $(x_1)$, so $\mathrm{Crit}(\widetilde{f}) = \{ x_1 = 0 \}$, while $\widetilde{f}^{-1}(0)$ is cut out by $(x_1^2)$ and
  carries a nilpotent. The same holds for
  $(\widetilde{q} \circ \widetilde{\sigma})^{-1}(0)$ against
  $\widetilde{L} = \widetilde{\sigma}^{-1}(0) = \{x_1 = 0\}$. 
  
  The
  \'etale site is
  insensitive to nilpotents, so the maps $i$ and $j$ of \cite[Proposition
  5.20]{AmorimBassat} induce equivalences on constructible sheaves, and
  $\Psi_0 = \mathrm{id}$ because $\widetilde{\Psi} = \mathrm{id}$.

  The first step is a chain of isomorphisms by loc.\ cit., using
  \[\F_{0_{\widetilde{E}}} \simeq \phi_{\widetilde{q}}
  \F_{\widetilde{E}}[\rk E - 1] \otimes \Lambda^{\otimes \rk E},\] which is
  \eqref{eq:TSquad} read backwards, the discriminant torsor being trivialized
  by the orientation.

  The second step applies $s_0^* \phi_{\widetilde{q}}$ to the unit
  $\F_{\widetilde{E}} \to \widetilde{\sigma}_* \F_{\widetilde{V}}$. Since
  $\widetilde{q}$ is non-degenerate on the fibers of $\widetilde{E}$,
  the complex
  $\phi_{\widetilde{q}} \F_{\widetilde{E}}$ is supported on the zero
  section. And
  since $\widetilde{\sigma}$ vanishes on $\widetilde{L}$, the map $s_0$ factors
  through the closed embedding $\widetilde{\sigma}$, and therefore
  $s_0^* \widetilde{\sigma}_* = (-)|_{\widetilde{L}}$. Both sides are
  rank-one systems on $\widetilde{L}$ twisted by $\Lambda$, and it suffices to
  check that the map is an isomorphism on stalks. Fix $p \in \widetilde{L}$ and
  work in the transversal slice with coordinates $(x_1, u)$, in which
  $\widetilde{q} = -t u^2$ and the graph of $\widetilde{\sigma}$ is the line
  $\{ u = x_1 \}$. The critical locus of $\widetilde{q}$ in this slice is the
  $x_1$-axis, and the graph is transverse to it, since the fiberwise
  differential
  of $\widetilde{\sigma}$ is $d x_1 \neq 0$. The Milnor fiber of
  $\widetilde{q}$ at
  $p$ is $\{ -t u^2 = \epsilon \}$, a pair of lines $u = \pm \sqrt{-\epsilon/t}$
  parallel to the $x_1$-axis, and the graph meets each of them in exactly one
  point. So the inclusion of the Milnor fiber of
  $\widetilde{q}|_{\mathrm{graph}} = -t x_1^2$ into that of $\widetilde{q}$ is a
  homotopy equivalence, and the induced map on the rank-one vanishing
  cohomologies
  is an isomorphism.

  The third step is the composition of the base change
  $\Psi_{0!} \phi_{\widetilde{f} \circ \widetilde{\Psi}} \to
  \phi_{\widetilde{f}} \widetilde{\Psi}_!$ with the trace
  $\delta_{\widetilde{\Psi}}$. Both are identities because
  $\widetilde{\Psi} = \mathrm{id}$, and the accompanying shift
  $[\, 2 \dim \widetilde{U} - 2 \dim \widetilde{V} \,]$ is trivial since the two
  dimensions are equal.

  The composite is an isomorphism, in particular nonzero, and since
  adjunction is a bijection on morphism spaces its adjoint
  $\mu_{\widetilde{L}}$ is
  nonzero as well. Hence $\mu_L \neq 0$.
\end{proof}

Combining Example \ref{ex:odd_nonzero} with Proposition
\ref{prop:odd_nonvanishing}, the ungraded formulation of the conjecture, which
demands $\theta \circ \mu_L = \mu_L$, is satisfiable on that
Legendrian only by
$\mu_L = 0$, whereas the graded formulation of Conjecture
\ref{conj:joyce} admits
the generator of a one-dimensional space. This is the precise sense
in which the
orientation datum must be graded.

\subsection{Graded Orientations and the Conjecture}
\label{subsec:gradedjoyce}

The obstruction is therefore in the orientation datum rather than in the class.
Twisting that datum by the parity of the virtual dimension absorbs the sign and
leaves a formulation that is satisfiable in both parities.

\begin{definition}[Graded contact orientation]\label{def:graded_orientation}
  Let $Z$ be a $-1$-shifted contact derived Artin stack with contact orientation
  data $R_Z$ as in Definition \ref{def:orientation}, and let
  $\underline{\Lambda}$ denote the $\mu_2$-torsor on $\widetilde{Z}$
  underlying the
  monodromic sign twist $\Lambda$ of Definition \ref{def:slicepair}. For a
  Legendrian morphism $\varphi \colon L \to Z$ write
  \[ \mathcal{O}_\varphi := \underline{\mathrm{Isom}}^{\,\square}
  \bigl( K_{\widetilde{L}},\, \widetilde{\varphi}^*(p^* R_Z) \bigr) \]
  for the $\mu_2$-torsor of isomorphisms whose tensor square is the canonical
  isomorphism $(K_{\widetilde{L}})^{\otimes 2} \simeq
  \widetilde{\varphi}^*(K_{\widetilde{Z}})$, so that a trivialization of
  $\mathcal{O}_\varphi$ is an orientation in the sense of Definition
  \ref{def:orientation}. A \bfem{graded orientation} of $\varphi$ is a
  trivialization of the twisted torsor
  \[ \mathcal{O}_\varphi \otimes_{\mu_2}
  \underline{\Lambda}^{\otimes \vdim L} . \]
  Since $\underline{\Lambda}^{\otimes 2}$ is canonically trivial,
  only the parity
  of $\vdim L$ enters. For $\vdim L$ even a graded orientation is an
  orientation,
  and for $\vdim L$ odd the two notions differ by the quadratic
  Kummer twist along
  the $\Gm$-orbits.
\end{definition}

\begin{remark}\label{rem:graded_equiv}
  Equivalently, one may retain Definition \ref{def:orientation} and modify the
  operator. For an oriented Legendrian $\varphi \colon L \to X$ set
  \[ \theta_L := (-1)^{\vdim L}\, \varphi^! \theta , \]
  the \bfem{Legendrian twisted operator}. A class is
  $\theta$-invariant for a graded
  orientation if and only if it is $\theta_L$-invariant for the underlying orientation. The two formulations differ by the orbit monodromy
  $(-1)^{\vdim L}$ of $\underline{\Lambda}^{\otimes \vdim L}$. The sign is
  multiplicative under composition of correspondences. In general
  \[ \vdim \bigl( M \times^h_{B} N \bigr) = \vdim M + \vdim N - \vdim B , \]
  so the parity is additive precisely when $\vdim B$ is even. This holds in both
  situations where the grading is composed. For $B$ a $-1$-shifted symplectic
  stack one has $\vdim B = 0$, as noted in
  \cite[Conjecture 5.22(c)]{AmorimBassat}, and for $B$ a
  $0$-shifted symplectic
  stack the non-degeneracy $\TT_B \simeq \LL_B$ gives $h^i(\TT_B) \simeq
  h^{-i}(\TT_B)^\vee$, so these have equal dimension at every point. Therefore
  $\vdim B \equiv \dim H^0(\TT_B) \pmod 2$, and $H^0(\TT_B)$ carries a
  non-degenerate alternating form, so $\vdim B$ is even. It follows that
  \[(-1)^{\vdim M} (-1)^{\vdim N} = (-1)^{\vdim (M \times^h_{B} N)}\]
  in both cases,
  and the enrichment of Definition \ref{def:Hmon} is unaffected.
\end{remark}

\begin{conjecture}[Contact Joyce Conjecture]\label{conj:joyce}
  Let $(X, \cL, \alpha)$ be an oriented $-1$-shifted contact derived stack over
  $\K$ and let $\varphi \colon L \to X$ be a proper Legendrian equipped with a
  graded orientation in the sense of Definition \ref{def:graded_orientation}.
  There exists a morphism in $D_c^b(L_{et}, \F)$
  \[ \mu_L \colon \F_L[\vdim L] \to \varphi^! \mathcal{P}_X, \]
  where $\vdim L$ denotes the virtual dimension of $L$, invariant under the
  twisted monodromy operator, $\theta \circ \mu_L = \mu_L$. Equivalently, for a
  Legendrian carrying an ordinary orientation, $\theta_L \circ \mu_L
  = \mu_L$ in the
  notation of Remark \ref{rem:graded_equiv}. Its local models in equivariant
  Darboux charts are those of Proposition \ref{prop:local_verification}.
\end{conjecture}

This is the contact analogue of the Symplectic Joyce Conjecture for
Lagrangians
\cite[Conjecture 5.18]{AmorimBassat}. Two features distinguish the contact
setting. First, the relevant operator is the twisted one. Invariance under the untwisted operator $T$ is too strong. In the local model of Remark \ref{rem:twist_necessary} every $T$-invariant morphism vanishes. Second, the
orientation datum must be graded by the virtual dimension. Legendrians of both
parities occur over a single contact chart by Example
\ref{ex:A1_parity}, and by
Example \ref{ex:odd_nonzero} the ungraded formulation forces $\mu_L = 0$ on an
odd-dimensional Legendrian whose $\Hom$-group is one-dimensional, while the
graded formulation admits a generator there. Neither phenomenon has
a counterpart
in the symplectic case, where the perverse sheaf carries no
monodromy operator.

\begin{remark}[Local verification, graded form]\label{rem:local_graded}
  In the situation of Proposition \ref{prop:local_verification}, that
  proposition gives $\theta \circ \mu_L =
  (-1)^{n + \rk E} \mu_L$ and Proposition \ref{prop:parity_formula}
  identifies $n + \rk E$ with $\vdim L$ modulo $2$, so
  \[ \theta \circ \mu_L = (-1)^{\vdim L}\, \mu_L . \]
  Multiplying both sides by $(-1)^{\vdim L}$, which is an involution, and
  using the definition $\theta_L = (-1)^{\vdim L} \varphi^! \theta$ of Remark
  \ref{rem:graded_equiv} gives $\theta_L \circ \mu_L = \mu_L$. Conjecture
  \ref{conj:joyce}, in its graded formulation, therefore holds locally for
  derived contact schemes.
\end{remark}

\begin{remark}[Global Gluing Obstruction]\label{rem:gluing}
  The local maps $\mu_L$ exist in the derived category $D_c^b(L_{et},
  \F)$ for each \'etale Darboux chart. Establishing this as a global
  theorem requires gluing these local morphisms. The object
  $\F_L[\vdim L]$ is a constructible complex rather than a perverse
  sheaf. Thus, the maps $\mu_L$ do not satisfy local descent. Gluing
  these local maps into a global morphism is obstructed by classes
  residing in the first extension group
  $\mathrm{Ext}^1_{L_{et}}(\F_L[\vdim L], \varphi^! \mathcal{P}_X)$
  within the Lisse-\'etale topos \cite[Remark 5.21]{AmorimBassat}.
  Explicitly, since local choices of $\mu_L$ exist, the obstruction
  to patching them on overlaps resides in the \v{C}ech cohomology
  group $\check{H}^1(L_{et}, \mathcal{H}om(\F_L[\vdim L], \varphi^!
  \mathcal{P}_X))$, which maps to the global extension group via the
  local-to-global spectral sequence. The conjecture posits the
  existence of a global choice bypassing these obstructions.
\end{remark}

\section{Perverse Linearizations of the Legendrian Categories}
\label{sec:linearization}

We linearize the non-linear 2-categories reviewed in Section \ref{sec:review}
using the six-functor formalism for $\ell$-adic sheaves \cite{LO2,
LZ}, under the
following standing assumptions.

\begin{setup}[Standing assumptions]\label{setup:joyce}
  Throughout this section we assume the symplectic Joyce conjecture of
  Amorim--Ben-Bassat in both of its forms, together with a monodromic
  refinement:
  \begin{enumerate}
    \item[(J1)] For every proper oriented Lagrangian $\varphi \colon
      L \to X$ into an
      oriented $-1$-shifted symplectic derived Artin stack there
      exists a fundamental
      class $\mu_L \colon \F_L[\vdim L] \to \varphi^! \mathcal{P}_X$
      \cite[Conjecture 5.18]{AmorimBassat}.
    \item[(J2)] The classes of (J1) are compatible with
      Thom--Sebastiani external
      products and with compositions of Lagrangian correspondences, in the form
      required by the pull-push functors of \cite[Section 6]{AmorimBassat}
      \cite[Conjecture 5.22]{AmorimBassat}.
    \item[(J3)] (\emph{Monodromic refinement.}) When the Lagrangians
      and the ambient
      stack carry $\Gm$-actions scaling the symplectic form with
      weight $1$, so that
      the perverse sheaves are monodromic and carry the twisted
      operator $\theta$ of
      Proposition \ref{prop:theta_glues}, the classes of (J1) and the
      isomorphisms of
      (J2) can be chosen $\theta$-invariant, after grading the
      orientation datum by the
      virtual dimension as in Definition
      \ref{def:graded_orientation}, and compatible
      with the monodromic structures. In particular the
      compatibilities of (J2) hold
      in $D^b_{\mathrm{mon}}$, and the K\"unneth and Thom--Sebastiani
      isomorphisms
      entering them are $\theta$-equivariant.
  \end{enumerate}
  All results of this section, and Theorem \ref{thm:C}, are conditional on
  (J1)--(J3).
\end{setup}

Assumption (J3) does not follow from (J1)--(J2). The attractor case of
(J1)--(J2) is a theorem of Kinjo--Park--Safronov \cite[Theorem 7.23]{KPS}, but
it is proved for a $\Gm$-action preserving both the d-critical structure and
the orientation, and the corresponding hyperbolic localization statement fails
for weight-one actions. Neither hypothesis holds here. The action scales the
potential with weight one, and the orientation local system has monodromy
$(-1)^{n+1}$ along the orbits by Proposition \ref{prop:sign_character}, so it
is not preserved when $n$ is even.

The attractor case is therefore not
evidence for the monodromic refinement (J3), and the support for (J3) is the
unconditional local verification below. Proposition
\ref{prop:local_verification} and Remark \ref{rem:local_graded} verify its
local model unconditionally for symplectifications of Legendrians in derived
schemes, once the orientation datum is graded by the virtual dimension in the
sense of Definition \ref{def:graded_orientation}. Example \ref{ex:odd_nonzero}
shows that this grading is not a normalization but is forced, in that the
ungraded requirement annihilates the class on odd-dimensional Legendrians.

\begin{definition}\label{def:Hmon}
  Let $M$ be a monodromic complex on a $\Gm$-space arising from an
  oriented shifted
  contact stack, equipped with the twisted operator $\theta$ of Proposition
  \ref{prop:theta_glues}. The \bfem{monodromic hypercohomology}
  \[ \mathbb{H}^\bullet_{\mathrm{mon}}(Y, M) := \big( \mathbb{H}^\bullet_{et}(Y,
  M), \; \mathbb{H}^\bullet(\theta) \big) \]
  is the \'etale hypercohomology together with the automorphism
  induced by $\theta$,
  an object of the symmetric monoidal category of graded $\F$-vector spaces
  equipped with an automorphism. This category is the enrichment of the
  2-categories below.
\end{definition}

\begin{lemma}\label{lem:intersection_orientation}
  Let $N_0, N_1 \to L_{01}$ be Legendrians with graded orientations into a
  $0$-shifted contact derived stack. The orientation induced on the
  $-1$-shifted symplectic
  intersection $\widetilde{N}_{01} = \widetilde{N}_0
  \times^h_{\widetilde{L}_{01}}
  \widetilde{N}_1$ by the construction of \cite[Section 5]{AmorimBassat} is
  $\Gm$-equivariant. So $\mathcal{P}_{\widetilde{N}_{01}}$ carries
  the twisted
  operator $\theta$ of Proposition \ref{prop:theta_glues}, and Definition
  \ref{def:Hmon} applies.
\end{lemma}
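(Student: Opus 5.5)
The plan is to reduce the statement to the $\Gm$-equivariance of the three ingredients of the Amorim--Ben-Bassat orientation on a Lagrangian intersection: the canonical isomorphism of determinant lines of the fibre product, the two Lagrangian orientations $\beta_{\widetilde{N}_0}, \beta_{\widetilde{N}_1}$, and the orientation of the $0$-shifted symplectic ambient $\widetilde{L}_{01}$. First, by Theorem \ref{thm:review_intersection} and the relative statement there, $\widetilde{N}_{01}$ is the symplectification of $N_{01} = N_0 \times^h_{L_{01}} N_1$. The structural $\Gm$-action on $\widetilde{N}_{01}$ is the diagonal one induced from the equivariant Lagrangians $\widetilde{N}_i \to \widetilde{L}_{01}$ (Definition \ref{def:legendrian}), and it scales the $-1$-shifted form with weight $1$.

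Next I would write out the determinant identity used in \cite[Section 5]{AmorimBassat}. The cartesian square gives a fibre sequence $\LL_{\widetilde{L}_{01}}|\to \LL_{\widetilde{N}_0}|\oplus\LL_{\widetilde{N}_1}| \to \LL_{\widetilde{N}_{01}}$, and hence
\[ K_{\widetilde{N}_{01}} \simeq K_{\widetilde{N}_0}|\otimes K_{\widetilde{N}_1}| \otimes K_{\widetilde{L}_{01}}^{-1}| . \]
Every map in this sequence is $\Gm$-equivariant because it is functorially induced by equivariant morphisms. The square root is then $R := K_{\widetilde{N}_0}| \otimes \widetilde{\varphi}_1^*(p^*R_{L_{01}})^{-1}$, or symmetrically the same expression with the roles of $\widetilde{N}_0$ and $\widetilde{N}_1$ exchanged, glued via $\beta_{\widetilde{N}_0}$ and $\beta_{\widetilde{N}_1}$. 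The isomorphism $R^{\otimes 2}\simeq K_{\widetilde{N}_{01}}$ is composed of $\beta_{\widetilde{N}_0}, \beta_{\widetilde{N}_1}$ and the canonical maps. Each of these is equivariant: the canonical maps by functoriality, and the $\beta_{\widetilde{N}_i}$ because a graded orientation is a trivialization of $\mathcal{O}_{\varphi}\otimes\underline{\Lambda}^{\otimes\vdim N_i}$, a $\Gm$-equivariant $\mu_2$-torsor on $\widetilde{N}_i$ (Definition \ref{def:graded_orientation}). So $R$ is a $\Gm$-equivariant square root, and by Proposition \ref{prop:orientation_obstruction} (pullback along $p$ is an equivalence onto equivariant line bundles) it descends to contact orientation data on $N_{01}$ in the sense of Definition \ref{def:orientation}. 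Proposition \ref{prop:theta_glues} and Theorem \ref{thm:monodromic_sheaf} then give $\theta$ on $\mathcal{P}_{\widetilde{N}_{01}}$, and Definition \ref{def:Hmon} applies.

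The main obstacle is the twist $\underline{\Lambda}^{\otimes\vdim N_i}$. A graded orientation is an equivariant trivialization of a twisted torsor rather than of $\mathcal{O}_{\varphi}$ itself, so the induced datum on the intersection is a trivialization of
\[ \mathcal{O}_{\widetilde{N}_{01}}\otimes\underline{\Lambda}^{\otimes(\vdim N_0+\vdim N_1)} . \]
To identify this with an honest equivariant orientation, up to the grading $\underline{\Lambda}^{\otimes \vdim N_{01}}$ if one wants a graded one, I would use the parity additivity of Remark \ref{rem:graded_equiv}. Since $L_{01}$ is $0$-shifted contact, $\widetilde{L}_{01}$ is $0$-shifted symplectic, so $\vdim\widetilde{L}_{01}$ is even. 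With the normalization $\vdim\widetilde{Y}=\vdim Y+1$, one then has to check carefully that the exponents agree modulo $2$ after accounting for the $+1$ shifts on each factor. If a residual single $\underline{\Lambda}$ remains from this bookkeeping, I would absorb it into the weight computation of Proposition \ref{prop:sign_character}, which already fixes $\varepsilon_U=(-1)^{n+1}$, and verify on a Darboux chart of $\widetilde{N}_{01}$ that the orbit monodromy of the resulting torsor is $(-1)^{n+1}$, as contact orientation data requires.
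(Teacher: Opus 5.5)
Your treatment of equivariance agrees with the paper's. Every line bundle and canonical isomorphism in the Amorim--Ben-Bassat construction is pulled back along the torsors, which is what "functorially induced by equivariant maps" amounts to, and tensor products of weight-zero equivariant isomorphisms remain weight-zero.

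The gap is the parity bookkeeping. You leave it open, and both the target you name and your fallback are wrong.

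\textbf{The target.} You compare the twist $\underline{\Lambda}^{\otimes(\vdim N_0+\vdim N_1)}$ with $\underline{\Lambda}^{\otimes \vdim N_{01}}$. Additivity of parity fails downstairs here. Remark \ref{rem:graded_equiv} requires a base of even virtual dimension, but $\vdim L_{01} = \vdim \widetilde{L}_{01} - 1$ is odd. Hence $\vdim N_{01} \equiv \vdim N_0 + \vdim N_1 + 1 \pmod 2$, and against that target a single residual $\underline{\Lambda}$ really does remain.

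\textbf{The fallback.} That residual cannot be absorbed into Proposition \ref{prop:sign_character}. The twist $\underline{\Lambda}$ has orbit monodromy $-1$, so tensoring the orientation torsor with it turns $\varepsilon_U = (-1)^{n+1}$ into $(-1)^{n}$. The chart verification you propose would therefore fail. The resulting datum would not be weight-zero equivariant, i.e.\ it would not be contact orientation data.

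\textbf{The missing step} is to count upstairs, as the paper does.
\begin{itemize}
\item Since $\vdim \widetilde{N}_i = \vdim N_i + 1$, the two shifts cancel, giving $\vdim N_0 + \vdim N_1 \equiv \vdim \widetilde{N}_0 + \vdim \widetilde{N}_1 \pmod 2$.
\item Since $\vdim \widetilde{N}_{01} = \vdim \widetilde{N}_0 + \vdim \widetilde{N}_1 - \vdim \widetilde{L}_{01}$ and $\vdim \widetilde{L}_{01}$ is even, the twist equals $\underline{\Lambda}^{\otimes \vdim \widetilde{N}_{01}}$. This is the grading the paper attaches to the intersection.
\end{itemize}
You can sharpen this. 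Because $\widetilde{N}_{01}$ is $-1$-shifted symplectic, $\vdim \widetilde{N}_{01} = 0$. Equivalently, the Lagrangian condition gives $\vdim \widetilde{N}_0 = \vdim \widetilde{N}_1 = \tfrac12 \vdim \widetilde{L}_{01}$. So the twist is canonically trivial, and the induced datum is an honest weight-zero equivariant orientation. Propositions \ref{prop:theta_glues} and \ref{prop:sign_character} then apply to it without correction.

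This also repairs your second paragraph, where you treat the graded orientations as supplying isomorphisms $\beta_{\widetilde{N}_i}$. When $\vdim N_i$ is odd they do not. What makes the construction work is that the two $\underline{\Lambda}$-twists have the same parity and cancel in the tensor product.
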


\begin{proof}
  Since $\widetilde{N}_i = N_i \times_{L_{01}} \widetilde{L}_{01}$ and
  $\widetilde{N}_{01} = N_{01} \times_{L_{01}} \widetilde{L}_{01}$
  are pulled-back
  torsors, the relative cotangent complexes along all torsor projections are
  equivariantly trivialized by $d_{\DR}t/t$ as in the proof of Proposition
  \ref{prop:orientation_obstruction}, so every canonical bundle entering the
  construction is the pullback of its counterpart on the base, with
  the weight-$0$
  equivariant structure, and likewise the chosen square roots. The orientations of $N_0$ and $N_1$ are, by Definition \ref{def:orientation}, equivariant data on
  $\widetilde{N}_0$ and $\widetilde{N}_1$ of weight $0$. The Amorim--Ben-Bassat
  orientation of the Lagrangian intersection is built from these
  square roots and
  the canonical isomorphisms of determinant lines of the fiber-product cotangent
  sequence, all of which are pulled back along the torsor. A tensor product of
  weight-$0$ equivariant isomorphisms is weight-$0$ equivariant.

  The grading is additive modulo $2$. By Definition
  \ref{def:graded_orientation} the gradings attached to the Legendrians $N_0$
  and $N_1$ are $\underline{\Lambda}^{\otimes \vdim N_0}$ and
  $\underline{\Lambda}^{\otimes \vdim N_1}$, so their tensor product twists
  $\mathcal{O}$ by $\underline{\Lambda}^{\otimes(\vdim N_0 + \vdim N_1)}$.
  Since $\vdim \widetilde{N}_i = \vdim N_i + 1$, this agrees modulo $2$ with
  $\underline{\Lambda}^{\otimes(\vdim \widetilde{N}_0 + \vdim
  \widetilde{N}_1)}$, and we compute with the latter.
  The intersection satisfies
  \[ \vdim \widetilde{N}_{01} = \vdim \widetilde{N}_0 + \vdim \widetilde{N}_1
  - \vdim \widetilde{L}_{01} , \]
  and $\vdim \widetilde{L}_{01}$ is even, since $\widetilde{L}_{01}$ is
  $0$-shifted symplectic (Remark \ref{rem:graded_equiv}). As
  $\underline{\Lambda}^{\otimes 2}$ is canonically trivial, the twist
  above equals
  $\underline{\Lambda}^{\otimes \vdim \widetilde{N}_{01}}$, which is the grading
  attached to the intersection. The induced datum is an orientation of
  $\widetilde{N}_{01}$ twisted by
  $\underline{\Lambda}^{\otimes \vdim \widetilde{N}_{01}}$, the grading of
  Definition \ref{def:graded_orientation} read on the intersection rather than
  on a Legendrian morphism.
\end{proof}

\begin{theorem}\label{thm:linear_category}
  Assume Setup \ref{setup:joyce}. For a $1$-shifted contact derived
  stack $X$, there exists a bicategory $\LFc(X)$ enriched over graded
  $\mathbb{F}$-vector spaces equipped with an automorphism, where:
  \begin{itemize}
    \item \textbf{Objects:} Legendrian morphisms $\varphi \colon L
      \to X$ equipped with graded orientations (Definition
      \ref{def:graded_orientation}).
    \item \textbf{1-Morphisms:} For objects $L_0$ and $L_1$,
      1-morphisms are proper Legendrian correspondences with graded
      orientations, given by Legendrian morphisms $\varphi_N \colon N
      \to L_{01}$ into the $0$-shifted contact derived intersection
      $L_{01} = L_0 \times_X^h L_1$.
    \item \textbf{2-Morphisms:} The mapping spaces between
      1-morphisms $\varphi_{N_0} \colon N_0 \to L_{01}$ and
      $\varphi_{N_1} \colon N_1 \to L_{01}$ are defined as the
      monodromic \'etale hypercohomology of the monodromic perverse
      sheaf on the intersection of their derived symplectifications
      $\widetilde{N}_{01} = \widetilde{N}_0
      \times_{\widetilde{L}_{01}}^h \widetilde{N}_1$, which is a
      $-1$-shifted symplectic space:
      \[ \Hom_{\LFc(X)}(N_0, N_1) :=
        \mathbb{H}^\bullet_{\mathrm{mon}}(\widetilde{N}_{01},
      \mathcal{P}_{\widetilde{N}_{01}}[-\vdim \widetilde{N}_0]). \]
  \end{itemize}
\end{theorem}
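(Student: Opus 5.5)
The plan is to transport the Amorim--Ben-Bassat construction of $\mathfrak{L}Lag(S)$ \cite[Theorem 1.3, Section 6]{AmorimBassat} to the symplectification $\widetilde{X}$, which is $1$-shifted symplectic with a weight-one $\Gm$-action. Everything must be carried out $\Gm$-equivariantly, so that each structure map lives in $D^b_{\mathrm{mon}}$ and commutes with $\theta$.

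\textbf{Objects and 1-morphisms.} By Definition \ref{def:legendrian}, an object $\varphi \colon L \to X$ symplectifies to a $\Gm$-equivariant Lagrangian $\widetilde{L} \to \widetilde{X}$. By Theorem \ref{thm:review_intersection}, $L_{01}$ is $0$-shifted contact with symplectification $\widetilde{L}_{01} = \widetilde{L}_0 \times^h_{\widetilde{X}} \widetilde{L}_1$. A $1$-morphism $N \to L_{01}$ then symplectifies to an equivariant Lagrangian in $\widetilde{L}_{01}$. For two $1$-morphisms, $\widetilde{N}_{01}$ is $-1$-shifted symplectic. Lemma \ref{lem:intersection_orientation} provides a $\Gm$-equivariant, graded orientation on it. Theorem \ref{thm:monodromic_sheaf} and Proposition \ref{prop:theta_glues} then give the monodromic sheaf $\mathcal{P}_{\widetilde{N}_{01}}$ with its operator $\theta$, so the $\Hom$-objects of Definition \ref{def:Hmon} are well defined in graded vector spaces with an automorphism.

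\textbf{Composition and identities.} For $1$-morphisms $N_0, N_1, N_2$ over $L_{01}$, take the Amorim--Ben-Bassat composition. It is the pull-push
\[ \Het(\widetilde{N}_{01}) \otimes \Het(\widetilde{N}_{12}) \to \Het(\widetilde{N}_0 \times \widetilde{N}_1 \times \widetilde{N}_1 \times \widetilde{N}_2) \to \Het(\widetilde{N}_{02}), \]
built from three ingredients: the K\"unneth formula, Thom--Sebastiani for the product, and pullback along the diagonal of $\widetilde{N}_1$ combined with the fundamental class of the Lagrangian correspondence, which exists by (J1)--(J2). The shifts by $-\vdim \widetilde{N}_0$ combine exactly as in loc.\ cit. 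The unit is the fundamental class $\mu_{\widetilde{N}}$ of the diagonal correspondence.

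\textbf{Equivariance of composition.} The diagonal and the projections are maps of $\Gm$-torsors, so by Lemma \ref{lem:monodromic_descent}(3) they preserve monodromic objects and intertwine $\theta$. By (J3), the K\"unneth and Thom--Sebastiani isomorphisms are $\theta$-equivariant, and the fundamental classes are $\theta$-invariant for the graded orientations. Remark \ref{rem:graded_equiv} and the parity count in Lemma \ref{lem:intersection_orientation} show that these gradings are compatible along the composition. Hence composition is a morphism in the enriching category.

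\textbf{Coherence.} Associators and unitors are inherited from \cite[Section 6]{AmorimBassat}, where they come from the (J2) compatibilities, base change and K\"unneth. Since (J3) states that these compatibilities hold in $D^b_{\mathrm{mon}}$, the same $2$-cells are $\theta$-equivariant, and the pentagon and triangle identities follow from the symplectic ones by forgetting $\theta$.

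\textbf{Main obstacle.} The step I expect to be hardest is the sign bookkeeping in the Thom--Sebastiani step. By Proposition \ref{prop:sign_character}, the sheaves carry the Kummer twist $\Lambda$ and satisfy $T=-\theta$, so the external product of two monodromic sheaves on a diagonal $\Gm$ must be shown to carry $\theta \boxtimes \theta$ and not $-\theta\boxtimes\theta$ after descending. I would check this first on the Morse--Bott charts of Lemma \ref{lem:mb_chart}, where $\theta=(-1)^{n+k}$ and chart dimensions add. I would then reduce the general case to (J3), using the evenness of $\vdim \widetilde{L}_{01}$ to absorb the $\Lambda$-factors.
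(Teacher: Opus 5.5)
Your overall route is the paper's. You pass to symplectifications, give $\widetilde{N}_{01}$ the $\Gm$-equivariant graded orientation of Lemma \ref{lem:intersection_orientation}, compose by the Amorim--Ben-Bassat pull-push through $\widetilde{N}_{012}$, and take all compatibility with $\theta$ from (J3). Two small repairs are needed. The K\"unneth term in your display should be $\mathbb{H}^\bullet(\widetilde{N}_{01} \times \widetilde{N}_{12}, \mathcal{P}_{\widetilde{N}_{01}} \boxtimes \mathcal{P}_{\widetilde{N}_{12}})$, not a cohomology of $\widetilde{N}_0 \times \widetilde{N}_1 \times \widetilde{N}_1 \times \widetilde{N}_2$. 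The push step also needs $\tilde{\pi}_{\mathrm{out}}$ to be proper, which the paper gets as the base change of the proper $1$-morphism $N_1 \to L_{01}$. Obtaining the pentagon and triangle identities by forgetting $\theta$ is a reasonable alternative to the paper's direct proper-base-change check of the interchange law, because equalities of $\theta$-equivariant maps are checked on the underlying maps.

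The step that would fail is your ``main obstacle'': the check you propose comes out the other way. Under the diagonal $\Gm$ the orbit monodromy of an external product is $T_1 \boxtimes T_2$. The product of weight-$0$ contact orientation data is again weight-$0$ for the diagonal action. Proposition \ref{prop:sign_character} then gives $T = -\theta$ on each factor and on the product, which is the symplectification of the contact product. So the descended operator is $-T_1 \boxtimes T_2 = -(\theta_1 \boxtimes \theta_2)$. On Morse--Bott charts the chart dimensions do not simply add. The product chart has base $U_1 \times U_2 \times \Gm$ of dimension $n_1 + n_2 + 1$ and potential $s_1 + r\,s_2$ of rank $k_1 + k_2$, so the computation of Lemma \ref{lem:mb_chart} returns $(-1)^{n_1+n_2+k_1+k_2+1}$. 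More generally, a product of $m$ factors carries the sign $(-1)^{m-1}$. The evenness of $\vdim \widetilde{L}_{01}$ only makes the gradings additive on the intersection $\widetilde{N}_{01}$ in Lemma \ref{lem:intersection_orientation}; it cannot supply this sign. The comparison is also unnecessary, and the paper's composition does not rely on it. In the enriching category the automorphism of $\mathbb{H}^\bullet_{\mathrm{mon}}(\widetilde{N}_{01}) \otimes \mathbb{H}^\bullet_{\mathrm{mon}}(\widetilde{N}_{12})$ is $\theta_{01} \otimes \theta_{12}$ by definition, that is $\theta_{01} \boxtimes \theta_{12}$ under K\"unneth. Equivariance of $a_2 \odot a_1$ therefore amounts to $\tilde{\pi}_{\mathrm{out}}^! \theta_{02} \circ \mu_{\widetilde{N}_{012}} = \mu_{\widetilde{N}_{012}} \circ \tilde{\pi}_{\mathrm{in}}^* (\theta_{01} \boxtimes \theta_{12})$. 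This is exactly the $\theta$-invariance of the fundamental class that the paper takes from (J3). Replace your last paragraph with that direct appeal rather than an identification with an intrinsic operator on the product.
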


\begin{proof}
  We construct the vertical composition $a_2 \odot a_1$. Given three
  parallel 1-morphisms $\varphi_{N_i} \colon N_i \to L_{01}$ for
  $i=0, 1, 2$ in $\LFc(X)_1(L_0, L_1)$, the derived triple
  intersection \[N_{012} = N_0 \times_{L_{01}}^h N_1 \times_{L_{01}}^h
  N_2\] lacks a natural contact structure. To perform intersection
  calculus, we pass to their derived symplectifications. The
  symplectification $\widetilde{N}_{012}$ embeds as a proper
  Lagrangian inside the $-1$-shifted product symplectic space
  $\widetilde{N}_{01}^- \times \widetilde{N}_{12}^- \times
  \widetilde{N}_{02}$ \cite[Theorem 2.14]{AmorimBassat}. This
  yields a pull-push correspondence diagram
  \begin{center}
    \begin{tikzcd}[row sep=large, column sep=large]
      & \widetilde{N}_{012} \arrow[dl, "\tilde{\pi}_{\mathrm{in}}"']
      \arrow[dr, "\tilde{\pi}_{\mathrm{out}}"] & \\
      \widetilde{N}_{01} \times \widetilde{N}_{12} & & \widetilde{N}_{02}
    \end{tikzcd}
  \end{center}
  where $\tilde{\pi}_{\mathrm{in}} = \tilde{\pi}_{01} \times
  \tilde{\pi}_{12}$. By Thom--Sebastiani, Theorem \ref{thm:TS},
  the external tensor product $\mathcal{P}_{\widetilde{N}_{01}}
  \boxtimes \mathcal{P}_{\widetilde{N}_{12}}$ represents the perverse
  sheaf on the domain symplectic space. By Assumptions (J2) and (J3)
  of Setup \ref{setup:joyce}, there is a
  $\theta$-invariant fundamental characteristic class, compatible
  with the monodromic
  structures, \[\mu_{\widetilde{N}_{012}} \colon
  \tilde{\pi}_{\mathrm{in}}^* (\mathcal{P}_{\widetilde{N}_{01}}
  \boxtimes \mathcal{P}_{\widetilde{N}_{12}}) [\vdim
  \widetilde{N}_{012}] \to \tilde{\pi}_{\mathrm{out}}^!
  \mathcal{P}_{\widetilde{N}_{02}}.\]

  The symplectic form has weight 1 under the $\mathbb{G}_m$-action on
  the symplectification, so the local superpotential carries weight
  1. The resulting perverse sheaf of vanishing cycles is monodromic
  with respect to the $\mathbb{G}_m$-action and does not descend to
  an equivariant perverse sheaf on the contact quotient. We thus compute
  the composition in the derived category of
  $\mathbb{G}_m$-monodromic sheaves
  $D_{\mathrm{mon}}^b(\widetilde{N}_{012, \mathrm{\acute{e}t}},
  \mathbb{F})$. The external tensor product of the perverse sheaves
  via the $\ell$-adic K\"unneth formula has
  an initial combined cohomological shift of $[-\vdim \widetilde{N}_0
  - \vdim \widetilde{N}_1]$ and the fundamental class
  $\mu_{\widetilde{N}_{012}}$ introduces a dimension shift
  corresponding to $[\vdim \widetilde{N}_{012}]$. Since the
  Lagrangians $\widetilde{N}_i$ have virtual dimension $d$ and the
  ambient symplectic space $\widetilde{L}_{01}$ has virtual dimension
  $2d$, the triple intersection has virtual dimension $3d - 2(2d) =
  -d$. Evaluating the fundamental class shifts the cohomological
  degree by $[-\vdim \widetilde{N}_{012}] = [d] = [\vdim
  \widetilde{N}_1]$. Integration along the fibers of the proper
  $\mathbb{G}_m$-equivariant projection $\tilde{\pi}_{\mathrm{out}}$
  applies the derived pushforward $R\tilde{\pi}_{\mathrm{out},!}$,
  preserving the cohomological degree. This cancels the cohomological
  shift $[-\vdim \widetilde{N}_1]$ from the input, yielding the
  output shift of $[-\vdim \widetilde{N}_0]$ on the target space
  $\widetilde{N}_{02}$.

  Using the adjunction $R\tilde{\pi}_{\mathrm{out}, !}
  \tilde{\pi}_{\mathrm{out}}^! \to \mathrm{id}$ and the fact that
  $R\tilde{\pi}_{\mathrm{out}, !} \simeq R\tilde{\pi}_{\mathrm{out},
  *}$ by the properness of $\tilde{\pi}_{\mathrm{out}}$, this characteristic class induces a
  monodromic \'etale pull-push integration functor \cite{AmorimBassat}
  \[ \Phi_{\mu} \colon
    \mathbb{H}^\bullet_{\mathrm{mon}}(\widetilde{N}_{01} \times
      \widetilde{N}_{12}, \mathcal{P}_{\widetilde{N}_{01}} \boxtimes
      \mathcal{P}_{\widetilde{N}_{12}}[-\vdim \widetilde{N}_0 - \vdim
    \widetilde{N}_1]) \to
    \mathbb{H}^\bullet_{\mathrm{mon}}(\widetilde{N}_{02},
  \mathcal{P}_{\widetilde{N}_{02}}[-\vdim \widetilde{N}_0]). \]
  The composition of $a_1 \in
  \mathbb{H}^\bullet_{\mathrm{mon}}(\widetilde{N}_{01},
  \mathcal{P}_{\widetilde{N}_{01}}[-\vdim \widetilde{N}_0])$ and $a_2
  \in \mathbb{H}^\bullet_{\mathrm{mon}}(\widetilde{N}_{12},
  \mathcal{P}_{\widetilde{N}_{12}}[-\vdim \widetilde{N}_1])$ is
  defined via this integration functor:
  \[ a_2 \odot a_1 := \Phi_{\mu} (a_1 \boxtimes a_2). \]

  Note that the intersection correspondence map $\tilde{\pi}_{\mathrm{in}}$ is
  a representable, proper morphism. Also, the projection
  $\tilde{\pi}_{\mathrm{out}}$ is proper, following from the
  properness of the Legendrian 1-morphisms (it is the Cartesian lift
    of the proper base map $\pi_{\mathrm{out}}$ across principal
  $\mathbb{G}_m$-bundles). 
  
  The derived \'etale pushforward
  $R\tilde{\pi}_{\mathrm{out},!} \simeq
  R\tilde{\pi}_{\mathrm{out},*}$ preserves constructibility of the
  sheaves. The pull-push integration functor is defined by geometric
  operations within the
  monodromic derived category. These commute with the glued
  automorphism $\theta$ of
  Proposition \ref{prop:theta_glues}, and the fundamental class is
  $\theta$-invariant
  by (J3). So $\Phi_\mu$ is a morphism of the enrichment category
  of Definition
  \ref{def:Hmon}. Proper base change
  ensures that the integration behaves compositionally across derived
  fiber products.

  To verify the axioms of a bicategory, we check the Categorical
  Interchange Law for horizontal ($*$) and vertical ($\odot$)
  composition \cite[Lemma 6.6]{AmorimBassat}. This requires
  verifying that $(b_2 \odot b_1) * (a_2 \odot a_1) =
  (-1)^{|a_2||b_1|} (b_2 * a_2) \odot (b_1 * a_1)$. This follows by
  evaluating the $2 \times 2$ grid of morphisms using derived fiber
  products of vertical triple intersections \cite[Section
  6]{AmorimBassat}. The pull-push correspondence diagrams for
  horizontal and vertical compositions form derived Cartesian
  squares. Because derived fiber products avoid Tor-independence
  assumptions, proper base change shows
  that the derived pushforward along proper vertical projections
  commutes with pullback along horizontal embeddings. This provides
  the natural isomorphisms required to satisfy the interchange law.
\end{proof}

The identity $1$-morphism on an object $\varphi \colon L \to X$ is
the diagonal
Legendrian $\Delta_L \colon L \to L_{LL} = L \times_X^h L$ of
\cite{IzbudakBerktav2}, whose symplectification is the diagonal Lagrangian of
\cite[Proposition 3.8]{AmorimBassat}. The underlying non-linear bicategory
structure is not constructed here.

The composition operations descend to the
contact quotients and satisfy the coherence conditions by \cite[Theorem
6.1]{IzbudakBerktav2}, and what is added above is the linearization of the
$2$-morphism spaces. Horizontal composition, the unitors, and the associators
are obtained from the same pull-push operations applied to the corresponding
derived fiber products, following \cite[Section 6]{AmorimBassat}, with the
six operations taken in the monodromic derived category. The coherence axioms
reduce to proper base change and to (J2)--(J3). The proof above records the two verifications
that differ from
the symplectic case. These are vertical composition through the
symplectification, and the
interchange law.

\begin{theorem}[Global Categorified Contact 2-Category]\label{thm:global_2cat}
  Assume Setup \ref{setup:joyce}. There exists a linear weak
  2-category $\mathit{LLeg}_0$ of Legendrian correspondences. Its
  objects are $0$-shifted contact derived stacks, its 1-morphisms are
  proper spans $X_1 \leftarrow N \rightarrow X_2$ carrying graded
  orientations (Definition \ref{def:graded_orientation}) and defining
  $\mathbb{G}_m$-equivariant Lagrangian correspondences between their
  symplectifications, and its 2-morphisms are the monodromic
  $\ell$-adic hypercohomologies \cite{AmorimBassat}
  \[ \mathit{LLeg}_0(N_0, N_1)_2 :=
    \mathbb{H}^\bullet_{\mathrm{mon}}(\widetilde{N}_{01},
  \mathcal{P}_{\widetilde{N}_{01}}[-\vdim \widetilde{N}_0]). \]
\end{theorem}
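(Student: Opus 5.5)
The plan is to assemble $\mathit{LLeg}_0$ from the bicategories $\LFc(-)$ of Theorem \ref{thm:linear_category}, by the passage from local to global correspondences that Amorim--Ben-Bassat use for $\mathfrak{L}Lag$. The contact product \eqref{eq:contact_product} provides the target. A span $X_1 \leftarrow N \rightarrow X_2$ whose symplectification is a $\Gm$-equivariant Lagrangian correspondence is the same as a Legendrian morphism $N \to X_1 \circledast X_2$. For $X_1, X_2$ $0$-shifted, $X_1 \circledast X_2$ is $0$-shifted contact with symplectification $\widetilde{X}_1^- \times \widetilde{X}_2$.

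Two such $1$-morphisms $N_0, N_1$ meet in $\widetilde{N}_{01} = \widetilde{N}_0 \times^h_{\widetilde{X}_1^- \times \widetilde{X}_2} \widetilde{N}_1$. This intersection is $-1$-shifted symplectic and carries a free $\Gm$-action of weight $1$, and by Theorem \ref{thm:review_intersection} its quotient is $-1$-shifted contact. Lemma \ref{lem:intersection_orientation} applies with $L_{01}$ replaced by $X_1 \circledast X_2$, since its proof uses only that the base is $0$-shifted, so that $\vdim \widetilde{X}_1 + \vdim \widetilde{X}_2$ is even by Remark \ref{rem:graded_equiv}. It yields a $\Gm$-equivariant graded orientation on $\widetilde{N}_{01}$. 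Theorems \ref{thm:monodromic_sheaf} and \ref{prop:theta_glues} then provide $\mathcal{P}_{\widetilde{N}_{01}}$ with $\theta$, and with it the $2$-morphism spaces $\mathbb{H}^\bullet_{\mathrm{mon}}$ of Definition \ref{def:Hmon}.

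Next I will define the compositions. Vertical composition is exactly the pull-push of Theorem \ref{thm:linear_category} applied inside $\LFc(X_1 \circledast X_2)$, so it needs no new argument. Horizontal composition of $1$-morphisms $N \colon X_1 \to X_2$ and $M \colon X_2 \to X_3$ is $M \circ N := N \times^h_{X_2} M$. I will show this is a span whose symplectification is the Lagrangian composite $\widetilde{N} \times^h_{\widetilde{X}_2} \widetilde{M}$ with the diagonal action; this is the relative part of Theorem \ref{thm:review_intersection} together with \cite[Corollary 8.1]{IzbudakBerktav2}. Its properness is inherited by base change. Its graded orientation comes from Remark \ref{rem:graded_equiv}: $\vdim \widetilde{X}_2$ is even, so the parities add. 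On $2$-morphisms, horizontal composition is the pull-push along the correspondence $\widetilde{N}_{01} \times \widetilde{M}_{01} \leftarrow \widetilde{N}_{01} \times^h_{\widetilde{X}_2} \widetilde{M}_{01} \rightarrow (\widetilde{M\circ N})_{01}$. It uses the Thom--Sebastiani isomorphism (Theorem \ref{thm:TS}) and the $\theta$-invariant fundamental class supplied by (J2)--(J3), exactly as in the proof of Theorem \ref{thm:linear_category}.

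Identities are the diagonal spans $X \leftarrow X \rightarrow X$, whose symplectifications are the diagonal Lagrangians of \cite[Proposition 3.8]{AmorimBassat}. The associators and unitors come from the canonical equivalences of iterated derived fiber products, transported to $2$-morphisms by the same pull-push.

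The main obstacle is coherence: checking the pentagon, the triangle and the interchange law, together with $\theta$-equivariance of every structure map. I will deal with it by reducing each axiom to a derived Cartesian cube of symplectifications, on which proper base change (\cite[Corollary 6.2.2]{LZ}) and the compatibility of fundamental classes with compositions (J2) give the required identities, following \cite[Section 6]{AmorimBassat}. Equivariance holds because every operation is a six-functor operation in $D^b_{\mathrm{mon}}$ and the Künneth and Thom--Sebastiani isomorphisms are $\theta$-equivariant by (J3). The one genuinely contact point is the Koszul signs. Here I will check that $\underline{\Lambda}^{\otimes \vdim}$ composes multiplicatively, which again rests on the evenness of $\vdim$ of $0$-shifted symplectic bases, so that the grading twist introduces no extra sign into the coherence diagrams.
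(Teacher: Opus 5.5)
Your route is the paper's. In both, $1$-morphisms are Legendrians in $X_1 \circledast X_2$, and vertical composition is the pull-push of Theorem~\ref{thm:linear_category} with $X_1 \circledast X_2$ (symplectification $B_{12} = \widetilde{X}_1^- \times \widetilde{X}_2$) in place of $L_{01}$. Lemma~\ref{lem:intersection_orientation} gives the equivariant orientation of $\widetilde{N}_{01}$, horizontal composition is the equivariant Lagrangian composite, identities are diagonals, and interchange is proper base change plus (J3).

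Two points to tighten. First, $\LFc(-)$ is defined for $1$-shifted targets. What you actually reuse is its hom-category with $L_{01}$ replaced by $X_1 \circledast X_2$, not ``$\LFc(X_1 \circledast X_2)$''. Second, reusing its shift count needs the bookkeeping the paper makes explicit: $\vdim \widetilde{N} = \tfrac12 \vdim B_{12} =: d$ for every $1$-morphism between fixed endpoints, hence $\vdim \widetilde{N}_{01} = 0$, $\vdim \widetilde{N}_{012} = -d$, and $[-2d] + [d] = [-\vdim \widetilde{N}_0]$.

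The step that fails as written is your grading of the horizontal composite $M \circ N = N \times^h_{X_2} M$. Definition~\ref{def:graded_orientation} twists by $\underline{\Lambda}^{\otimes \vdim N}$, with $\vdim N$ the downstairs virtual dimension. Downstairs the composition is over $X_2$, whose virtual dimension $\vdim \widetilde{X}_2 - 1$ is odd. Writing $\vdim \widetilde{X}_i = 2a_i$, one gets $\vdim N = a_1 + a_2 - 1$, $\vdim M = a_2 + a_3 - 1$ and $\vdim (M \circ N) = a_1 + a_3 - 1$, so
\[ \vdim (M \circ N) \equiv \vdim N + \vdim M + 1 \pmod 2 . \]
Evenness of $\vdim \widetilde{X}_2$ only makes the upstairs dimensions additive, which is all Remark~\ref{rem:graded_equiv} asserts. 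The product of the two twists is therefore $\underline{\Lambda}^{\otimes (\vdim (M \circ N) + 1)}$. That is one factor of $\underline{\Lambda}$ (orbit monodromy $-1$, not canonically trivial) away from the twist the composite must carry, unless the comparison of $\mathcal{O}_{M \circ N}$ with $\mathcal{O}_N \otimes \mathcal{O}_M$ over $\widetilde{X}_2$ supplies it.

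Lemma~\ref{lem:intersection_orientation} avoids this problem because the two $+1$'s coming from $N_0$ and $N_1$ cancel, and the grading on $\widetilde{N}_{01}$ is read upstairs. The composite $M \circ N$ is again a $1$-morphism, so its convention is already fixed. You therefore need one of two fixes:
\begin{enumerate}
\item index the grading of $1$-morphisms by $\vdim \widetilde{N}$ instead; this is a uniform shift for fixed endpoints and stays compatible with vertical composition because $\vdim B_{12}$ is even;
\item show that the middle factor $\widetilde{X}_2$ contributes exactly one $\underline{\Lambda}$.
\end{enumerate}
The paper's proof records only that properness is inherited at this point, so this cannot be borrowed from it.
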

\begin{proof}
  Write $\widetilde{X}_i$ for the symplectification of $X_i$, a $0$-shifted
  symplectic derived stack carrying a $\Gm$-action scaling its form with weight
  $1$ by Theorem \ref{thm:symplectification}. By
  \eqref{eq:contact_product} the contact product $X_1 \circledast X_2$ is a
  $0$-shifted contact derived stack with symplectification
  \[ B_{12} := \widetilde{X}_1^- \times \widetilde{X}_2 , \]
  and a $1$-morphism $X_1 \leftarrow N \rightarrow X_2$ is a Legendrian
  morphism $N \to X_1 \circledast X_2$, its symplectification
  $\widetilde{N} \to B_{12}$ being the associated $\Gm$-equivariant Lagrangian.
  The construction is that of Theorem \ref{thm:linear_category} with
  $X_1 \circledast X_2$ in place of $L_{01}$. We record the points at which the
  virtual dimensions differ.

  By Remark \ref{rem:graded_equiv} the
  virtual dimension of a $0$-shifted symplectic stack is even. Write
  $\vdim \widetilde{X}_i = 2 a_i$, so $\vdim B_{12} = 2(a_1 + a_2)$. For a
  Lagrangian $\widetilde{N} \to B_{12}$ the fiber sequence
  $\TT_{\widetilde{N}} \to \widetilde{N}^* \TT_{B_{12}} \to
  \LL_{\widetilde{N}}$ has ranks $\vdim \widetilde{N}$ and
  $\vdim \widetilde{N}$ on the outer terms, so
  \[ \vdim \widetilde{N} = \tfrac12 \vdim B_{12} = a_1 + a_2 =: d , \]
  the same for every $1$-morphism between the fixed pair $X_1, X_2$. Hence
  \[\vdim \widetilde{N}_{01} = 2d - \vdim B_{12} = 0,\] so $\widetilde{N}_{01}$
  is $-1$-shifted symplectic as asserted, and the triple intersection
  $\widetilde{N}_{012} = \widetilde{N}_0 \times_{B_{12}} \widetilde{N}_1
  \times_{B_{12}} \widetilde{N}_2$ satisfies
  \[\vdim \widetilde{N}_{012} = 3d - 2 \vdim B_{12} = -d.\]

  Since $X_1 \circledast X_2$ is a $0$-shifted contact derived stack and the
  $\widetilde{N}_i$ are the associated $\Gm$-torsors, Lemma
  \ref{lem:intersection_orientation} applies to it directly. The orientation
  induced on $\widetilde{N}_{01}$ by \cite[Section 5]{AmorimBassat} is
  $\Gm$-equivariant and graded, so $\mathcal{P}_{\widetilde{N}_{01}}$ carries
  the operator $\theta$ of Proposition \ref{prop:theta_glues} and Definition
  \ref{def:Hmon} applies.

  Vertical composition is the argument of Theorem \ref{thm:linear_category}. The
  triple intersection $\widetilde{N}_{012}$ embeds as a proper Lagrangian in
  $\widetilde{N}_{01}^- \times \widetilde{N}_{12}^- \times \widetilde{N}_{02}$
  by \cite[Theorem 2.14]{AmorimBassat}, giving the correspondence
  $\tilde{\pi}_{\mathrm{in}}, \tilde{\pi}_{\mathrm{out}}$. 
  
  The K\"unneth
  formula gives the external product
  $\mathcal{P}_{\widetilde{N}_{01}} \boxtimes
  \mathcal{P}_{\widetilde{N}_{12}}$, the $\theta$-invariant fundamental class of
  (J2) and (J3) contributes the shift attached to the triple intersection, and
  $R\tilde{\pi}_{\mathrm{out},!} \simeq R\tilde{\pi}_{\mathrm{out},*}$ by
  properness preserves cohomological degree. The
  three shifts are
  \begin{align*}
    [-\vdim \widetilde{N}_0 - \vdim \widetilde{N}_1] &= [-2d] , \\
    [-\vdim \widetilde{N}_{012}] &= [d] , \\
    [-2d] + [d] &= [-d] = [-\vdim \widetilde{N}_0] ,
  \end{align*}
  the last of which is the normalization in the statement.

  Horizontal composition is the composition of Lagrangian correspondences. Given
  spans $X_1 \leftarrow N \rightarrow X_2$ and $X_2 \leftarrow M \rightarrow
  X_3$, the symplectifications are Lagrangian in $B_{12}$ and in $B_{23}$, and
  \cite[Corollary 2.13]{AmorimBassat} equips
  $\widetilde{N} \times_{\widetilde{X}_2} \widetilde{M} \to B_{13}$ with a
  Lagrangian structure. The diagonal $\Gm$-actions on $B_{12}$ and $B_{23}$
  restrict to the same action on the shared factor $\widetilde{X}_2$, so the
  fiber product is taken equivariantly and the composite is again a
  $\Gm$-equivariant Lagrangian correspondence, and properness is inherited from
  that of the two spans. The identity $1$-morphism on $X$ is the span with
  symplectification the diagonal Lagrangian
  $\Delta_{\widetilde{X}} \to \widetilde{X}^- \times \widetilde{X}$ of
  \cite[Corollary 2.19]{AmorimBassat}, and the unit laws are
  \cite[Proposition 3.8]{AmorimBassat}.

  For the interchange law of \cite[Lemma 6.6]{AmorimBassat},
  \[(b_2 \odot b_1) * (a_2 \odot a_1) = (-1)^{|a_2||b_1|}
  (b_2 * a_2) \odot (b_1 * a_1),\] the
  pull-push diagrams for horizontal and vertical composition form derived
  Cartesian squares, and derived fiber products require no Tor-independence
  hypothesis, so proper base change shows that
  pushforward along the proper vertical projections commutes with pullback along
  the horizontal embeddings. 
  
  All the operations involved are the six operations
  in the monodromic derived category and commute with $\theta$, which is
  $\theta$-invariant on the classes by (J3), so the resulting natural
  isomorphisms are morphisms of the enrichment of Definition \ref{def:Hmon}. The
  remaining coherence data are those of the non-linear category $Leg_0$ of
  \cite[Corollary 8.1]{IzbudakBerktav2}, whose composition operations descend to
  the contact quotients and satisfy the coherence conditions by \cite[Theorem
  6.1]{IzbudakBerktav2}. The construction above linearizes the $2$-morphism
  spaces and leaves that structure untouched.
\end{proof}
\vspace{1em}

\begin{theorem}[Perverse linearization of $Leg_0$]\label{thm:linearization_functor}
  Assume Setup \ref{setup:joyce}. Let $Leg_0^{\mathrm{pr}}$ denote the
  sub-$2$-category of $Leg_0$ \cite[Corollary 8.1]{IzbudakBerktav2} with the
  same objects, whose $1$- and $2$-morphisms are proper and carry graded
  orientations in the sense of Definition \ref{def:graded_orientation}. No
  orientation of the objects is required, the orientation of
  $\widetilde{N}_{01}$ being supplied by Lemma
  \ref{lem:intersection_orientation}. There is a $2$-functor
  \[ F \colon Leg_0^{\mathrm{pr}} \longrightarrow \mathit{LLeg}_0 \]
  which is the identity on objects and on $1$-morphisms, and which sends a
  $2$-morphism $\varphi \colon P \to N_{01}$ to the image of its fundamental
  class,
  \[ F(P) \in \mathbb{H}^{\,\vdim \widetilde{N}_0 - \vdim \widetilde{P}}_{
  \mathrm{mon}}\bigl( \widetilde{N}_{01},
  \mathcal{P}_{\widetilde{N}_{01}}[-\vdim \widetilde{N}_0] \bigr) . \]
  Each $F(P)$ is $\theta$-invariant, and $F$ is monoidal for the contact product
  \eqref{eq:contact_product}. This is the contact analogue of
  \cite[Theorem 6.11]{AmorimBassat}.
\end{theorem}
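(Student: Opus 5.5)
We follow the proof of \cite[Theorem 6.11]{AmorimBassat} and carry it out in the monodromic derived category over the symplectifications. On objects and $1$-morphisms $F$ is the identity. We first check that a $1$-morphism of $Leg_0^{\mathrm{pr}}$, a proper graded-oriented span $X_1 \leftarrow N \rightarrow X_2$, is a $1$-morphism of $\mathit{LLeg}_0$ in the sense of Theorem \ref{thm:global_2cat}. This holds because its symplectification is a $\Gm$-equivariant Lagrangian in $B_{12}$ by Definition \ref{def:legendrian}.

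For a $2$-morphism $\varphi \colon P \to N_{01}$, properness and the graded orientation let us apply (J1) and (J3) to the symplectification $\widetilde{\varphi} \colon \widetilde{P} \to \widetilde{N}_{01}$. Here $\widetilde{N}_{01}$ is $-1$-shifted symplectic and carries a $\Gm$-equivariant graded orientation by Lemma \ref{lem:intersection_orientation}. The result is a $\theta$-invariant class $\mu_{\widetilde{P}} \colon \F_{\widetilde{P}}[\vdim \widetilde{P}] \to \widetilde{\varphi}^! \mathcal{P}_{\widetilde{N}_{01}}$. We then define $F(P)$ as the image of $\mu_{\widetilde{P}}$ under adjunction and the counit $R\widetilde{\varphi}_! \widetilde{\varphi}^! \to \mathrm{id}$, using $R\widetilde{\varphi}_! \simeq R\widetilde{\varphi}_*$. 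Composing $\F_{\widetilde{N}_{01}} \to R\widetilde{\varphi}_* \F_{\widetilde{P}}$ with the pushforward of $\mu_{\widetilde{P}}$ produces a class in $\mathbb{H}^{-\vdim \widetilde{P}}(\widetilde{N}_{01}, \mathcal{P}_{\widetilde{N}_{01}})$. Shifting by $[-\vdim \widetilde{N}_0]$ places it in the degree displayed in the statement. It is $\theta$-invariant because $\mu_{\widetilde{P}}$ is, by (J3), and because unit, counit and proper pushforward commute with the glued automorphism $\theta$ of Proposition \ref{prop:theta_glues}. The graded orientation is exactly what makes this invariance consistent with Remark \ref{rem:local_graded}.

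Functoriality splits into vertical composition, horizontal composition and identities. For vertical composition, $2$-morphisms $P \to N_{01}$ and $Q \to N_{12}$ compose in $Leg_0$ to $P \times_{N_1} Q \to N_{02}$ by Theorem \ref{thm:review_intersection}. We must show $F(Q)\odot F(P) = F(P\times_{N_1}Q)$, where $\odot$ is $\Phi_\mu$ from Theorem \ref{thm:linear_category}. Following \cite[Section 6]{AmorimBassat}, we form the derived Cartesian diagram relating $\widetilde{P}\times\widetilde{Q} \to \widetilde{N}_{01}\times\widetilde{N}_{12}$, the triple intersection $\widetilde{N}_{012}$, and $\widetilde{N}_{02}$. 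Proper base change then reduces the identity to a compatibility of fundamental classes: $\mu_{\widetilde{P}} \boxtimes \mu_{\widetilde{Q}}$ composed with $\mu_{\widetilde{N}_{012}}$ should equal $\mu_{\widetilde{P}\times\widetilde{Q}}$. This is the composition form of (J2), made $\theta$-equivariant by (J3). Horizontal composition and unit $2$-morphisms are handled the same way, using the fiber products over $\widetilde{X}_2$ of Theorem \ref{thm:global_2cat} and the diagonal Lagrangian. For monoidality for $\circledast$, the symplectification of $X_1 \circledast X_2$ is a product by \eqref{eq:contact_product}. The required isomorphism therefore comes from the Künneth formula together with the Thom--Sebastiani part of (J2), both $\theta$-equivariant by (J3).

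The step we expect to be the main obstacle is vertical composition. The issue is not the base change itself but the bookkeeping of gradings and signs. One must check that the grading twists $\underline{\Lambda}^{\otimes \vdim}$ on $P$, on $Q$ and on the intersection multiply correctly. As in Lemma \ref{lem:intersection_orientation}, this rests on $\vdim \widetilde{N}_1$ having the needed parity, and at this point we would invoke the evenness argument of Remark \ref{rem:graded_equiv}. One must also check that the Koszul signs of the interchange law match those of \cite[Lemma 6.6]{AmorimBassat}. We would first verify the degree count, namely that $[-\vdim \widetilde{P}] + [-\vdim \widetilde{Q}]$ together with the shift $[d]$ from $\mu_{\widetilde{N}_{012}}$ yields $[-\vdim(\widetilde{P}\times_{\widetilde{N}_1}\widetilde{Q})]$, and only then the sign.
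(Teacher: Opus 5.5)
Your definition of $F(P)$ (the unit composed with the pushforward of the (J1)/(J3) class, using $R\widetilde{\varphi}_! \simeq R\widetilde{\varphi}_*$) follows the paper's proof. So do the $\theta$-invariance argument, the degree count you outline, and the reduction of functoriality and monoidality to (J2)--(J3), proper base change, Thom--Sebastiani and K\"unneth.

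The problem is the one step you propose to settle by hand. You want the grading twists of $P$ and $Q$ to multiply to that of $P \times_{N_1} Q$, and you plan to get this from the evenness argument of Remark~\ref{rem:graded_equiv} applied to $\vdim \widetilde{N}_1$. That argument only applies when the base of the fiber product is symplectic: $-1$-shifted, or $0$-shifted such as $\widetilde{L}_{01}$ or $\widetilde{X}_2$. Here $\widetilde{N}_1$ is a Lagrangian in $B_{12}$, and by the computation in Theorem~\ref{thm:global_2cat} one has $\vdim \widetilde{N}_1 = d = a_1 + a_2$. This is odd, for instance, when $\vdim \widetilde{X}_1 = 2$ and $\vdim \widetilde{X}_2 = 4$. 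In the contact normalization of Definition~\ref{def:graded_orientation}, which is the one the graded orientations of $P$ and $Q$ actually use, the relevant number is $\vdim N_1 = d-1$, and this is odd whenever $d$ is even. So in either normalization, $\underline{\Lambda}^{\otimes \vdim P} \otimes \underline{\Lambda}^{\otimes \vdim Q}$ differs from the grading of the composite by a factor $\underline{\Lambda}^{\otimes \vdim N_1}$. That factor is nontrivial in general, and no parity argument about the base removes it.

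What works instead is to form the composite as $(\widetilde{P} \times \widetilde{Q}) \times_{\widetilde{N}_{01} \times \widetilde{N}_{12}} \widetilde{N}_{012}$. This is a fiber product over a $-1$-shifted symplectic base of virtual dimension $0$, where parity is additive. The leftover factor then belongs to the correspondence $\widetilde{N}_{012}$, of virtual dimension $-d$. It has to be absorbed by the orientation data of that correspondence, which is built from the graded orientations of the $1$-morphisms.

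The paper does not do this bookkeeping separately. It cites Lemma~\ref{lem:intersection_orientation} for preservation of graded orientations and otherwise delegates the graded compatibility of classes under composition to (J3), which stipulates that the (J2) isomorphisms can be chosen $\theta$-invariant after grading. You should cite (J3) at that point rather than the evenness remark.

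Separately, you skip the paper's first step: checking that $Leg_0^{\mathrm{pr}}$ really is a sub-$2$-category. This requires three things:
\begin{itemize}
\item properness is stable under base change and composition;
\item identities, associators and unitors are graphs and hence proper;
\item graded orientations are preserved under vertical and horizontal composition, by Lemma~\ref{lem:intersection_orientation} and \cite[Lemmas 5.5 and 5.7]{AmorimBassat} applied to the symplectifications.
\end{itemize}
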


Properness is required at two levels, and the two requirements are distinct.
On $2$-morphisms it makes the fundamental class of (J1) pushable. Without it
$R\widetilde{\varphi}_!$ and $R\widetilde{\varphi}_*$ differ and the unit class
does not lie in compactly supported hypercohomology. This is the restriction
imposed in \cite[Section 6]{AmorimBassat}, where the $2$-morphisms of
$\mathfrak{Symp}^{or}_c$ are taken proper for the same reason.

On $1$-morphisms it makes composition in $\mathit{LLeg}_0$ defined. Here $\widetilde{N}_{012} \to \widetilde{N}_{02}$ is base changed from
$\widetilde{N}_1 \to \widetilde{L}_{01}$ and must be proper for
$R\tilde{\pi}_{\mathrm{out},!} \simeq R\tilde{\pi}_{\mathrm{out},*}$. The
hypotheses of Theorems \ref{thm:linear_category} and \ref{thm:global_2cat}
therefore carry properness at the level of $1$-morphisms, and
Theorem \ref{thm:linearization_functor} adds it at the level of
$2$-morphisms.

\begin{proof}
  $Leg_0^{\mathrm{pr}}$ is a sub-$2$-category. Horizontal and vertical
  composition are derived fiber products, and properness is stable under base
  change and under composition, so a composite of proper morphisms is proper. The identities, associators and unitors are graphs of
  Legendreomorphisms and are therefore proper as well. Graded orientations are
  preserved by vertical composition by Lemma \ref{lem:intersection_orientation}
  and by horizontal composition by \cite[Lemma 5.5 and Lemma
  5.7]{AmorimBassat}, applied to the symplectifications. This is the contact
  form of the corresponding statement for $\mathfrak{Symp}^{or}_c$ in
  \cite[Section 6]{AmorimBassat}.

  A $2$-morphism of $Leg_0^{\mathrm{pr}}$ from $N_0$ to $N_1$ is a Legendrian
  $\varphi \colon P \to N_{01}$, where
  $N_{01} = N_0 \times^h_{X_1 \circledast X_2} N_1$ is the $-1$-shifted contact
  intersection of Theorem \ref{thm:review_intersection}. Its symplectification
  $\widetilde{\varphi} \colon \widetilde{P} \to \widetilde{N}_{01}$ is a
  $\Gm$-equivariant Lagrangian in the $-1$-shifted symplectic stack
  $\widetilde{N}_{01}$, proper because $\varphi$ is, and graded oriented by
  hypothesis.

  By (J1) and (J3) it therefore carries a $\theta$-invariant fundamental class
  \[ \mu_{\widetilde{P}} \colon \F_{\widetilde{P}}[\vdim \widetilde{P}]
  \longrightarrow \widetilde{\varphi}^{\,!}
  \mathcal{P}_{\widetilde{N}_{01}} . \]
  Since $\widetilde{\varphi}$ is proper we have
  $R\widetilde{\varphi}_! \simeq R\widetilde{\varphi}_*$, so the adjunction
  $R\widetilde{\varphi}_! \widetilde{\varphi}^{\,!} \to \mathrm{id}$ turns
  $\mu_{\widetilde{P}}$ into
  $R\widetilde{\varphi}_* \F_{\widetilde{P}}[\vdim \widetilde{P}] \to
  \mathcal{P}_{\widetilde{N}_{01}}$, and on hypercohomology into
  \[ \mathbb{H}^{\bullet + \vdim \widetilde{P}}(\widetilde{P}, \F)
  \longrightarrow \mathbb{H}^{\bullet}(\widetilde{N}_{01},
  \mathcal{P}_{\widetilde{N}_{01}}) . \]
  We set $F(P)$ to be the image of the unit $1 \in \mathbb{H}^0(\widetilde{P},
  \F)$, which lies in
  $\mathbb{H}^{-\vdim \widetilde{P}}(\widetilde{N}_{01},
  \mathcal{P}_{\widetilde{N}_{01}})$, that is in degree
  $\vdim \widetilde{N}_0 - \vdim \widetilde{P}$ of the $2$-morphism space of
  Theorem \ref{thm:global_2cat}. All the operations used commute with $\theta$
  by Proposition \ref{prop:theta_glues}, and $\mu_{\widetilde{P}}$ is
  $\theta$-invariant by (J3), so $F(P)$ is fixed by
  $\mathbb{H}^\bullet(\theta)$ and is a morphism of the enrichment of
  Definition \ref{def:Hmon}.

  For functoriality, vertical composition of $2$-morphisms in
  $Leg_0^{\mathrm{pr}}$ is the derived fiber product of the corresponding
  Legendrian spans, and its symplectification is the triple intersection
  $\widetilde{P}_{012}$ of the proof of Theorem \ref{thm:global_2cat}. The
  compatibility of the classes of (J1) with composition of Lagrangian
  correspondences is (J2), refined monodromically by (J3), so
  $F(P_2 \odot P_1) = F(P_2) \odot F(P_1)$ with $\odot$ the pull-push
  composition $\Phi_\mu$ constructed there. Horizontal composition is the same
  argument applied to the composite correspondences of \cite[Corollary
  2.13]{AmorimBassat}, and the interchange law holds on both sides by
  \cite[Theorem 6.1]{IzbudakBerktav2} and by the computation in Theorem
  \ref{thm:global_2cat}. Identities go to identities because the diagonal
  Legendrian has fundamental class the unit, by (J2) applied to
  \cite[Proposition 3.8]{AmorimBassat}.

  Monoidality is Thom--Sebastiani. For contact products the symplectifications
  multiply, and Theorem \ref{thm:TS} together with the K\"unneth formula gives
  $\mathcal{P}_{\widetilde{N} \times \widetilde{N}'} \simeq
  \mathcal{P}_{\widetilde{N}} \boxtimes \mathcal{P}_{\widetilde{N}'}$
  compatibly with the monodromy operators. The fundamental classes are
  multiplicative under external products by (J2), so $F$ carries the contact
  product to the tensor product of the enrichment.
\end{proof}

\section{Contact DT Invariants and the
\texorpdfstring{$\ell$}{l}-adic Behrend Function}
\label{sec:dt}

Let $Z$ be a proper, oriented $-1$-shifted contact derived Artin stack over
$\K$. Invariants extracted through the symplectification alone are
inadequate for
structural reasons. The $\Gm$-action on $\widetilde{Z}$ is free, so
$\chi_{et, c}(\widetilde{Z}) = 0$, and the pushforward
$Rp_* \mathcal{P}_{\widetilde{Z}}$ retains only the generalized
$1$-eigenspace of the monodromy, by the triangle of Lemma
\ref{lem:monodromic_descent}(1). It vanishes outright whenever $1$ is not an
eigenvalue.

This is the case in both examples computed below. The twisted
operator $\theta$ acts by $+1$ there, so by Proposition
\ref{prop:sign_character}
the geometric monodromy $T = -\theta$ acts by $-1$. The trace of the
twisted monodromy operator $\theta$ of Theorem
\ref{thm:monodromic_sheaf} retains the information lost by the pushforward. We
define the \bfem{contact Donaldson--Thomas invariant} as the alternating sum of
its traces on the \'etale hypercohomology of the perverse sheaf $\mathcal{P}_Z$:
\[ \mathrm{DT}(Z) := \sum_{i \in \mathbb{Z}} (-1)^i \mathrm{Tr}\left(
    \theta \mid
\mathbb{H}^i_{et}(Z, \mathcal{P}_Z) \right) \in \overline{\mathbb{Q}}_\ell. \]
In classical Donaldson-Thomas theory over $\mathbb{C}$, the global
hypercohomology invariant is computed by integrating a local
constructible Behrend function \cite[Section 1.2]{Behrend}. We
formulate the $\ell$-adic contact analogue over $\K$.

\begin{definition}\label{def:behrend}
  We define the \bfem{$\ell$-adic contact Behrend function}
  $\nu^{\text{contact}}_Z \colon Z(\K) \to
  \overline{\mathbb{Q}}_\ell$ as the pointwise alternating sum of the
  traces of the operator $\theta$ of Theorem
  \ref{thm:monodromic_sheaf} evaluated at geometric points:
  \[ \nu^{\text{contact}}_Z(x) := \sum_{i \in \mathbb{Z}} (-1)^i
  \mathrm{Tr}\left( \theta_x \mid \mathcal{H}^i(\mathcal{P}_Z)_x \right). \]
\end{definition}

\begin{theorem}[$\ell$-adic Contact Topological
  Integration]\label{thm:integration}
  Let $Z$ be a proper, oriented $-1$-shifted contact derived stack
  over $\K$. The contact DT invariant is equal to the $\ell$-adic
  topological integration of the contact Behrend function:
  \[ \mathrm{DT}(Z) = \chi_{et, c}(Z, \nu^{\text{contact}}_Z) =
    \sum_{c \in \overline{\mathbb{Q}}_\ell} c \cdot \chi_{et, c}\left(
  (\nu^{\text{contact}}_Z)^{-1}(c) \right), \]
  where $\chi_{et, c}$ denotes the stacky \'etale Euler
  characteristic with compact support.
\end{theorem}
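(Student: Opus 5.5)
The plan is the standard trace-formula argument of Grothendieck--Lefschetz type in characteristic zero: the alternating trace of an automorphism on compactly supported cohomology is additive and reduces to stalks. Since $Z$ is proper, $Rp_{Z!} \simeq Rp_{Z*}$ for the structure map $p_Z \colon Z \to \Spec \K$ by \cite[Proposition 6.2.11]{LZ}. Hence $\mathbb{H}^i_{et}(Z, \mathcal{P}_Z) = \mathbb{H}^i_c(Z, \mathcal{P}_Z)$, and $\theta$ acts on it by functoriality of $Rp_{Z!}$ applied to the automorphism $\theta$ of Theorem \ref{thm:monodromic_sheaf}. I would define, for any pair $(K, \alpha)$ of a constructible complex with an automorphism, the quantity $\chi_c(Z; K, \alpha) := \sum_i (-1)^i \mathrm{Tr}(\alpha \mid \mathbb{H}^i_c(Z, K))$, together with the pointwise function $x \mapsto \sum_i (-1)^i \mathrm{Tr}(\alpha_x \mid \mathcal{H}^i(K)_x)$. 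The theorem is the identity between the first quantity and the integral of the second, applied to $(\mathcal{P}_Z, \theta)$.

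First, both sides are additive in distinguished triangles of pairs and in decompositions $Z = \mathcal{U} \sqcup \mathcal{C}$ with $\mathcal{U}$ open and $\mathcal{C}$ closed. For the left side this uses the triangle $j_! j^* \to \mathrm{id} \to i_* i^*$, which is functorial in the automorphism. Alternating traces are additive in long exact sequences of $\F[\alpha]$-modules. The right side is additive by definition of $\chi_{et,c}$ of constructible functions.

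Next I choose a stratification of $Z^{\mathrm{cl}}$ into locally closed substacks $S_j$ on which each $\mathcal{H}^i(\mathcal{P}_Z)$ is lisse. Since $\theta$ is tame and quasi-unipotent (Proposition \ref{prop:theta_glues}), I can filter each $\mathcal{H}^i(\mathcal{P}_Z)|_{S_j}$ by generalized eigensheaves of $\theta$. After refining the stratification, the fiberwise trace is then constant on each stratum, so $\nu^{\mathrm{contact}}_Z$ is constructible and the level sets are finite unions of strata. On one stratum $S$ with a lisse sheaf $\mathcal{F}$ and automorphism $\alpha$, I need $\chi_c(S; \mathcal{F}, \alpha) = \mathrm{Tr}(\alpha_x \mid \mathcal{F}_x) \cdot \chi_{et,c}(S)$. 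Passing to the semisimplification of $\alpha$ reduces this to the case $\alpha = c \cdot \mathrm{id}$ on an eigensheaf. There both sides equal $c$ times $\chi_c(S, \mathcal{F})$, and the characteristic-zero Euler characteristic formula $\chi_c(S, \mathcal{F}) = \mathrm{rk}(\mathcal{F}) \, \chi_c(S)$ for lisse sheaves with tame monodromy finishes the argument. Over $\K$ of characteristic zero no wild ramification occurs, and the formula can be transported from $\mathbb{C}$ by the Lefschetz principle as in \S\ref{subsec:milnor}. Summing over strata gives the theorem.

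The main obstacle is the stacky setting. Compactly supported Euler characteristics of Artin stacks are defined through Laszlo--Olsson $Rp_!$, and on non-DM strata such as $B G$ the cohomology may be unbounded. I would handle this by stratifying $Z$ further into global quotients $[Y/G]$ with $G$ linear, which is possible by the quasi-separatedness and affine stabilizer hypotheses. For such a quotient I would define $\chi_{et,c}$ through the smooth atlas $Y \to [Y/G]$ as $\chi_c(Y)/\chi_c(G)$, read motivically via the class $\mathbb{L}$. I would then check that the trace identity on $Y$, where it holds by the scheme case, descends: the pullback of $(\mathcal{F}, \alpha)$ to $Y$ is again lisse with the same fiberwise traces, and multiplicativity along the $G$-torsor holds for alternating traces exactly as for Euler characteristics. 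Making this division rigorous for non-special $G$ is the step I expect to require the most care. If it fails, I would restrict the statement to stacks whose stabilizers are special groups, where Zariski-local triviality of the torsor gives the multiplicativity directly.
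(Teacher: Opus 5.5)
Your core argument is exactly the paper's proof: $\mathbb{H}^\bullet_{et} = \mathbb{H}^\bullet_{et,c}$ by properness, additivity over an open--closed decomposition, a stratification on which the $\mathcal{H}^i(\mathcal{P}_Z)$ are lisse, a splitting into generalized eigensheaves of $\theta$, and the rank formula $\chi_{et,c}(S,\mathcal{F}) = \rk(\mathcal{F})\,\chi_{et,c}(S)$. It is complete when the strata are schemes or algebraic spaces. The gap is the stacky step, and both your diagnosis and your repair go wrong there.

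\emph{The obstacle you name does not occur.} With the standard convention that a proper stack is separated, the inertia is proper. The stabilizers are then affine and proper, hence finite, so $Z$ is Deligne--Mumford in characteristic zero. This convention is also what makes $\mathbb{H}^\bullet_{et} = \mathbb{H}^\bullet_{et,c}$ true; that identification fails for $B\Gm$.

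\emph{The repair is not usable.} $\chi_c(Y)/\chi_c(G)$ is undefined whenever $G$ contains a torus, since then $\chi_c(G) = 0$. For finite $G$ it assigns the orbifold weight $1/|\Gamma|$ to a gerbe $B\Gamma$, whereas the trace on $\ell$-adic hypercohomology sees $\chi_{et,c}(B\Gamma) = 1$, as the paper notes after Theorem \ref{thm:integration}. Likewise, ``multiplicativity along the $G$-torsor'' already fails for $Y = \Spec \K$, $G = \mu_2$. Your fallback to special stabilizers means trivial stabilizers once stabilizers are finite. That is the algebraic-space case, where your argument does prove the statement.

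The step that actually breaks is the rank formula on a stratum with nontrivial stabilizers. A lisse sheaf on $B\mu_2$ is a representation of $\mu_2$, and for the sign character $\chi_{et,c}(B\mu_2, \mathrm{sgn}) = 0 \neq 1$. The paper's proof applies the same formula to stacky strata without comment, so it has the same gap, and the identity itself fails.

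Here is a counterexample. Let $\mu_2$ act on the Koszul--Tate model of $\Delta\mathrm{loc}(x^2)$ by $(x,y,z) \mapsto (-x,-y,z)$. This preserves $d(y) = 2x$, $d(z) = x^2$ and $\alpha = d_{\DR}z + y\, d_{\DR}x$. Set $Z := [\Delta\mathrm{loc}(x^2)/\mu_2]$, so $Z^{\mathrm{cl}} = B\mu_2$ is proper.
\begin{itemize}
\item $\mu_2$ acts by $-1$ on $d_{\DR}x$ and $d_{\DR}y$ and trivially on $d_{\DR}z$. Hence $K_Z|_{B\mu_2}$ is trivial, and both characters of $\mu_2$ are square roots.
\item The two orientations differ by the sign torsor, so for one of them $\mathcal{P}_Z$ is the sign local system. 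Then $\mathbb{H}^\bullet_{et}(Z,\mathcal{P}_Z) = 0$ and $\mathrm{DT}(Z) = 0$.
\item On the stalk, $\theta = +1$ by Lemma \ref{lem:mb_chart} with $n = k = 1$. So $\nu^{\mathrm{contact}}_Z \equiv 1$, and the right-hand side is $\chi_{et,c}(B\mu_2) = 1$.
\end{itemize}

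What your method does prove for Deligne--Mumford $Z$ is the following. Use the coarse space $\pi \colon Z^{\mathrm{cl}} \to \bar{Z}$, together with $\chi_{et,c}(S,\mathcal{F}) = \chi_{et,c}(\bar{S}, \pi_*\mathcal{F})$ and $(\pi_*\mathcal{F})_{\bar{x}} = \mathcal{F}_x^{G_x}$ for $\overline{\mathbb{Q}}_\ell$-coefficients. This gives the integration formula on $\bar{Z}$ with $\nu$ replaced by the alternating trace of $\theta$ on the stabilizer invariants $\mathcal{H}^i(\mathcal{P}_Z)_x^{G_x}$. That formula reduces to the stated one when the stabilizers act trivially on the stalks of $\mathcal{P}_Z$.
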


\begin{proof}
  Because $Z$ is proper, the global \'etale hypercohomology coincides
  with the compactly supported \'etale hypercohomology
  $\mathbb{H}^\bullet_{et, c}(Z, \mathcal{P}_Z)$.

  Fix a finite stratification of $Z$ by locally closed substacks
  $j_c \colon S_c \hookrightarrow Z$ refining both a stratification on which the
  cohomology sheaves of $\mathcal{P}_Z$ are locally constant and the level sets
  of $\nu^{\mathrm{contact}}_Z$. All strata within one level set carry stalkwise
  trace $c$, so it suffices to treat one stratum per value. Applying the
  alternating sum of the traces of $\theta$ to both sides, the
  evaluation splits as
  a sum over the strata. Because $j_c^* \mathcal{P}_Z$ has locally
  constant cohomology sheaves with a tame action of $\theta$, it
  represents a class in the Grothendieck ring of $\ell$-adic local
  systems equipped with an automorphism. As the compactly supported
  \'etale Euler characteristic $\chi_{et,c}(S_c, -)$ is additive
  under distinguished triangles, we can evaluate it on this
  Grothendieck class. As $\theta$ is an endomorphism of local systems its
  generalized eigenvalues are locally constant, so each cohomology sheaf
  $\mathcal{H}^i(j_c^* \mathcal{P}_Z)$ splits on $S_c$ into the generalized
  eigen-local-systems $L_{i, \lambda}$ of $\theta$. Because $\K$ has
  characteristic zero every $\ell$-adic local system on $S_c$ is tame, so
  $\chi_{et, c}(S_c, L_{i, \lambda}) = \rk(L_{i, \lambda}) \cdot
  \chi_{et, c}(S_c, \overline{\mathbb{Q}}_\ell)$. Summing over $i$ and
  $\lambda$ with signs and eigenvalues, and using that the stalkwise alternating
  trace $\sum_i (-1)^i \sum_\lambda \lambda \rk(L_{i, \lambda})$ is the
  constant $c$, the alternating sum of the traces of $\theta$
  on the compactly supported hypercohomology of $S_c$ with
  coefficients in $j_c^* \mathcal{P}_Z$ evaluates to the scalar
  product $c \cdot \chi_{et, c}(S_c, \overline{\mathbb{Q}}_\ell)$.
  Therefore, the sum reduces to $\sum_{c \in
  \overline{\mathbb{Q}}_\ell} c \cdot \chi_{et, c}\left(
  (\nu^{\text{contact}}_Z)^{-1}(c) \right)$. This establishes the
  topological integration formula over $\K$.
\end{proof}

When $Z$ is in addition Deligne--Mumford, the formula descends to the coarse
moduli space $\pi \colon Z^{\mathrm{cl}} \to \bar{Z}$ of its classical
truncation \cite{KeelMori}. The contact Behrend function is constant on
isomorphism classes of geometric points, so it descends to a constructible
function $\bar{\nu}$ on $\bar{Z}$. The map $\pi$ is proper and induces a
homeomorphism of underlying topological spaces \cite{KeelMori}, so the level
sets of $\nu^{\mathrm{contact}}_Z$ and of $\bar{\nu}$ correspond and
constructibility descends.

By proper base change
the stalk of $R\pi_* \overline{\mathbb{Q}}_\ell$ at a geometric point
$\bar{x}$ is the cohomology of the fiber, a gerbe over the finite group
$\Gamma_{\bar{x}}$. Since $H^{>0}(\Gamma_{\bar{x}},
\overline{\mathbb{Q}}_\ell) = 0$ we get $R\pi_* \overline{\mathbb{Q}}_\ell
\simeq \overline{\mathbb{Q}}_\ell$, and $R\pi_! = R\pi_*$ by properness, so
$\chi_{et,c}$ agrees on corresponding level sets. Substituting into Theorem
\ref{thm:integration},
\[ \mathrm{DT}(Z) = \sum_{c \in \overline{\mathbb{Q}}_\ell} c \cdot
\chi_{et, c}\big( \bar{\nu}^{-1}(c) \big), \]
an Euler characteristic of constructible subsets of the coarse space. Residual
gerbes contribute through $\chi_{et,c}(B\Gamma, \overline{\mathbb{Q}}_\ell)
= 1$ for finite $\Gamma$.

Classical Donaldson--Thomas theory over $\mathbb{C}$ integrates the Behrend
function against the orbifold measure, weighting a point $x$ of the
coarse space by
$1/|\mathrm{Aut}(x)|$ \cite{Behrend}. That measure differs from the
one computed by
$\ell$-adic hypercohomology. A finite gerbe satisfies $\chi_{et,c}(B\Gamma,
\overline{\mathbb{Q}}_\ell) = 1$, not $1/|\Gamma|$. The invariant
$\mathrm{DT}(Z)$
of this paper is the cohomological one. An orbifold-weighted variant
$\mathrm{DT}^{\mathrm{orb}}(Z)$ may be defined by the weighted integral over
$\bar{Z}$. It is not computed by the trace on $\mathbb{H}^\bullet_{et}(Z,
\mathcal{P}_Z)$ and we do not use it.

\subsection{The \texorpdfstring{$A_k$}{Ak}-Singularity}
\label{subsec:Ak}
Consider a local contact chart representing the intersection of two
Legendrians giving an $A_k$-singularity, with potential $s(x) =
x^{k+1}$ on the affine space $\mathbb{A}^1$. The associated
homogeneous potential on the derived symplectification is
$\widetilde{f}(x, t) = t \cdot x^{k+1}$, and its critical locus is the
$t$-axis $x = 0$.

By the proof of Proposition \ref{prop:behrend_one}, the transversal Milnor
fiber of $\widetilde{f}$ at $(0, t_0)$ is the Milnor fiber $F_0$ of $s$, the
$k + 1$ roots of $\epsilon$ of Example \ref{ex:Ak_classical}, and
$T_{U, \widetilde{f}}$ is the inverse of the Milnor monodromy $T_0$, so it acts
through $x \mapsto \zeta_{k+1}^{-1} x$, a cyclic permutation without fixed
points. By that example, $\mathrm{Tr}(T_{U, \widetilde{f}} \mid
\widetilde{H}^0(F_0, \overline{\mathbb{Q}}_\ell)) = -1$, the inversion being
immaterial for a permutation.

Because $s(x) = x^{k+1}$ defines an isolated singularity on a
1-dimensional domain, the perverse vanishing cycles complex
$\mathcal{P}_Z = \phi^p_s(\overline{\mathbb{Q}}_\ell[1]) =
\phi_s(\overline{\mathbb{Q}}_\ell[1])[-1] =
\phi_{s}(\overline{\mathbb{Q}}_\ell)[0]$ is concentrated in degree 0.
By Definition \ref{def:twisted}, $\theta$ acts on the stalk as
$(-1)^{\dim \mathbb{A}^1} T_{U, \widetilde{f}} = -T_{U, \widetilde{f}}$, where
$T_{U, \widetilde{f}}$ denotes the chartwise vanishing-cycle monodromy computed
above. It is related to the global operator of Theorem
\ref{thm:monodromic_sheaf} by $T = \varepsilon_U T_{U, \widetilde{f}}$ with
$\varepsilon_U = (-1)^{n+1} = +1$ here.

Thus
\[ \nu^{\text{contact}}_Z(0)
  = \mathrm{Tr}\bigl(-T_{U, \widetilde{f}} \mid \tilde{H}^0(F_0,
\overline{\mathbb{Q}}_\ell)\bigr) = 1 . \]
The classical Behrend function evaluates to the Milnor number $k$ by
\S\ref{subsec:milnor}, while the $\ell$-adic contact one evaluates to $1$ for
every $k \geq 1$. The
alternating trace of the cyclic monodromy cancels the contribution of the
multi-sheeted branches. The critical locus is the single point $\{0\}$, so
Theorem \ref{thm:integration} gives
\[ \mathrm{DT}(Z) = \nu^{\text{contact}}_Z(0) \cdot \chi_{et,
c}(\{0\}, \overline{\mathbb{Q}}_\ell) = 1 \cdot 1 = 1. \]

\begin{remark}[Non-degenerate quadratic singularity]\label{rem:quadratic_dt}
  For $s = x_1^2 + \dots + x_n^2$ on $\mathbb{A}^n$ the same
  computation gives $1$
  again. The vanishing cohomology of an ordinary quadratic singularity in $n$
  variables has rank one, and by the Picard--Lefschetz formula \cite[Exp.
  XV]{SGA7II} the generator $\gamma$ acts on it by $(-1)^n$, so
  $T_{U, \widetilde{f}} = (-1)^n$. Over $\mathbb{C}$ this is the
  antipodal map on
  the Milnor fiber $F_0 \simeq S^{n-1}$. Since
  $\mathcal{P}_Z = \phi_s(\overline{\mathbb{Q}}_\ell)[n-1]$ has its
  only stalk in
  degree $0$, equal to $\widetilde{H}^{n-1}(F_0)$, and
  $\theta = (-1)^n T_{U, \widetilde{f}}$, we get
  $\nu^{\mathrm{contact}}_Z(0) = (-1)^n (-1)^n = 1$ and
  $\mathrm{DT}(Z) = 1$. The dimensional twist of $\theta$ cancels the parity
  dependence of the geometric monodromy in every dimension.
\end{remark}

\subsection{The Contact Behrend Function is Constant}
\label{subsec:constant}

Both computations returned $1$, and neither is accidental.

\begin{proposition}\label{prop:behrend_one}
  Let $Z$ be an oriented $-1$-shifted contact derived Artin stack over $\K$ and
  $x \in Z(\K)$. Let $(U, s)$ be a contact Darboux chart around $x$
  with $\dim U =
  n$, and let $F_x$ denote the Milnor fiber of $s$ at $x$. Then
  \[ \nu^{\mathrm{contact}}_Z(x)
    = 1 - \Lambda\bigl(T_{U, \widetilde{f}} \mid H^\bullet(F_x,
  \overline{\mathbb{Q}}_\ell)\bigr) = 1 , \]
  where $\Lambda$ denotes the Lefschetz number. If $Z$ is proper, then
  \[ \mathrm{DT}(Z) = \chi_{et, c}(Z^{\mathrm{cl}},
  \overline{\mathbb{Q}}_\ell). \]
\end{proposition}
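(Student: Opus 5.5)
The plan is to reduce to a stalk computation in a single contact Darboux chart, identify the stalk of $\mathcal{P}_Z$ with the reduced vanishing cohomology of $s$ at $x$, identify $\theta$ on it with $(-1)^n$ times the inverse Milnor monodromy, and then apply A'Campo's theorem. By Theorem \ref{thm:monodromic_sheaf} and the slice description of Definition \ref{def:slicepair}, the stalk of $\mathcal{P}_Z$ at $x$ is the stalk of $\mathcal{PV}_{U,\widetilde{f}}\otimes_{\mu_2}Q_{U,\widetilde{f}}$ at $(x,1)$, shifted by $[-1]$. The factor $Q$ is inert for $\theta$ (Definition \ref{def:twisted}) and only changes the identification by a sign, which does not affect traces.

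First I compute the vanishing cycles of $\widetilde{f}=t\cdot s$ near $(x,t_0)$. Using the change of coordinates $\epsilon\mapsto \epsilon/t$ along the $\Gm$-orbits, the Milnor fiber $\{t s=\epsilon\}$ near $(x,t_0)$ is, locally, the product of a small disk in $t$ with the Milnor fiber $\{s=\epsilon/t_0\}=F_x$. Equivalently, $\widetilde{f}$ is smoothly equivalent near such a point to $s$ plus a trivial parameter. Hence the stalk is $\widetilde{H}^\bullet(F_x)$, placed so that $\mathcal{H}^i(\mathcal{P}_Z)_x=\widetilde{H}^{i+n-1}(F_x)$. For the orbit monodromy, I move $t$ once around the circle at fixed $\epsilon$; then $\epsilon/t$ winds once in the opposite direction. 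This gives $T_{U,\widetilde{f}}=T_x^{-1}$, matching the convention stated just before the proposition and in \S\ref{subsec:Ak}. Therefore
\[ \nu^{\mathrm{contact}}_Z(x)=\sum_i(-1)^i\,\mathrm{Tr}\bigl((-1)^nT_x^{-1}\mid \widetilde{H}^{i+n-1}(F_x)\bigr)=\sum_j(-1)^{j}\,\mathrm{Tr}\bigl(T_x^{-1}\mid\widetilde{H}^j(F_x)\bigr)\cdot(-1)^{n}(-1)^{-(n-1)}, \]
which equals $-\bigl(\Lambda(T_x^{-1}\mid H^\bullet(F_x))-1\bigr)=1-\Lambda(T_{U,\widetilde{f}}\mid H^\bullet(F_x))$. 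The "$1$" comes from passing between reduced and unreduced cohomology in degree $0$, where the monodromy acts trivially. I will double-check this sign bookkeeping against Example \ref{ex:Ak_classical} and Remark \ref{rem:quadratic_dt}, both of which must return $1$.

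Second, the point $x$ lies in $Z^{\mathrm{cl}}=\{s=0\}\cap\mathrm{Crit}(s)$ by Lemma \ref{lem:KT_model}. So $x$ is a singular point of $s^{-1}(0)$, and A'Campo's theorem \cite{ACampo} gives $\Lambda(T_x\mid H^\bullet(F_x))=0$. Since $T_x$ is quasi-unipotent of finite order on its semisimplification, $T_x^{-1}$ has the complex-conjugate eigenvalues. Its Lefschetz number is therefore the conjugate of $0$, hence $0$. Alternatively, I can apply A'Campo directly to the iterates via the zeta function. This yields $\nu^{\mathrm{contact}}_Z(x)=1$. The main obstacle is justifying A'Campo and the Milnor-fiber model over $\K$ with $\ell$-adic coefficients. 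I would handle this via the definition of $F_x$ through $\phi_f$ in \S\ref{subsec:milnor}, together with the comparison theorem and the Lefschetz principle, which the paper already adopts. For stacks with degree $-2$ generators, I would reduce to schemes by smooth pullback along the atlas, since the trace is unchanged under the shift in Theorem \ref{thm:BBDJS_sheaf}.

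Finally, for proper $Z$, I plug $\nu^{\mathrm{contact}}_Z\equiv1$ into Theorem \ref{thm:integration}, which gives $\mathrm{DT}(Z)=\chi_{et,c}(Z)$. Here the support is $Z^{\mathrm{cl}}$ and the étale topos is insensitive to derived structure, so $\chi_{et,c}(Z)=\chi_{et,c}(Z^{\mathrm{cl}},\overline{\mathbb{Q}}_\ell)$.
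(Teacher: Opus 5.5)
Your proposal is correct and is essentially the paper's proof: identify the Milnor fiber of $t\cdot s$ at $(x,t_0)$ with $F_x$ and the orbit monodromy with $T_x^{-1}$, reindex by $j=i+n-1$ to get $\nu^{\mathrm{contact}}_Z(x)=1-\Lambda(T_{U,\widetilde f}\mid H^\bullet(F_x))$, kill $\Lambda$ by A'Campo, and conclude with Theorem \ref{thm:integration}; your treatment of the inverse is sounder than the paper's. You use that the eigenvalues are roots of unity, so $\Lambda(T_x^{-1})$ is the image of $\Lambda(T_x)=0$ under $\zeta\mapsto\zeta^{-1}$, whereas the paper asserts in general that a quasi-unipotent operator and its inverse have equal Lefschetz numbers, which already fails for the scalar $i$. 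One correction to your aside on stacky charts: a shift by the relative dimension $m$ of the atlas multiplies the alternating trace by $(-1)^m$, so the trace is not unchanged, and you recover $\nu^{\mathrm{contact}}_Z\equiv 1$ there only if the dimensional twist of $\theta$ on such a chart is $(-1)^{n-m}$ rather than $(-1)^n$ (the normalization that gluing along smooth comparison maps forces), a point the paper's proof also leaves implicit.
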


\begin{proof}
  We first identify the stalks. Near a point $(x, t_0)$ of $\widetilde{U} = U
  \times \Gm$ the fiber $\widetilde{f}^{-1}(\epsilon)$ is
  $\{(x', t') : t'\, s(x') = \epsilon\}$, and $t'$ is invertible, so it is the
  graph of $t' = \epsilon / s(x')$ over $\{s(x') \neq 0\}$. Projection to $U$
  identifies it with the fiber of $s$ over $\epsilon / t_0$. The local Milnor
  fiber of $\widetilde{f}$ at $(x, t_0)$ is $F_x$. By Definition
  \ref{def:twisted}, $\mathcal{P}_{\widetilde{Z}} =
  \phi^p_{\widetilde{f}}(\overline{\mathbb{Q}}_\ell[n+1]) =
  \phi_{\widetilde{f}}(\overline{\mathbb{Q}}_\ell)[n]$, whose stalk
  in degree $i$
  is $\widetilde{H}^{i+n}(F_x)$, and the slice $\mathcal{P}_Z = u^*
  \mathcal{P}_{\widetilde{Z}}[-1]$ of Definition \ref{def:slicepair} has
  \[ \mathcal{H}^i(\mathcal{P}_Z)_x = \widetilde{H}^{i+n-1}(F_x,
  \overline{\mathbb{Q}}_\ell) , \]
  the stalk of $\phi^p_s(\overline{\mathbb{Q}}_\ell[n])$, as used in
  \S\ref{subsec:Ak} and Remark \ref{rem:quadratic_dt}. Under the same
  identification a
  loop in $t$ at fixed $\epsilon$ moves $\epsilon/t$ around $0$ in the direction
  opposite to a loop in $\epsilon$ at fixed $t$, so $T_{U,
  \widetilde{f}}$ is the
  inverse of the Milnor monodromy of $s$ at $x$.

  Now $\theta$ acts as $(-1)^n T_{U, \widetilde{f}}$
  by Definition \ref{def:twisted}. Reindexing by $j = i + n - 1$, the sign
  $(-1)^i = (-1)^{j}(-1)^{n+1}$ combines with the $(-1)^n$ of $\theta$ to give
  \[ \nu^{\mathrm{contact}}_Z(x)
    = -\sum_j (-1)^j \mathrm{Tr}\bigl(T_{U, \widetilde{f}} \mid
    \widetilde{H}^j(F_x)\bigr)
    = - \Lambda\bigl(T_{U, \widetilde{f}} \mid
  \widetilde{H}^\bullet(F_x)\bigr) . \]
  The monodromy acts trivially on the constant summand of $H^0(F_x)$, so
  $\Lambda(T_{U, \widetilde{f}} \mid H^\bullet(F_x)) = 1 +
  \Lambda(T_{U, \widetilde{f}} \mid \widetilde{H}^\bullet(F_x))$,
  which gives the
  first equality.

  A geometric point of $Z$ lies in $\mathrm{Crit}(s) \cap \{s = 0\}$ by Lemma
  \ref{lem:KT_model}, so $s$ has a critical point with critical value
  $0$ at $x$.
  The Lefschetz number of the Milnor monodromy at such a point vanishes
  \cite{ACampo}. The statement is proved there over $\mathbb{C}$ and
  transfers to
  $\K$ by the Lefschetz principle, the Lefschetz number being an
  integer computed
  from an embedded resolution defined over a subfield finitely generated over
  $\mathbb{Q}$. A quasi-unipotent operator and its inverse have the same
  Lefschetz number, so the inversion found above is immaterial. Hence
  $\Lambda = 0$
  and $\nu^{\mathrm{contact}}_Z \equiv 1$.

  The last assertion follows from Theorem \ref{thm:integration}. The only value
  taken is $c = 1$, whose level set is all of $Z(\K)$.
\end{proof}

So the contact invariant carries no Behrend weighting, in contrast with the
symplectic case, where the weight at an $A_k$ point is $k$. The content of
$\mathrm{DT}$ is entirely in the Euler characteristic of the truncation, and the examples above are proper only when their truncation is a point. Here is one
where it is not.

\begin{example}[A contact DT invariant different from $1$]\label{ex:P1}
  Let $C$ be a smooth proper curve of genus $g$, let $F$ be a line
  bundle on $C$,
  and let $V$ be the total space of $E := F \oplus F^\vee$, with $s \colon V \to
  \mathbb{A}^1$ the hyperbolic form $s(u, v) = \langle u, v \rangle$, a regular
  function on $V$. Then $ds$ vanishes exactly on the zero section, where $s$
  vanishes too, so the contact Darboux scheme $Z = \Delta\mathrm{loc}(s)$ of
  Definition \ref{def:discriminant} has $Z^{\mathrm{cl}} \simeq C$
  and is proper,
  and it is canonically oriented by Remark \ref{rem:canonical_orientation}, $s$
  being a function on the smooth scheme $V$. Transversally to the
  zero section $s$ is a non-degenerate
  quadratic form of rank $2$, so Lemma \ref{lem:mb_chart} applies fiberwise and
  $\nu^{\mathrm{contact}}_Z \equiv 1$, as Proposition \ref{prop:behrend_one}
  predicts. Theorem \ref{thm:integration} then gives
  \[ \mathrm{DT}(Z) = \chi_{et, c}(C, \overline{\mathbb{Q}}_\ell) = 2 - 2g , \]
  which is $2$ for $C = \mathbb{P}^1$. Here the integration formula
  does work that
  a single stalk cannot. The support is positive-dimensional and the
  invariant sees
  its topology.
\end{example}

\subsection{Higher Traces}
\label{subsec:highertraces}

Proposition \ref{prop:behrend_one} says that the first trace of $\theta$ is
blind to the singularity type. The higher traces are not. For $m \geq 1$ set
\[ \mathrm{DT}_m(Z) := \sum_{i \in \mathbb{Z}} (-1)^i
  \mathrm{Tr}\bigl( \theta^m \mid \mathbb{H}^i_{et}(Z, \mathcal{P}_Z) \bigr),
  \qquad
  \nu^{(m)}_Z(x) := \sum_{i \in \mathbb{Z}} (-1)^i
\mathrm{Tr}\bigl( \theta^m_x \mid \mathcal{H}^i(\mathcal{P}_Z)_x \bigr), \]
so that $\mathrm{DT}_1 = \mathrm{DT}$ and $\nu^{(1)} = \nu^{\mathrm{contact}}$.
Each $\theta^m$ is again a tame automorphism of $\mathcal{P}_Z$, so the proof of
Theorem \ref{thm:integration} applies verbatim and
$\mathrm{DT}_m(Z) = \chi_{et, c}(Z, \nu^{(m)}_Z)$.

\begin{example}[The higher traces recover $k$]\label{ex:Ak_higher}
  In the notation of \S\ref{subsec:Ak}, the operator $T_{U, \widetilde{f}}$ is a
  cyclic permutation of the $k+1$ points of $F_0$, so $T^m_{U,
  \widetilde{f}}$ is
  the identity when $(k+1) \mid m$ and a fixed-point-free permutation otherwise.
  Hence
  \[ \mathrm{Tr}\bigl(T^m_{U, \widetilde{f}} \mid \widetilde{H}^0(F_0)\bigr)
    =
    \begin{cases} k, & (k+1) \mid m, \\ -1, & \text{otherwise,}
  \end{cases} \]
  and since $\theta = -T_{U, \widetilde{f}}$ on this chart,
  \[ \mathrm{DT}_m(Z) = \nu^{(m)}_Z(0) =
    \begin{cases} (-1)^m\, k, & (k+1) \mid m, \\
      (-1)^{m+1}, & \text{otherwise.}
  \end{cases} \]
  For $m = 1$ this is $1$, as it must be. For $k \geq 2$ the first $m$ at which
  the invariant
  leaves $\{\pm 1\}$ is $m = k+1$, and its value there is $(-1)^{k+1} k$. The
  sequence $(\mathrm{DT}_m)_{m \geq 1}$ determines $k$, and so separates the
  $A_k$-singularities that $\mathrm{DT}$ alone identifies. For $k =
  1$ the sequence
  is constantly $1$, agreeing with Remark \ref{rem:quadratic_dt},
  where the rank-one
  vanishing cohomology carries $\theta = 1$ and $\theta^m = 1$
  for all $m$.
\end{example}

\section{Applications to Conic Lagrangian Intersections}
\label{sec:applications}

The invariants of Section \ref{sec:dt} were computed on charts. This section
applies them to a class of global objects on which the symplectic invariants
vanish identically. A conic Lagrangian in a cotangent bundle, such as a conormal bundle, the
characteristic variety of a $D$-module, or a component of a nilpotent cone, is
the symplectification of a Legendrian in the projectivized cotangent bundle.
The derived intersection of two of them is $(-1)$-shifted symplectic, and its
Behrend-weighted Euler characteristic is the invariant the symplectic theory
attaches to it. We show that this invariant vanishes identically, for a
structural reason, and that the contact invariant of Theorem
\ref{thm:integration} is what remains. We then compute it for
conormal bundles, decide when the orientation data it requires exists, and
assemble the higher traces into a zeta function.

\subsection{Projectivized Cotangent Bundles}
\label{subsec:projcot}

Throughout this section $M$ denotes a smooth variety over $\K$ of dimension
$d$, that is a smooth separated $\K$-scheme of finite type. Its cotangent
bundle $T^*M$ carries the Liouville form $\lambda$ and the symplectic form
$\omega = d_{\DR}\lambda$, which make it a $0$-shifted symplectic derived
scheme \cite{PTVV}. The group $\Gm$ acts on $T^*M$ by scaling the fibers. In
local coordinates $(x_i, \xi_i)$ one has $\lambda = \sum_i \xi_i\, d_{\DR}x_i$,
so $\lambda$, and with it $\omega$, has weight one. Write
\[ T^*M^\circ := T^*M \setminus 0_M \]
for the complement of the zero section, an open $\Gm$-stable subscheme on which
the action is free, and
\[ \mathbb{P}T^*M := [T^*M^\circ / \Gm] \]
for the quotient. As the action is free with a scheme as quotient, this is the
projectivized cotangent bundle $\mathbb{P}(T^*M) \to M$, a smooth variety of
dimension $2d - 1$ fibered in $\mathbb{P}^{d-1}$ over $M$. By
\cite[Theorem 3.7]{IzbudakBerktav2026}, applied to the weight-one form $\omega$ on
$T^*M^\circ$, the quotient $\mathbb{P}T^*M$ carries a $0$-shifted contact
structure whose symplectification is the tautological torsor $T^*M^\circ \to
\mathbb{P}T^*M$. The contact form is the tautological $1$-form, which at a
point $[\xi]$ sends a tangent vector $v$ to $\langle \xi, d\pi(v) \rangle$.
This is linear in the representative $\xi$, so it takes values in the dual of
the tautological subbundle $\cO(-1)$, and the contact line bundle of
$\mathbb{P}T^*M$ is $\cL = \cO_{\mathbb{P}(T^*M)}(1).$
The symplectification, the bundle of contact forms of \cite[Definition
4.3]{kib1}, is the frame bundle of $\cO(-1) = \cL^\vee$, namely
$T^*M^\circ$ with $\Gm$ scaling covectors. The restriction of $\cL$ to a fiber
$\mathbb{P}^{d-1}$ is $\cO(1)$, which is nontrivial for $d \geq 2$. This is confirmed
in Step 1 of the proof of Proposition \ref{prop:conormal_orientation}, where
the fiber sequence of Definition \ref{def:legendrian} gives $K_{\mathbb{P}T^*M}
= \cL^{\otimes(-d)}$ along any Legendrian of virtual dimension $d - 1$, in
agreement with $K_{\mathbb{P}(T^*M)} = \cO(-d)$. This is the non-exact case, outside the construction of Remark
\ref{rem:exact_case}, and it is the case in which the descent of Section
\ref{sec:perverse} carries content.

By a \bfem{conic Lagrangian} in $T^*M^\circ$ we mean a $\Gm$-equivariant
morphism $\ell \colon \Lambda \to T^*M^\circ$ from a derived scheme with a
$\Gm$-action, equipped with a $\Gm$-equivariant Lagrangian structure. A smooth
closed $\Gm$-stable subvariety $\Lambda \subseteq T^*M^\circ$ which is
Lagrangian in the classical sense is a conic Lagrangian in this sense. A closed
immersion of smooth schemes into a symplectic smooth scheme carries a
Lagrangian structure if and only if it is Lagrangian in the classical sense,
and the structure is then unique \cite[Introduction and \S 3]{PTVV}. The
$\Gm$-action preserves $\omega|_\Lambda$ and therefore, by uniqueness, the
Lagrangian structure.

\begin{lemma}[Conic Lagrangians are symplectifications of Legendrians]
\label{lem:conic_legendrian}
  Let $\ell \colon \Lambda \to T^*M^\circ$ be a conic Lagrangian. Then $\Gm$
  acts freely on $\Lambda$, and for the induced morphism
  \[ \mathbb{P}\ell \colon \mathbb{P}\Lambda := [\Lambda / \Gm]
    \longrightarrow \mathbb{P}T^*M \]
  the map $\Lambda \to \mathbb{P}\Lambda$ is the symplectification of
  $\mathbb{P}\Lambda$, so that
  \[ \Lambda \simeq \mathbb{P}\Lambda \times_{\mathbb{P}T^*M} T^*M^\circ , \]
  and this symplectification carries the $\Gm$-equivariant Lagrangian
  structure of $\ell$. In particular the conclusion of Theorem
  \ref{thm:review_intersection} holds for any two such morphisms, and we call
  them Legendrian.
\end{lemma}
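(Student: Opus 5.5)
The plan has three steps: establish freeness of the $\Gm$-action on $\Lambda$, identify $\Lambda$ with the pulled-back torsor, and transport the Lagrangian structure to the Legendrian condition of Definition \ref{def:legendrian} via the dictionary of \cite{IzbudakBerktav2026}.

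\emph{Freeness.} The action on $T^*M^\circ$ is free by construction. The morphism $\ell$ is $\Gm$-equivariant, so for a geometric point $\lambda \in \Lambda$ the stabilizer $\mathrm{Stab}(\lambda)$ embeds in $\mathrm{Stab}(\ell(\lambda)) = 1$. To make this work for derived stabilizers, I would show that the derived inertia of $\Lambda$ relative to $\Gm$ is the base change of that of $T^*M^\circ$. The argument is that the action map $\Gm \times \Lambda \to \Lambda \times \Lambda$ factors compatibly with the corresponding map for $T^*M^\circ$, and the latter is a monomorphism. Hence $\mathbb{P}\Lambda = [\Lambda/\Gm]$ is a derived algebraic space (a derived scheme if $\Lambda$ is quasi-projective over $M$), and $\Lambda \to \mathbb{P}\Lambda$ is a $\Gm$-torsor.

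\emph{The Cartesian square.} Equivariance of $\ell$ gives a morphism of $\Gm$-torsors $\Lambda \to (\mathbb{P}\ell)^* T^*M^\circ$ over $\mathbb{P}\Lambda$. Any equivariant morphism between $\Gm$-torsors over the same base is an equivalence, and this can be checked after pulling back along the atlas $\Lambda \to \mathbb{P}\Lambda$, where both torsors are trivial. This yields $\Lambda \simeq \mathbb{P}\Lambda \times_{\mathbb{P}T^*M} T^*M^\circ$. Since the symplectification of $\mathbb{P}T^*M$ is $T^*M^\circ \to \mathbb{P}T^*M$, the symplectification of a morphism into it is the base change along the tautological torsor, as in Step 4 of Proposition \ref{prop:local_verification}. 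So $\Lambda \to \mathbb{P}\Lambda$ is the symplectification of $\mathbb{P}\Lambda$, and $\ell$ is the symplectification of $\mathbb{P}\ell$.

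\emph{The Legendrian structure.} This step is the main obstacle. We must show that a $\Gm$-equivariant Lagrangian structure on the symplectification descends to an isotropic structure $(\mathbb{P}\ell)^*\alpha \simeq 0$ for which the fiber sequence of Definition \ref{def:legendrian} holds. I would invoke the converse direction of the contact--symplectic dictionary \cite[Theorem 3.7, Theorem 4.1]{IzbudakBerktav2026}, whose weight-one quotient sends equivariant Lagrangians to Legendrians. Concretely, contracting the equivariant nullhomotopy of $\ell^*\omega$ with the Euler vector field gives a weight-one nullhomotopy of $\ell^*\lambda$, using $\lambda = \iota_{\mathrm{Eu}}\omega$. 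Its weight-zero reduction is a nullhomotopy of $\alpha$ with values in $\cL$. Non-degeneracy then follows by comparing ranks: the Lagrangian sequence $\TT_\Lambda \to \ell^*\TT_{T^*M} \to \LL_\Lambda$ descends after splitting off the trivial relative tangent line $\LL_{\Lambda/\mathbb{P}\Lambda} \simeq \cO$ on both ends. This produces $\cO \to \LL_{\mathbb{P}\Lambda/\mathbb{P}T^*M}\otimes \cL[-1] \to \TT_{\mathbb{P}\Lambda}$, which is the required sequence for $n = 0$. The delicate point is tracking the weight-one twist that turns $\cO$ into the contact line $\cL$. Once this is in place, the claim about Theorem \ref{thm:review_intersection} follows immediately, since that theorem applies to any Legendrian morphisms.
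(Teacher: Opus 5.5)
Your freeness and Cartesian-square steps are the paper's own. The stabilizer of a point of $\Lambda$ sits inside the trivial stabilizer of its image, and an equivariant map of $\Gm$-torsors over $\mathbb{P}\Lambda$ is an equivalence.

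Drop the ``derived stabilizer'' refinement, which is false as stated. In derived geometry the action map $\Gm \times T^*M^\circ \to T^*M^\circ \times T^*M^\circ$ is the base change of the diagonal of $\mathbb{P}T^*M$. It is therefore a closed immersion but not a monomorphism, since its self-intersection picks up the shifted tangent bundle of $\mathbb{P}T^*M$. The derived stabilizer of $\Lambda$ is governed by $\TT_{\mathbb{P}\Lambda}$ rather than by the pullback of $\TT_{\mathbb{P}T^*M}$, so it is not a base change of that of $T^*M^\circ$ unless $\mathbb{P}\ell$ is \'etale. Freeness is a statement about classical truncations, and there your geometric-point argument already suffices.

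The genuine divergence is the last step. The paper never checks Definition \ref{def:legendrian} for $\mathbb{P}\ell$. It observes instead that the proof of \cite[Theorem 4.1]{IzbudakBerktav2026} uses its Legendrian inputs only through the $\Gm$-equivariant Lagrangian structures on their symplectifications. Hence the conclusion of Theorem \ref{thm:review_intersection} holds for any morphisms whose symplectifications carry such structures, and ``Legendrian'' is then a name. Your route is the one the paper sketches in Remark \ref{rem:conic_legendrian} and explicitly does not use: contract the null-homotopy of $\ell^*\omega$ with the Euler field, descend the resulting weight-one null-homotopy of $\ell^*\lambda$, and descend non-degeneracy.

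Your route buys more. It gives an honest Legendrian structure, so Theorem \ref{thm:review_intersection} applies as a black box and the fiber sequence of Definition \ref{def:legendrian} is genuinely available. Step 1 of the proof of Proposition \ref{prop:conormal_orientation} in fact invokes that sequence for $\mathbb{P}N^*Y$.

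The price is twofold. First, this converse half of the dictionary is not among the results cited in the paper. \cite[Theorem 3.7]{IzbudakBerktav2026} concerns the ambient structures, and \cite[Theorem 4.1]{IzbudakBerktav2026} supplies only the forward direction, so you must carry the converse out yourself. Second, for Theorem \ref{thm:conic}(1) you must check that symplectifying the descended structure returns the given Lagrangian structure of $\ell$; the paper's route gets this for free.

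For the non-degeneracy, work with the relative form $\TT_\Lambda \simeq \LL_{\Lambda/T^*M^\circ}[-1]$. The right side is pulled back from $\LL_{\mathbb{P}\Lambda/\mathbb{P}T^*M}$ by your Cartesian square. The left side descends to the extension of $\TT_{\mathbb{P}\Lambda}$ by $\cO$. So nothing needs to be split off on the cotangent side, and the weight of $\omega$ becomes the twist by the contact line.
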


\begin{proof}
  The action of $\Gm$ on $\Lambda$ is the restriction along $\ell$ of the
  action on $T^*M^\circ$, and the stabilizer of a point of $\Lambda$ is
  contained in the stabilizer of its image, which is trivial. Hence the action
  is free, $\Lambda \to [\Lambda/\Gm]$ is a principal $\Gm$-bundle, and $\ell$
  descends to $\mathbb{P}\ell$. The square
  \[
    \begin{tikzcd}
      \Lambda \arrow[r, "\ell"] \arrow[d] & T^*M^\circ \arrow[d] \\
      \mathbb{P}\Lambda \arrow[r, "\mathbb{P}\ell"'] & \mathbb{P}T^*M
    \end{tikzcd}
  \]
  is Cartesian. Indeed both vertical maps are principal $\Gm$-bundles, so
  $\Lambda \to \mathbb{P}\Lambda \times_{\mathbb{P}T^*M} T^*M^\circ$ is an
  equivariant morphism between principal $\Gm$-bundles over
  $\mathbb{P}\Lambda$, and every such morphism is an isomorphism. Hence
  $\Lambda$ is the symplectification of $\mathbb{P}\Lambda$ in the sense of
  Theorem \ref{thm:symplectification}, and it carries the equivariant
  Lagrangian structure of $\ell$ by hypothesis. The proof of \cite[Theorem
  4.1]{IzbudakBerktav2026} uses the Legendrian structures on its inputs only
  through the $\Gm$-equivariant Lagrangian structures on their
  symplectifications, which its first step produces, the remaining steps
  working upstairs and descending by Theorem \ref{thm:symplectification}.
  Its conclusion therefore holds for morphisms whose symplectifications carry
  such structures.
\end{proof}

\begin{remark}[Conic Lagrangians are Legendrian]\label{rem:conic_legendrian}
  The morphism $\mathbb{P}\ell$ is Legendrian in the sense of Definition
  \ref{def:legendrian} as well. The Liouville form satisfies $\lambda =
  \iota_E \omega$ for the Euler vector field $E$ generating the $\Gm$-action,
  and $E$ is tangent to $\Lambda$ because $\ell$ is equivariant. Contracting
  the null-homotopy $\ell^*\omega \simeq 0$ of the Lagrangian structure with
  $E$ gives a null-homotopy $\ell^*\lambda \simeq 0$, which is
  $\Gm$-equivariant and descends to an isotropic structure
  $(\mathbb{P}\ell)^*\alpha \simeq 0$ for the contact form. The fiber sequence
  of Definition \ref{def:legendrian} is the descent along $\Lambda \to
  \mathbb{P}\Lambda$ of the equivalence $\TT_\Lambda \simeq
  \LL_{\Lambda/T^*M^\circ}[-1]$ expressing the non-degeneracy of the
  Lagrangian structure, by the duality argument recorded after
  \cite[Definition 2.39]{IzbudakBerktav2026}. This is the classical fact that a
  conic Lagrangian in a cotangent bundle lies in the zero locus of the
  Liouville form. We do not use it below.
\end{remark}

\subsection{Vanishing of the Symplectic Invariants}
\label{subsec:symp_vanishing}

The symplectification of a $-1$-shifted contact stack is never proper, and
its compactly supported Euler characteristic vanishes, weighted or not.

\begin{lemma}[Invariant Euler characteristics vanish on the symplectification]
\label{lem:torsor_vanishing}
  Let $Z$ be a $-1$-shifted contact derived Artin stack of finite type with
  symplectification $p \colon \widetilde{Z} \to Z$, and let
  $\nu \colon \widetilde{Z}^{\mathrm{cl}}(\K) \to \overline{\mathbb{Q}}_\ell$
  be a constructible function invariant under the $\Gm$-action. Then
  \[ \chi_{et, c}\bigl( \widetilde{Z}^{\mathrm{cl}}, \nu \bigr) :=
    \sum_{c \in \overline{\mathbb{Q}}_\ell} c \cdot
    \chi_{et, c}\bigl( \nu^{-1}(c) \bigr) = 0 . \]
  In particular $\chi_{et, c}(\widetilde{Z}^{\mathrm{cl}}) = 0$, and the
  Behrend-weighted Euler characteristic
  $\chi_{et, c}(\widetilde{Z}^{\mathrm{cl}}, \nu_{\widetilde{Z}})$ vanishes,
  where $\nu_{\widetilde{Z}}$ is the Behrend function of the Artin stack
  $\widetilde{Z}^{\mathrm{cl}}$ \cite[Proposition 4.4]{JoyceSong}.
\end{lemma}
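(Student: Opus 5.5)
The plan is to reduce to the basic fact that a free $\Gm$-torsor has vanishing compactly supported Euler characteristic, and then use $\Gm$-invariance of $\nu$ to arrange the level sets as torsors themselves.

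\textbf{Step 1: the level sets are torsors.} Since $\nu$ is constructible and $\Gm$-invariant, each level set $\nu^{-1}(c)$ is a $\Gm$-stable constructible subset of $\widetilde{Z}^{\mathrm{cl}}$. Since $p$ is a principal $\Gm$-bundle, a $\Gm$-stable constructible subset is the preimage $p^{-1}(S_c)$ of its image $S_c := p(\nu^{-1}(c))$. The image is constructible in $Z^{\mathrm{cl}}$ by Chevalley's theorem for the finite-type morphism $p$; on stacks, we apply it to a smooth atlas. The truncation of $p$ is again a $\Gm$-torsor $\widetilde{Z}^{\mathrm{cl}} \to Z^{\mathrm{cl}}$, because truncation commutes with the base change along the flat torsor.

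\textbf{Step 2: refine to smooth strata.} Stratify each $S_c$ into finitely many locally closed reduced substacks $S_{c,j}$ over which the torsor is defined. By additivity of $\chi_{et,c}$ over locally closed decompositions (the excision triangle for $j_!j^* \to \mathrm{id} \to i_*i^*$), it suffices to show
\[
\chi_{et,c}(p^{-1}(S), \overline{\mathbb{Q}}_\ell) = 0
\]
for a $\Gm$-torsor $q \colon P \to S$ over a finite-type stack $S$.

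\textbf{Step 3: the torsor computation.} Compute $Rq_!\overline{\mathbb{Q}}_\ell$. This is locally constant, with stalks $H^\bullet_c(\Gm) = \overline{\mathbb{Q}}_\ell[-1] \oplus \overline{\mathbb{Q}}_\ell(-1)[-2]$, using the compactly supported analogue of Lemma~\ref{lem:monodromic_descent}(1) with trivial monodromy. Its fiberwise Euler characteristic is therefore $0$, so in the Grothendieck group of constructible complexes on $S$ the class of $Rq_!\overline{\mathbb{Q}}_\ell$ is $[\mathcal{H}^1] - [\mathcal{H}^2]$, the difference of two rank-one local systems. These are in fact constant: the sheaf $R^2q_!$ is the orientation sheaf $\overline{\mathbb{Q}}_\ell(-1)$, and $R^1 q_!$ is trivial because the structure group $\Gm$ is connected. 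Then
\[
\chi_{et,c}(P) = \chi_{et,c}(S, Rq_!\overline{\mathbb{Q}}_\ell) = -\chi_{et,c}(S) + \chi_{et,c}(S) = 0 .
\]
Even without knowing the local systems are constant, the multiplicativity $\chi_{et,c}(S,L) = \rk L \cdot \chi_{et,c}(S)$ for tame rank-one $L$, as used in the proof of Theorem~\ref{thm:integration}, suffices.

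\textbf{Step 4: the special cases.} Summing over $c$ gives the claim. The case $\nu \equiv 1$ gives $\chi_{et,c}(\widetilde{Z}^{\mathrm{cl}}) = 0$. For the Behrend function, it remains to see that $\nu_{\widetilde{Z}}$ is $\Gm$-invariant. This holds because the Behrend function is intrinsic to the stack and is preserved by any automorphism, here the action of each $\lambda \in \Gm(\K)$ \cite[Proposition 4.4]{JoyceSong}. Pointwise invariance under $\Gm(\K)$ is all that Step 1 uses.

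\textbf{Main obstacle.} The delicate point is the stacky part of Steps 2--3, namely Euler characteristics with compact support on Artin stacks with affine stabilizers, where $\chi_{et,c}$ must be finite and additive. I would first reduce to the case of finitely many strata that are quotient stacks $[Y/G]$. On such a stratum the torsor is a $G$-equivariant $\Gm$-torsor on $Y$, and one can alternatively pass to the compactly supported analogue of Lemma~\ref{lem:monodromic_descent}(1) on the Laszlo--Olsson category.
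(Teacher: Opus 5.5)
Your proposal is correct and follows the paper's proof step for step. You descend the $\Gm$-invariant function along $p$ via Chevalley and stratify $Z^{\mathrm{cl}}$ so that the level sets become restricted torsors. You then show that each torsor has vanishing $\chi_{et,c}$, because $Rq_!\overline{\mathbb{Q}}_\ell$ has two rank-one lisse cohomology sheaves, in degrees $1$ and $2$, which cancel by $\chi_{et,c}(T,L)=\rk L\cdot\chi_{et,c}(T)$. Finally you get $\Gm$-invariance of $\nu_{\widetilde{Z}}$ from the uniqueness in its intrinsic characterization. One harmless slip: the Grothendieck class of $Rq_!\overline{\mathbb{Q}}_\ell$ is $-[\mathcal{H}^1]+[\mathcal{H}^2]$, not $[\mathcal{H}^1]-[\mathcal{H}^2]$, and this does not affect the vanishing.
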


\begin{proof}
  The proof proceeds in three steps. We descend $\nu$ to $Z$, reduce to a
  single principal bundle, and compute its Euler characteristic.

  \vspace{5pt}
  \textbf{\textit{Step 1: Descent of the function.}}
  The map $p$ is a principal $\Gm$-bundle, so the geometric points of
  $Z^{\mathrm{cl}}$ are the $\Gm$-orbits of geometric points of
  $\widetilde{Z}^{\mathrm{cl}}$, and a $\Gm$-invariant function $\nu$ is the
  pullback $\bar{\nu} \circ p$ of a function $\bar{\nu}$ on
  $Z^{\mathrm{cl}}(\K)$. The level set $\bar{\nu}^{-1}(c)$ is the image under
  $p$ of the constructible set $\nu^{-1}(c)$, hence is constructible by
  Chevalley's theorem, so $\bar{\nu}$ is constructible. Choose a finite
  stratification of $Z^{\mathrm{cl}}$ by locally closed substacks $T_\alpha$ on
  each of which $\bar{\nu}$ is constant, with value $c_\alpha$. Then
  \[ \nu^{-1}(c) = p^{-1}\bigl( \bar{\nu}^{-1}(c) \bigr) =
    \bigsqcup_{c_\alpha = c} p^{-1}(T_\alpha) , \]
  and additivity of $\chi_{et, c}$ over stratifications gives
  \[ \chi_{et, c}\bigl( \widetilde{Z}^{\mathrm{cl}}, \nu \bigr) = \sum_\alpha
    c_\alpha \cdot \chi_{et, c}\bigl( p^{-1}(T_\alpha) \bigr) . \]

  \textbf{\textit{Step 2: Reduction to a principal bundle.}}
  The restriction $q \colon S := p^{-1}(T_\alpha) \to T := T_\alpha$ of $p$ is
  a principal $\Gm$-bundle over a finite type Artin stack, being the pullback
  of one. It suffices to show $\chi_{et, c}(S) = 0$ for every such $q$.

  \vspace{5pt}
  \textbf{\textit{Step 3: The Euler characteristic of a principal bundle.}}
  By proper base change the stalk of $Rq_! \overline{\mathbb{Q}}_\ell$ at a
  geometric point $t$ of $T$ is the compactly supported cohomology of the fiber
  $q^{-1}(t) \simeq \Gm$, namely $H^1_c(\Gm, \overline{\mathbb{Q}}_\ell) = \overline{\mathbb{Q}}_\ell$, $H^2_c(\Gm, \overline{\mathbb{Q}}_\ell) = \overline{\mathbb{Q}}_\ell(-1)$ and $H^\bullet_c(\Gm, \overline{\mathbb{Q}}_\ell)=0$ in every other degree. Let $U \to T$ be a smooth atlas. The pullback
  of $q$ to $U$ is a principal $\Gm$-bundle over a scheme, hence Zariski
  locally trivial by Hilbert's Theorem 90, so over an open cover of $U$ the
  pullback of $Rq_! \overline{\mathbb{Q}}_\ell$ is the pullback of
  $R\Gamma_c(\Gm, \overline{\mathbb{Q}}_\ell)$ by proper base change and the
  K\"unneth formula. Lisseness of a sheaf on $T$ may be checked on a smooth
  atlas, so the cohomology sheaves $R^1 q_! \overline{\mathbb{Q}}_\ell$ and
  $R^2 q_! \overline{\mathbb{Q}}_\ell$ are lisse of rank one. Every lisse sheaf
  on $T$ is tame because $\K$ has characteristic zero, so
  $\chi_{et, c}(T, L) = \rk(L) \cdot \chi_{et, c}(T)$ for lisse $L$, as in the
  proof of Theorem \ref{thm:integration}. Additivity of the Euler
  characteristic over the triangles of the canonical filtration of
  $Rq_! \overline{\mathbb{Q}}_\ell$ then gives
  \[ \chi_{et, c}(S) = \sum_{i} (-1)^i \chi_{et, c}\bigl( T, R^i q_!
    \overline{\mathbb{Q}}_\ell \bigr) = - \chi_{et, c}(T) + \chi_{et, c}(T) =
    0 . \]
    
  For the last assertion, the Behrend function of an Artin stack $X$ locally
  of finite type is characterized by the property that $\varphi^*(\nu_X) =
  (-1)^n \nu_W$ for every smooth morphism $\varphi \colon W \to X$ of
  relative dimension $n$ from a finite type scheme
  \cite[Proposition 4.4]{JoyceSong}. For an automorphism $g$ of $X$ the
  function $g^*\nu_X$ satisfies the same property, since $\varphi$ and
  $g \circ \varphi$ are smooth of the same relative dimension, so
  $g^*\nu_X = \nu_X$ by uniqueness. Applied to the automorphisms given by
  the elements of $\Gm(\K)$, this shows that $\nu_{\widetilde{Z}}$ is constant
  on orbits.
\end{proof}

\begin{theorem}[Conic Lagrangian intersections]\label{thm:conic}
  Let $\ell_1 \colon \Lambda_1 \to T^*M^\circ$ and $\ell_2 \colon \Lambda_2
  \to T^*M^\circ$ be conic Lagrangians, and write
  \[ Z := \mathbb{P}\Lambda_1 \times^h_{\mathbb{P}T^*M} \mathbb{P}\Lambda_2,
     \qquad
     \widetilde{Z} := \Lambda_1 \times^h_{T^*M^\circ} \Lambda_2 . \]
  \begin{enumerate}
    \item $Z$ is a $-1$-shifted contact derived stack whose symplectification
      is $\widetilde{Z}$, and $\widetilde{Z}$ is the $-1$-shifted symplectic
      derived intersection of the Lagrangians $\Lambda_1$ and $\Lambda_2$. Its
      contact line bundle is the restriction of $\cL = \cO(1)$ along $Z \to
      \mathbb{P}T^*M$.
    \item If $\widetilde{Z}$ is of finite type, then every $\Gm$-invariant
      constructible function on $\widetilde{Z}^{\mathrm{cl}}$ integrates to
      zero. In particular the Euler characteristic and the Behrend-weighted
      Euler characteristic of the symplectic intersection vanish.
    \item If $Z$ is proper and admits contact orientation data, then
      \[ \mathrm{DT}(Z) = \chi_{et, c}\bigl( \mathbb{P}\Lambda_1^{\mathrm{cl}}
        \times_{\mathbb{P}T^*M} \mathbb{P}\Lambda_2^{\mathrm{cl}} \bigr) , \]
      the compactly supported Euler characteristic of the classical
      intersection of the projectivized Lagrangians.
  \end{enumerate}
\end{theorem}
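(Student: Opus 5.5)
The plan is to prove the three parts in order, each reduced to a result already established earlier in the paper.

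For (1), I would start from Lemma \ref{lem:conic_legendrian}. It gives $\mathbb{P}\Lambda_i \to \mathbb{P}T^*M$ as Legendrians in the $0$-shifted contact scheme $\mathbb{P}T^*M$, with symplectifications $\Lambda_i \to T^*M^\circ$. By Theorem \ref{thm:review_intersection} the derived fiber product $Z$ is then $-1$-shifted contact.

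To identify its symplectification, I would use that $T^*M^\circ \to \mathbb{P}T^*M$ is a principal $\Gm$-bundle and that $\Lambda_i \simeq \mathbb{P}\Lambda_i \times_{\mathbb{P}T^*M} T^*M^\circ$. Pasting Cartesian squares then gives
\[ Z \times_{\mathbb{P}T^*M} T^*M^\circ \simeq \mathbb{P}\Lambda_1 \times_{\mathbb{P}T^*M} \Lambda_2 \simeq \Lambda_1 \times^h_{T^*M^\circ} \Lambda_2 = \widetilde{Z}, \]
compatibly with the diagonal $\Gm$-action. The proof of \cite[Theorem 4.1]{IzbudakBerktav2026} builds the contact structure on the intersection precisely as the weight-one descent of the $-1$-shifted symplectic structure of \cite{PTVV} on the Lagrangian intersection. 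Hence $\widetilde{Z}$ with this structure is the symplectification, and the contact line bundle is the pullback of $\cL = \cO(1)$.

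Part (2) follows directly from Lemma \ref{lem:torsor_vanishing}. One finite-type caveat: $\widetilde{Z}$ of finite type implies $Z$ is of finite type, since $Z^{\mathrm{cl}}$ is the quotient of a finite-type scheme by a free $\Gm$-action. The Behrend function of $\widetilde{Z}^{\mathrm{cl}}$ is $\Gm$-invariant by the last paragraph of that lemma.

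For (3), I would apply Proposition \ref{prop:behrend_one}, which gives $\mathrm{DT}(Z) = \chi_{et,c}(Z^{\mathrm{cl}})$ for proper oriented $Z$. It then remains to identify $Z^{\mathrm{cl}}$ with the classical fiber product $\mathbb{P}\Lambda_1^{\mathrm{cl}} \times_{\mathbb{P}T^*M} \mathbb{P}\Lambda_2^{\mathrm{cl}}$. This holds because $\pi_0$ of a derived fiber product is the classical fiber product of the truncations, and $\mathbb{P}T^*M$ is a smooth classical scheme. Even if the two differ by nilpotents, the étale Euler characteristic is insensitive to them, so the identification is enough.

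I expect the main obstacle to lie in (1): checking that the contact structure produced by Theorem \ref{thm:review_intersection} agrees with the weight-one quotient of the PTVV structure on $\widetilde{Z}$, rather than merely sharing the same underlying torsor. I would first try to verify this by inspecting the descent step in \cite[Theorem 3.7]{IzbudakBerktav2026}, noting that both structures come from the same equivariant null-homotopies.
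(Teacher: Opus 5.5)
Your proposal is correct and follows the paper's proof essentially step for step: Lemma \ref{lem:conic_legendrian}, Theorem \ref{thm:review_intersection} and the pasting of Cartesian squares give (1), Lemma \ref{lem:torsor_vanishing} gives (2), and for (3) you combine Proposition \ref{prop:behrend_one} with the fact that the classical truncation of a derived fiber product is the classical fiber product of the truncations. Your two extra checks go slightly beyond the written proof rather than along a different route: that $Z$ inherits finite type from $\widetilde{Z}$, and that the descended contact structure matches the PTVV structure on $\widetilde{Z}$. The paper handles the second only implicitly, through the remark in the proof of Lemma \ref{lem:conic_legendrian}, and leaves the first unstated.
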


\begin{proof}
  By Lemma \ref{lem:conic_legendrian} the morphisms $\mathbb{P}\ell_i$ are
  Legendrian, so $Z$ is $-1$-shifted contact by Theorem
  \ref{thm:review_intersection}. Its symplectification is the pullback of the
  torsor $T^*M^\circ \to \mathbb{P}T^*M$ along $Z \to \mathbb{P}T^*M$, and
  fiber products commute with one another, so
  \begin{align*}
    Z \times_{\mathbb{P}T^*M} T^*M^\circ
    &\simeq \bigl( \mathbb{P}\Lambda_1 \times_{\mathbb{P}T^*M} T^*M^\circ \bigr)
      \times_{T^*M^\circ}
      \bigl( \mathbb{P}\Lambda_2 \times_{\mathbb{P}T^*M} T^*M^\circ \bigr) \\
    &\simeq \Lambda_1 \times_{T^*M^\circ} \Lambda_2 = \widetilde{Z} ,
  \end{align*}
  using Lemma \ref{lem:conic_legendrian} for the second line. The derived
  intersection of two Lagrangians in the $0$-shifted symplectic
  $T^*M^\circ$ is $-1$-shifted symplectic by the recollection of
  \S\ref{subsec:symplectic}. For the contact line bundle, the display above
  exhibits $\widetilde{Z} \to Z$ as the pullback of the torsor $T^*M^\circ \to
  \mathbb{P}T^*M$ along $Z \to \mathbb{P}T^*M$. The contact line bundle of a
  $-1$-shifted contact stack is the line bundle associated to its
  symplectification torsor \cite[Definition 4.3]{kib1}, as recalled in
  Proposition \ref{prop:orientation_obstruction}, and the line bundle
  associated to a pullback torsor is the pullback of the associated line
  bundle. Hence the contact line bundle of $Z$ is $\cL|_Z$, which proves
  (1). Assertion (2) is Lemma
  \ref{lem:torsor_vanishing} applied to $Z$. For (3), Theorem
  \ref{thm:integration} and Proposition \ref{prop:behrend_one} give
  $\mathrm{DT}(Z) = \chi_{et, c}(Z^{\mathrm{cl}})$, and the classical
  truncation of a derived fiber product is the classical fiber product of the
  truncations.
\end{proof}

\begin{remark}[Microsupports and the index formula]\label{rem:microlocal}
  The microsupport of a constructible sheaf $F$ on $M$ is a closed conic
  Lagrangian in $T^*M$ \cite[Theorem 8.4.2]{KS}, and so is the characteristic
  variety of a holonomic $D$-module. The Euler characteristic of $F$ is
  computed by the index formula \cite[Corollary 9.5.2]{KS} as the intersection
  number of the characteristic cycle $CC(F)$ \cite[Definition 9.4.1]{KS} with
  the zero section of $T^*M$. The projectivization $T^*M^\circ \to
  \mathbb{P}T^*M$ removes the zero section. Theorem \ref{thm:conic} therefore
  attaches to a pair of microsupports, once they are given $\Gm$-equivariant
  Lagrangian structures, an invariant supported on the part of their
  intersection that the index formula does not see, and on which the
  symplectic Euler characteristic vanishes.
\end{remark}

\begin{remark}[The contact invariant is not a virtual count]
\label{rem:not_virtual}
  The symplectic invariant of a proper $-1$-shifted symplectic stack is
  invariant under deformation, because the Behrend weighting makes it a virtual
  count. The contact invariant is not. By Proposition \ref{prop:behrend_one}
  it is the Euler characteristic of the classical truncation, and Euler
  characteristics jump in families. The two invariants count different things,
  and the following family shows the difference. 
  
  For $s_t = (x^2 - t)^2$ on
  $U = \mathbb{A}^1$, the symplectic invariant of $\dCrit(s_t)$ is the sum of
  the Milnor numbers of the critical points of $s_t$, the Behrend function
  being the Milnor number at an isolated critical point by
  \S\ref{subsec:milnor}. For $t \neq 0$ these are
  $x = 0$ and $x = \pm\sqrt{t}$, all nondegenerate, and for $t = 0$ the single
  point $x = 0$ with $s_0 = x^4$, so the invariant is $3$ for every $t$. The
  contact invariant of $\Delta\mathrm{loc}(s_t)$ counts the critical points on
  the zero level, each with weight $1$ by Theorem \ref{thm:D}. For $t \neq 0$
  these are the two points $x = \pm\sqrt{t}$, and for $t = 0$ the single
  $A_3$-point, so the invariant is $2$ for $t \neq 0$ and $1$ for $t = 0$. The
  critical point at $x = 0$ leaves the zero level as $t$ moves away from $0$,
  and the contact invariant records it. The information the symplectic
  theory encodes in the Behrend function is carried in the contact theory by
  the operator $\theta$ and its higher traces, which distinguish the
  $A_3$-point at $t = 0$ from a nondegenerate one by Example
  \ref{ex:Ak_higher}.
\end{remark}

\subsection{Conormal Bundles}
\label{subsec:conormal}

The conormal bundle of a smooth subvariety is the basic conic Lagrangian, and
intersections of conormal bundles are the objects of microlocal geometry. We
compute their contact invariants.

Let $Y \subseteq M$ be a smooth closed subvariety of codimension $c$. Its
conormal bundle \[N^*Y := \ker\bigl( T^*M|_Y \to T^*Y \bigr)\] is a subbundle of
rank $c$ of $T^*M|_Y$, and its total space is a smooth closed subvariety of
$T^*M$ of dimension $d$, Lagrangian for $\omega$ and stable under $\Gm$. Write
\[N^*Y^\circ := N^*Y \cap T^*M^\circ\] for the complement of the zero section.
By Lemma \ref{lem:conic_legendrian} the projectivization $\mathbb{P}N^*Y =
[N^*Y^\circ / \Gm]$, a $\mathbb{P}^{c-1}$-bundle over $Y$, is a Legendrian in
$\mathbb{P}T^*M$ with symplectification $N^*Y^\circ$. Its virtual dimension is
$\dim Y + c - 1 = d - 1$.

Two smooth closed subvarieties $Y_1, Y_2 \subseteq M$ \bfem{meet cleanly} if
their scheme-theoretic intersection $W := Y_1 \times_M Y_2$ is smooth and
$T_W = T_{Y_1}|_W \cap T_{Y_2}|_W$ inside $T_M|_W$. In that case
$T_{Y_1}|_W + T_{Y_2}|_W$ is a subbundle of $T_M|_W$ of rank
$\dim Y_1 + \dim Y_2 - \dim W$, and the \bfem{excess} of the intersection is
\[ e := d - \dim Y_1 - \dim Y_2 + \dim W , \]
the corank of that subbundle. The intersection is transverse exactly when
$e = 0$.

\begin{proposition}[Contact invariants of conormal intersections]
\label{prop:conormal}
  Let $M$ be a smooth proper variety of dimension $d$, let $Y_1, Y_2 \subseteq
  M$ be smooth closed subvarieties of codimensions $c_1, c_2$ meeting cleanly
  along $W$ with excess $e$, and let
  \[ Z := \mathbb{P}N^*Y_1 \times^h_{\mathbb{P}T^*M} \mathbb{P}N^*Y_2 . \]
  Then $Z$ is a proper $-1$-shifted contact derived scheme, its classical
  truncation is the projective bundle $\mathbb{P}(E) \to W$ of the excess
  bundle
  \[ E := N^*Y_1|_W \cap N^*Y_2|_W \subseteq T^*M|_W , \]
  a vector bundle of rank $e$, and the symplectic invariants of Theorem
  \ref{thm:conic}(2) vanish. If $Z$ admits contact orientation data, then
  \[ \mathrm{DT}(Z) = e \cdot \chi_{et, c}(W) . \]
  In particular $\mathrm{DT}(Z) = 0$ for a transverse intersection, and for
  the self-intersection $Y_1 = Y_2 = Y$ of a subvariety of codimension $c$
  \[ \mathrm{DT}\bigl( \mathbb{P}N^*Y \times^h_{\mathbb{P}T^*M}
    \mathbb{P}N^*Y \bigr) = c \cdot \chi_{et, c}(Y) . \]
\end{proposition}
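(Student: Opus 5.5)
The plan is to apply Theorem \ref{thm:conic} to the conic Lagrangians $\Lambda_i = N^*Y_i^\circ$, so that everything reduces to identifying the classical truncation $Z^{\mathrm{cl}}$ and checking properness. Since the $N^*Y_i^\circ$ are smooth, closed, $\Gm$-stable and Lagrangian in the classical sense, the discussion preceding Lemma \ref{lem:conic_legendrian} gives them conic Lagrangian structures. Theorem \ref{thm:conic}(1) then shows that $Z$ is a $-1$-shifted contact derived scheme with symplectification $\widetilde{Z} = N^*Y_1^\circ \times^h_{T^*M^\circ} N^*Y_2^\circ$, which is of finite type because $M$ is. Theorem \ref{thm:conic}(2) therefore gives the vanishing of the symplectic invariants. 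If $Z$ is proper and oriented, Theorem \ref{thm:conic}(3) gives $\mathrm{DT}(Z) = \chi_{et,c}(Z^{\mathrm{cl}})$.

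The central step is to compute $Z^{\mathrm{cl}}$. It is the classical fiber product $\mathbb{P}N^*Y_1 \times_{\mathbb{P}T^*M} \mathbb{P}N^*Y_2$. Both factors are closed in $\mathbb{P}T^*M$, so this is their scheme-theoretic intersection, equivalently the quotient by $\Gm$ of $(N^*Y_1 \cap N^*Y_2)^\circ$. Over a point of $M$ the intersection is nonempty only over $Y_1 \cap Y_2 = W$. There the fiberwise intersection is $N^*_wY_1 \cap N^*_wY_2$, which is the annihilator of $T_wY_1 + T_wY_2$. By cleanness this sum is a subbundle of $T_M|_W$ of corank $e$, so $E$ is a vector bundle of rank $e$.

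The hardest point is checking that the scheme-theoretic intersection of the total spaces is exactly the total space of $E$, with no nilpotents. I would work in local coordinates adapted to the clean intersection. One can choose étale coordinates $x = (a, b, c, w)$ on $M$ with $Y_1 = \{a = 0, c = 0\}$ and $Y_2 = \{b = 0, c = 0\}$; this uses the standard normal form for clean intersections, with $c$ the excess directions. In these coordinates $N^*Y_1 = \{a=c=0,\ \xi_b = \xi_w = 0\}$ and $N^*Y_2 = \{b=c=0,\ \xi_a=\xi_w=0\}$. The sum of their ideals is generated by coordinate functions, so the intersection is $\{a=b=c=0,\ \xi_a=\xi_b=\xi_w=0\}$. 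This is the reduced total space of $E$, spanned by the $\xi_c$, of rank $e$.

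It follows that $Z^{\mathrm{cl}} \simeq \mathbb{P}(E)$, a $\mathbb{P}^{e-1}$-bundle over $W$, and it is empty when $e=0$. Since $W$ is closed in the proper variety $M$, $Z^{\mathrm{cl}}$ is proper, and hence so is $Z$. For the Euler characteristic I would use the Leray spectral sequence, or additivity over a Zariski trivializing stratification of $E$. Since $R\pi_!\overline{\mathbb{Q}}_\ell$ has lisse cohomology sheaves of total rank $e$, concentrated in even degrees, the tameness argument in the proof of Lemma \ref{lem:torsor_vanishing} gives $\chi_{et,c}(\mathbb{P}(E)) = e\cdot\chi_{et,c}(W)$. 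This yields the formula, including $0$ in the transverse case. For the self-intersection case $Y_1=Y_2=Y$ we have $W = Y$, $E = N^*Y$ and $e = d - 2\dim Y + \dim Y = c$, which gives $c\cdot\chi_{et,c}(Y)$.
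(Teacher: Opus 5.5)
Your proposal is correct and follows the paper's overall structure. You apply Theorem~\ref{thm:conic} to the inclusions of $N^*Y_i^\circ$, identify $Z^{\mathrm{cl}}$ with $\mathbb{P}(E)$, get properness from $W \subseteq M$ closed, and compute the Euler characteristic of a Zariski locally trivial $\mathbb{P}^{e-1}$-bundle as in Step~3 of Lemma~\ref{lem:torsor_vanishing}. The real difference is how you rule out nilpotents in $Z^{\mathrm{cl}}$.

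The paper stays at the projective level. It base changes the fiber product to $W = Y_1 \times_M Y_2$, where it becomes $\mathbb{P}(N^*Y_1|_W) \cap \mathbb{P}(N^*Y_2|_W)$ inside $\mathbb{P}(T^*M|_W)$. It then uses a linear-algebra fact: if $A, B \subseteq V$ are subbundles with $A \cap B$ and $A + B$ subbundles (which its Step~1 rank count supplies), a local trivialization adapted to the flag makes $A, B$ constant and gives $\mathbb{P}(A) \cap \mathbb{P}(B) = \mathbb{P}(A \cap B)$.

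You instead compute on the cones in $T^*M$ and descend along the $\Gm$-torsor. This requires coordinates on $M$ adapted to $Y_1$ and $Y_2$ simultaneously. That normal form does hold algebraically, on a Zariski neighbourhood of each $w \in W$ with the adapted functions forming \'etale coordinates. However, it is the one ingredient your argument adds to what the paper has available, so you should prove it rather than cite it. The argument runs as follows.
\begin{itemize}
\item Tensor the exact sequence $0 \to I_{Y_1} \cap I_{Y_2} \to I_{Y_1} \oplus I_{Y_2} \to I_{Y_1} + I_{Y_2} = I_W \to 0$ with $k(w)$.
\item By right exactness and smoothness of $W$, the kernel of $N^*_wY_1 \oplus N^*_wY_2 \to N^*_wW$ is the antidiagonal copy of $N^*_wY_1 \cap N^*_wY_2$. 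Hence every covector in that intersection is $dg(w)$ for some $g \in I_{Y_1} \cap I_{Y_2}$; these $g$ give your excess coordinates.
\item Complete them by functions in $I_{Y_1}$ and in $I_{Y_2}$ extending to bases of $N^*_wY_1$ and $N^*_wY_2$. Nakayama then shows these generate $I_{Y_1}$ and $I_{Y_2}$ near $w$.
\end{itemize}

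In exchange, your route gives slightly more than the paper records. The classical intersection $N^*Y_1 \cap N^*Y_2 \subseteq T^*M$ is itself smooth and equal to the total space of $E$, so $\widetilde{Z}^{\mathrm{cl}} \simeq E^\circ$. The paper's route is more economical, needing only linear algebra of vector bundles over $W$.
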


\begin{proof}
  The proof proceeds in three steps. We identify the excess bundle, compute
  the classical truncation, and evaluate the invariant.

  \vspace{5pt}
  \textbf{\textit{Step 1: The excess bundle.}}
  At a point $w \in W$, a covector $\xi \in T^*_w M$ lies in $N^*_w Y_1 \cap
  N^*_w Y_2$ if and only if it annihilates both $T_w Y_1$ and $T_w Y_2$, that
  is if and only if it annihilates $T_w Y_1 + T_w Y_2$. Hence $E$ is the
  annihilator of the subbundle $T_{Y_1}|_W + T_{Y_2}|_W$ of $T_M|_W$, whose
  rank is $\dim Y_1 + \dim Y_2 - \dim W$ by cleanness, so $E$ is a subbundle of
  $T^*M|_W$ of rank $e$. In particular the two subbundles $N^*Y_1|_W$ and
  $N^*Y_2|_W$ of $T^*M|_W$ have an intersection of constant rank $e$, and their
  sum has constant rank $c_1 + c_2 - e$, so both are subbundles.

  \vspace{5pt}
  \textbf{\textit{Step 2: The classical truncation.}}
  The classical truncation of a derived fiber product is the classical fiber
  product of the truncations, so $Z^{\mathrm{cl}} = \mathbb{P}N^*Y_1
  \times_{\mathbb{P}T^*M} \mathbb{P}N^*Y_2$. We compute it in two stages. The
  projection $\mathbb{P}T^*M \to M$ restricts to $\mathbb{P}N^*Y_i \to Y_i$,
  so the fiber product maps to $Y_1 \times_M Y_2 = W$, and over $W$ it is the
  fiber product of $\mathbb{P}(N^*Y_1|_W)$ and $\mathbb{P}(N^*Y_2|_W)$ over
  $\mathbb{P}(T^*M|_W)$. Let $A, B$ be subbundles of a vector bundle $V$ over
  $W$ such that $A \cap B$ and $A + B$ are subbundles. Then the
  scheme-theoretic intersection of $\mathbb{P}(A)$ and $\mathbb{P}(B)$ inside
  $\mathbb{P}(V)$ is $\mathbb{P}(A \cap B)$. This is local on $W$, and locally
  $V$ admits a trivialization adapted to the flag $A \cap B \subseteq A, B
  \subseteq A + B \subseteq V$, in which $A$ and $B$ are constant subspaces.
  The statement then reduces to the fact that two linear subspaces of
  projective space, each cut out by linear equations, intersect
  scheme-theoretically in the projectivization of the intersection of the
  corresponding vector subspaces, which is reduced. By Step 1 we may apply this
  with $A = N^*Y_1|_W$, $B = N^*Y_2|_W$ and $V = T^*M|_W$, and we obtain
  \[ Z^{\mathrm{cl}} = \mathbb{P}(E) \longrightarrow W , \]
  a Zariski locally trivial fibration with fiber $\mathbb{P}^{e-1}$, which is
  empty when $e = 0$. It is proper, as $W$ is closed in the proper $M$ and
  $\mathbb{P}(E) \to W$ is projective. A derived stack is proper when its
  classical truncation is, so $Z$ is proper. It is a derived scheme, being a
  fiber product of derived schemes.

  \vspace{5pt}
  \textbf{\textit{Step 3: The invariant.}}
  Theorem \ref{thm:conic} applies, its conic Lagrangians being the inclusions
  of $N^*Y_1^\circ$ and $N^*Y_2^\circ$. Assertion (2) gives the vanishing of the
  symplectic invariants. For (3) we need $\chi_{et, c}(\mathbb{P}(E))$. The
  fibration $\mathbb{P}(E) \to W$ is Zariski locally trivial, so the argument
  of Step 3 of Lemma \ref{lem:torsor_vanishing}, with the fiber
  $\mathbb{P}^{e-1}$ in place of $\Gm$, shows that its pushforward has lisse
  cohomology sheaves whose ranks are those of $H^\bullet(\mathbb{P}^{e-1})$,
  and
  \[ \chi_{et, c}(\mathbb{P}(E)) = \chi(\mathbb{P}^{e-1}) \cdot
    \chi_{et, c}(W) = e \cdot \chi_{et, c}(W) . \]
  The final assertions are the cases $e = 0$ and $Y_1 = Y_2$, for which
  $W = Y$ and $E = N^*Y$ has rank $e = c$.
\end{proof}

For a curve $Y = C$ of genus $g$ on a surface, the self-intersection formula
gives $\chi_{et, c}(C) = 2 - 2g$, the value of Example \ref{ex:P1}. The chart
of that example is the local model of the situation, its contact line bundle
being trivial, whereas here the contact line bundle of $Z$ is the restriction
of $\cO(1)$, which for $c = 1$ is the normal bundle $N_C$.

The orientation hypothesis in Proposition \ref{prop:conormal} is not
automatic, and the twist of Definition \ref{def:orientation} decides it.

\begin{proposition}[Orientability of conormal self-intersections]
\label{prop:conormal_orientation}
  Let $Y \subseteq M$ be a smooth closed subvariety of codimension $c$ in a
  smooth variety of dimension $d$, and let $Z$ be the derived self-intersection
  of $\mathbb{P}N^*Y$ in $\mathbb{P}T^*M$. Then the canonical bundle of $Z$ is
  \[ K_Z \simeq \bigl( K_Y \otimes \det N_Y \bigr)^{\otimes 2}\big|_Z \otimes
    \cL^{\otimes (d - 2c)} , \]
  where $N_Y$ is the normal bundle and $\cL = \cO(1)|_Z$ is the contact line
  bundle. Consequently
  \begin{enumerate}
    \item if $d$ is even, $Z$ admits contact orientation data, with the square
      root $(K_Y \otimes \det N_Y)|_Z \otimes \cL^{\otimes (d/2 - c)}$ of
      $K_Z$,
    \item if $d$ is odd and $c \geq 2$, $Z$ admits no contact orientation data,
    \item if $d$ is odd and $c = 1$, so that $Z^{\mathrm{cl}} = Y$ and $\cL =
      N_Y$, contact orientation data exists if and only if $N_Y$ is a square
      in $\mathrm{Pic}(Y)$.
  \end{enumerate}
\end{proposition}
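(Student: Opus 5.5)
The plan is to compute $K_Z$ from the determinant of the cotangent complex of the derived fiber product, and then to decide the existence of a square root separately in each case.

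\textbf{Step 1: the canonical bundle.} Write $P := \mathbb{P}N^*Y$, a smooth Legendrian of dimension $d-1$ in the smooth variety $\mathbb{P}T^*M$ of dimension $2d-1$. For the self-intersection $Z = P \times^h_{\mathbb{P}T^*M} P$ the cotangent complex fits into
\[ \LL_{\mathbb{P}T^*M}|_Z \to \LL_P|_Z \oplus \LL_P|_Z \to \LL_Z . \]
Taking determinants gives $K_Z \simeq K_P^{\otimes 2}|_Z \otimes K_{\mathbb{P}T^*M}^{-1}|_Z$.

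Next we insert the two known canonical bundles. As recorded in \S\ref{subsec:projcot}, $K_{\mathbb{P}T^*M} = \cO(-d) = \cL^{\otimes(-d)}$. For $K_P$ we use that $P = \mathbb{P}(N^*Y) \to Y$ is a $\mathbb{P}^{c-1}$-bundle. The relative Euler sequence gives
\[ K_{P/Y} = \cO_{\mathbb{P}(N^*Y)}(-c) \otimes \det(N^*Y)^{-1} = \cL^{\otimes(-c)} \otimes \det N_Y . \]
Here the tautological $\cO(1)$ on $\mathbb{P}(N^*Y)$ is the restriction of $\cL$. Hence $K_P = K_Y \otimes \det N_Y \otimes \cL^{\otimes(-c)}$.

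Substituting these into the determinant formula gives
\[ K_Z \simeq (K_Y\otimes\det N_Y)^{\otimes 2}|_Z \otimes \cL^{\otimes(d-2c)} . \]
We cross-check this against the twisted identity $(K_L)^{\otimes 2} \simeq f^*(K_Z\otimes \cL^{-(\vdim L+1)})$ following Definition \ref{def:orientation}. Applied to $L = P \to \mathbb{P}T^*M$ it recovers $K_{\mathbb{P}T^*M} = \cL^{-d}$, which confirms the convention for $\cL$. The one step needing care is the sign and orientation convention for $\cO(1)$ versus $\cL$ under the identification of $\mathbb{P}(N^*Y)$ as lines or hyperplanes. I would fix it using $\cL = \cO(1)$ with $\cO(-1) = \cL^\vee$ the tautological line of covectors.

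\textbf{Step 2: even $d$.} By Definition \ref{def:orientation} and Proposition \ref{prop:orientation_obstruction}, contact orientation data is exactly a square root of $K_Z$ on $Z$. For $d$ even the exponent $d-2c$ is even, so $(K_Y\otimes\det N_Y)|_Z\otimes\cL^{\otimes(d/2-c)}$ is an explicit square root.

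\textbf{Step 3: odd $d$.} A square root of $K_Z$ exists if and only if $\cL|_Z$ is a square times a square, that is, if and only if the class of $\cL|_Z$ in $\mathrm{Pic}(Z^{\mathrm{red}})/2$ vanishes. Line bundles and their square roots are detected on the classical reduced truncation up to the derived thickening, and I would invoke nilpotent invariance of $\mu_2$-torsors and of $H^2_{\mathrm{et}}(-,\mu_2)$ for this.

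By Proposition \ref{prop:conormal}, $Z^{\mathrm{cl}} = \mathbb{P}(N^*Y)$ is a $\mathbb{P}^{c-1}$-bundle over $Y$. When $c\ge 2$, restrict to a fiber $\mathbb{P}^{c-1}$. There $\cL$ becomes $\cO(1)$ and the pulled-back factor from $Y$ becomes trivial, so $K_Z$ restricts to $\cO(d-2c)$ with $d-2c$ odd. This bundle has no square root in $\mathrm{Pic}(\mathbb{P}^{c-1}) = \mathbb{Z}$, which proves (2).

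When $c=1$, $Z^{\mathrm{cl}} = Y$ and the conormal line gives $\cL = \cO(1) = (N^*Y)^\vee = N_Y$. The formula then reads $K_Z = (\text{square})\otimes N_Y^{\otimes(d-2)}$, and $d-2$ is odd. So a square root exists if and only if $N_Y$ is a square in $\mathrm{Pic}(Y)$, which is (3).

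The main obstacle is Step 1's identification of $\cO(1)$ on $\mathbb{P}(N^*Y)$ with $\cL$ and with $N_Y$ when $c=1$. A sign error there flips the exponents, although it would not change any parity conclusion.
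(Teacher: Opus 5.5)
Your argument is correct and is essentially the paper's proof. The steps match: the determinant of the fiber-product cotangent sequence, the relative Euler sequence for $K_P$, $f^*K_{\mathbb{P}T^*M}\simeq\cL^{\otimes(-d)}$, restriction to a fiber $\mathbb{P}^{c-1}$ for (2), and $\cL=N_Y$ for (3). The only difference is order: the paper derives $f^*K_{\mathbb{P}T^*M}\simeq\cL^{\otimes(-d)}$ from the Legendrian fiber sequence and uses the Euler-sequence value only as a check, whereas you take the Euler-sequence value as primary.

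Your cross-check is wrong, though nothing in your main line depends on it. The identity $(K_L)^{\otimes 2}\simeq f^*(K_Z\otimes\cL^{\otimes -(\vdim L+1)})$ is for Legendrians in $-1$-shifted contact stacks. Applied to the $0$-shifted $\mathbb{P}T^*M$ it would give $f^*K_{\mathbb{P}T^*M}\simeq K_P^{\otimes 2}\otimes\cL^{\otimes d}$, not $\cL^{\otimes(-d)}$. For a $0$-shifted target the $[-1]$ shift in the Legendrian sequence makes $K_L$ cancel, and the sequence gives $f^*K_X\simeq\cL^{\otimes(\vdim L-\vdim X)}$ directly.
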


\begin{proof}
  The proof proceeds in two steps. We compute $K_Z$ from the fiber product,
  and read off the square roots.

  \vspace{5pt}
  \textbf{\textit{Step 1: The canonical bundle.}}
  Write $L := \mathbb{P}N^*Y$, $X := \mathbb{P}T^*M$ and $f \colon L \to X$
  for the Legendrian, so that $Z = L \times^h_X L$. The cotangent complex of a
  derived fiber product sits in the cofiber sequence
  \[ f^*\LL_X|_Z \longrightarrow \LL_L|_Z \oplus \LL_L|_Z \longrightarrow
    \LL_Z , \]
  so taking determinants
  \[ K_Z \simeq K_L^{\otimes 2}|_Z \otimes f^*K_X^{-1}|_Z . \]
  The two factors are computed separately. For the first, $L \to Y$ is the
  projective bundle of the rank $c$ bundle $N^*Y$, and the relative Euler
  sequence
  \[ 0 \longrightarrow \cO_L \longrightarrow \pi^*N^*Y \otimes \cO(1)
    \longrightarrow \TT_{L/Y} \longrightarrow 0 \]
  gives $\det \TT_{L/Y} = \pi^*\det(N^*Y) \otimes \cO(c)$, hence
  \[ K_L = \pi^*K_Y \otimes \det(\TT_{L/Y})^{-1} = \pi^*\bigl( K_Y \otimes
    \det N_Y \bigr) \otimes \cO(-c) . \]
  For the second, the fiber sequence of Definition \ref{def:legendrian} for
  the Legendrian $f$ in the $0$-shifted contact stack $X$ reads
  \[ \cO_L \longrightarrow \LL_{L/X} \otimes f^*\cL[-1] \longrightarrow
    \TT_L , \]
  and taking determinants, with $r := \rk \LL_{L/X} = \vdim L - \vdim X = -d$
  and $\det \LL_{L/X} = K_L \otimes f^*K_X^{-1}$,
  \[ \bigl( K_L \otimes f^*K_X^{-1} \otimes f^*\cL^{\otimes r}
    \bigr)^{-1} = K_L^{-1} , \]
  that is $f^*K_X \simeq f^*\cL^{\otimes r} = \cL^{\otimes(-d)}|_L$. As a
  check, $K_X = \cO(-d)$ by the Euler sequence for $\mathbb{P}(T^*M)$ and
  $\cL = \cO(1)$, in agreement. Substituting,
  \[ K_Z \simeq \pi^*\bigl( K_Y \otimes \det N_Y \bigr)^{\otimes 2}\big|_Z
    \otimes \cO(-2c)|_Z \otimes \cO(d)|_Z , \]
  which is the displayed formula, $\cO(1)|_Z$ being $\cL$.

  \vspace{5pt}
  \textbf{\textit{Step 2: Square roots.}}
  If $d$ is even then $d - 2c$ is even, and the bundle in (1) is a square root
  of $K_Z$, which proves (1). Suppose $d$ is odd and $c \geq 2$, and let $R$ be
  a square root of $K_Z$. The fiber of $Z^{\mathrm{cl}} = \mathbb{P}N^*Y \to
  Y$ over a point of $Y$ is $F \simeq \mathbb{P}^{c-1}$, on which $K_Y \otimes
  \det N_Y$ is trivial and $\cL$ restricts to $\cO(1)$, so $K_Z|_F \simeq
  \cO(d - 2c)$ with $d - 2c$ odd, and $R|_F$ would be a square root of an odd
  power of $\cO(1)$ on $\mathbb{P}^{c-1}$. As $\mathrm{Pic}(\mathbb{P}^{c-1})
  = \mathbb{Z} \cdot \cO(1)$ for $c \geq 2$, no such $R$ exists, which proves
  (2). If $c = 1$ then $\mathbb{P}N^*Y = Y$, the tautological subbundle is
  $N^*Y$ and $\cL = (N^*Y)^\vee = N_Y$, so
  \[ K_Z \simeq \bigl( K_Y \otimes N_Y \bigr)^{\otimes 2}\big|_Z \otimes
    N_Y^{\otimes (d - 2)} \]
  with $d - 2$ odd. If $N_Y \simeq A^{\otimes 2}$ in $\mathrm{Pic}(Y)$, then
  $N_Y^{\otimes (d-2)} = (A^{\otimes (d-2)})^{\otimes 2}$ and $K_Z$ has the
  square root $(K_Y \otimes N_Y)|_Z \otimes A^{\otimes (d-2)}|_Z$. Conversely a
  square root of $K_Z$ restricts to $Z^{\mathrm{cl}} = Y$, where it is a square
  root of $(K_Y \otimes N_Y)^{\otimes 2} \otimes N_Y^{\otimes (d-2)}$, so
  $N_Y^{\otimes (d-2)}$ is a square in $\mathrm{Pic}(Y)$, and since $d - 2$ is
  odd so is $N_Y$. This proves (3).
\end{proof}

\begin{remark}[The equivariant computation]\label{rem:equivariant_check}
  The formula for $K_Z$ can also be obtained upstairs. On $T^*M^\circ$ the
  fiber coordinates have weight one, so the determinant of the relative
  cotangent complex of $T^*M \to M$ carries weight $d$ and $K_{T^*M^\circ}
  \simeq \cO\langle d \rangle$, the trivial bundle with $\Gm$ acting in weight
  $d$, while the same computation for $N^*Y \to Y$ gives $K_{N^*Y^\circ} \simeq
  \pi^*(K_Y \otimes \det N_Y) \otimes \cO\langle c \rangle$. The fiber product
  formula then gives $K_{\widetilde{Z}} \simeq \pi^*(K_Y \otimes \det
  N_Y)^{\otimes 2} \otimes \cO\langle 2c - d \rangle$ as equivariant bundles.
  The tautological section of $p^*\cL^\vee$ has weight one, so
  $\cO\langle 1 \rangle$ descends to $\cL^\vee$ and $\cO\langle 2c - d
  \rangle$ to $\cL^{\otimes (d - 2c)}$, and by Proposition
  \ref{prop:orientation_obstruction} the descent of $K_{\widetilde{Z}}$ is
  $K_Z$. This recovers the formula of Step 1 and fixes the sign of the weight
  convention.
\end{remark}

The parity condition in (2) is the parity defect of Section \ref{sec:joyce}
read on a global object. It shows that the obstruction of Theorem
\ref{thm:B} is not confined to charts.

\subsection{The Monodromy Zeta Function}
\label{subsec:zeta}

The higher traces of $\theta$ assemble into a single generating function.

\begin{definition}[Contact zeta function]\label{def:zeta}
  Let $Z$ be a proper oriented $-1$-shifted contact derived Artin stack. Its
  \bfem{contact zeta function} is the formal power series
  \[ \zeta_Z(t) := \exp\Bigl( \sum_{m \geq 1} \mathrm{DT}_m(Z)\,
    \frac{t^m}{m} \Bigr) \in \overline{\mathbb{Q}}_\ell[[t]] , \]
  where $\mathrm{DT}_m(Z) = \sum_i (-1)^i \mathrm{Tr}\bigl( \theta^m \mid
  \mathbb{H}^i_{et}(Z, \mathcal{P}_Z) \bigr)$ are the higher traces of
  \S\ref{subsec:highertraces}. For a geometric point $x$ of $Z$ the
  \bfem{local zeta function} is
  \[ \zeta_{Z, x}(t) := \prod_{i \in \mathbb{Z}} \det\bigl( 1 - t\,\theta_x
    \mid \mathcal{H}^i(\mathcal{P}_Z)_x \bigr)^{(-1)^{i+1}} . \]
\end{definition}

\begin{proposition}[Rationality and stratification]\label{prop:zeta}
  Let $Z$ be a proper oriented $-1$-shifted contact derived Artin stack.
  \begin{enumerate}
    \item $\zeta_Z(t)$ is the rational function
      \[ \zeta_Z(t) = \prod_{i \in \mathbb{Z}} \det\bigl( 1 - t\,\theta \mid
        \mathbb{H}^i_{et}(Z, \mathcal{P}_Z) \bigr)^{(-1)^{i+1}} . \]
    \item There is a finite stratification of $Z^{\mathrm{cl}}$ by locally
      closed substacks $S_\alpha$ on which the local zeta function is constant,
      with value $\zeta_\alpha(t)$, and for every such stratification
      \[ \zeta_Z(t) = \prod_\alpha \zeta_\alpha(t)^{\chi_{et, c}(S_\alpha)} . \]
  \end{enumerate}
\end{proposition}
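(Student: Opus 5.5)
Part (1) is linear algebra applied to the finite-dimensional graded space $\mathbb{H}^\bullet_{et}(Z, \mathcal{P}_Z)$. Since $Z$ is proper and $\mathcal{P}_Z$ is constructible, each $\mathbb{H}^i$ is finite-dimensional and only finitely many are nonzero. For an automorphism $A$ of a finite-dimensional space with eigenvalues $\lambda_j$ (with multiplicity) we have
\[ \exp\Bigl(\sum_{m\ge1}\mathrm{Tr}(A^m)\tfrac{t^m}{m}\Bigr)=\prod_j \exp\bigl(-\log(1-\lambda_j t)\bigr)=\det(1-tA)^{-1}. \]
I will apply this to $A=\theta|_{\mathbb{H}^i}$, where $\theta$ acts on hypercohomology by functoriality. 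Multiplying over $i$ with exponent $(-1)^{i+1}$ matches the alternating sign $(-1)^i$ in $\mathrm{DT}_m$. The exponential converts the sum over $i$ into this product, and the result is a finite product of polynomials and their inverses, which gives the formula in (1) and rationality.

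For (2) I first choose the stratification. Take a finite stratification of $Z^{\mathrm{cl}}$ by locally closed substacks $S_\alpha$ on which the cohomology sheaves of $\mathcal{P}_Z$ are lisse. As in the proof of Theorem \ref{thm:integration}, $\theta$ acts on each $\mathcal{H}^i(\mathcal{P}_Z)|_{S_\alpha}$ as an endomorphism of a local system. Its characteristic polynomial is therefore locally constant, so after refining to connected strata the local zeta function is constant on each stratum. This gives existence.

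For an arbitrary stratification on which $\zeta_{Z,x}$ is constant, the strategy is to compare logarithmic derivatives. The same computation as in (1), carried out stalkwise, gives
\[ \log\zeta_{Z,x}(t)=\sum_{m\ge1}\nu^{(m)}_Z(x)\,\frac{t^m}{m}. \]
Thus $\zeta_{Z,x}$ is constant on $S_\alpha$ exactly when every $\nu^{(m)}$ is constant on $S_\alpha$, with value $\nu^{(m)}_\alpha$ determined by $\zeta_\alpha$. By \S\ref{subsec:highertraces}, $\mathrm{DT}_m(Z)=\chi_{et,c}(Z,\nu^{(m)}_Z)=\sum_\alpha \nu^{(m)}_\alpha\,\chi_{et,c}(S_\alpha)$. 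Summing against $t^m/m$ and exponentiating then yields $\zeta_Z=\prod_\alpha\zeta_\alpha^{\chi_{et,c}(S_\alpha)}$.

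The main obstacle is that the integration formula for $\nu^{(m)}$ was proved along a stratification adapted to $\mathcal{P}_Z$, whereas here the stratification is only assumed to make $\zeta_{Z,x}$ constant. I will handle this by passing to a common refinement that is adapted to $\mathcal{P}_Z$. On this refinement, $\chi_{et,c}$ is additive over the pieces of each $S_\alpha$, and the constant value $\nu^{(m)}_\alpha$ factors out of the sum. Hence the formula for $\mathrm{DT}_m$ holds for any stratification on which the $\nu^{(m)}$ are constant. A minor point is that $\zeta_\alpha^{\chi}$ with $\chi$ possibly negative is a rational function in $\overline{\mathbb{Q}}_\ell(t)$ and equals the formal exponential in $\overline{\mathbb{Q}}_\ell[[t]]$; both sides have constant term $1$, so this causes no difficulty.
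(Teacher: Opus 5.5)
Your proposal is correct and follows the paper's proof essentially step by step. For (1) you use the trace--determinant identity on each $\mathbb{H}^i$ with sign $(-1)^i$. For (2) you take a lisse-adapted stratification into connected strata, apply the integration formula for $\nu^{(m)}_Z$ from \S\ref{subsec:highertraces}, use the stalkwise identity $\log\zeta_{Z,x}(t)=\sum_m \nu^{(m)}_Z(x)\,t^m/m$, and exponentiate. Your common-refinement step extends the product formula from stratifications adapted to $\mathcal{P}_Z$ to an arbitrary stratification on which the local zeta function is constant, which is exactly what the statement claims. The paper's proof passes over this point, and you handle it correctly.
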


\begin{proof}
  For a finite dimensional vector space $V$ with an automorphism $A$ one has
  the identity of formal power series
  \[ \exp\Bigl( \sum_{m \geq 1} \mathrm{Tr}(A^m)\, \frac{t^m}{m} \Bigr) =
    \det(1 - tA)^{-1} . \]
  Indeed, writing $A$ in triangular form with eigenvalues $\lambda_1, \dots,
  \lambda_r$ gives $\mathrm{Tr}(A^m) = \sum_j \lambda_j^m$, so the exponent on
  the left is $-\sum_j \log(1 - \lambda_j t)$, and exponentiating gives
  $\prod_j (1 - \lambda_j t)^{-1} = \det(1 - tA)^{-1}$. The hypercohomology
  groups $\mathbb{H}^i_{et}(Z, \mathcal{P}_Z)$ are finite dimensional and
  vanish for almost all $i$, as $Z$ is proper and $\mathcal{P}_Z$ is
  constructible. Applying the identity to each of them with the sign
  $(-1)^i$,
  \[ \sum_{m} \mathrm{DT}_m(Z)\, \frac{t^m}{m} = \sum_i (-1)^i \sum_m
    \mathrm{Tr}\bigl( \theta^m \mid \mathbb{H}^i \bigr) \frac{t^m}{m} =
    - \sum_i (-1)^i \log \det\bigl( 1 - t\,\theta \mid \mathbb{H}^i \bigr) , \]
  and exponentiating proves (1).

  For (2), Theorem \ref{thm:integration} applied to the automorphism $\theta^m$
  in place of $\theta$, as recorded in \S\ref{subsec:highertraces}, gives
  $\mathrm{DT}_m(Z) = \chi_{et, c}(Z^{\mathrm{cl}}, \nu^{(m)}_Z)$ with
  \[ \nu^{(m)}_Z(x) = \sum_i (-1)^i \mathrm{Tr}\bigl( \theta_x^m \mid
    \mathcal{H}^i(\mathcal{P}_Z)_x \bigr) . \]
  Fix a finite stratification of $Z^{\mathrm{cl}}$ by connected locally closed
  substacks $S_\alpha$ on which the cohomology sheaves
  $\mathcal{H}^i(\mathcal{P}_Z)$ are lisse, as in the proof of Theorem
  \ref{thm:integration}. On such a stratum $\theta$ is an automorphism of a
  lisse sheaf, so its characteristic polynomial on the stalks is locally
  constant, and the local zeta function, which is determined by these
  characteristic polynomials, is constant on $S_\alpha$ with value
  $\zeta_\alpha(t)$. The same holds for every refinement of this
  stratification, which proves the existence assertion. On such a
  stratification every $\nu^{(m)}_Z$ is constant on $S_\alpha$, with value
  $\nu^{(m)}_\alpha$, and
  \[ \mathrm{DT}_m(Z) = \sum_\alpha \nu^{(m)}_\alpha \cdot
    \chi_{et, c}(S_\alpha) . \]
  Applying the identity of the first paragraph to the stalks at a point of
  $S_\alpha$ gives $\sum_m \nu^{(m)}_\alpha\, t^m / m = \log \zeta_\alpha(t)$,
  so
  \[ \sum_m \mathrm{DT}_m(Z)\, \frac{t^m}{m} = \sum_\alpha
    \chi_{et, c}(S_\alpha) \log \zeta_\alpha(t) , \]
  and exponentiating proves (2).
\end{proof}

\begin{example}[The zeta function of an $A_k$-point]\label{ex:Ak_zeta}
  For the contact Darboux chart $Z = \Delta\mathrm{loc}(x^{k+1})$ of
  \S\ref{subsec:Ak}, whose classical truncation is a point, the sheaf
  $\mathcal{P}_Z$ is concentrated in degree $0$ with stalk
  $\widetilde{H}^0(F_0)$ of dimension $k$. The operator $T_{U, \widetilde{f}}$
  is the inverse of the cyclic monodromy $T_0$ of Example
  \ref{ex:Ak_classical}, whose characteristic polynomial on
  $\widetilde{H}^0(F_0)$ is $(x^{k+1} - 1)/(x - 1)$, a polynomial whose roots
  are closed under inversion, and $\theta = -T_{U, \widetilde{f}}$ by Example
  \ref{ex:Ak_higher}. Using $\prod_{\zeta^{k+1} = 1}(1 - u\zeta) = 1 -
  u^{k+1}$ with $u = -t$,
  \[ \det\bigl( 1 - t\theta \mid \widetilde{H}^0(F_0) \bigr) =
     \frac{\prod_{\zeta^{k+1} = 1} (1 + t\zeta)}{1 + t} =
     \frac{1 - (-t)^{k+1}}{1 + t} , \]
  and Proposition \ref{prop:zeta}(1) gives
  \[ \zeta_Z(t) = \frac{1 + t}{1 - (-t)^{k+1}} . \]
  For $k = 1$ this is $1/(1 - t)$, in agreement with $\mathrm{DT}_m \equiv 1$.
  For $k = 2$ the expansion of $\log \zeta_Z(t) = \log(1 + t) - \log(1 + t^3)$
  returns $\mathrm{DT}_1 = 1$, $\mathrm{DT}_2 = -1$ and $\mathrm{DT}_3 = -2$,
  in agreement with Example \ref{ex:Ak_higher}. Comparing with the monodromy
  zeta function $\zeta_{f,0}(t) = (1 - t)/(1 - t^{k+1})$ of Example
  \ref{ex:Ak_classical},
  \[ \zeta_Z(t) = \zeta_{f, 0}(-t) , \]
  the substitution $t \mapsto -t$ being the sign twist $\theta = -T_{U,
  \widetilde{f}}$. Proposition \ref{prop:zeta}(2) is in this sense a global
  form of A'Campo's formula \cite{ACampo}. By Proposition \ref{prop:zeta}(2), a proper oriented
  $-1$-shifted contact stack whose
  transversal singularity type along a stratum $S_\alpha$ is $A_{k_\alpha}$ has
  \[ \zeta_Z(t) = \prod_\alpha \Bigl( \frac{1 + t}{1 - (-t)^{k_\alpha + 1}}
    \Bigr)^{\chi_{et, c}(S_\alpha)} . \]
\end{example}

\subsection{Moduli Spaces}
\label{subsec:moduli_app}

The variety $M$ of \S\ref{subsec:projcot} may itself be a moduli space. Smooth
projective moduli spaces of stable bundles of coprime rank and degree on a
curve, of stable sheaves on a surface, and Hilbert schemes of points on a
surface all satisfy the hypotheses of Proposition \ref{prop:conormal}, and
their cotangent bundles are the corresponding moduli of Higgs bundles or Higgs
sheaves on the stable locus. For a smooth projective moduli space of stable
sheaves on a K3 or abelian surface the dimension is even, so by Proposition
\ref{prop:conormal_orientation}(1) every conormal self-intersection admits
contact orientation data, and Proposition \ref{prop:conormal} computes its
invariant unconditionally. The fixed locus of a finite order automorphism of
$M$, and any smooth Brill--Noether locus, furnish smooth closed subvarieties
$Y$ whose conormal self-intersections have invariant $c \cdot
\chi_{et, c}(Y)$.

The stack $\mathrm{Bun}_G(C)$ of $G$-bundles on a smooth projective curve $C$
is the stacky case. The global nilpotent cone $\mathcal{N}ilp \subseteq
T^*\mathrm{Bun}_G(C)$ is a Lagrangian substack, in the sense that its pullback
to any smooth presentation of $\mathrm{Bun}_G(C)$ is Lagrangian \cite[Main
Theorem]{GinzburgNilpotent}, \cite[Theorem 2.10.4]{BD}, and it is stable under $\Gm$.
A $\Gm$-stable Lagrangian substack of a cotangent stack is a union of conormal
bundles to reduced irreducible closed substacks \cite[Remark (iii) after
Theorem 2.10.2]{BD}, so the irreducible components of $\mathcal{N}ilp$ are
closures of conormals. 

For $PSL_n$ some of the corresponding substacks are
described in \cite[\S\S 3.8--3.9]{Laumon}, and for $GL_n$ the components of
the semistable locus are described combinatorially in \cite{Bozec}. The projectivization
$P\mathrm{Higgs}_G(C) = \mathbb{P}T^*\mathrm{Bun}_G(C)$ is $0$-shifted
contact \cite[Corollary 5.4]{IzbudakBerktav2026}, and \cite[Corollary
5.6]{IzbudakBerktav2026} equips the derived intersection of the projectivizations
of two components, once these are given $\Gm$-equivariant Lagrangian
structures, with a $-1$-shifted contact structure. Lemma
\ref{lem:torsor_vanishing} applies to that intersection as stated, its
argument being insensitive to whether the base is a scheme, and shows that the
Euler characteristic and the Behrend-weighted Euler characteristic of the
corresponding intersection of components of $\mathcal{N}ilp$ vanish for every
$G$ and every $C$. The contact invariant requires properness, which fails on
$\mathrm{Bun}_G(C)$ and holds after restriction to a finite type open substack
containing the intersection, when one exists. Its evaluation by the conormal
formula of Proposition \ref{prop:conormal} requires in addition the extension
of that proposition to smooth Artin stacks. The conormal of a smooth closed
substack carries a canonical Lagrangian structure by \cite{Calaque2019}, so this
extension is a matter of properness alone. We leave both aside.

\section{Concluding Remarks}
\label{sec:conclusion}

In this paper we have attached $\ell$-adic invariants to $-1$-shifted contact
derived Artin stacks by working on the symplectification and descending along
the structural $\Gm$-action. Theorem \ref{thm:monodromic_sheaf} produces the
perverse sheaf $\mathcal{P}_Z$ together with its twisted monodromy operator
$\theta$, and the central difference from the symplectic setting is contained
in Proposition \ref{prop:sign_character}. The $\Gm$-action does not preserve
the canonical orientation of a contact Darboux chart, the character of its
action being $(-1)^{n+1}$, so the geometric monodromy and the twisted operator
differ by the global sign $T = -\theta$. The pushforward to $Z$ retains only one
generalized eigenspace of $\theta$, and the pair $(\mathcal{P}_Z, \theta)$
retains both. The sign has two consequences. The first is the parity
defect of Theorem \ref{thm:B}. A nonzero Legendrian class can be
$\theta$-invariant only when its virtual dimension is even, both parities occur over a single
chart by Example \ref{ex:A1_parity}, and the contact Joyce conjecture is
therefore stated for graded orientation data. Theorem
\ref{thm:linearization_functor} shows that, under the assumptions of Setup
\ref{setup:joyce}, the non-linear $2$-category of proper Legendrian correspondences linearizes to
the enriched $2$-categories of Theorems \ref{thm:linear_category} and
\ref{thm:global_2cat}.

The second is the content of Sections \ref{sec:dt} and
\ref{sec:applications}. The contact
Behrend function is identically $1$ by Proposition \ref{prop:behrend_one}, a
consequence of A'Campo's theorem, so the invariant of Theorem
\ref{thm:integration} is the compactly supported Euler characteristic of the
classical truncation, and the information that the symplectic theory encodes
in the Behrend function is carried instead by $\theta$. Its higher traces
distinguish the $A_k$-singularities by Example \ref{ex:Ak_higher} and assemble
into the zeta function of Proposition \ref{prop:zeta}, a global form of
A'Campo's formula. Theorem \ref{thm:conic} then identifies a class of objects
on which the symplectic invariant vanishes identically and the contact
invariant does not. The derived intersection of two conic Lagrangians in a cotangent
bundle has vanishing symplectic invariant for a structural reason, the free
$\Gm$-action, while its contact invariant computes the Euler characteristic of
the projectivized intersection, with the value $e \cdot \chi_{et, c}(W)$ for
conormal bundles meeting cleanly with excess $e$ by Proposition
\ref{prop:conormal}. Proposition \ref{prop:conormal_orientation} reads the
parity defect on these objects, the self-intersection of the conormal of a
subvariety of codimension at least $2$ in a variety of dimension $d$
admitting contact orientation data exactly when $d$ is even. The examples of Remark \ref{rem:not_virtual} show that the contact
invariant counts singular points of a hypersurface rather than critical points
of a function, and that this is what distinguishes it from a virtual count.

\section{Outlook}
\label{sec:outlook}

Three threads are left open by the constructions above. The first is the
conjecture on which Sections \ref{sec:joyce} and \ref{sec:linearization} depend,
the second is the supply of examples beyond the local models computed here, and
the third is the relation of the grading to the twists appearing elsewhere in
the subject.

\subsection{Toward the Contact Joyce Conjecture}
The main problem left open here is Conjecture \ref{conj:joyce}, and three
ingredients are missing from a proof.

The first is canonicity. Proposition \ref{prop:local_verification}
constructs the
local class from a presentation and shows it is unchanged under a change of
trivialization of the torsor, but not that it is determined by $L$ alone. Were
the symplectic classes of (J1) known to be unique in their $\Hom$-groups, or
pinned by a normalization, the monodromic refinement (J3) would follow. The
operator $\theta$ arises from the $\Gm$-action, which preserves the
symplectic form
up to weight and preserves the orientation, so it fixes any class characterized
by data it preserves. Establishing (J3) would then reduce to a canonicity
statement for (J1) together with Remark \ref{rem:local_graded}.

The second is gluing. The local morphisms of Proposition
\ref{prop:local_verification} are maps of constructible complexes rather than of
perverse sheaves, so they do not descend by sheaf-theoretic gluing, and the
obstruction is the extension class of Remark \ref{rem:gluing}.

The third is the symplectic input. Assumptions (J1)--(J2) are theorems in the
attractor case \cite{KPS}, whose proof proceeds through relative
perverse sheaves
and hyperbolic localization. A general construction of Lagrangian classes has
been announced by Khan, Kinjo, Park and Safronov. If that construction is
compatible with monodromic structures, then (J3), and with it Theorem
\ref{thm:C}, becomes unconditional.

\subsection{Moduli examples}

Section \ref{sec:applications} applies the theory to conic Lagrangian
intersections in cotangent bundles of smooth proper varieties, including
smooth projective moduli spaces, and to the nilpotent cone of
$\mathrm{Bun}_G(C)$ through \cite[Corollary 5.6]{IzbudakBerktav2026}. Two things
are missing. The conormal formula of Proposition \ref{prop:conormal} is
stated for varieties. Its extension to smooth Artin stacks is available in
principle, the conormal of a smooth closed substack carrying a canonical
Lagrangian structure by \cite{Calaque2019}, but requires a finite type
restriction for properness. The invariant of the nilpotent cone intersection requires a
finite type restriction on which it is proper, and a computation of the
excess along the intersection of two components. The components are
closures of conormals to closed substacks of $\mathrm{Bun}_G(C)$
\cite[Remark (iii) after Theorem 2.10.2]{BD}, so the second point is a
question about those substacks, which \cite{Bozec} describes combinatorially
for $GL_n$ on the semistable locus.

\subsection{The parity defect and orientation data}
Proposition \ref{prop:parity_formula} identifies the parity defect of a
Legendrian with the parity of its virtual dimension, and Example
\ref{ex:odd_nonzero} shows that the resulting grading is not a normalization. On
an odd-dimensional Legendrian the ungraded requirement annihilates a class that
is otherwise a generator of a one-dimensional space.

We do not know whether this has a symplectic counterpart. The operator $\theta$
exists only in the presence of the weight-one $\Gm$-action, so the phenomenon is
invisible in the setting of \cite{AmorimBassat}. A half-twist of the same
shape does appear in the motivic and Hodge-theoretic treatments of vanishing
cycles, where it is written $\mathbb{L}^{1/2}$ and absorbed into the
definition of
the monodromic Grothendieck ring \cite{BJM}. Whether Definition
\ref{def:graded_orientation} is a shadow of that construction, and whether the
graded orientations of a fixed Legendrian form a torsor under a group other than
$\mu_2$, are concrete questions we leave open.

\subsection{Relation to microlocal sheaf theory}
Over $\mathbb{C}$, the Nadler--Zaslow theorem identifies the Fukaya
category of a
cotangent bundle with constructible sheaves on the base \cite{NadlerZaslow}. The
2-category $\LFc(X)$ is built from constructible complexes attached to
Legendrians, so the comparison is suggestive, but we make no claim of a
correspondence. Our $2$-morphism spaces are hypercohomologies of vanishing-cycle
sheaves rather than morphism complexes in a Fukaya category, and no functor
between the two is constructed here. Identifying a candidate functor, even for a
$1$-shifted cotangent stack, would be the first step toward such a comparison.

\section*{Acknowledgments}
The author thanks Kadri \.{I}lker Berktav for many discussions on
shifted contact
geometry and for collaboration on the prior work this paper builds upon.

\bibliographystyle{amsalpha}
\bibliography{refss}

\end{document}